\definecolor{dblue}{rgb}{0,0,.6}
\numberwithin{equation}{section}
\newtheorem{theorem}{Theorem}[section]
\theoremstyle{plain}
\newtheorem{claim}[theorem]{Claim}
\newtheorem{conjecture}[theorem]{Conjecture}
\newtheorem{corollary}[theorem]{Corollary}
\newtheorem{cor}[theorem]{Corollary}
\newtheorem{definition}[theorem]{Definition}
\newtheorem{example}[theorem]{Example}
\newtheorem{lemma}[theorem]{Lemma}
\newtheorem{notation}[theorem]{Notation}
\newtheorem{proposition}[theorem]{Proposition}
\newtheorem{prop}[theorem]{Proposition}
\newtheorem{remark}[theorem]{Remark}
\newcommand{\Z}{\mathbb Z}
\newcommand{\Q}{\mathbb Q}
\newcommand{\A}{\mathbb A}
\newcommand{\C}{\mathbb C}
\newcommand{\CP}{\mathbb P}
\newcommand{\F}{\mathbb F}
\newcommand{\im}{\operatorname{im}}
\newcommand{\Pic}{\operatorname{Pic}}
\newcommand{\id}{\operatorname{id}}
\newcommand{\Spec}{\operatorname{Spec}}
\newcommand{\codim}{\operatorname{codim}}
\newcommand{\CH}{\operatorname{CH}}
\newcommand{\cl}{\operatorname{cl}}
\newcommand{\et}{\text{\'et}}
\newcommand{\proet}{\text{pro\'et}}
\newcommand{\identity}{\operatorname{id}}
\newcommand{\image}{\operatorname{im}}
\newcommand{\dashedlongrightarrow}{\xymatrix@1@=15pt{\ar@{-->}[r]&}}
\renewcommand{\longrightarrow}{\xymatrix@1@=15pt{\ar[r]&}}
\renewcommand{\mapsto}{\xymatrix@1@=15pt{\ar@{|->}[r]&}}
\renewcommand{\twoheadrightarrow}{\xymatrix@1@=15pt{\ar@{->>}[r]&}}
\newcommand{\hooklongrightarrow}{\xymatrix@1@=15pt{\ar@{^(->}[r]&}}
\newcommand{\congpf}{\xymatrix@1@=15pt{\ar[r]^-\sim&}}
\renewcommand{\cong}{\simeq}
\newcommand{\kbar}{\bar{\mathbb F}}
\newcommand{\Xbar}{\bar{X}}
\newcommand{\zl}{\mathbb{Z}_{\ell}}
\newcommand{\ql}{\mathbb{Q}_{\ell}}
\newcommand{\rou}[2]{\mu^{\otimes#2}_{#1}}
\newcommand{\coh}[3]{H^{#1}(#2,#3)}
\newcommand{\cohbm}[3]{H_{BM}^{#1}(#2,#3)}
\newcommand{\nrcoh}[4]{H^{#1}(F_{#2}#3,#4)}
\newcommand{\urcoh}[4]{H^{#1}_{#2,nr}(#3,#4)}
\newcommand{\chow}[2]{\operatorname{CH}^{#1}(#2)}
\newcommand{\bigtate}[3]{\mathbf{T}_{#3}^{#1}(#2)}
\newcommand{\bigSS}[3]{\mathbf{SS}_{#3}^{#1}(#2)}
\newcommand{\bigbeil}[3]{\mathbf{B}_{#3}^{#1}(#2)}
\begin{document}    
\title[Cycle conjectures and birational invariants over finite fields]{Cycle conjectures and birational invariants over finite fields}

\author{Samet Balkan} 
\address{Institute of Algebraic Geometry, Leibniz University Hannover, Welfengarten 1, 30167 Hannover, Germany.}
\email{balkan@math.uni-hannover.de}

\author{Stefan Schreieder} 
\address{Institute of Algebraic Geometry, Leibniz University Hannover, Welfengarten 1, 30167 Hannover, Germany.}
\email{schreieder@math.uni-hannover.de}

\date{November 6, 2025} 
\subjclass[2010]{primary 14C15, 14C25} 
%



\begin{abstract}  
We study a natural birational invariant for varieties over finite fields and show that its vanishing on projective space is equivalent to 
 the Tate conjecture, the Beilinson conjecture, and the Grothendieck--Serre semi-simplicity conjecture for all smooth projective varieties over finite fields. 
We further show that the Tate, Beilinson, and 1-semi-simplicity conjecture in half of the degrees implies those conjectures in all degrees.
\end{abstract}
 
\maketitle 
 

\section{Introduction}

Throughout this paper $\F$ denotes a finite field of characteristic $p$, $\bar \F$ denotes an algebraic closure of $\F$ and $G_{\F}$ denotes the absolute Galois group of $\F$.
For an $\F$-scheme $X$, we denote by $\bar X$ the base change of $X$ to $\bar \F$.
We further fix a prime $\ell\neq p$. 

\subsection{Cycle conjectures}
Let $X$ be a smooth projective variety over $\F$.
The Galois group $G_\F$ acts on the $\ell$-adic \'etale cohomology $H^{i}(\bar X,\Q_\ell(j))$ of $\bar X$ and the Grothendieck--Serre semi-simplicity conjecture predicts that this action is semi-simple, see \cite[Conj. 12.5]{jannsen-LMN}; in other words, the Frobenius action on $H^{i}(\bar X,\Q_\ell(j)) $ is diagonalizable (over $\bar \Q_\ell$).

There is a cycle class map
\begin{align} \label{eq:cl}
\cl_X^i:\CH^i(X)_{\Q_\ell}\longrightarrow H^{2i}(\bar X,\Q_\ell(i))
\end{align}
which lands in the subspace of Galois invariant classes  $H^{2i}(\bar X,\Q_\ell(i))^{G_\F}$.
The Tate conjecture asserts that the image of $\cl_X^i$ coincides with that subspace.
This is known for divisors on abelian varieties \cite{tate} and for K3 surfaces \cite{artin-swinnerton-dyer,rudakov-shafarevich-zink,nygaard-ogus,maulik, charles,madapusipera,kim-madapusipera}, but wide open in general, see e.g.\ the surveys \cite{tate2,totaro-tate}.
For important implications and equivalent formulations, see e.g.\ \cite[p.\ 578-579]{totaro-tate}, \cite{geisser98}, and  \cite{kahn98}.
We also note that some reduction steps are known.
For instance, de Jong and Morrow showed that the Tate conjecture for surfaces over $\F$ implies the Tate conjecture for divisors on any smooth projective variety over $\F$, see \cite{morrow}. 


The Beilinson conjecture predicts that the cycle class map in \eqref{eq:cl} is always injective.
This is known for $i=0,1,\dim X$, but wide open in general; some interesting special cases are proven in \cite{soule,kahn}. 
(We follow here the terminology from \cite{totaro-tate}; Beilinson's original conjecture is slightly stronger, as it asserts that any cycle in $\CH^i(X)_{\Q_\ell}$ that is numerically trivial is already zero in $\CH^i(X)_{\Q_\ell}$.)

The  Tate and Beilinson conjectures are fundamental open problems.
They are analogues over finite fields of the Hodge and the Bloch--Beilinson conjecture over the complex numbers, while the semi-simplicity conjecture is an analogue of the well-known fact that the category of polarized rational Hodge structures is semi-simple.

\subsection{Birational invariants} 
For a smooth projective $ \F$-variety $X$, we denote by
\begin{align} \label{eq:H^iF0X}
H^i(F_0\bar X,\Q_\ell(n)):=\lim_{\substack{\longrightarrow\\ \emptyset\neq  \bar U\subset \bar X}} H^i(\bar U,\Q_\ell(n))
\end{align}
the direct limit of the \'etale cohomology groups of all dense open subsets $\bar U$ of $\bar X$ with coefficients in $\Q_\ell(n)$.
If $\bar X$ is irreducible, this group  is a birational invariant of $\bar X$, which depends only on the function field $\bar \F(X)$ and which plays an important role in the Gersten conjecture, see e.g.\ \cite{BO}. 
However, the group is too big for practical purposes and so one often considers suitable subgroups, for instance the subgroup $H^i_{nr}(\bar X,\Q_\ell(n))\subset H^i(F_0\bar X,\Q_\ell(n))$ of unramified classes, see \cite{CTO}.
The latter plays a central role in rationality problems, (where one uses torsion coefficients instead of $\Q_\ell(n)$), see e.g.\ \cite{CTO,HPT,Sch-JAMS} and the surveys \cite{CT-survey,Sch-survey}.

Since $X$ is defined over $\F$ and the limit in \eqref{eq:H^iF0X} can be computed by running only over those open dense subsets of $\bar X$ that are defined over $\F$,  the group \eqref{eq:H^iF0X} inherits a natural Galois action.
In analogy to the idea of considering the subgroup of unramified classes, it is  natural to consider the subgroup
$$
H^i(F_0\bar X,\Q_\ell(n))^{G_\F}\subset H^i(F_0\bar X,\Q_\ell(n))
$$
of Galois invariant classes.
This is a birational invariant of $X$, hence an invariant of the function field $\F(X)$.
It is natural to expect that this invariant should vanish for $X=\CP^m$ for suitable values of $i$ and $n$.
For instance, we will show in this paper (see Theorem \ref{thm:induct1}\eqref{induct1_1} below) that
$$
H^{2i}(F_0 \CP^n_{\bar \F}, \Q_\ell(i))^{G_{\F}}=0\ \ \text{for all}\ \ i,n\geq 2 .
$$  

\subsection{Main results}
Our first main result is as follows.

\begin{theorem}  \label{thm:main:Tate-Beilinson-ss}
The Tate,   Beilinson, and semi-simplicity conjecture for all smooth projective varieties over a finite field $\F$ are equivalent to the vanishing
$$
H^{2i}(F_0 \CP^n_{\bar \F }, \Q_\ell(i+1))^{G_{\F}}=0\ \ \text{for all}\ \ i,n \geq 2 .
$$   
\end{theorem}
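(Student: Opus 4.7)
The strategy is to relate the birational invariant $H^{2i}(F_0 \bar X, \Q_\ell(i+1))^{G_{\F}}$ to cycle-theoretic information via the coniveau filtration on $\ell$-adic cohomology, and then to use the universality of projective space to bridge the vanishing for $\CP^n$ with the full Tate, Beilinson, and semi-simplicity conjectures.

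The central tool is the colimit, as $U \subset X$ shrinks, of the localization sequences, giving the six-term exact sequence
\[
\bigoplus_{x \in X^{(1)}} H^{2i-2}(\overline{\kappa(x)}, \Q_\ell(i)) \xrightarrow{\alpha} H^{2i}(\bar X, \Q_\ell(i+1)) \to H^{2i}(F_0 \bar X, \Q_\ell(i+1)) \xrightarrow{\partial} \bigoplus_{x \in X^{(1)}} H^{2i-1}(\overline{\kappa(x)}, \Q_\ell(i)) \xrightarrow{\beta} H^{2i+1}(\bar X, \Q_\ell(i+1)).
\]
By Deligne's Riemann hypothesis, the pure groups $H^{2i}(\bar X, \Q_\ell(i+1))$ and $H^{2i+1}(\bar X, \Q_\ell(i+1))$ carry weights $-2$ and $-1$, so their $G_{\F}$-invariants vanish (and so does $H^1(G_{\F}, H^{2i}(\bar X, \Q_\ell(i+1)))$, since Frobenius has no eigenvalue~$1$). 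Taking $G_{\F}$-invariants therefore identifies $H^{2i}(F_0 \bar X, \Q_\ell(i+1))^{G_{\F}}$, up to a Hochschild--Serre extension, with $\ker(\beta)^{G_{\F}}$ modulo an image arising from the Gysin map $\alpha$. This exhibits the invariant as a ``residue invariant'' supported on the divisors of $X$, and recursively on their subvarieties.

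For the direction \emph{conjectures $\Rightarrow$ vanishing on $\CP^n$}, I would run the above analysis inductively on codimension. Tate, Beilinson, and semi-simplicity give precise control on $H^{2i}(\bar Z, \Q_\ell(i))^{G_{\F}}$ for subvarieties $Z$ of $X$, and via Jannsen's continuous \'etale cohomology together with the Hochschild--Serre spectral sequence (using $\cd(G_{\F})=1$) they translate into vanishing of the Galois cohomology groups that appear in the coniveau analysis. Specialising to $X = \CP^n$ yields the conjectured vanishing. For the reverse direction, I would exploit that every smooth projective $Y/\F$ embeds as a closed subvariety of some $\CP^N$, so that its cohomology enters (with Gysin shifts) the coniveau filtration for the ambient $\CP^N$. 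The vanishing hypothesis propagates through the spectral sequence and forces the cycle-theoretic conditions on $Y$; an induction on $\dim Y$ and $i$ then delivers the three conjectures in full.

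The main obstacle is this last reverse direction: disentangling how a single birational invariant on $\CP^n$ simultaneously encodes three different conjectures for every smooth projective variety in every codimension. This will require a delicate induction on $\dim Y$ and $i$, careful weight-theoretic bookkeeping (Deligne), and most likely the intermediary of Jannsen's continuous \'etale cohomology $H^{2i+1}_{\mathrm{cont}}(Y, \Q_\ell(i+1))$ to pass cleanly between the birational invariant and cycle classes.
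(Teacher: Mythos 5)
Your six-term sequence is not correct as stated: for $X$ smooth projective of dimension $d$ and $i\geq 2$, the localization sequence for the filtration step from $F_1X$ to $F_0X$ reads
\[
H^{2i}(F_1\bar X,\Q_\ell(i+1))\longrightarrow H^{2i}(F_0\bar X,\Q_\ell(i+1))\stackrel{\partial}\longrightarrow \bigoplus_{x\in X^{(1)}}H^{2i-1}(\bar x,\Q_\ell(i))\longrightarrow H^{2i+1}(F_1\bar X,\Q_\ell(i+1)),
\]
and $H^{2i}(F_1\bar X,\Q_\ell(i+1))$ agrees with $H^{2i}(\bar X,\Q_\ell(i+1))$ only when $1\geq i$ (Lemma~\ref{ceilinglemma}); for $i\geq 2$, which is precisely the range of the theorem, one cannot replace $F_1\bar X$ by $\bar X$. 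So one does not get a direct connection between the birational invariant and the pure cohomology of $\bar X$ in one step. The actual argument must iterate through all intermediate levels $F_jX\to F_{j-1}X$ (Proposition~\ref{prop:roof3}), and at each level one needs Galois-invariance and $1$-semi-simplicity statements about the Borel--Moore cohomology of codimension-$j$ points; these in turn require the Tate, Beilinson, and $1$-semi-simplicity conjectures in lower dimension and degree. This is the source of the delicate double induction in the paper (via Theorem~\ref{thm:induct1}, Corollaries~\ref{cor:Tupdown}, \ref{cor:Bupdown} and \ref{induct3}), and it also requires the identification of the kernel of the cycle class map with a quotient of refined unramified cohomology from \cite{Sch-refined} (Theorem~\ref{kercyc_1}); none of this appears in your sketch.

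The second genuine gap is your proposed mechanism for the reverse direction. Embedding $Y\hookrightarrow\CP^N$ as a closed subvariety of codimension $>1$ does not help: the residue maps out of $H^{2i}(F_0\CP^N_{\bar\F},\cdot)$ only land on \emph{codimension-one} points of $\CP^N$, so $Y$ of higher codimension never appears in this localization sequence, and Gysin pushforward of its cohomology lands in weights where the Galois-invariants vanish anyway. The paper instead uses that every $d$-dimensional $\F$-variety is birational to a \emph{hypersurface} in $\CP^{d+1}$ (Lemma~\ref{PState_preparation}), so that its function field shows up exactly as one of the residue fields $\kappa(x)$ for $x\in(\CP^{d+1})^{(1)}$. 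This, combined with the birational invariance of the $F_0$-group (Lemma~\ref{F0pullback}, Corollary~\ref{cor:F0vanish}, via alterations), is what makes the vanishing on projective space equivalent to the vanishing for all varieties. You would need to replace your embedding idea by this birational reduction to hypersurfaces to have any chance of closing the argument.
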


By the above theorem, the vanishing of a natural birational invariant of projective space is equivalent to the  Tate,   Beilinson, and semi-simplicity conjecture for all smooth projective varieties over a finite field.

In Theorem \ref{thm:cohcrit1} below we give effective versions of the above theorem.
For instance, 
$$
H^4(F_0 \CP^4_{\bar \F }, \Q_\ell(3))^{G_{\F}}=0 
$$
is equivalent to the Tate conjecture for divisors on surfaces and the Beilinson conjecture for threefolds over $\F$,  see  Remark \ref{rem:effective-version} below.

A variant of Theorem \ref{thm:main:Tate-Beilinson-ss} that concerns only the Tate and semi-simplicity conjecture is as follows.

\begin{theorem}  \label{thm:main:Tate-ss}
The Tate and semi-simplicity conjecture for all smooth projective varieties over a finite field $\F$ are equivalent to the vanishing
$$
H^{2i-1}(F_0 \CP^n_{\bar \F }, \Q_\ell(i))^{G_{\F}}=0\ \ \text{for all}\ \ i,n \geq 2 .
$$   
\end{theorem}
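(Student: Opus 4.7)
The plan is to analyze the birational invariant $H^{2i-1}(F_0 \CP^n_{\bar\F}, \Q_\ell(i))^{G_\F}$ via the Gersten resolution in \'etale cohomology, reducing to Tate and semi-simplicity statements on smooth projective varieties arising as strata of $\CP^n_{\bar\F}$. Since $H^{\mathrm{odd}}(\CP^n_{\bar\F}, \Q_\ell) = 0$, Bloch--Ogus (using the Gersten conjecture, which holds for \'etale cohomology) yields an exact sequence
$$
0 \to H^{2i-1}(F_0 \CP^n_{\bar\F}, \Q_\ell(i)) \xrightarrow{\partial} \bigoplus_{x \in (\CP^n_{\bar\F})^{(1)}} H^{2i-2}(\kappa(x), \Q_\ell(i-1)) \xrightarrow{\gamma} H^{2i}(\CP^n_{\bar\F}, \Q_\ell(i)),
$$
where $\gamma$ is the sum of Gysin pushforwards. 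Taking $G_\F$-invariants and decomposing codimension-one points of $\CP^n_{\bar\F}$ into Galois orbits, the middle term's invariants become a direct sum over codimension-one closed $\F$-subvarieties $Z \subset \CP^n$ of Galois-invariant cohomology of the generic point of $\bar Z$. Iterating Gersten on a resolution (or alteration) of each $Z$ then expresses this in terms of Galois invariants of \'etale cohomology of smooth projective $\F$-varieties of strictly smaller dimension, modulo cycle class contributions.

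For the direction \emph{conjectures $\Rightarrow$ vanishing}, semi-simplicity of Frobenius ensures that at each stage of the Gersten filtration the $G_\F$-invariants agree with the $G_\F$-coinvariants, so that $\ker(\gamma)^{G_\F}$ coincides with the kernel of $\gamma$ restricted to $G_\F$-invariants. The Tate conjecture realizes the relevant invariant classes as cycle classes, and a diagram chase using $\CH^i(\CP^n)_{\Q_\ell} \simeq H^{2i}(\CP^n_{\bar\F}, \Q_\ell(i))$ shows they assemble to zero.

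For the reverse direction \emph{vanishing $\Rightarrow$ conjectures}, given a smooth projective $\F$-variety $Y$, we embed $Y \hookrightarrow \CP^n$ and show that a hypothetical failure of Tate (a non-algebraic Galois-invariant class) or of semi-simplicity (a non-trivial Jordan block of Frobenius) on $Y$ produces, via the Gysin and Gersten machinery, a non-zero element of $H^{2i-1}(F_0 \CP^n_{\bar\F}, \Q_\ell(i))^{G_\F}$ for suitable $i, n$. For Tate on divisors, Morrow's reduction~\cite{morrow} allows us to restrict attention to surfaces, which appear as complete intersections in $\CP^3$; an induction on codimension using the vanishing for all $i, n \geq 2$ extends this to arbitrary codimension. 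For semi-simplicity, Honda--Tate lets us twist by abelian varieties with prescribed Frobenius eigenvalues, reducing semi-simplicity at arbitrary eigenvalues to the eigenvalue-one case, where the $H^{2i-1}$-vanishing detects the existence of non-trivial Jordan blocks.

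The main difficulty is the precise identification of $\ker(\gamma)^{G_\F}$ with a concrete combination of algebraic cycles and obstructions to Tate and semi-simplicity on strata: controlling the extensions in the $G_\F$-equivariant Bloch--Ogus spectral sequence, and ensuring that every Tate obstruction on a smooth projective variety is detected in some $\CP^n$, requires careful weight-and-codimension bookkeeping together with resolutions or alterations. It is precisely at this identification step that semi-simplicity is indispensable, reflecting the tightness of the claimed equivalence.
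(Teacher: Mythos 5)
Your overall strategy — residuate along codimension\mbox{-}one points of $\CP^n$, use that every function field of a $d$-dimensional $\F$-variety appears as a residue field of a point of $(\CP^{d+1}_\F)^{(1)}$, and bootstrap over codimension — is exactly the one the paper follows. But several of the concrete steps you write down are either false or leave the hard part untouched.

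First, the exact sequence you start from is not correct. The residue $\partial$ does inject $H^{2i-1}(F_0\CP^n_{\bar\F},\Q_\ell(i))$ into $\bigoplus_{x\in(\CP^n_{\bar\F})^{(1)}}H^{2i-2}(\kappa(x),\Q_\ell(i-1))$ (this uses $H^{2i-1}_{0,nr}(\CP^n_{\bar\F})=0$ for $i\geq 2$), but the Gysin pushforward $\gamma$ out of that direct sum lands in $H^{2i}(F_1\CP^n_{\bar\F},\Q_\ell(i))$ — the colimit of Borel--Moore cohomology over opens containing all codimension-one points — and \emph{not} in $H^{2i}(\CP^n_{\bar\F},\Q_\ell(i))$. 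By Lemma~\ref{ceilinglemma} these two groups agree only when $1\geq i$, so for $i\geq 2$ they differ drastically, and your exact sequence breaks. The paper instead (Lemma~\ref{PState}) works with the genuine Gysin sequence of Lemma~\ref{exactnesslemma} and must separately prove that $H^{2i}(F_1\CP^{d+1}_{\bar\F},\Q_\ell(i))^{G_\F}$ vanishes, which is nontrivial: it is item~\eqref{induct1_1} of Theorem~\ref{thm:induct1}, combined with the Tate and $1$-semi-simplicity conjectures for projective space. Your plan elides this step.

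Second, the technical core of the argument — your own ``main difficulty'' paragraph — is left entirely open, and that is precisely where the theorem lives. The paper resolves it by a descending induction on the filtration index $j$ in $H^i(F_j\bar X,\Q_\ell(n))$ (Proposition~\ref{prop:roof3} and Corollaries~\ref{cor:Tupdown}, \ref{induct2}, \ref{induct3}). At each step one only has access to Tate and $1$-semi-simplicity in strictly lower codimension on strictly lower-dimensional varieties; this is exactly what forces the hypothesis ``for all $i,n\geq 2$'' rather than a single $(i,n)$, and it must be arranged so that nothing circular occurs. Calling this ``careful weight-and-codimension bookkeeping'' names the difficulty but does not dispose of it.

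Third, two smaller inaccuracies. ``Surfaces which appear as complete intersections in $\CP^3$'' is false — not every smooth projective surface is a complete intersection, nor even admits a smooth model in $\CP^3$. The paper uses the far weaker (and correct) fact that over a perfect field any $d$-dimensional variety is birational to a hypersurface in $\CP^{d+1}_\F$ (Hartshorne I.4.9), so its function field occurs as a residue field of a codimension-one point; see Lemma~\ref{PState_preparation}. Morrow's reduction is not used anywhere in the proof and does not help here. Finally, your appeal to Honda--Tate to reduce semi-simplicity at arbitrary eigenvalues to $1$-semi-simplicity is a genuine alternative route, but the paper achieves the same reduction more cheaply via Milne's K\"unneth argument on $X\times X$ (Remark~\ref{rem:1-ss_implies_ss}); the twist-by-abelian-variety approach introduces a dependence on the Honda--Tate theory that the simpler argument avoids.

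In short: you have correctly identified the skeleton of the proof, but the exact sequence you anchor it on is wrong for $i\geq 2$, the complete-intersection claim is false, and the inductive control over $F_j\bar X\to F_{j-1}\bar X$ — the actual content of Proposition~\ref{prop:roof3} and Theorem~\ref{thm:cohcrit1} — is not supplied.
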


As before, the above theorem admits effective versions, see Theorem \ref{thm:cohcrit1}.
For instance, 
 $$
H^3(F_0 \CP^3_{\bar \F }, \Q_\ell(2))^{G_{\F}}=0 
$$
is equivalent to the Tate conjecture for surfaces over $\F$, see  Remark \ref{rem:effective-version} below.  

Our proof of Theorem \ref{thm:main:Tate-Beilinson-ss} leads to the following application of independent interest.

\begin{corollary}  \label{cor:Lefschetz} 
  If for all smooth projective varieties $X$ of dimension $d\leq n$ over a finite field $\F$,  the Tate, Beilinson, and 1-semi-simplicity conjectures hold for cycles of codimension $i\leq \lceil d/2\rceil $, then they hold for cycles of any codimension on smooth projective varieties of dimension at most $n$ over $\F$.
\end{corollary}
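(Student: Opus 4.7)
The plan is to bootstrap from codimension $j\leq \lceil d/2\rceil$ to codimension $i:=d-j$ on each smooth projective variety $X$ of dimension $d\leq n$, using Deligne's hard Lefschetz theorem for the Tate and $1$-semi-simplicity parts, and the birational-invariant framework of Theorems \ref{thm:main:Tate-Beilinson-ss} and \ref{thm:cohcrit1} for the Beilinson part.

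Fix such an $X$ and a very ample class $h\in \CH^1(X)$. By Deligne's hard Lefschetz theorem, cup product with $h^{i-j}$ gives a Galois-equivariant isomorphism
\[
L^{i-j}\;:\;H^{2j}(\bar X,\Q_\ell(j))\;\xrightarrow{\ \sim\ }\;H^{2i}(\bar X,\Q_\ell(i))
\]
of pure weight-$0$ Galois modules, and it satisfies
\[
L^{i-j}\circ \cl_X^{\,j}\;=\;\cl_X^{\,i}\circ (h^{i-j}\cdot -)
\]
on rational Chow groups. Passing to $G_\F$-invariants, the Tate conjecture in codimension $j$ therefore yields the Tate conjecture in codimension $i$. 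Since $L^{i-j}$ is an isomorphism of Galois modules, the $1$-semi-simplicity statement (semi-simplicity of the Frobenius action at eigenvalue $1$) likewise transfers from codimension $j$ to codimension $i$.

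The remaining and most delicate step is to upgrade the Beilinson conjecture from codimension $j$ to codimension $i$, since hard Lefschetz does not admit a direct inverse on rational Chow groups. Here I would translate the hypothesis via the effective criterion of Theorem \ref{thm:cohcrit1} into the vanishing of the birational invariants $H^{2j}(F_0 \CP^m_{\bar \F},\Q_\ell(j+1))^{G_\F}$ in the corresponding range of $m\leq n$. Poincar\'e duality on $\bar X$, applied at the generic point, pairs the codimension-$j$ contribution to these invariants with its codimension-$i$ counterpart, and combining this pairing with the Tate and $1$-semi-simplicity statements already upgraded in the first step converts the hypothesis in the half-range into the vanishing of the birational invariants governing codimension $i$. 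Feeding this back into Theorem \ref{thm:main:Tate-Beilinson-ss} / \ref{thm:cohcrit1} then yields Beilinson in codimension $i$, completing the induction on codimension.

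The main obstacle I anticipate is precisely this last step, namely the compatibility of Poincar\'e duality on $\bar X$ with the generic-point invariants of $\CP^m$ used throughout the paper; the transfer of information across this duality is what is made possible by the structural results underlying the equivalences of Theorem \ref{thm:main:Tate-Beilinson-ss}, so once the effective criterion is in place the corollary follows without further input.
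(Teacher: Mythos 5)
Your approach splits into two parts, and their quality differs significantly.

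For the Tate and $1$-semi-simplicity parts, the hard Lefschetz transfer is correct: $L^{i-j}$ is a Galois-equivariant isomorphism compatible with the cycle class map, so surjectivity of $\cl_X^j$ onto invariants gives surjectivity of $\cl_X^i$, and an isomorphism of Galois modules trivially preserves $1$-semi-simplicity. This is a valid argument but a genuinely different route from the paper: the authors deliberately avoid hard Lefschetz (they note this explicitly right after the corollary), deriving everything from the vanishing of the birational invariants on projective space. Your route buys familiarity; theirs buys independence from Deligne's hard Lefschetz theorem and, more importantly, a uniform mechanism that also handles Beilinson.

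For the Beilinson part there is a genuine gap. You propose to "pair the codimension-$j$ contribution with its codimension-$i$ counterpart via Poincar\'e duality on $\bar X$, applied at the generic point." This does not work as stated: $F_0\bar X$ is the pro-scheme of open (non-proper) subsets of $\bar X$, and for a non-proper smooth variety Poincar\'e duality relates ordinary cohomology to compactly-supported cohomology, not to itself; moreover the relevant groups on $\CP^{d+1}_{\bar \F}$ appearing in Theorem \ref{thm:cohcrit1} occur in \emph{different} projective spaces as $d$ varies, so there is no single duality pairing them. The key observation the paper uses instead is far more elementary and you have missed it entirely: by Artin vanishing, affine varieties of dimension $m$ over an algebraically closed field have trivial $\ell$-adic cohomology in degrees $>m$. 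Since the groups $H^{2j}(F_0\CP^{d+1}_{\bar \F},\Q_\ell(j+1))$ are direct limits over $(d+1)$-dimensional affine opens, they vanish \emph{automatically} whenever $2j>d+1$, i.e.\ for $j>\lceil d/2\rceil$. Combined with the vanishing for $j\leq\lceil d/2\rceil$ coming from the hypothesis via Theorem \ref{thm:cohcrit1} (plus the trivial cases $j\leq 1$), this gives the vanishing for all $j$, and a second application of Theorem \ref{thm:cohcrit1} yields all three conjectures in all codimensions. You should replace the Poincar\'e duality heuristic by this cohomological-dimension bound; once you do, the hard Lefschetz step in the first paragraph becomes unnecessary as well, since the same argument proves Tate and $1$-semi-simplicity in the upper half of the range.
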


The above corollary says that one roughly speaking has to prove only half of the cases of the Tate, Beilinson and 1-semi-simplicity conjecture.
For the Tate and 1-semi-simplicity conjecture, this is a direct consequence of the hard Lefschetz theorem, proven by Deligne.
The new content of the above theorem is the respective assertion for Beilinson's conjecture.
Of course this would follow if the hard Lefschetz conjecture for Chow groups over finite fields was known, i.e.\ if intersection with the relevant power of the hyperplane class would be known to induce isomorphisms $\CH^i(X)_{\Q_\ell}\stackrel{\cong}\to \CH^{\dim X-i}(X)_{\Q_\ell}$, which itself is a consequence of the Tate and Beilinson conjectures.
However, the Lefschetz conjecture for Chow groups over finite fields is known only in some special cases, e.g.\ in the case of abelian varieties (see \cite{soule}),  and wide open in general. 

\subsection{Tate conjecture for higher Chow groups}

To put the above theorem in perspective,  we note that the Tate conjecture  for higher Chow groups (of open varieties) says that the natural map
$$
\CH^j(X,2j-i)\otimes_\Z \Q_\ell \cong H^i_M(X,\Q(j))\otimes_\Q \Q_\ell \longrightarrow H^i(\bar X,\Q_\ell(j))^{G_\F}
$$
from Bloch's higher Chow groups with $\Q_\ell$-coefficients (resp.\ motivic cohomology) to the $G_\F$-invariant subspace of \'etale cohomology is surjective for all smooth varieties $X$ over $\F$.
The Beilinson conjecture for higher Chow groups predicts that the above map is  injective.
The conjunction of both conjectures has been formulated for instance by Beilinson,  Friedlander, and Jannsen, see e.g.\ \cite[Conjecture 12.4(b)]{jannsen-LMN}. 

Since $\CH^a(\Spec K,b)=0$ for any field $K$ and $b<a$, and because motivic cohomology commutes with projective systems of schemes with affine dominant transition maps (see e.g.\ \cite[Example 11.1.25]{cisinski-deglise}),   
we see that the Tate conjecture for higher Chow groups predicts the following for function fields:

\begin{conjecture}[Tate conjecture for function fields]
Let $X$ be a geometrically integral variety over a finite field $\F$. Then the natural map $$ 
\CH^j(\Spec \bar  \F(X),2j-i)\otimes_{\Q}\Q_\ell 
\longrightarrow H^i(F_0\bar X,\Q_\ell(j))^{G_\F}
$$
is surjective.
In particular, $H^i(F_0\bar X,\Q_\ell(j))^{G_\F}=0$ for  $i>j$.
\end{conjecture}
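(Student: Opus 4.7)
The plan is to derive the conjecture as an immediate consequence of the generalized Tate conjecture, using the two algebraic inputs already recalled in the paragraph preceding the statement: the vanishing $\CH^a(\Spec K, b)=0$ for $b<a$, and the fact that motivic cohomology commutes with filtered systems of schemes with affine dominant transition maps. Let $X$ be geometrically integral over $\F$ and let $U$ range over the directed system of $\F$-defined nonempty open subsets of $X$, ordered so that $U' \subset U$ induces the natural restriction maps. Each such $U$ is smooth over $\F$, so the generalized Tate conjecture applied to $U$ supplies a surjection
\[
\CH^j(U, 2j-i) \otimes_\Z \Q_\ell \to H^i(\bar U, \Q_\ell(j))^{G_\F},
\]
and I would obtain the conjecture by passing to the filtered colimit in $U$.

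On the motivic side, the cited reference \cite[Example 11.1.25]{cisinski-deglise} identifies this colimit with $\CH^j(\Spec \bar \F(X), 2j-i) \otimes_\Q \Q_\ell$, since the limit of the schemes $\bar U$ is $\Spec \bar \F(X)$. On the \'etale side, the colimit of $H^i(\bar U, \Q_\ell(j))$ is $H^i(F_0\bar X, \Q_\ell(j))$ by the definition \eqref{eq:H^iF0X}; the restriction to $\F$-defined opens is harmless because these are cofinal among all opens of $\bar X$ (given any open $\bar V \subset \bar X$, the complement in $X$ of the Zariski closure of $\bar X \setminus \bar V$ is an $\F$-defined open whose base change lies in $\bar V$, using here that $\bar X$ is irreducible).

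The only step requiring more than bookkeeping is the commutation of $G_\F$-invariants with the filtered colimit on the \'etale side. For this I would use that $G_\F \cong \widehat{\Z}$ is topologically generated by Frobenius, so that $(-)^{G_\F}$ on each finite-dimensional constituent $H^i(\bar U, \Q_\ell(j))$ coincides with the kernel of $1-\Frob$; since filtered colimits of abelian groups are exact, they commute with kernels of endomorphisms, yielding $\colim_U H^i(\bar U, \Q_\ell(j))^{G_\F} \cong H^i(F_0\bar X, \Q_\ell(j))^{G_\F}$. Taking the colimit of the surjections above proves the first assertion. The ``in particular'' claim is then immediate: $\CH^j(\Spec \bar \F(X), 2j-i)$ vanishes as soon as $2j-i<j$, i.e.\ precisely when $i>j$. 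This commutation step is the only technical point; the rest is formal manipulation of the input conjecture.
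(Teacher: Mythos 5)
Your derivation is correct and is in substance identical to the paper's own argument, which appears in the paragraph immediately preceding the conjecture and is formalized in Lemma~\ref{lem:generalized-Tate}: apply the generalized Tate conjecture to the $\F$-defined dense opens $U\subset X$, pass to the filtered colimit, and use $\CH^a(\Spec K,b)=0$ for $b<a$. You spell out two points that the paper compresses into ``the direct limit functor is exact'': the cofinality of $\F$-defined opens among all opens of $\bar X$, and the commutation of $(-)^{G_\F}$ with the filtered colimit via the identification $(-)^{G_\F}=\ker(1-F)$; both are fine. One small imprecision worth noting: the colimit of $\CH^j(U,2j-i)$ over the $\F$-defined opens $U$ is $\CH^j(\Spec\F(X),2j-i)$, not $\CH^j(\Spec\bar\F(X),2j-i)$ --- what is relevant here is $\varprojlim U=\Spec\F(X)$, not $\varprojlim\bar U$. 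This is harmless (the higher cycle map for $\F(X)$ factors through the one for $\bar\F(X)$, so surjectivity still follows, and the paper's own Conjecture~\ref{conj:generalized-Tate-function field} is in fact stated with $\F(X)$), but the reason you give for the identification is not the right one.
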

 
In view of this,  Theorem \ref{thm:main:Tate-Beilinson-ss} shows the following:

\begin{corollary}\label{cor:generalized Tate}
Let $\F$ be a finite field.
The Tate conjecture for higher Chow groups applied to the purely transcendental extension $\F(x_1,\dots ,x_n)$ for all $n$
implies the Tate, Beilinson, and semi-simplicity conjecture for all smooth projective varieties over $\F$.
\end{corollary}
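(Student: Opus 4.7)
The plan is essentially a direct combination of Theorem \ref{thm:main:Tate-Beilinson-ss} with the cohomological vanishing recorded in the statement of the Tate conjecture for function fields. No new ideas are required beyond carefully matching indices.

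First, I would invoke Theorem \ref{thm:main:Tate-Beilinson-ss} to reduce the conclusion to the single statement
$$
H^{2i}(F_0 \CP^n_{\bar \F }, \Q_\ell(i+1))^{G_{\F}}=0 \quad \text{for all } i,n \geq 2 .
$$
Thus the task is to derive this family of vanishings from the hypothesis that the generalized Tate conjecture holds for the purely transcendental extensions $\F(x_1,\dots,x_n)$.

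Next, I would apply the Tate conjecture for function fields (stated in the excerpt as a consequence of the generalized Tate conjecture via the vanishing $\CH^a(\Spec K,b)=0$ for $b<a$) to $X=\CP^n_\F$. This $X$ is geometrically integral with $\bar \F(X) = \bar \F(x_1,\dots,x_n)$, so the hypothesis applies and yields
$$
H^{i}(F_0 \CP^n_{\bar \F}, \Q_\ell(j))^{G_{\F}}=0 \quad \text{whenever } i>j .
$$
Specializing the cohomological degree to $2i$ and the Tate twist to $i+1$, the required inequality $2i>i+1$ is equivalent to $i\geq 2$, which is precisely the range dictated by Theorem \ref{thm:main:Tate-Beilinson-ss}. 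Combining the two reductions completes the proof.

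I do not anticipate any real obstacle: the result is formal once Theorem \ref{thm:main:Tate-Beilinson-ss} is in hand, and the hypothesis is tailored so that it applies to exactly the function fields $\bar \F(\CP^n)$ that show up in the criterion. The only point that deserves a sentence of explanation in the written proof is why the generalized Tate conjecture implies the vanishing $H^i(F_0\bar X,\Q_\ell(j))^{G_\F}=0$ for $i>j$, and this is already spelled out in the excerpt via the vanishing of Bloch's higher Chow groups of a field in negative motivic weights.
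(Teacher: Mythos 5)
Your proof is correct and matches the paper's intended argument exactly: the paper derives the corollary by combining the reduction in Theorem \ref{thm:main:Tate-Beilinson-ss} with the observation (made just before the corollary) that the generalized Tate conjecture forces $H^i(F_0\bar X,\Q_\ell(j))^{G_\F}=0$ for $i>j$, via the vanishing of higher Chow groups $\CH^a(\Spec K,b)$ for $b<a$. Specializing to $\bar X=\CP^n_{\bar\F}$ and $(i,j)=(2m,m+1)$ with $m\geq 2$ gives precisely the required vanishing, as you note.
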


A result of Moonen \cite{moonen} says that in characteristic zero, the usual Tate conjecture implies the semi-simplicity conjecture.
The analogous result over finite fields is known under the additional assumption  
of one of Grothendieck's standard conjectures, and goes back to Milne \cite{milne-AJM}.
Corollary \ref{cor:generalized Tate} shows that the Tate conjecture for higher Chow groups implies not only the semi-simplicity conjecture but also the Beilinson conjecture over finite fields, without reference to any standard conjecture. 

\subsection{A conjectural description of $H^i(F_0\bar X,\Q_\ell(n))^{G_\F}$.}
 A conjecture of Beilinson and Parshin predicts that for any field $K$ of positive characteristic, we have $H^i_M(\Spec K,\Q(j))=0$  for $i\neq j$, see \cite[Conjecture 8.3.3(a)]{beilinson-absolute-hodge}.
 Taken together with the Tate and Beilinson conjectures for higher Chow groups, one arrives at the following conjectural description,
see Remark \ref{rem:beilinson-conjecture} below for more details.
 
 \begin{conjecture}
 Let $X$ be a quasi-projective variety over a finite field $\F$. Then
$$
H^i(F_0 \bar X,\Q_\ell(j))^{G_\F}\cong \begin{cases} K^M_i(\F(X))\otimes_{\Z} \Q_\ell\ \ \ \ &\text{if $i=j$};\\
0\ \ \ \ &\text{otherwise}.
\end{cases}
$$
\end{conjecture}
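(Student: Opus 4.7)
The plan is to derive the stated isomorphism by combining the generalized Tate and Beilinson conjectures recalled in the excerpt with Beilinson's vanishing conjecture for the motivic cohomology of fields of positive characteristic, together with the Nesterenko--Suslin--Totaro identification of diagonal motivic cohomology with Milnor $K$-theory.

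First, I would invoke the generalized Tate and generalized Beilinson conjectures. Together they predict that the cycle class map
\[
H^i_M(X,\Q(j))\otimes_\Q \Q_\ell \longrightarrow H^i(\bar X,\Q_\ell(j))^{G_\F}
\]
is an isomorphism for every smooth $\F$-variety $X$: surjectivity is the content of the generalized Tate conjecture, while injectivity is the generalized Beilinson conjecture. Passing to the filtered colimit over dense open subsets $U\subset X$ defined over $\F$, using that motivic cohomology converts the resulting affine dominant projective system into a colimit (as recalled in the excerpt) and that taking $G_\F$-invariants commutes with filtered colimits of $\Q_\ell$-vector spaces, one obtains a canonical isomorphism
\[
H^i_M(\Spec \F(X),\Q(j))\otimes_\Q \Q_\ell \cong H^i(F_0\bar X,\Q_\ell(j))^{G_\F}.
\]

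Next, I would apply Beilinson's vanishing conjecture for the motivic cohomology of fields of positive characteristic, stated just before the conjecture: for every such field $K$, $H^i_M(\Spec K,\Q(j))=0$ whenever $i\neq j$. Specializing to $K=\F(X)$ yields the vanishing part of the assertion. For the remaining case $i=j$, I would invoke the Nesterenko--Suslin--Totaro theorem, which identifies $H^i_M(\Spec K,\Z(i))\cong K^M_i(K)$ for every field $K$; tensoring with $\Q_\ell$ and combining with the isomorphism above produces the asserted identification with $K^M_i(\F(X))\otimes_\Z \Q_\ell$.

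The principal subtlety is essentially bookkeeping: the form of the generalized Tate conjecture recalled in the excerpt has source $\CH^j(\Spec \bar\F(X),2j-i)\otimes\Q_\ell$, whereas the derivation above naturally produces the motivic cohomology of $\F(X)$ on the left-hand side. Reconciling these requires Galois descent for rational motivic cohomology of fields, which in principle may itself rely on a further standard conjecture; however, once Beilinson's vanishing is assumed, the only relevant case is $i=j$, where the descent question reduces to the analogous statement for Milnor $K$-theory with $\Q_\ell$-coefficients and is therefore expected to be benign.
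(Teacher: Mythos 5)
Your proposal reproduces, step by step, the derivation the paper gives in Remark~\ref{rem:beilinson-conjecture}: combine the generalized Tate and Beilinson conjectures (passed to the colimit over dense opens, as in \eqref{eq:motivic coho of F(X)}), apply Beilinson's vanishing conjecture for motivic cohomology of positive-characteristic fields to dispose of the case $i\neq j$, and invoke the Nesterenko--Suslin--Totaro identification $H^i_M(\Spec K,\Z(i))\cong K^M_i(K)$ for the case $i=j$. On your closing caveat about $\F(X)$ versus $\bar\F(X)$: the discrepancy you flag is only between the introduction's statement (which uses $\bar\F(X)$) and the body; the paper's actual derivation uses Conjecture~\ref{conj:generalized-Tate-function field}, whose source is $H^p_M(\Spec\F(X),\Z(q))$, so no Galois descent for motivic cohomology is needed and the bookkeeping concern does not arise.
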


In other words, the birational invariant $H^i(F_0\bar X,\Q_\ell(j))^{G_\F}$ of $X$ that we study in this paper agrees conjecturally with Milnor K-theory (tensored with $\Q_\ell$) for $i=j$ and vanishes otherwise.
If one could prove this for the function field of projective space (and in degree $i=2m$ and weight $j=m+1$),  then, by Theorem \ref{thm:main:Tate-Beilinson-ss}, the Tate, Beilinson, and Grothendieck--Serre semi-simplicity conjecture follow.

\subsection{Organization of the paper}
We collect Preliminaries in Section \ref{sec:preliminary}, discuss Galois actions  in Section \ref{sec:galois} and various forms of the Tate, Beilinson and semi-simplicity conjecture in Section \ref{sec:cycle-conj}.
Most of those results are easy or known to experts; we give some details for convenience of the reader and to fix notation.
In Section \ref{sec:more-ss} we briefly discuss some specific aspects of the $1$-semi-simplicity conjecture and the lifting of Galois invariant classes, that we will use in our inductive arguments.
The proofs of our main results then start in Section \ref{sec:galois-invariants}, where we relate various cycle conjectures for a smooth projective variety $X$ to the Galois invariants of the cohomology of its function field.
The proofs are then concluded in Section \ref{sec:reduction-to-Pn}, where we pass from the function field of a smooth projective variety $X$ of dimension $n$ to the function field of $\CP^{n+1}$, exploiting the fact that any such $X$ is birational to a hypersurface in $\CP^{n+1}$.
 
\section{Preliminaries} \label{sec:preliminary}
\subsection{Conventions}
We fix throughout a finite field $\mathbb F$ of characteristic $p$ and with $q$ elements.

A field is finitely generated if it is finitely generated over its prime field.
For a field $k$ with a separable closure $k_s$, $G_k\coloneqq Gal(k_s/k)$ denotes the absolute Galois group of $k$.  
An algebraic scheme is a separated scheme of finite type over a field. 
A variety is an integral algebraic scheme.

For an equi-dimensional Noetherian scheme $X$, we define $X^{(i)}$ as  the set of all codimension-$i$ points of $X$, i.e.\ $X^{(i)}\coloneqq\{x\in X\big|\dim X-\dim\overline{\{x\}}=i\}$.
We further denote by 
$$
F_iX\coloneqq\{x\in X\big|\dim X-\dim\overline{\{x\}}\leq i\}
$$
the set of points of codimension at most $i$.
This yields a filtration $F_0X\subset F_1X\subset \dots \subset X$ and we may regard $F_jX$ as a pro-scheme given by the system of all open subsets of $X$ that contain $X^{(j)}$.

We denote by
$\chow{i}{X}$   the Chow group of codimension-$i$ cycles of $X$.
For a ring $R$, we denote by $\chow{i}{X}_{R}$ the tensor product $\chow{i}{X}\otimes_{\mathbb{Z}} R$.

Let $\Xbar$ be a quasi-projective variety over an algebraically closed field $\bar k$. 
Let $k$ be a finitely generated field contained in $\bar k$.
 A $k$-variety $X$ is a model of $\bar X$ if $\bar X\simeq X\times_k\bar k$. Whenever such a model exists, we will also say that $\Xbar$ is defined over $k$.

Let $G$ be a group and $M$ be a $G$-module. We write 
$$M^{G}\coloneqq\{x\in M\ \big|\ gx=x\:\forall g\in G\}\ \ \ \text{and}\ \ \ M_{G}\coloneqq M/\{x-gx\ \big|\ g\in G, x\in M\}$$
for the $G$-invariance and $G$-coinvariance of $M$, respectively.

Let $X$ be a variety over a field $k$. An alteration of $X$ is a dominant proper morphism $X^{\prime}\rightarrow X$ of varieties over $k$ such that $\dim X=\dim X^{\prime}$ and $X^\prime$ is regular.
Alterations exist by the work of de Jong, see \cite{deJong}.

\subsection{\'Etale cohomology}
For an algebraic scheme $X$ over a field $k$ and for $A\in \{\Z/\ell^r,\Z_\ell,\Q_\ell\}$, we denote by 
$$
H^i(X,A(n)):=H^i(X_{\proet},A(n))\cong H^i_{cont}(X_\et,A(n))
$$
the pro-\'etale cohomology of $X$ with coefficients in $A$ with a Tate twist of Bhatt and Scholze  \cite{BS},  where $\Z/\ell^r(n):=\mu_{\ell^r}^{\otimes n}$.
These groups agree with Jannsen's continuous \'etale cohomology \cite{jannsen}, cf.\ \cite[\S 5.6]{BS}.  
If $k$ is finite or algebraically closed, then $H^i(X,\Z/\ell^r(n)) $ is finite and so the above groups coincide with ordinary \'etale cohomology, see \cite[p.\ 208]{jannsen}.
In this paper the case $A=\Q_\ell$ will play a particularly important role and we use the shorthand
$$
H^i(X,n):=H^i(X_{\et},\Q_\ell(n)) .
$$

\subsection{Hochschild-Serre spectral sequence} 
Let $X$ be an algebraic scheme over the finite field $\F$.
Since $\F$ has cohomological dimension $1$, the Hochschild--Serre spectral sequence degenerates at $E_2$, see \cite{jannsen,milne}.
Moreover,  for any continuous $G_\F$-module $M$, there is a canonical isomorphism
$$
H^1(G_\F,M)\cong M_{G_{\F}}.
$$
As a consequence,  one can deduce for $A\in \{\Z/\ell^r,\Z_\ell,\Q_\ell\}$ the following well-known short exact sequence 
\begin{equation}\label{HSexact}
0\rightarrow\coh{i-1}{\Xbar}{A(j)}_{G_\F}\rightarrow\coh{i}{X}{A(j)}\rightarrow\coh{i}{\Xbar}{A(j)}^{G_\F}\rightarrow 0 .
\end{equation}

\subsection{Borel-Moore cohomology}\label{section_Borel_Moore}
We will also use Borel--Moore cohomology, defined by
$$
H^{i}_{BM}(X, A(n)):=
H^{i-2d_X} (X_{\proet},\pi_X^!A(n-d_X)), 
$$
where $\pi_X:X\to \Spec k$ is the structure map, $d_X=\dim X$, and $X_\proet$ denotes the pro-\'etale site of $X$, see \cite{BS} and \cite[Section 4 and Proposition 6.6]{Sch-refined}.
If $k$ is a finite field or the algebraic closure of a finite field, then the pro-\'etale site may be replaced by the \'etale site in the above formula, as in this case the above groups are finite $A$-modules. 

\begin{remark}
By definition, the above group coincides with Borel--Moore homology up to a change of indices:
$H^{i}_{BM}(X, A(n))=H^{BM}_{2d_X-i}(X,A(d_X-n))$.
We could of course use the (more standard) homological notation in what follows.
An advantage of our convention is that $H^{i}_{BM}(X, A(n))$ agrees for $X$ smooth and equi-dimensional with ordinary cohomology (because $\pi_X^!\cong \pi_X^\ast(d_X)[2d_X]$ in this case), see Lemma \ref{bmtoet} below.
Since the Tate conjecture (for smooth projective varieties) as well as our main results are formulated cohomologically, a change of indices by going back and forth between homological and cohomological notation is avoided by our convention. 
Also, formulas such as weight considerations (see e.g.\ Lemma \ref{BMweight} below) literally agree with the corresponding formulas in the smooth case.
\end{remark}

Let $X$ be an algebraic scheme. 
We list a few properties of the above functor in what follows, see e.g.\ \cite[Definition 4.2 and Proposition 6.6]{Sch-refined}.
A proper morphism $f:X\rightarrow Y$ of algebraic schemes of relative dimension $c=\dim Y-\dim X$ induces functorial push-forward morphisms
$$
f_*: \cohbm{i-2c}{X}{A(n-c)}\rightarrow\cohbm{i}{Y}{A(n)}
$$
and an open immersion $j:U\hookrightarrow X$ of algebraic schemes with $\dim U=\dim X$ induces functorial pull-back morphisms
$$
j^*:\cohbm{i}{X}{A(n)}\rightarrow\cohbm{i}{U}{A(n)}.
$$ 
For any closed immersion $i:Z\hookrightarrow X$ of codimension $c=\dim X-\dim Z$ with complement $j:U\hookrightarrow X$ such that $\dim U=\dim X$, there exists a long exact Gysin sequence
\begin{equation}\label{Gysin}
...\rightarrow\cohbm{i}{X}{A(n)}\xrightarrow{j^*}\cohbm{i}{U}{A(n)}\xrightarrow{\partial}\cohbm{i+1-2c}{Z}{A(n-c)}\xrightarrow{i_*}\cohbm{i+1}{X}{A(n)}\rightarrow...   
\end{equation}
where $j^*$ is the pull-back and $i_*$ is the push-forward from above. The boundary map $\partial$ is called the residue map.

\begin{lemma}\label{bmtoet}
Let $X$ be a smooth and equi-dimensional algebraic scheme over a field $k$. 
Then, there exist canonical isomorphisms
$$
\coh{i}{X}{A(n)}\xrightarrow{\simeq}\cohbm{i}{X}{A(n)},
$$
where $A(n)\in\{\rou{\ell^r}{n},\zl(n),\ql(n)\}$. 
Moreover, if $f:X\to Y$ is a proper generically finite morphism between smooth varieties,  then
\begin{align} \label{eq:f_*f^*}
f_\ast \circ f^\ast= d\cdot \id:
H^i(Y,A(n)) \stackrel{f^\ast}\longrightarrow H^i(X,A(n))\cong H^i_{BM}(X,A(n))  \stackrel{f_\ast}\longrightarrow H^i(Y,A(n)) ,
\end{align}
where $d$ denotes the degree of $f$ (i.e.\ the degree of the field extension $k(X)/k(Y)$).
\end{lemma}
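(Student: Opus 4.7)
The plan is to establish the two assertions separately: the first via a purity/Poincar\'e duality isomorphism built into the definition of $H^i_{BM}$, and the second via the projection formula together with a computation of the pushforward of the fundamental class.

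For the first isomorphism, I would unwind the definition
$$
H^i_{BM}(X,A(n)) = H^{i-2d_X}(X_{\proet}, \pi_X^! A(n-d_X)),
$$
where $d_X = \dim X$. Since $X$ is smooth and equi-dimensional over $k$, absolute purity for $\ell$-adic sheaves gives a canonical trivialization $\pi_X^! A \simeq A(d_X)[2d_X]$ in the relevant derived category (and this compatibly for $A\in\{\rou{\ell^r}{n},\zl(n),\ql(n)\}$, which in the $\ql$-case can be bootstrapped from the $\Z/\ell^r$-case via the pro-\'etale formalism of \cite{BS}). Twisting by $(n-d_X)$ and shifting then identifies $\pi_X^! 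A(n-d_X)$ with $A(n)[2d_X]$, and the asserted isomorphism follows by taking cohomology in degree $i-2d_X$. This step, while conceptually clean, relies on absolute purity, which is the main external input; I would simply cite the version available in the framework of \cite{Sch-refined}.

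For the second assertion, I would use the projection formula in Borel--Moore cohomology: for $f:X\to Y$ proper and $\alpha\in H^i(Y,A(n))$, $\beta\in H^j_{BM}(X,A(m))$, one has
$$
f_\ast\bigl(f^\ast\alpha \cup \beta\bigr) = \alpha \cup f_\ast \beta .
$$
Applying this with $\beta = 1_X \in H^0(X,A(0)) \cong H^0_{BM}(X,A(0))$ (here I use the first part of the lemma, with $d_X = \dim X = \dim Y$, so that $f$ has relative dimension $0$ and $f_\ast:H^0_{BM}(X,A(0))\to H^0_{BM}(Y,A(0))$ lands in the right place), one obtains
$$
f_\ast (f^\ast \alpha) = \alpha \cup f_\ast(1_X) \in H^i(Y,A(n)).
$$
It remains to show $f_\ast(1_X) = d \cdot 1_Y$ in $H^0(Y,A(0))$. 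Since $Y$ is smooth the target identifies with $A^{\pi_0(Y)}$; reducing to the case $Y$ connected, it suffices to check the identity on a nonempty open $V\subset Y$. Choosing $V$ such that $f|_{f^{-1}(V)}:f^{-1}(V)\to V$ is finite \'etale of degree $d$ (possible since $f$ is generically finite of degree $d$), the pushforward on $H^0$ is then given by the trace of the corresponding \'etale cover, which sends $1$ to $d$.

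The main potential obstacle is justifying the projection formula in the pro-\'etale/Borel--Moore setup used here: one needs compatibility of $f_\ast$ with cup products against \'etale cohomology classes, and that this compatibility survives the purity identification of $H^\ast$ with $H^\ast_{BM}$ for the smooth source $X$. In the framework of \cite{Sch-refined,BS} these are standard but require some care, particularly when passing from $\Z/\ell^r$-coefficients to $\zl$ and $\ql$ via the pro-\'etale derived limit; once that is in place, the identification $f_\ast(1_X) = d\cdot 1_Y$ on a dense open is straightforward, and so is the resulting equality $f_\ast\circ f^\ast = d\cdot \id$ globally.
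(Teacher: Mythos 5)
Your first half matches the paper's argument exactly: both invoke the purity/Poincar\'e duality trivialization of $\pi_X^!$ for smooth $X$, the paper citing \cite[Lemma~6.5]{Sch-refined} where you spell out the isomorphism $\pi_X^!A\simeq A(d_X)[2d_X]$.

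For the second half your route differs in a substantive way from the paper's. The paper observes that $f_\ast[X]=d\cdot[Y]$ in $\CH^0(Y)$ by the very definition of proper pushforward of cycles, and then invokes compatibility of $f^\ast$, $f_\ast$ and cup product with the cycle class map (\cite[Lemmas A.19(1), A.21]{Sch-moving}); this sidesteps any geometry of the morphism $f$ beyond its degree. You instead try to compute $f_\ast(1_X)$ directly by restricting to a dense open and using the trace map. Here there is a genuine gap: you assert that one may choose $V\subset Y$ so that $f|_{f^{-1}(V)}$ is \emph{finite \'etale} of degree $d$, ``possible since $f$ is generically finite of degree $d$''. This is false in positive characteristic: a proper generically finite morphism of smooth varieties need not be generically \'etale, the Frobenius $t\mapsto t^p$ on $\mathbb A^1$ being the simplest counterexample, and more relevantly, $d=[k(X):k(Y)]$ may have an inseparable part. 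The lemma is stated (and used, together with de Jong's alterations, which are not guaranteed to be generically \'etale) for arbitrary proper generically finite maps, so this case cannot be excluded.

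The gap is repairable but not cosmetic. One can only shrink $V$ so that $f$ is \emph{finite flat} over $V$; the composition $A\to g_\ast g^\ast A\to A$ via the trace for a finite locally free $g$ is still multiplication by the rank of $g_\ast\mathcal O$, which equals $d=[k(X):k(Y)]$ even in the inseparable case, so the identity $f_\ast(1_X)=d\cdot 1_Y$ survives. However, running the argument this way also requires checking that the Borel--Moore pushforward $f_\ast$ you are using agrees, for a finite flat map, with the sheaf-theoretic trace pushforward on $H^0$ — a compatibility that the paper's cycle-class route gets for free from \cite[Lemma~A.21]{Sch-moving}. In short: your overall strategy (projection formula plus $f_\ast(1_X)=d\cdot 1_Y$) is sound and parallel to the paper's, but the justification of $f_\ast(1_X)=d\cdot 1_Y$ needs the \'etale hypothesis replaced by flatness, together with a compatibility of the two notions of pushforward for finite morphisms.
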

\begin{proof}
The first claim follows from Poincar\'e duality, see e.g.\  \cite[Lemma~6.5]{Sch-refined}.
For the second claim, note that $f_\ast [X]=d\cdot [Y]\in \CH^0(Y)$, by definition of proper pushforwards of algebraic cycles.
Cup product with the cycle class $\cl(Y)\in H^0(Y,A(0))$ on $Y$ agrees with the identity and so \eqref{eq:f_*f^*} follows from   the projection formula $f_\ast(f^\ast \alpha \cup \beta)=\alpha\cup f_\ast \beta$,   see \cite[Lemma A.19(1)]{Sch-moving}, and the compatibility of $f^\ast$ and $f_\ast$ with the cycle class map, see \cite[Lemma A.21]{Sch-moving}. 
\end{proof}

The following lemma follows easily from the existence of the Gysin sequence \eqref{Gysin}.  
\begin{lemma}\label{directsum}
Let $X$ be an algebraic scheme.
\begin{enumerate}
    \item  Let $X_1$,...,$X_n$ be  the connected components of $X$. For $m=1,...,n$, let $c_m\coloneqq \dim X-\dim X_m$ be the codimension of the component $X_m$. Then, for all $i$ and $j$, 
    $$\cohbm{i}{X}{A(j)}\simeq\underset{m=1,...,n}{\bigoplus}\cohbm{i-2c_m}{X_m}{A(j-c_m)}.$$
    \item Let $X^{red}$ be the scheme $X$ with its reduced scheme structure. Then, for all $i$ and $j$, there is an isomorphism
    $$\cohbm{i}{X}{A(j)}\simeq\cohbm{i}{X^{red}}{A(j)}.$$
\end{enumerate}
\end{lemma}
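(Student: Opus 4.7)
My plan is to prove both parts by combining the sheaf-theoretic definition of Borel--Moore cohomology with the Gysin sequence \eqref{Gysin} from the previous subsection. I will indicate two approaches for part (1).

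For part (1), the cleanest route is to observe that $X = X_1 \sqcup \dots \sqcup X_n$ is literally a disjoint union of schemes, so its proétale site decomposes as $X_{\proet} = \bigsqcup_m X_{m,\proet}$. Cohomology with any proétale sheaf therefore splits:
$$H^{\ast}(X_{\proet},\mathcal{F}) = \bigoplus_m H^{\ast}(X_{m,\proet},\mathcal{F}|_{X_m}).$$
Applied to $\mathcal{F} = \pi_X^{!}A(j-d_X)$ (the sheaf defining BM cohomology) and using that $\pi^{!}$ commutes with open pullback, so that $\pi_X^{!}A(j-d_X)|_{X_m} = \pi_{X_m}^{!}A(j-d_X)$, the relabeling $d_{X_m} = d_X - c_m$ then immediately yields the claimed identification. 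Alternatively, one can argue by induction on $n$ within the Gysin framework: pick a component $X_m$ different from some chosen top-dimensional component, so that $U := X \setminus X_m$ is open in $X$ of full dimension $\dim X$, and apply \eqref{Gysin} to the codimension-$c_m$ closed immersion $X_m \hookrightarrow X$. Since $X_m$ is a connected component, $U$ is simultaneously open and closed in $X$ of codimension $0$, and the associated pushforward $(i_U)_{\ast}:\cohbm{i}{U}{A(j)} \to \cohbm{i}{X}{A(j)}$ is a section of $j^{\ast}$ by proper base change for $U \times_X U = U$. Hence the Gysin long exact sequence collapses to canonically split short exact sequences, and iterating produces the full decomposition.

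For part (2), the closed immersion $X^{red} \hookrightarrow X$ is a universal homeomorphism, hence induces an equivalence of small étale (and therefore proétale) topoi which carries $\pi_X^{!}$ to $\pi_{X^{red}}^{!}$; combined with $d_X = d_{X^{red}}$, this yields the claimed identification $\cohbm{i}{X}{A(j)} \cong \cohbm{i}{X^{red}}{A(j)}$.

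The main obstacle, in the Gysin-based version of part (1), is verifying that the pushforward from the open-and-closed complement really is a section of the open pullback, which requires a careful application of proper base change and the projection formula at the level of BM cohomology. The sheaf-theoretic argument sketched first bypasses this subtlety entirely and is the one I would ultimately adopt.
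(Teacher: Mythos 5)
Your proof is correct, and both parts are argued by a route that is genuinely different from the one the paper gestures at. The paper offers no actual proof, only the one-line remark that the lemma ``follows easily from the existence of the Gysin sequence \eqref{Gysin}.'' You instead unwind the sheaf-theoretic definition $\cohbm{i}{X}{A(n)}=H^{i-2d_X}(X_{\proet},\pi_X^!A(n-d_X))$ directly, which is arguably cleaner. For part (1), the decomposition of the pro\'etale topos over a finite coproduct of schemes, together with $j^!=j^*$ for an open immersion and the functoriality of $(-)^!$, gives $\pi_X^!A(j-d_X)|_{X_m}=\pi_{X_m}^!A(j-d_X)$; the bookkeeping $(i-2c_m)-2d_{X_m}=i-2d_X$ and $(j-c_m)-d_{X_m}=j-d_X$ then matches the claimed shift and twist exactly, so the identification is immediate and does not require the Gysin sequence at all. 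For part (2), you use topological invariance of the (pro)\'etale site under the universal homeomorphism $\iota:X^{red}\hookrightarrow X$, together with $\iota^*=\iota^!$ (true here since $\iota$ is a closed immersion with empty open complement, so the localization triangle $\iota_*\iota^!\to\id\to Rj_*j^*$ degenerates) and $\iota^!\pi_X^!=\pi_{X^{red}}^!$; this is again a direct argument. Worth noting: the paper's suggested Gysin route does not literally apply to part (2), since the sequence \eqref{Gysin} as stated requires $\dim U=\dim X$ and here $U=\emptyset$, so your approach is in fact the natural one for that part. Your Gysin-based fallback for part (1) is also sound: when $U$ is open and closed of full dimension, $(i_U)_*$ is a section of $j^*$ by proper base change along $U\times_XU=U$, so the long exact sequence breaks into split short exact sequences, and induction on the number of components finishes the argument.
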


\begin{lemma}\label{negdegree}
Let $X$ be an algebraic scheme over a finite or algebraically closed field $k$. Let $\ell$ be a prime different from the characteristic of $k$. Then, for all $i$ and $j$, $\cohbm{i}{X}{\rou{\ell^r}{j}}$ is a finite group and $\cohbm{i}{X}{\zl(j)}$ is a finitely generated $\zl$-module  for all $i$ and $j$.
Moreover,  $\cohbm{i}{X}{\zl(j)}$ and $\cohbm{i}{X}{\rou{\ell^r}{j}}$ vanish for $i<0$.
\begin{proof}
Using the Gysin sequence (\ref{Gysin}) and Lemma \ref{bmtoet}, one can reduce the problem via an induction on the dimension to the exact same statement for the \'etale cohomology. Then, the vanishing in negative degrees becomes trivial. As for finiteness, it follows from \cite[Corollaire~1.10]{SGA4.5} and \cite[Lemma~1.11 p. 165]{milne} that $\cohbm{i}{X}{\rou{\ell^r}{j}}$ is a finite and $\cohbm{i}{X}{\zl(j)}$ is a finitely generated $\Z_\ell$-module if $k$ is algebraically closed. The finiteness for algebraically closed fields imply the finiteness for finite fields by the Hochschild-Serre exact sequence \eqref{HSexact}.
\end{proof}
\end{lemma}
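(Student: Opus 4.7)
The plan is to proceed by induction on $\dim X$, using the Gysin sequence \eqref{Gysin} to peel off a smooth dense open subscheme and the identification in Lemma \ref{bmtoet} to reduce to standard finiteness statements for \'etale cohomology.

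First, by Lemma \ref{directsum}, I may replace $X$ by its underlying reduced scheme and pass to connected components; since finiteness, finite generation, and vanishing in negative degrees are all preserved under finite direct sums, it suffices to treat the case where $X$ is a variety.

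Now I induct on $d=\dim X$, the case $d<0$ (i.e.\ $X=\emptyset$) being trivial. For the inductive step, since $k$ is perfect (every finite or algebraically closed field is perfect), the smooth locus $U=X^{\sm}$ is open and dense in $X$; let $Z=X\setminus U$ carry the reduced scheme structure. Then $\dim Z<d$, so the inductive hypothesis applies to $H^*_{BM}(Z,A(*))$. The long exact Gysin sequence \eqref{Gysin} presents $H^i_{BM}(X,A(j))$ as an extension of subquotients of $H^*_{BM}(U,A(*))$ and $H^*_{BM}(Z,A(*))$. Since finite groups (resp.\ finitely generated $\Z_\ell$-modules) form a Serre subcategory, it remains to establish the claims for the smooth open $U$.

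For smooth $U$, Lemma \ref{bmtoet} identifies $H^i_{BM}(U,A(j))$ with the ordinary \'etale cohomology $H^i(U,A(j))$, which vanishes in negative degrees for trivial reasons. When $k$ is algebraically closed, finiteness of $H^i(U,\mu_{\ell^r}^{\otimes j})$ and finite generation of $H^i(U,\Z_\ell(j))$ are classical (see \cite[Corollaire~1.10]{SGA4.5} and \cite[Ch.\ VI, Lemma~1.11]{milne}). When $k=\F$ is finite, I would invoke the Hochschild--Serre sequence \eqref{HSexact}, which expresses $H^i(U,A(j))$ as an extension of $H^i(\bar U,A(j))^{G_\F}$ by $H^{i-1}(\bar U,A(j))_{G_\F}$; invariants and coinvariants of a finite (resp.\ finitely generated) module remain in the same class, so the finite-field case reduces to the algebraically closed one. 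I do not foresee a serious obstacle: the only delicate point is the book-keeping around the Gysin sequence to confirm each term stays in the desired class, which is immediate from a short diagram chase.
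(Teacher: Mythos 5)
Your proposal is correct and follows essentially the same route as the paper: reduce to reduced, irreducible $X$ via Lemma \ref{directsum}, induct on $\dim X$ by splitting into the smooth dense open $U$ and its lower-dimensional complement via the Gysin sequence \eqref{Gysin}, identify $H^i_{BM}(U,A(n))$ with ordinary \'etale cohomology via Lemma \ref{bmtoet}, invoke the classical finiteness results over the algebraic closure, and reduce the finite-field case to the algebraically closed one using the Hochschild--Serre sequence \eqref{HSexact}. The paper's proof is terser but has the identical skeleton; you have merely made the book-keeping explicit.
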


\subsection{Refined unramified cohomology}\label{subsec:H_j,nr}

For an algebraic scheme $X$, let 
$$
F_jX\coloneqq\{x\in X\big| \codim (x)\leq j\},
$$
where $\codim (x)=\dim X-\dim\overline{\{x\}}$ and let $X^{(j)}\coloneqq\{x\in X\big| \codim (x)=j\}$. 
The following definitions are taken from \cite[Section~5]{Sch-refined}.
\begin{definition}\label{unramcoh}
Let $X$ be an algebraic scheme. For all integers $i$, $j$ and $n$,
$$\nrcoh{i}{j}{X}{A(n)}\coloneqq\underset{F_j X\subseteq U,\:\text{open}}{\varinjlim}\cohbm{i}{U}{A(n)},$$
where the direct limit runs through all the open sets $U$ of $X$ such that $F_j X\subseteq U$. If $X$ is irreducible with the generic point $\eta$, we also write $\coh{i}{\eta}{A(n)}$ instead of $\nrcoh{i}{0}{X}{A(n)}$.\par
For all $m$ with $m\geq j$, there are natural restriction maps $\nrcoh{i}{m}{X}{A(n)}\rightarrow\nrcoh{i}{j}{X}{A(n)}$, which are induced by pull-backs via open immersions. The $j$-th refined unramified cohomology group of $X$ is defined as
$$\urcoh{i}{j}{X}{A(n)}\coloneqq \image \big(\nrcoh{i}{j+1}{X}{A(n)}\rightarrow\nrcoh{i}{j}{X}{A(n)}\big).$$
\end{definition}

Refined unramified cohomology generalizes the concept of unramified cohomology, which corresponds to the case $j=0$, see \cite{Sch-refined}.

\begin{remark}\label{Remark_after_urcoh}
Let $X$ be an algebraic scheme.
\begin{enumerate}
\item If $X$ is smooth and equi-dimensional, then the same holds for the open subsets of $X$ and so Borel--Moore cohomology can in this case be replaced by ordinary cohomology in Definition \ref{unramcoh}.
    \item $F_jX$ may be thought of as the pro-scheme of all open subsets of $X$ that contain all codimension $j$ points.
    The group $\nrcoh{i}{j}{X}{A(n)}$ coincides with the cohomology of the pro-scheme $F_jX$. 
    In particular,  if $X$ is integral, $\nrcoh{i}{0}{X}{A(n)}$ may be thought of as the cohomology of the generic point.
    \item Suppose $X$ is a smooth variety and let $k(X)$ be its function field. 
   Then, by Lemma \ref{bmtoet}, $\nrcoh{i}{0}{X}{A(n)}$ is the direct limit of 
   the cohomology groups $H^i(U,A(n))$ with $U\subset X$ open and non-empty.   
   Therefore, the group $\nrcoh{i}{0}{X}{A(n)}$ is related to the Galois cohomology of $k(X)$. For example, since \'etale cohomology commutes with filtered limits of quasi-projective varieties with affine transition maps by  \cite[p.\ 88, III.1.16]{milne},
    we have an isomorphism
    $$\nrcoh{i}{0}{X}{\rou{\ell^r}{n}}=\underset{\emptyset \neq U\subseteq X}
    {\varinjlim}\coh{i}{U}{\rou{\ell^r}{n}}\simeq
   \coh{i}{  \varprojlim U}{\rou{\ell^r}{n}}\simeq\coh{i}{k(X)}{\rou{\ell^r}{n}}.$$
   The analogous result fails for integral coefficients, because inverse and direct limits do not commute with each other in general. 
\end{enumerate}
\end{remark}

The Gysin (or localization) sequence \eqref{Gysin} has the following consequence.

\begin{lemma}[\cite{Sch-refined}, Lemma 5.8]\label{exactnesslemma}
Let $X$ be an algebraic scheme. Then, for all integers $i,j$ and $n$,  and any coefficients $A\in \{\Z/\ell^r, \Z_\ell,\Q_\ell\}$, there exists a long exact sequence
$$\dots \rightarrow\nrcoh{i}{j}{X}{A(n)}\rightarrow\nrcoh{i}{j-1}{X}{A(n)}\xrightarrow{\partial}\underset{x\in X^{(j)}}{\bigoplus}\coh{i+1-2j}{x}{A(n-j)}\xrightarrow{i_*}\nrcoh{i+1}{j}{X}{A(n)}\rightarrow \dots .$$
\end{lemma}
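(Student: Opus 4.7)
The plan is to obtain the asserted long exact sequence as the filtered colimit of Gysin sequences \eqref{Gysin}. The indexing system consists of pairs $(U,V)$ of open subsets of $X$ satisfying $F_{j-1}X\subseteq V\subseteq U$ and $F_jX\subseteq U$, ordered by reverse inclusion. For such a pair the closed complement $Z:=U\setminus V$ has all points of codimension $\geq j$ in $X$, and the Gysin sequence applied to $V\hookrightarrow U\hookleftarrow Z$ yields
\[
\dots\to\cohbm{i}{U}{A(n)}\to\cohbm{i}{V}{A(n)}\to\cohbm{i+1-2j}{Z}{A(n-j)}\to\cohbm{i+1}{U}{A(n)}\to\dots.
\]
By definition, the colimits of the two outer terms are $\nrcoh{\bullet}{j}{X}{A(n)}$ and $\nrcoh{\bullet}{j-1}{X}{A(n)}$.

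To normalize the middle term, I would pass to a cofinal subsystem. First, if $\xi$ is a generic point of $Z$ of codimension $>j$ in $X$, then $\overline{\{\xi\}}$ contains no codim-$j$ point of $X$, so one may remove $\overline{\{\xi\}}$ from both $U$ and $V$ without violating the containment conditions. Iterating makes $Z$ of pure codimension $j$. Next, for distinct codim-$j$ generic points $x\ne x'$ of $Z$, the intersection $\overline{\{x\}}\cap\overline{\{x'\}}$ has codimension $>j$ in $X$ (the only codim-$j$ point of $\overline{\{x\}}$ is $x$ itself) and is removable for the same reason. After this clean-up, $Z=\bigsqcup_{x\in S}W_x$ is a disjoint union, where $S$ is the finite set of codim-$j$ generic points of $Z$ and each $W_x$ is an open neighborhood of $x$ in $\overline{\{x\}}$. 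Lemma~\ref{directsum}(1) then identifies $\cohbm{i+1-2j}{Z}{A(n-j)}$ with $\bigoplus_{x\in S}\cohbm{i+1-2j}{W_x}{A(n-j)}$.

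Taking colimits along this cofinal subsystem, the sets $S$ exhaust $X^{(j)}$ and, for each fixed $x\in X^{(j)}$, the $W_x$ form a cofinal system in the nonempty opens of $\overline{\{x\}}$. Since filtered colimits commute with direct sums and preserve exactness of sequences of abelian groups, the middle colimit becomes
\[
\bigoplus_{x\in X^{(j)}}\varinjlim_W\cohbm{i+1-2j}{W}{A(n-j)}=\bigoplus_{x\in X^{(j)}}\coh{i+1-2j}{x}{A(n-j)},
\]
after identifying the Borel--Moore cohomology of the opens $W\subseteq\overline{\{x\}}$ at the generic point with the ordinary cohomology of the residue field, as in Remark~\ref{Remark_after_urcoh}. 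Combining all three colimit computations produces the claimed long exact sequence.

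The step I expect to be hardest is this final identification for $A=\Z_\ell$. For torsion coefficients $A=\Z/\ell^r$ it is the standard continuity of \'etale cohomology along filtered inverse limits of qcqs schemes with affine transition maps. For $A=\Z_\ell$, however, interchanging the direct limit over shrinking $W$ with the inverse limit defining $\Z_\ell$-coefficients via pro-\'etale cohomology is not formal, and should require a Mittag--Leffler argument based on the finiteness statements of Lemma~\ref{negdegree}. The case $A=\Q_\ell$ would then follow from $A=\Z_\ell$ by inverting $\ell$.
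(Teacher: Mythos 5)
Your approach — filtered colimit of Gysin sequences over pairs $(U,V)$ with $F_{j-1}X\subseteq V\subseteq U$, $F_jX\subseteq U$, passing to a cofinal subsystem to normalize $Z=U\setminus V$ — is the standard argument and is the one underlying Lemma 5.8 in \cite{Sch-refined}; the clean-up steps (removing components of $Z$ of codimension $>j$, then removing pairwise intersections of the closures of the codimension-$j$ generic points) are correct, and the cofinality and directedness checks go through.

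However, the ``hardest step'' you flag at the end is not a step at all, and your worry there reveals a misreading of the paper's notation. In Definition~\ref{unramcoh} the symbol $\coh{i+1-2j}{x}{A(n-j)}$ for a point $x\in X^{(j)}$ is \emph{defined} to be $\nrcoh{i+1-2j}{0}{\overline{\{x\}}}{A(n-j)}=\varinjlim_W\cohbm{i+1-2j}{W}{A(n-j)}$, the colimit over nonempty opens $W\subseteq\overline{\{x\}}$. That is \emph{exactly} the expression your colimit computation produces, so there is nothing left to identify — no interchange of inverse and direct limits, no Mittag--Leffler argument, no appeal to continuity of \'etale cohomology. Indeed, Remark~\ref{Remark_after_urcoh}(3) makes precisely the point that for $A=\Z_\ell$ this colimit does \emph{not} agree with the continuous cohomology of the residue field, which is why the paper works with the colimit throughout; the lemma's statement never asserts the identification you are trying to verify. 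Once you delete that last paragraph, the proof is complete.

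One small thing worth saying explicitly, which you leave tacit: the transition map on the middle ($Z$-)terms as $(U_1,V_1)\mapsto(U_2,V_2)$ shrinks is ``restrict along the open immersion $Z_1\cap U_2\hookrightarrow Z_1$, then push forward along the closed immersion $Z_1\cap U_2\hookrightarrow Z_2$'', and one needs these maps to be compatible with the Gysin boundary and the restriction maps on $U$ and $V$ so that the sequences in \eqref{Gysin} form a filtered system of long exact sequences. This is part of the functoriality of the localization sequence for Borel--Moore cohomology (and is established in the framework of \cite{Sch-refined}), but it should be cited rather than silently assumed.
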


Since cohomology vanishes in negative degrees, one deduces the following.

\begin{lemma}[\cite{Sch-refined}, Corollary 5.10]\label{ceilinglemma}
Let $X$ be an algebraic scheme. Then, for all integers $i,j$ and $n$ such that $j\geq\lceil i/2\rceil$, $\nrcoh{i}{j}{X}{A(n)}\simeq\cohbm{i}{X}{A(n)}$, where $A\in\{\Z/{\ell^r} ,\zl,\ql\}$.
\end{lemma}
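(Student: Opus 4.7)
The plan is to compare $\nrcoh{i}{j}{X}{A(n)}$ with $\cohbm{i}{X}{A(n)}$ by descending induction on $j$. The base case is $j\geq \dim X$: then $F_jX=X$, and the only open subset $U\subseteq X$ containing $F_jX$ is $X$ itself, so the directed system defining $\nrcoh{i}{j}{X}{A(n)}$ collapses to the single group $\cohbm{i}{X}{A(n)}$. It therefore suffices to show that, for every $j \geq \lceil i/2 \rceil$, the transition map $\nrcoh{i}{j+1}{X}{A(n)} \to \nrcoh{i}{j}{X}{A(n)}$ is an isomorphism.

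To verify this, I would apply Lemma \ref{exactnesslemma} with its index $j$ replaced by $j+1$, and extract from that long exact sequence the four-term piece
$$
\bigoplus_{x\in X^{(j+1)}} \coh{i-2j-2}{x}{A(n-j-1)} \longrightarrow \nrcoh{i}{j+1}{X}{A(n)} \longrightarrow \nrcoh{i}{j}{X}{A(n)} \longrightarrow \bigoplus_{x\in X^{(j+1)}} \coh{i-2j-1}{x}{A(n-j-1)}.
$$
The two outer direct sums are \'etale cohomologies of the points $\Spec\kappa(x)$, i.e., Galois cohomologies of residue fields, in cohomological degrees $i-2j-2$ and $i-2j-1$ respectively. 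The hypothesis $j \geq \lceil i/2 \rceil$ forces $2j \geq i$, whence both of these degrees are strictly negative. Since Galois cohomology of a field vanishes in negative degree, both outer groups vanish, and the middle arrow is an isomorphism, as required.

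Iterating from $j=\max(\dim X,\lceil i/2\rceil)$ down to $j=\lceil i/2\rceil$ then yields the claimed identification $\nrcoh{i}{j}{X}{A(n)} \simeq \cohbm{i}{X}{A(n)}$. I do not foresee any real obstacle: once the long exact sequence of Lemma \ref{exactnesslemma} is granted, the proof reduces to the observation that $j \geq \lceil i/2 \rceil$ is precisely the numerical threshold at which the residue contributions in degrees $i-2j-2$ and $i-2j-1$ both drop into the negative range and hence vanish. The case distinction between even and odd $i$ is automatically absorbed by the ceiling function, so no separate treatment is required.
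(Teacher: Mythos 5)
Your argument is correct and matches the intended approach (the paper simply cites \cite{Sch-refined}, Corollary 5.10, with the remark ``Since cohomology vanishes in negative degrees, one deduces the following,'' which is exactly the mechanism you exploit). The base case $j\geq\dim X$ and the use of the long exact sequence of Lemma~\ref{exactnesslemma} to show the transition maps are isomorphisms once both boundary terms fall into negative degree are both right, and the numerology $2j\geq i\Rightarrow i-2j-1,\,i-2j-2<0$ is correct.

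One small imprecision worth flagging: the groups $\coh{i-2j-2}{x}{A(n-j-1)}$ and $\coh{i-2j-1}{x}{A(n-j-1)}$ are, in the paper's notation (see Definition~\ref{unramcoh}), direct limits of Borel--Moore cohomology groups over dense open subsets of $\overline{\{x\}}$, not literally Galois cohomology of $\kappa(x)$. For torsion coefficients $\Z/\ell^r$ these do agree with Galois cohomology of the residue field (Remark~\ref{Remark_after_urcoh}(3)), but for $A=\Z_\ell$ or $\Q_\ell$ the identification fails because direct and inverse limits need not commute. The vanishing you need is therefore better justified by the negative-degree vanishing of Borel--Moore cohomology itself, as in Lemma~\ref{negdegree}, passed through the direct limit. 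Since that vanishing is exactly what Lemma~\ref{negdegree} supplies, your argument is unaffected; only the description of the boundary terms should be adjusted.
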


\section{Galois actions} \label{sec:galois}

\subsection{1-semi-simplicity}
We recall the finite field $\F$ with $q$ elements and its absolute Galois group $G_\F$.
Let $F\in G_\F$ denote the geometric Frobenius morphism, i.e.\ the inverse of the field isomorphism
$$\kbar\rightarrow\kbar,\:x\mapsto x^{q}.$$
We have $G_\F\cong \hat \Z$ and $F$ generates each finite quotient of $G_\F$.  

We denote by $Rep(G_\F, \ql)$ the category of finite-dimensional $\Q_\ell$-vector spaces with a continuous $G_\F$-action.
If $X$ is an algebraic scheme over $\F$, then $H^i(\bar X,\Q_\ell(j))$ and $H^i_{BM}(\bar X,\Q_\ell(j))$  
yield objects in this category.

For an object $V$ of $Rep(G_\F, \ql)$, the sequence
\begin{equation}\label{frob_sequence}
    0\rightarrow V^{G_\F}\rightarrow V\xrightarrow{\identity-F} V\rightarrow V_{G_\F}\rightarrow 0
\end{equation}
is exact.  
To see this, note that $V$ comes from a finitely generated $\zl$-module, say $M$, with a compatible $G_\F$-action. 
It then suffices to prove that $M^{G_\F}=M^{\langle F \rangle}$ and $M_{G_\F}= M_{\langle F \rangle}$.
Both results hold modulo $\ell^r$ for any $r\geq 0$, because any finite quotient of $G_\F$ is generated by $F$.
This implies the result integrally, because $M$ is finitely generated.


\begin{definition}\label{1semisimple_def}
Let $V$ be an object of $Rep(G_\F, \ql)$. We say that ``$V$ is a $1$-semi-simple $G_\F$-representation", or ``$G_\F$ acts $1$-semi-simply on $V$",  or just ``$V$ is $1$-semi-simple" (if the Galois group $G_\F$ is clearly understood from the context)  if the natural morphism $V^{G_\F}\rightarrow V_{G_\F}$ is an isomorphism.
Equivalently, the Frobenius action on $V$ is semi-simple at the eigenvalue $1$.
\end{definition}

The next lemma shows how $1$-semi-simplicity carries over to  extensions.

\begin{lemma}[{\cite[Lemma~12.8]{jannsen-LMN}}]\label{SStrans} 
The following holds:
    \begin{enumerate}
        \item Let $V$ be an object in $Rep(G_\F, \ql)$.
        If $V$ is 1-semi-simple, then any sub-representation and any quotient-representation of $V$ is 1-semi-simple.    
        \item  Let $W\xrightarrow{\phi}V\xrightarrow{\psi}Q$ be an exact sequence of objects of $Rep(G_\F, \ql)$.  If $W$ and $Q$ are $1$-semi-simple $G_\F$ representations and either $V^{G_\F}\rightarrow Q^{G_\F}$ is surjective or $W^{G_\F}=0$ or $Q^{G_\F}$=0, then $V$ is a $1$-semi-simple representation. 
    \end{enumerate}
\end{lemma}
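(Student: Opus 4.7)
The plan is to work with the equivalent characterization that $V$ is $1$-semi-simple if and only if it admits an $F$-stable direct sum decomposition $V = V^{G_\F}\oplus U$ on which $\id-F$ acts invertibly on $U$. This comes from the primary decomposition of the minimal polynomial of $F$ over $\ql$: by B\'ezout, the spectral projectors onto the generalized $1$-eigenspace and its complement are polynomials in $F$, and $1$-semi-simplicity is precisely the assertion that the generalized $1$-eigenspace coincides with $V^{G_\F}$.

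For part~(1), the key observation is that these spectral projectors, being polynomials in $F$, preserve any $F$-invariant subspace. Consequently, any subrepresentation $W\subset V$ inherits the decomposition $W = W^{G_\F}\oplus (W\cap U)$ with $\id - F$ still invertible on the second summand; likewise any quotient $V/W$ splits as $V^{G_\F}/W^{G_\F}\oplus U/(W\cap U)$, where $F$ is the identity on the first factor and $\id-F$ remains invertible on the second. Both $W$ and $V/W$ are therefore $1$-semi-simple.

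For part~(2), I would first replace $W\xrightarrow{\phi}V\xrightarrow{\psi}Q$ by the short exact sequence $0\to W'\to V\to Q'\to 0$, with $W':=\phi(W)$ and $Q':=\psi(V)$. By part~(1), $W'$ and $Q'$ are again $1$-semi-simple. Applying the $4$-term exact sequence~\eqref{frob_sequence} term-wise and invoking the snake lemma yields a canonical $6$-term exact sequence
$$
0\to (W')^{G_\F}\to V^{G_\F}\to (Q')^{G_\F}\xrightarrow{\delta} (W')_{G_\F}\to V_{G_\F}\to (Q')_{G_\F}\to 0.
$$
Once $\delta=0$, this splits into two short exact sequences, and the $5$-lemma — applied with the outer vertical maps being the isomorphisms $(W')^{G_\F}\xrightarrow{\sim}(W')_{G_\F}$ and $(Q')^{G_\F}\xrightarrow{\sim}(Q')_{G_\F}$ supplied by $1$-semi-simplicity of $W'$ and $Q'$ — forces $V^{G_\F}\xrightarrow{\sim}V_{G_\F}$, proving $V$ is $1$-semi-simple.

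The main obstacle will be verifying $\delta=0$ under each of the three hypotheses, since these are phrased on $W$ and $Q$ rather than on $W'$ and $Q'$. If $V^{G_\F}\to Q^{G_\F}$ is surjective, the image of $V^{G_\F}$ in $Q$ automatically lies in $(Q')^{G_\F}\subset Q^{G_\F}$, so surjectivity forces $V^{G_\F}\to(Q')^{G_\F}$ to be surjective as well, killing $\delta$ by exactness. If $W^{G_\F}=0$, then $1$-semi-simplicity of $W$ gives $W_{G_\F}=0$, so its quotient $(W')_{G_\F}$ vanishes and kills the codomain of $\delta$. Finally, if $Q^{G_\F}=0$, then $(Q')^{G_\F}\subset Q^{G_\F}=0$ kills the domain of $\delta$. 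In each case $\delta$ vanishes and part~(2) follows.
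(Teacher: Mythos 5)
The paper does not give its own proof of this lemma --- it is cited verbatim from Jannsen's LMN~1400 (Lemma~12.8) --- so there is no in-paper argument to compare against. Your proof is correct and self-contained. For part~(1), the B\'ezout/spectral-projector decomposition $V = V^{G_\F}\oplus U$ is exactly the content of 1-semi-simplicity, and since the projectors are polynomials in $F$ they preserve any $F$-stable subspace $W$, yielding the compatible splittings $W = W^{G_\F}\oplus(W\cap U)$ and $V/W \cong (V^{G_\F}/W^{G_\F})\oplus (U/(W\cap U))$; the invertibility of $\id-F$ on the second summands follows because an injective endomorphism of a finite-dimensional space is bijective. For part~(2), the reduction to the short exact sequence $0\to W'\to V\to Q'\to 0$ with $W'=\phi(W)$, $Q'=\psi(V)$, the application of the snake lemma to the termwise sequence~\eqref{frob_sequence}, and the verification that the connecting map $\delta$ vanishes under each of the three hypotheses (noting in the first case that $\psi(V^{G_\F})\subset (Q')^{G_\F}$, in the second that $(W')_{G_\F}$ is a quotient of $W_{G_\F}=0$, in the third that $(Q')^{G_\F}\subset Q^{G_\F}=0$) are all carried out carefully, and the short five lemma then gives $V^{G_\F}\xrightarrow{\sim}V_{G_\F}$. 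This is the natural argument for such a statement and there is no gap.
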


\begin{lemma}\label{SSdef} 
Let $V$ be an object of $Rep(G_\F, \ql)$. Then, $\dim V^{G_\F}=\dim V_{G_\F}$, and the following are equivalent:
\begin{enumerate}
    \item The natural morphism $V^{G_\F}\rightarrow V_{G_\F}$ is injective or surjective;\label{item:SSdef:1}
    \item The natural morphism $V^{G_\F}\rightarrow V_{G_\F}$ is an isomorphism, i.e. $V$ is $1$-semi-simple;\label{item:SSdef:2}
    \item Let $W$ be any $\ql$-representation of $G_\F$ and let $\phi:V\rightarrow W$ be a $G_\F$-equivariant morphism. Let $\omega\in W^{G_\F}$ be the image of some $v\in V$ under $\phi$. Then there exists $v^{\prime}\in V^{G_\F}$ such that $\phi(v^{\prime})=\omega$.\label{item:SSdef:3}
\end{enumerate}
\end{lemma}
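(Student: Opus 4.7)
The plan is as follows. First, the dimension equality $\dim V^{G_\F}=\dim V_{G_\F}$ is immediate from the exact sequence \eqref{frob_sequence}: additivity of dimensions in a four-term exact sequence of finite-dimensional $\Q_\ell$-vector spaces yields $\dim V^{G_\F}-\dim V+\dim V-\dim V_{G_\F}=0$. Consequently the natural linear map $V^{G_\F}\to V_{G_\F}$ is a map between two spaces of the same finite dimension, so injectivity, surjectivity, and bijectivity are mutually equivalent. This settles \eqref{item:SSdef:1}$\Leftrightarrow$\eqref{item:SSdef:2}.

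For \eqref{item:SSdef:2}$\Rightarrow$\eqref{item:SSdef:3}, the key tool is the Fitting decomposition of $V$ and $W$ at the eigenvalue $1$ of Frobenius. There are canonical $G_\F$-stable decompositions $V=V_1\oplus V^{\perp}$ and $W=W_1\oplus W^{\perp}$, where $V_1:=\ker(1-F)^N$ for $N\gg 0$ is the generalized $1$-eigenspace of $F|_V$ and $1-F$ is invertible on $V^{\perp}$, and analogously for $W$. Since $\phi$ commutes with $F$, it respects these decompositions: $\phi(V_1)\subseteq W_1$ and $\phi(V^{\perp})\subseteq W^{\perp}$. Assumption \eqref{item:SSdef:2} forces the nilpotent operator $(1-F)|_{V_1}$ to vanish (otherwise a Jordan block of size $\geq 2$ would produce a nontrivial element in $V_1^F\cap (1-F)V_1$, contradicting injectivity of $V^{G_\F}\to V_{G_\F}$), hence $V_1=V^{G_\F}$. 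Given $v\in V$ with $\phi(v)=\omega\in W^{G_\F}\subseteq W_1$, I decompose $v=v_1+v^{\perp}$ and compare components in $W=W_1\oplus W^{\perp}$: projecting onto $W^{\perp}$ gives $\phi(v^{\perp})=0$, and projecting onto $W_1$ gives $\phi(v_1)=\omega$. Since $v_1\in V_1=V^{G_\F}$, the element $v':=v_1$ is the desired lift.

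For the reverse implication \eqref{item:SSdef:3}$\Rightarrow$\eqref{item:SSdef:1}, I would apply the hypothesis to the canonical projection $\phi\colon V\twoheadrightarrow V_{G_\F}$, with $V_{G_\F}$ carrying its (automatically trivial) $G_\F$-action. For any $\omega\in V_{G_\F}=(V_{G_\F})^{G_\F}$, surjectivity of $\phi$ furnishes a preimage in $V$, and then \eqref{item:SSdef:3} produces a preimage in $V^{G_\F}$. Hence the composition $V^{G_\F}\hookrightarrow V\twoheadrightarrow V_{G_\F}$ is surjective, yielding the surjectivity version of \eqref{item:SSdef:1}.

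The argument is essentially linear algebra over $\Q_\ell$ and I do not expect a serious obstacle. The most delicate point is verifying that $\phi$ respects the Fitting decomposition; this follows from the observation that if a polynomial $p(x)$ with $p(1)\neq 0$ annihilates $F$ on $V^{\perp}$, then the identity $p(F)\phi=\phi\,p(F)$ forces $\phi(V^{\perp})$ to land in $\ker p(F)\subseteq W^{\perp}$, and the analogous reasoning with $(x-1)^N$ in place of $p(x)$ handles the other summand. The rest is routine.
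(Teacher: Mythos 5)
Your proof is correct. The dimension count, the equivalence of \eqref{item:SSdef:1} and \eqref{item:SSdef:2}, and the argument for \eqref{item:SSdef:3}$\Rightarrow$\eqref{item:SSdef:1} (applying the lifting property to the projection $V\twoheadrightarrow V_{G_\F}$) all match the paper's proof. Where you diverge is in \eqref{item:SSdef:2}$\Rightarrow$\eqref{item:SSdef:3}: you argue by a Fitting decomposition of $F$ at the eigenvalue $1$, observing that $\phi$ respects the decompositions, that $1$-semi-simplicity forces $V_1=V^{G_\F}$, and that $W^{G_\F}\subseteq W_1$, so the $V_1$-component of any preimage already does the job. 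The paper instead replaces $W$ by $\im\phi$, invokes the long exact sequence in Galois cohomology $V^{G_\F}\to W^{G_\F}\to(\ker\phi)_{G_\F}\to V_{G_\F}$, and uses Lemma~\ref{SStrans} (sub-representations of $1$-semi-simple representations are $1$-semi-simple) to see that the map $(\ker\phi)_{G_\F}\to V_{G_\F}$ is injective, hence $V^{G_\F}\to W^{G_\F}$ is surjective. Your route is more elementary and self-contained, amounting to pure linear algebra for the single operator $F$ (and it gives the slightly stronger structural fact that $\phi$ is block-diagonal with respect to the eigenvalue-$1$ Fitting pieces); the paper's route is shorter given that Lemma~\ref{SStrans} has already been established. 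One small point worth making explicit if you write this up: the Fitting decomposition is a priori only $F$-stable, but since $F$ topologically generates $G_\F\cong\hat{\Z}$ and the action is continuous, $F$-stability implies $G_\F$-stability, and likewise $V^{G_\F}=\ker(\id-F)$ and $V_{G_\F}=\coker(\id-F)$, so everything really is an assertion about the operator $F$ alone.
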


\begin{proof}
The claim $\dim V^{G_\F}=\dim V_{G_\F}$ follows from \eqref{frob_sequence} by taking the alternating sum of dimensions.
From this the equivalence of the first two items follows.
To prove that $1$-semi-simplicity implies the property formulated in item \eqref{item:SSdef:3}, let $\phi:V\rightarrow W$ be a $G_\F$-equivariant morphism. 
We may replace $W$ by the image of $\phi$ to assume that $\phi$ is surjective.
The long exact sequence in Galois cohomology then yields an exact sequence
$$
V^{G_\F}\longrightarrow W^{G_\F}\longrightarrow (\ker(\phi))_{G_\F}\longrightarrow V_{G_\F}.
$$
The kernel of $\phi$ is 1-semi-simple because it is a subrepresentation of $V$ (see Lemma \ref{SStrans}).
We can deduce from this that the natural map $(\ker(\phi))_{G_\F}\to V_{G_\F}$ is injective.
This implies surjectivity of $V^{G_\F}\to  W^{G_\F}$ as we want.
Conversely, assume that \eqref{item:SSdef:3} holds true.
Then we can apply that property to the natural surjection $V\to V_{G_\F}$. 
The $G_\F$-action on $V_{G_\F}$ is trivial and so we find that $V^{G_\F}\to V_{G_\F}$ is surjective.
Hence, \eqref{item:SSdef:1} holds and so $V$ is $1$-semi-simple.
\end{proof}

We finish this section by noting that the $1$-semi-simplicity property holds for the image of the cycle class map.

\begin{lemma}\label{SScycle}
Let $X$ be a smooth quasi-projective variety over $\F$. 
Then the absolute Galois group $G_\F$ acts $1$-semi-simply on the image of the cycle class map
$$\cl_{\bar X}^{i} :\chow{i}{\bar X}_{\ql}\rightarrow \coh{2i}{\Xbar}{\ql(i)}.$$
\end{lemma}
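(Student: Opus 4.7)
The plan is to show that the geometric Frobenius $F\in G_\F$ acts with finite order on the image $I\coloneqq \image(\cl^i_{\bar X})$. Since $\Q_\ell$ has characteristic zero, the minimal polynomial of a finite-order operator divides some $x^n-1$, which is separable; hence $F|_I$ is diagonalizable over $\bar \Q_\ell$, and in particular $1$-semi-simple in the sense of Definition~\ref{1semisimple_def}.

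First, I would observe that $I$ is a $G_\F$-stable finite-dimensional $\Q_\ell$-subspace of $H^{2i}(\bar X,\Q_\ell(i))$: stability follows from the $G_\F$-equivariance of $\cl^i_{\bar X}$, which holds by functoriality under the $\bar \F$-automorphisms of $\bar X$ induced by $G_\F$, while finite-dimensionality follows from that of $\ell$-adic \'etale cohomology of varieties over an algebraically closed field. I would then pick a $\Q_\ell$-basis $\cl(Z_1),\dots,\cl(Z_r)$ of $I$ consisting of classes of codimension-$i$ algebraic cycles on $\bar X$.

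Next, since $\bar X=\varprojlim_{\F'/\F} X_{\F'}$ with $\F'$ ranging over the finite subextensions of $\bar \F/\F$ and the transition maps affine, and since $\bar X$ is Noetherian so that each integral closed subscheme is of finite presentation, the standard limit-of-schemes formalism implies that each $Z_j$ descends to a cycle on $X_{\F_{q^{n_j}}}$ for some integer $n_j\geq 1$. Setting $n\coloneqq \operatorname{lcm}(n_1,\dots,n_r)$, equivariance of the cycle class map forces $F^n\cdot \cl(Z_j)=\cl(Z_j)$ for every $j$, so $F^n$ acts as the identity on the chosen basis and therefore on all of $I$. Combined with the opening observation, this completes the proof.

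There is no serious obstacle here: aside from the descent of cycles to finite-field models, the argument is pure linear algebra over $\Q_\ell$. A hands-on alternative to the limit-of-schemes formalism is to note that each irreducible component of $Z_j$ is cut out by finitely many polynomial equations in suitable affine coordinates, whose coefficients collectively lie in a finite subfield of $\bar \F$.
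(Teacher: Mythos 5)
Your proposal is correct and follows essentially the same approach as the paper: both arguments reduce the claim to the observation that Frobenius acts with finite order on the image because every codimension-$i$ cycle on $\bar X$ descends to a finite subextension of $\bar\F/\F$, and a finite-order operator over a field of characteristic zero is diagonalizable. The only (cosmetic) difference is that the paper first finds a finite-dimensional $G_\F$-invariant subspace $V\subset \CH^i(\bar X)_{\Q_\ell}$ surjecting onto the image and then applies Lemma \ref{SStrans} to the quotient, whereas you pick a finite basis of cycle classes for the image directly and bound the orders there; these are interchangeable phrasings of the same idea.
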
 
\begin{proof}
The Galois group $G_\F$ acts via finite orbits on cycles and hence on $\chow{i}{\bar X}_{\ql}$.
It follows that there is a finite-dimensional $G_\F$-invariant subspace $V\subset \chow{i}{\bar X}_{\ql}$ that surjects onto $\im(\cl_{\bar X}^{i} )\subset \coh{2i}{\Xbar}{\ql(i)}$.
The Frobenius action on $V$ satisfies $F^m=\id$ for some $m\gg 0$ and so $F$ acts semi-simply on $V$.
Hence it acts semi-simply and in particular 1-semi-simply on the image of $V$,  see  Lemma \ref{SStrans}. 
\end{proof}

\subsection{Galois actions on cohomology}\label{Galois_action_basics}
Let  $G_{\F}$ be the absolute Galois group of the finite field $\F$.  
Let $X$ be an algebraic scheme over $\F$ and let $\Xbar$ be its base change to $\kbar$. 
For $\phi\in G_\F$, we also denote by $\phi$ the (scheme) automorphism of $\Xbar$ given by $\identity \times \phi:X\times_{\F}\kbar\rightarrow X\times_{\F}\kbar$.
We emphasize that this is an automorphism of schemes,  but not an automorphism of $\bar \F$-varieties.  
We have natural isomorphisms of sheaves (on the \'etale or pro-\'etale site) $\phi^\ast \mu_{\ell^r}^{\otimes n}= \mu_{\ell^r}^{\otimes n}$ and $\phi^\ast \pi_X^!\mu_{\ell^r}(n)=\pi_X^!\phi^\ast \mu_{\ell^r}(n)=\pi_X^!\mu_{\ell^r}(n)$.
This implies that there are natural actions
$$
\phi^\ast:H^i(\bar X,\Q_\ell(n))\longrightarrow H^i(\bar X,\Q_\ell(n))\ \ \ \text{and}\ \ \ \phi^\ast:H^i_{BM}(\bar X,\Q_\ell(n))\longrightarrow H_{BM}^i(\bar X,\Q_\ell(n)) ,
$$
which via the  isomorphism of Lemma \ref{bmtoet} agree if $X$ is smooth and equi-dimensional. 

If $\bar U\subset \bar X$ is an open subset whose complement $\bar Z$ has codimension $j$, then the Galois orbit of $\bar Z$ has codimension $j$ as well and so there is an open subset $\bar V\subset \bar U$ that is defined over $\F$ and such that $\bar X\setminus \bar V$ has codimension $j$.
From this observation it follows that 
$$
H^i(F_j\bar X,A(n))\cong \lim_{\substack{\longrightarrow \\ F_j X\subset U\subset  X}} H_{BM}^i(\bar U,A(n)),
$$
where the limit runs over all open subsets $U$ of $X$ that contain $F_jX$.
It is clear from the above description that $H^i(F_j\bar X,A(n))$ admits a natural $G_\F$-action as well.
This action  is functorial and hence restricts to a natural $G_\F$-action on $H^i_{j,nr}(\bar X,A(n))$.
The natural maps
$$
H^i(F_j  X,A(n))\to H^i(F_j\bar X,A(n))\ \ \ \text{and}\ \ \ H^i_{j,nr}(  X,A(n))\to H^i_{j,nr}(\bar X,A(n))
$$
are Galois-equivariant and hence have their images contained in the subspaces of $G_\F$-invariant classes.  

Let us now assume  that $X$ is an equi-dimensional smooth quasi-projective $\F$-scheme. In this case, the Borel-Moore cohomology groups coincide with the $\ell$-adic \'etale cohomology groups by Lemma \ref{bmtoet} and the Galois action on both sides are compatible as explained above.
Since the direct limit is an exact functor,   the Hochschild-Serre short exact sequence in \eqref{HSexact} 
gives the short exact sequence
\begin{equation}\label{HSnrexact}
    0\rightarrow\nrcoh{i-1}{j}{\Xbar}{A(n)}_{G_\F}\rightarrow\nrcoh{i}{j}{X}{A(n)}\rightarrow\nrcoh{i}{j}{\Xbar}{A(n)}^{G_\F}\rightarrow 0
\end{equation}
for every equi-dimensional smooth quasi-projective scheme $X$, for all $i$, $j$, $n$ and for $A \in\{\Z/\ell^r,\zl,\ql\}$.
Here we have used that direct limit commutes with invariance and coinvariance as it is an exact functor.

\begin{lemma}\label{F0pullback}
Let $f:X\to Y$ be a generically finite dominant morphism of quasi-projective $\F$-varieties. 
Then, for all integers $i$ and $n$, there is a $G_\F$-equivariant injection
$$f^*:\nrcoh{i}{0}{\bar{Y}}{\ql(n}\rightarrow\nrcoh{i}{0}{\Xbar}{\ql(n)}.$$ 
\end{lemma}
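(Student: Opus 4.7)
The plan is to reduce to the case where both varieties are smooth and $f$ is finite (not just generically finite), then exploit the relation $f_\ast \circ f^\ast = d \cdot \id$ from Lemma \ref{bmtoet}, using that $d$ is invertible in $\Q_\ell$.

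\textbf{Step 1 (Construction of $f^\ast$).} Since $f$ is dominant, the preimage of any non-empty open subset of $Y$ is a non-empty open subset of $X$. The group $H^i(F_0\bar Y,\Q_\ell(n))$ is the direct limit of $H^i_{BM}(\bar V,\Q_\ell(n))$ over all dense open $\bar V\subset \bar Y$ that are defined over $\F$, and analogously for $H^i(F_0\bar X,\Q_\ell(n))$. Pullback along $f|_{f^{-1}(\bar V)}:f^{-1}(\bar V)\to \bar V$ is compatible with restriction to smaller opens, hence these maps assemble into a $G_\F$-equivariant map
$$
f^\ast : H^i(F_0\bar Y,\Q_\ell(n))\longrightarrow H^i(F_0\bar X,\Q_\ell(n)).
$$
Galois equivariance is automatic since $f$ is defined over $\F$, and so the action of $\phi\in G_\F$ on both sides is induced by $\id\times \phi$, which commutes with $f\times \id$.

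\textbf{Step 2 (Reduction to smooth finite case).} By generic smoothness (and since we are taking a direct limit over dense opens defined over $\F$), every class in $H^i(F_0\bar Y,\Q_\ell(n))$ is represented on some dense open $V\subset Y$, defined over $\F$, on which $V$ and $f^{-1}(V)$ are smooth and $f|_{f^{-1}(V)}:f^{-1}(V)\to V$ is finite (hence proper) of degree $d=[\F(X):\F(Y)]$. Restricting further if necessary, we may assume this from the start.

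\textbf{Step 3 (Injectivity).} On the smooth finite model, Lemma \ref{bmtoet} gives
$$
f_\ast \circ f^\ast = d\cdot \id:H^i(\bar V,\Q_\ell(n))\longrightarrow H^i(\bar V,\Q_\ell(n)),
$$
and since $d$ is a nonzero integer, it is invertible in $\Q_\ell$, so $f^\ast$ is injective at each stage of the direct system. The direct limit of injective maps of $\Q_\ell$-vector spaces remains injective, yielding injectivity of $f^\ast$ on $H^i(F_0\bar Y,\Q_\ell(n))$.

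The only subtle point is the reduction in Step 2: one must check that the limit defining $H^i(F_0\bar Y,\Q_\ell(n))$ can be taken over opens that are simultaneously small enough to make both $V$ and $f^{-1}(V)$ smooth and $f$ finite. This is guaranteed because such opens are cofinal among all dense opens of $Y$ (generic flatness and generic smoothness applied to the generically finite dominant morphism $f$), and Galois-invariance is preserved by shrinking to the intersection of the Galois orbit of $V$ as explained in Section \ref{Galois_action_basics}.
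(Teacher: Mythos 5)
Your proof is correct and follows essentially the same approach as the paper: construct $f^\ast$ as the direct limit of pullbacks and deduce injectivity from the relation $f_\ast \circ f^\ast = \deg(f)\cdot\id$ of Lemma \ref{bmtoet}. The paper's proof is terser — it invokes \eqref{eq:f_*f^*} directly on the $F_0$ level without spelling out the reduction to a smooth open over which $f$ restricts to a finite morphism — whereas your Step 2 makes this reduction explicit; that is a useful clarification but not a different argument.
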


\begin{proof}
The $G_\F$-equivariance of $f^\ast$ is clear from the construction.
Moreover, we have $f_\ast\circ f^\ast=\deg(f)\id$ on $\nrcoh{i}{0}{\overline{Y}}{\ql(n}$ (see \eqref{eq:f_*f^*}) and so the map in question is injective.
This proves the lemma.
\end{proof}

\begin{corollary}\label{cor:F0vanish}
Let $i,j,d$ be integers and suppose that for every smooth projective $\F$-variety $X$ of dimension $d$, we have
$$
\nrcoh{i}{0}{\bar X}{\ql(j)}^{G_\F}=0.
$$
Then the same holds for all quasi-projective varieties $X$ over $\F$.
\end{corollary}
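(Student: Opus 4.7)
The plan is to pass to a smooth projective model of the same dimension via de Jong's alterations, and then invoke Lemma \ref{F0pullback} together with the birational invariance of $\nrcoh{i}{0}{\bar X}{\ql(j)}$ recorded in Remark \ref{Remark_after_urcoh}.

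Given a quasi-projective $\F$-variety $X$ of dimension $d$, I would first embed $X$ as an open subvariety of a projective $\F$-variety $X^c$ (for instance its closure in an ambient projective space) and apply de Jong's theorem to obtain an alteration $f\colon Y\to X^c$, i.e.\ a proper dominant morphism with $Y$ regular and $\dim Y=d$. Since $f$ is proper and $X^c$ is projective, $Y$ is projective; and since the base field $\F$ is perfect, $Y$ is in fact smooth. Setting $X':=f^{-1}(X)\subset Y$, the restriction $g:=f|_{X'}\colon X'\to X$ is then a generically finite dominant morphism of quasi-projective $\F$-varieties (generic finiteness being automatic for a proper dominant map between varieties of the same dimension).

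By Lemma \ref{F0pullback}, pull-back along $g$ induces a $G_\F$-equivariant injection
$$
g^{\ast}\colon \nrcoh{i}{0}{\bar X}{\ql(j)}\hookrightarrow \nrcoh{i}{0}{\overline{X'}}{\ql(j)}.
$$
Since $\overline{X'}$ is open dense in the smooth projective variety $\bar Y$, the direct limit description in Remark \ref{Remark_after_urcoh} yields a canonical $G_\F$-equivariant isomorphism $\nrcoh{i}{0}{\overline{X'}}{\ql(j)}\cong \nrcoh{i}{0}{\bar Y}{\ql(j)}$: any dense open of $\bar Y$ restricts to a dense open of $\overline{X'}$, so the two directed systems defining these limits are cofinal. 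Taking $G_\F$-invariants (which is left exact) and invoking the hypothesis for the smooth projective $d$-fold $Y$ then forces $\nrcoh{i}{0}{\bar X}{\ql(j)}^{G_\F}=0$, as required.

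I do not anticipate a serious obstacle here: the only point that requires care is verifying that de Jong's alteration produces a smooth (rather than merely regular) projective model of the same dimension as $X$, which is automatic from the perfectness of $\F$ (regular equals smooth over perfect fields) together with the properness of the alteration morphism. The remaining steps are formal consequences of Lemma \ref{F0pullback} and the function-field-only dependence of $\nrcoh{i}{0}{\,\cdot\,}{\ql(j)}$ encoded in Remark \ref{Remark_after_urcoh}.
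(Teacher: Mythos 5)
Your proof is correct and follows essentially the same route as the paper: de Jong's alterations plus Lemma \ref{F0pullback} plus the fact that $H^i(F_0\bar X,\Q_\ell(j))$ depends only on the generic point. The only (inessential) rearrangement is that you invoke the birational invariance \emph{after} pulling back, to pass from $\overline{X'}$ to $\bar Y$, whereas the paper first uses the generic-point dependence to reduce to $X$ projective and then applies Lemma \ref{F0pullback} directly to the alteration $Y\to X$.
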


\begin{proof}
Since $F_0X$ depends only on the generic point of $X$, it suffices to prove that $\nrcoh{i}{0}{\bar X}{\ql(j)}^{G_\F}=0$ for all projective $\F$-varieties $X$.
This reduces to the case of smooth projective $\F$-varieties by Lemma \ref{F0pullback} and the existence of alterations \cite{deJong}. 
\end{proof} 

\subsection{Weight arguments}
Let $X$ be an algebraic scheme over the finite field $\F$ and let $\Xbar$ be the base change of $X$ to an algebraic closure $\kbar$ of $\F$. The geometric Frobenius morphism $F\in G_\F$ acts on the Borel-Moore cohomology groups via $F^\ast$ as explained in Section \ref{Galois_action_basics}.

\begin{definition}\label{weight} 
We say that $V\in Rep(G_\F,\Q_\ell)$  has weights $\geq m$ (resp.\ $\leq m$) if all eigenvalues of the geometric Frobenius action on $V$ have via any embedding $\bar \Q_\ell\to \C$ absolute value $\geq q^{m/2}$ (resp.\ $\leq q^{m/2}$).  
\end{definition}

Deligne's theory of weights (see \cite{deligne-weil, deligne-weil2})  implies:

\begin{theorem}[Deligne]\label{smoothweight}
Let $X$ be a smooth quasi-projective variety over $\F$. 
Then the weights of $\coh{i}{\Xbar}{\ql(j)}$ are greater than or equal to $i-2j$.
\end{theorem}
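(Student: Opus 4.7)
The plan is to reduce the statement to Deligne's fundamental weight bound on compactly supported cohomology of smooth varieties (\cite{deligne-weil2}) via Poincar\'e duality, after first eliminating the Tate twist.

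First I would strip off the twist. Since $H^i(\bar X,\Q_\ell(j))\cong H^i(\bar X,\Q_\ell)\otimes_{\Q_\ell}\Q_\ell(j)$ and geometric Frobenius acts on $\Q_\ell(j)$ by multiplication by $q^{-j}$, all Frobenius eigenvalues on $H^i(\bar X,\Q_\ell(j))$ differ from those on $H^i(\bar X,\Q_\ell)$ by the scalar $q^{-j}$; the weights therefore shift by $-2j$. It suffices to prove that the weights of $H^i(\bar X,\Q_\ell)$ are $\geq i$. Decomposing into connected components, we may further assume $X$ equi-dimensional of some dimension $d$, since cohomology and the Galois action split as a direct sum across components.

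Next I would apply Poincar\'e duality for the smooth $\bar\F$-variety $\bar X$ of pure dimension $d$:
\begin{equation*}
H^i(\bar X,\Q_\ell)\cong H^{2d-i}_c\bigl(\bar X,\Q_\ell(d)\bigr)^{\vee},
\end{equation*}
where $V^{\vee}:=\Hom_{\Q_\ell}(V,\Q_\ell)$ is equipped with the transpose-inverse Frobenius action. Since dualizing inverts Frobenius eigenvalues, $V^{\vee}$ has weights $\geq -w$ precisely when $V$ has weights $\leq w$. Consequently the lower bound ``weights $\geq i$'' on $H^i(\bar X,\Q_\ell)$ is equivalent to the upper bound ``weights $\leq -i$'' on $H^{2d-i}_c(\bar X,\Q_\ell(d))$, or, untwisting, to ``weights $\leq 2d-i$'' on $H^{2d-i}_c(\bar X,\Q_\ell)$.

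This last bound is precisely Deligne's main theorem in \emph{La conjecture de Weil II}: for any smooth (quasi-projective) variety $Y$ over a finite field and any integer $k$, the compactly supported cohomology $H^k_c(\bar Y,\Q_\ell)$ has weights $\leq k$. Applied to $Y=X$ with $k=2d-i$, this finishes the argument. The genuinely hard input is of course this Weil II bound; everything else is a formal manipulation with Tate twists, $\Q_\ell$-linear duals, and the splitting of cohomology over connected components, so the main obstacle is invoking Deligne's theorem as a black box rather than reproving it.
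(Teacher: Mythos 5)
The paper does not prove this statement at all: it records it as a known fact and cites Deligne's Weil I and Weil II papers, so there is no ``paper proof'' to compare against. Your sketch is the standard way to extract this lower bound from Deligne's results, and it is correct: peeling off the Tate twist shifts all weights by $-2j$; restricting to a connected component makes $\bar X$ smooth of pure dimension $d$, so Poincar\'e duality gives $H^i(\bar X,\Q_\ell)\cong H^{2d-i}_c(\bar X,\Q_\ell(d))^\vee$ compatibly with the Frobenius action; and duality inverts eigenvalues, turning the desired lower bound into the upper bound ``weights $\leq 2d-i$'' on $H^{2d-i}_c(\bar X,\Q_\ell)$, which is exactly Deligne's Weil II theorem on compactly supported cohomology. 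Two small remarks: the Weil II bound on $H^k_c$ holds for arbitrary (not just smooth) varieties of finite type, so smoothness in your argument is used only to invoke Poincar\'e duality, not for the weight input; and your reduction to the equi-dimensional case is the right move, since Poincar\'e duality as you state it requires pure dimension. In short, the proposal is a correct and complete derivation of precisely the statement the paper quotes from Deligne without proof.
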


We note the following well-known implication of Deligne's result, cf.\ \cite{CTSS}.

\begin{lemma}\label{coh_injection}
    Let $X$ be a smooth projective variety over $\F$. 
    Then, for all $i$ and $j$ with $i-2j-1<0$, the natural map
 $\coh{i}{X}{\ql(j)}\rightarrow\coh{i}{\Xbar}{\ql(j)}$ 
 is injective.
    \end{lemma}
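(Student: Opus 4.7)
The plan is to use the Hochschild--Serre short exact sequence \eqref{HSexact} to identify the kernel of the restriction map with Galois coinvariants of the previous cohomology group, and then apply Deligne's theory of weights to show that this coinvariant space vanishes.

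Concretely, the short exact sequence
$$
0\rightarrow\coh{i-1}{\Xbar}{\ql(j)}_{G_\F}\rightarrow\coh{i}{X}{\ql(j)}\rightarrow\coh{i}{\Xbar}{\ql(j)}^{G_\F}\rightarrow 0
$$
identifies the kernel of the natural map $\coh{i}{X}{\ql(j)}\rightarrow\coh{i}{\Xbar}{\ql(j)}$ with $\coh{i-1}{\Xbar}{\ql(j)}_{G_\F}$. So it suffices to show that this coinvariant space is zero under the hypothesis $i-2j-1<0$.

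Next, I would invoke purity: since $X$ is smooth and projective, Deligne's Weil conjectures imply that $\coh{i-1}{\Xbar}{\ql}$ is pure of weight $i-1$, and hence $\coh{i-1}{\Xbar}{\ql(j)}$ is pure of weight $i-1-2j$. The assumption $i-2j-1<0$ then forces this weight to be strictly negative, so every Frobenius eigenvalue on $\coh{i-1}{\Xbar}{\ql(j)}$ has complex absolute value $q^{(i-1-2j)/2}<1$. In particular, $1$ is not a Frobenius eigenvalue, so $\identity-F$ acts as an automorphism.

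Applying this to the defining sequence \eqref{frob_sequence} for coinvariants shows $\coh{i-1}{\Xbar}{\ql(j)}_{G_\F}=0$, which by the Hochschild--Serre sequence yields the desired injectivity. The only subtle point is that Theorem \ref{smoothweight} as stated in the excerpt only provides a lower bound on weights, so the main step is invoking purity (the upper bound on weights for smooth projective varieties) from Deligne's Weil I; this is where the projectivity hypothesis on $X$ is essential, since on an open variety one would only get the lower bound $\geq i-1-2j$ from Theorem \ref{smoothweight}, insufficient to exclude the eigenvalue $1$.
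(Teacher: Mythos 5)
Your proof is correct and follows essentially the same route as the paper: use the Hochschild--Serre sequence \eqref{HSexact} to identify the kernel with the coinvariants $\coh{i-1}{\Xbar}{\ql(j)}_{G_\F}$, then kill these coinvariants by observing that the Frobenius eigenvalues on $\coh{i-1}{\Xbar}{\ql(j)}$ all have absolute value $q^{(i-1-2j)/2}<1$. Your remark that Theorem \ref{smoothweight} alone only gives the lower bound on weights and that purity (Deligne's Weil I) for smooth projective varieties is the actual input is a fair clarification; the paper invokes "the Weil conjectures" at this point, which amounts to the same appeal to purity.
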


\begin{proof}
By the exactness of sequence (\ref{HSexact}), the kernel of the morphism
    $$\coh{i}{X}{\ql(j)}\rightarrow\coh{i}{\bar X}{\ql(j)}$$
    is $\coh{i-1}{\bar X}{\ql(j)}_{G_\F}$. By the Weil conjectures, the geometric Frobenius morphism acting on $\coh{i-1}{\bar X}{\ql(j)}$ has weights $(i-1)-2j<0$.
    In particular, $1$ cannot be an eigenvalue of the geometric Frobenius and so  $\coh{i-1}{\bar X}{\ql(j)}_{G_\F}=0$.
\end{proof}

If 
$$
V=\lim_{\substack{\longrightarrow \\ i}}V_i
$$
 is a direct limit of an inductive system (with compatible $G_\F$-actions) of objects $V_i$ in $Rep(G_\F,\Q_\ell)$, then  we say that $V$ has weights $\geq m$ (resp.\ $\leq m$) if the geometric Frobenius action on $V$ has only eigenvalues of absolute value $\geq q^{m/2}$, resp.\ $\leq q^{m/2}$.
Equivalently,  if for each $i$ the image of $V_i$ in $V$ has  weights $\geq m$ (resp.\ $\leq m$).

\begin{corollary}\label{F0weight}
Let $X$ be a quasi-projective variety over $\F$.
Then  the weights of $\nrcoh{i}{0}{\bar X}{\ql(j)}$ are greater than or equal to $i-2j$.
\end{corollary}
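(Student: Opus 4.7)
The plan is to reduce everything to Deligne's weight estimate for smooth quasi-projective varieties (Theorem~\ref{smoothweight}) and then observe that the weight bound propagates through the direct limit defining $\nrcoh{i}{0}{\bar X}{\ql(j)}$.

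First, I would exploit that $\F$ is perfect. Generic smoothness implies that the smooth locus $X^{\sm}\subseteq X$ is an open dense subscheme defined over $\F$. Since $X$ is a variety (hence integral), $F_0X$ consists of the single generic point of $X$, which lies in $X^{\sm}$. Therefore the open subsets $U\subseteq X^{\sm}$ defined over $\F$ form a cofinal subsystem of the system of all open subsets of $X$ that are defined over $\F$ and contain $F_0X$. Invoking the description
$$
\nrcoh{i}{0}{\bar X}{\ql(j)}\cong\lim_{\substack{\longrightarrow\\ U\subseteq X^{\sm}}}\cohbm{i}{\bar U}{\ql(j)}
$$
as a $G_\F$-representation from Section~\ref{Galois_action_basics}, I may restrict attention to smooth quasi-projective $U$.

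For such $U$, Lemma~\ref{bmtoet} gives a $G_\F$-equivariant isomorphism $\cohbm{i}{\bar U}{\ql(j)}\simeq\coh{i}{\bar U}{\ql(j)}$, and Theorem~\ref{smoothweight} tells us that this group has weights $\geq i-2j$. By the definition of weights for a direct limit recalled immediately before the statement of the corollary, the limit $\nrcoh{i}{0}{\bar X}{\ql(j)}$ has weights $\geq i-2j$ as soon as, for each member $\cohbm{i}{\bar U}{\ql(j)}$ of the system, the image in the limit has weights $\geq i-2j$. This image is a $G_\F$-equivariant quotient of $\cohbm{i}{\bar U}{\ql(j)}$ and therefore inherits the weight bound, finishing the proof.

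The only point that needs mild care is the cofinality argument, which ensures that shrinking to smooth opens is harmless; once that is in place, the statement is essentially a formal consequence of Deligne's theorem together with the fact that Frobenius eigenvalues in a direct limit come from finite stages where the weight bound already holds.
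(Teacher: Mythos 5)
Your proof is correct and follows essentially the same route as the paper: shrink $X$ to its smooth locus (which leaves $F_0X$ and hence $H^i(F_0\bar X,\Q_\ell(j))$ unchanged), identify Borel--Moore with ordinary cohomology on smooth opens via Lemma~\ref{bmtoet}, apply Deligne's weight bound (Theorem~\ref{smoothweight}), and pass to the limit using the definition of weights for a direct limit. The paper compresses the cofinality and limit bookkeeping into the single line ``Up to shrinking $X$ (which does not change $F_0X$), we can assume that $X$ is smooth,'' which your write-up simply spells out in more detail.
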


\begin{proof}
Up to shrinking $X$ (which does not change $F_0X$), we can assume that $X$ is smooth.
Then any open subset of $X$ is smooth and so the claim follows from  Theorem \ref{smoothweight}.
\end{proof}

\begin{lemma}\label{BMweight}
Let $X$ be a quasi-projective scheme over $\F$.
Then the weights of the Borel--Moore cohomology group $\cohbm{i}{\Xbar}{\ql(j)}$ are greater than or equal to $i-2j$.
\end{lemma}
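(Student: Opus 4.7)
The plan is to prove the lemma by induction on $d = \dim X$, reducing to Deligne's weight bound (Theorem \ref{smoothweight}) for smooth varieties via dévissage along the Gysin sequence \eqref{Gysin}.

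First, I would make two preliminary reductions. By Lemma \ref{directsum}(2), I may assume $X$ is reduced. By Lemma \ref{directsum}(1), it is enough to treat each connected component separately, since the decomposition
$$
H^i_{BM}(\bar X, \Q_\ell(j)) \simeq \bigoplus_m H^{i-2c_m}_{BM}(\bar X_m, \Q_\ell(j-c_m))
$$
with $c_m = \dim X - \dim X_m$ preserves the quantity $i - 2j$, because $(i - 2c_m) - 2(j - c_m) = i - 2j$. The base case $d = 0$ is then immediate: after the reductions $X = \Spec L$ for a finite extension $L/\F$, and the only nonzero group $H^0(\bar X, \Q_\ell(j)) \simeq \Q_\ell(j)^{[L:\F]}$ has weight $-2j = i - 2j$; the vanishing for $i < 0$ is Lemma \ref{negdegree}, and for $i > 0$ it is standard étale cohomology of $\bar \F$-points.

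For the inductive step, since $\F$ is perfect, the smooth locus of a reduced $X$ is open and dense. I would choose a closed subscheme $Z \subset X$ of pure codimension $c \geq 1$ whose open complement $U := X \setminus Z$ is smooth and equi-dimensional. The Gysin sequence \eqref{Gysin} then provides a $G_\F$-equivariant sequence
$$
H^{i-2c}_{BM}(\bar Z, \Q_\ell(j-c)) \longrightarrow H^i_{BM}(\bar X, \Q_\ell(j)) \longrightarrow H^i_{BM}(\bar U, \Q_\ell(j))
$$
exact at the middle term. By Lemma \ref{bmtoet} and Theorem \ref{smoothweight}, the right-hand term has weights $\geq i - 2j$; by the inductive hypothesis applied to $Z$ (of strictly smaller dimension), the left-hand term has weights $\geq (i-2c) - 2(j-c) = i - 2j$. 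Since the property ``weights $\geq m$'' is closed under extensions and subquotients in $Rep(G_\F, \Q_\ell)$, the middle term $H^i_{BM}(\bar X, \Q_\ell(j))$ inherits the bound.

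The main bookkeeping subtlety, which is the only real obstacle, is arranging $Z$ to have pure codimension when the singular locus of $X$ is itself not equi-dimensional. This is handled by the standard trick of stratifying $X^{\mathrm{sing}}$ along its codimension strata and iterating the Gysin dévissage above, using Lemma \ref{directsum}(1) on the successive smooth-but-possibly-non-equi-dimensional open strata.
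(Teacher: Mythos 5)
Your proof is correct and follows essentially the same route as the paper: reduce to the reduced case, induct on dimension, and feed the singular locus into the Gysin sequence, bounding the outer terms by Theorem \ref{smoothweight} on the smooth piece and by induction on the closed piece, then observe that the weight bound passes to the middle term. The only divergence is that you treat the non-equi-dimensionality of $Z$ as a real obstacle requiring a stratification, but this worry is unnecessary: the Gysin sequence \eqref{Gysin} as stated applies to an arbitrary closed immersion $i:Z\hookrightarrow X$ with $c:=\dim X-\dim Z$ (no purity of codimension is required), and with that convention the shift still satisfies $(i-2c)-2(j-c)=i-2j$, so the induction hypothesis applies directly to the full singular locus $Z$; this is exactly what the paper does, taking $Z=X\setminus X_{\mathrm{sm}}$ in one stroke.
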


\begin{proof}
We may replace $X$ by its reduction to assume that it is generically reduced.
Moreover, the case where $X$ has dimension zero is trivial and so we may argue via induction on $\dim X$.
Let $Z\subset X$ be the singular locus with complement $U$.
The sequence
\begin{equation}\label{Bmweight_seq1}
    \cohbm{i-2c}{\bar{Z}}{\ql(j-c)} \xlongrightarrow{i_*}\cohbm{i}{\bar X}{\ql(j)} \longrightarrow\cohbm{i}{\bar U}{\ql(j)} 
\end{equation}
is exact by \eqref{Gysin}.
The weights of $\cohbm{i}{\bar U}{\ql(j)} $ are $\geq i-2j$ by Theorem  \ref{smoothweight}.
The same holds for  $ \cohbm{i-2c}{\bar{Z}}{\ql(j-c)} $ by induction hypothesis because $\dim Z\leq \dim X$.
This implies the lemma, as we want. 
\end{proof}

\section{Cycle conjectures} \label{sec:cycle-conj}
\subsection{Classical versions}
Let $X$ be a smooth projective variety over the finite field $\F$.
There is a cycle class map
\begin{align} \label{def:cl_X}
\cl_X^i:\CH^i(X)_{\Q_\ell}\longrightarrow H^{2i}(\bar X,i)= H^{2i}(\bar X,\Q_\ell(i))
\end{align}
whose image lands in the subspace of $G_\F$-invariant classes, because the Galois action is trivial on the first factor of $X\times_\F \bar \F$.

\begin{conjecture}[Tate conjecture]\label{Tateconj}  $\im(\cl_X ^i)=H^{2i}(\bar X,\Q_\ell(i))^{G_\F} $.
\end{conjecture}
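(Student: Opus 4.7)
Since the Tate conjecture is a famous open problem, no direct proof sketch is possible; my plan, following the strategy announced in Theorem \ref{thm:main:Tate-Beilinson-ss}, is to deduce it (together with the Beilinson and semi-simplicity conjectures) from the conjectural vanishing
$$H^{2i}(F_0\CP^n_{\bar\F}, \Q_\ell(i+1))^{G_\F} = 0 \quad \text{for all } i,n \geq 2$$
of a birational invariant of projective space, and then separately indicate a line of attack on that vanishing.

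The first step is to translate the cokernel of $\cl_X^i$ into a statement about refined unramified cohomology. By Lemma \ref{ceilinglemma} one has $H^{2i}(\bar X,\Q_\ell(i)) \cong \nrcoh{2i}{i}{\bar X}{\ql(i)}$, and the Gysin tower of Lemma \ref{exactnesslemma} connects this with the generic-point cohomology $\nrcoh{2i}{0}{\bar X}{\ql(i)}$ through boundary maps into the cohomology of codimension-$j$ points. Taking Galois invariants compatibly via \eqref{HSnrexact}, and using the weight estimates of Theorem \ref{smoothweight}, Corollary \ref{F0weight}, and Lemma \ref{BMweight} to control the successive quotients, I would identify the obstruction to surjectivity of $\cl_X^i$ as an element of a well-understood subquotient of this tower. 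The $1$-semi-simplicity of the cycle class image (Lemma \ref{SScycle}), combined with Lemma \ref{SStrans}, ensures that classes coming from cycles already behave well, so nontriviality of the obstruction survives passage to those subquotients.

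The second step is to transport this obstruction to projective space. Using alterations (de Jong) together with the injectivity of the pullback from Lemma \ref{F0pullback} and the reduction of Corollary \ref{cor:F0vanish}, one can compare the invariant $\nrcoh{\ast}{0}{\bar X}{\ql(\ast)}^{G_\F}$ with the corresponding invariant for $\CP^n$, after embedding $\bar\F(X)$ into $\bar\F(x_1,\ldots,x_N)$. The twist shift from $\ql(i)$ to $\ql(i+1)$, and the degree shift from $2i-1$ in the pure Tate+semi-simplicity variant (Theorem \ref{thm:main:Tate-ss}) to $2i$ in the full variant (Theorem \ref{thm:main:Tate-Beilinson-ss}), reflect precisely the difference between detecting only Galois-invariant classes and detecting also the Beilinson injectivity obstruction; the generic-point weight bounds absorb this additional discrepancy. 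The Lefschetz reduction of Corollary \ref{cor:Lefschetz} keeps the argument to codimensions $i\leq\lceil\dim X/2\rceil$.

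The main obstacle is of course the vanishing statement itself, which is equivalent to the conjectures one is trying to prove, so no unconditional geometric proof is presently in sight. A natural attack, consistent with the conjectural identification $H^j(F_0\bar X, \ql(j))^{G_\F} \cong K^M_j(\F(X))\otimes\Q_\ell$ (and vanishing in other bidegrees) sketched in the introduction, would be to develop direct cohomological computations at the function-field level via norm-residue methods, but this remains far beyond current techniques.
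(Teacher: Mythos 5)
The statement you were asked to prove is labeled as a \emph{conjecture} in the paper (Conjecture~\ref{Tateconj}, the Tate conjecture), and the paper does not contain a proof of it---no one does. You correctly recognized this, so there is no ``paper proof'' against which a genuine comparison can be made. What you have written instead is a sketch of the \emph{equivalence} proven in Theorem~\ref{thm:main:Tate-Beilinson-ss}, which is the right thing to point to: the paper's contribution is to show that the Tate, Beilinson, and semi-simplicity conjectures together are equivalent to the vanishing of $H^{2i}(F_0\CP^n_{\bar\F},\Q_\ell(i+1))^{G_\F}$, not to resolve any of them.

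Your sketch of the reduction is broadly consistent with the paper's route, though somewhat impressionistic in places. The paper's actual argument proceeds through Theorem~\ref{thm:induct1} (which, for a fixed smooth projective $X$, characterizes the Tate, 1-semi-simplicity, and Beilinson conjectures in degree $i$ in terms of vanishing of $G_\F$-invariants of $H^{2i}(F_0\bar X,\Q_\ell(i))$ and $H^{2i-1}(F_0\bar X,\Q_\ell(i))$, under inductive hypotheses in lower degrees), and then passes to projective space not by embedding $\bar\F(X)$ into a purely transcendental extension, but via Lemma~\ref{PState_preparation}: every $d$-dimensional $\F$-variety is birational to a hypersurface in $\CP^{d+1}_\F$, so the vanishing for all $d$-dimensional smooth projective $X$ is equivalent to the vanishing of $\bigoplus_{x\in(\CP^{d+1}_\F)^{(1)}}H^\ast(\bar x,\cdot)^{G_\F}$, which is exactly the term controlling the residue map out of $H^\ast(F_0\CP^{d+1}_{\bar\F},\cdot)$ in the Gysin tower (Lemmas~\ref{PSbeil}, \ref{PState}). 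Your description ``after embedding $\bar\F(X)$ into $\bar\F(x_1,\ldots,x_N)$'' is not what the paper does and would not directly give the bijection between the two vanishing statements; the hypersurface trick is what makes the shift from dimension $d$ and weight $i$ to $\CP^{d+1}$ and weight $i+1$ work. Likewise, Corollary~\ref{cor:Lefschetz} is a \emph{consequence} of the main equivalence, not an ingredient used in establishing it, so it should not appear as a step ``keeping the argument to codimensions $i\le\lceil d/2\rceil$.'' Finally, the mechanism for passing from $F_jX$ to $F_{j-1}X$ is the careful double induction in Proposition~\ref{prop:roof3}, hinging on the 1-semi-simplicity lifting lemmas (\ref{lem:SSopencor}, \ref{lem:liftforbeil2cor}), which is more delicate than the weight arguments you invoke, since weights alone do not suffice to lift Galois-invariant classes across boundary maps when the target groups have weight zero.

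To summarize: there is no proof of the Tate conjecture to compare with; you correctly identify the statement as open and gesture at the paper's equivalence theorem, but the specific reduction mechanism (hypersurfaces in $\CP^{d+1}$, the residue exact sequence of Lemma~\ref{exactnesslemma} filtered by codimension, and the 1-semi-simplicity lifting devices) differs in substance from what you describe.
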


\begin{conjecture}[Beilinson conjecture]\label{Beilconj}
The cycle class map in \eqref{def:cl_X} 
is injective.
\end{conjecture}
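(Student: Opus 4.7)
Since the Beilinson conjecture is a famous open problem, any honest plan must either be conditional or give a reduction to a more tractable statement; I will sketch the reduction strategy natural within the machinery developed above. The overall plan is to identify the kernel of $\cl_X^i$ with the Galois invariants of a birational $\ell$-adic invariant of $X$, and then, via functoriality and alterations, to reduce the vanishing of this invariant to the case of projective space.

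The first step is to factor $\cl_X^i$ through refined unramified cohomology. By Lemma \ref{ceilinglemma}, $H^{2i}_{i,nr}(X,\ql(i)) \cong H^{2i}(X,\ql(i))$, and by Lemma \ref{coh_injection} the natural map $H^{2i}(X,\ql(i)) \to H^{2i}(\bar X,\ql(i))^{G_\F}$ is injective. So Beilinson reduces to injectivity of the refined cycle class map $\CH^i(X)_{\ql} \to H^{2i}_{i,nr}(X,\ql(i))$. Comparing successive levels via the long exact sequence of Lemma \ref{exactnesslemma} introduces residues landing in cohomology of generic points of codimension-$j$ subvarieties. Passing to Galois invariants via the Hochschild--Serre sequence \eqref{HSnrexact} and using $1$-semi-simplicity (Lemmas \ref{SStrans}, \ref{SSdef}) to move between invariants and coinvariants at each step, the kernel should ultimately be controlled by groups of the form $H^{i'}(F_0 \bar Y,\ql(j'))^{G_\F}$ for smooth subvarieties $Y\subset X$ and appropriate indices $i',j'$.

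By Lemma \ref{F0pullback}, pullback along a dominant generically finite morphism is injective on invariant $F_0$-cohomology, so combining this with de Jong's alterations \cite{deJong} and Corollary \ref{cor:F0vanish} reduces the vanishing for arbitrary smooth projective $Y$ to the case $Y = \CP^n$. This matches Theorem \ref{thm:main:Tate-Beilinson-ss}, which identifies the vanishing $H^{2i}(F_0 \CP^n_{\bar \F},\ql(i+1))^{G_\F}=0$ for $i,n\geq 2$ as exactly what is needed. The hard part is this unconditional vanishing on projective space: Corollary \ref{F0weight} only forces weights $\geq -2$, and therefore does not rule out Galois-invariant classes in the relevant degrees. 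Overcoming this obstacle would require either constructing new algebraic cycles from the function field of projective space, or sharpening Deligne's weight bounds on the Galois-invariant subspace, and it is precisely this obstacle that the paper reformulates rather than resolves.
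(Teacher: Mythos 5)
The statement you were asked to prove is Conjecture~\ref{Beilconj}, which the paper records as an \emph{open conjecture} and does not prove. There is therefore no proof in the paper to compare against. You were right to flag this up front rather than manufacture an argument, and your sketch of the reduction machinery is a reasonable summary of what the paper actually does with the conjecture (namely, establishes equivalences, not a proof).

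A few technical corrections to your sketch are worth noting, since they show where the paper's argument is sharper than your heuristic. First, the identification of the kernel of $\cl_X^i$ is not ``injectivity of a map $\CH^i(X)_{\ql}\to H^{2i}_{i,nr}(X,\ql(i))$''; the precise statement (Theorem~\ref{kercyc_1}, quoting \cite{Sch-refined}) is that
$$
\ker(\cl_X^i)\;\cong\; \urcoh{2i-1}{i-2}{X}{\ql(i)}\big/\coh{2i-1}{X}{\ql(i)},
$$
i.e.\ a quotient of a degree-$(2i-1)$ refined unramified cohomology group, which Corollary~\ref{kercyc} then identifies with $\urcoh{2i-1}{i-2}{\bar X}{\ql(i)}^{G_\F}$. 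The degree is $2i-1$, not $2i$, and the filtration index is $i-2$, not $i$. Second, the descent from $F_{i-2}\bar X$ to $F_0\bar X$ (Proposition~\ref{prop:roof3}, Corollary~\ref{cor:Bupdown}) is \emph{conditional}: it requires the Tate and $1$-semi-simplicity conjectures in lower degree and dimension as hypotheses at each step, so the Beilinson part of Theorem~\ref{thm:main:Tate-Beilinson-ss} genuinely needs the Tate and semi-simplicity inputs running alongside it; it is not a self-contained reduction of Beilinson alone. Third, the final pass from arbitrary $X$ to $\CP^{d+1}$ goes through hypersurfaces via Lemma~\ref{PState_preparation} and Lemma~\ref{PSbeil}, not only through alterations and Corollary~\ref{cor:F0vanish}. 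Your closing diagnosis — that the unconditional vanishing $\nrcoh{2i}{0}{\CP^n_{\bar\F}}{\ql(i+1)}^{G_\F}=0$ is exactly the obstacle the paper reformulates rather than resolves, and that the weight bound $\geq -2$ from Corollary~\ref{F0weight} is too weak to force it — is accurate and is the right place to locate the difficulty.
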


The next conjecture which is often attributed to Grothendieck--Serre (see \cite[Conjecture 12.5]{jannsen-LMN}) concerns the action of $G_\F$ on the cohomology of $\Xbar$.

\begin{conjecture}[Grothendieck--Serre semi-simplicity conjecture]\label{SSconj-strong}
The $G_\F$-action on $H^i(\bar X,\Q_\ell(n))$
 is semi-simple, i.e.\ the Frobenius action is given by a diagonalizable matrix.  
\end{conjecture}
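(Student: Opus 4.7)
This is a well-known wide-open problem, so any honest plan must either be conditional or proceed indirectly. I would pursue two complementary strategies, both building on the 1-semi-simplicity machinery developed above.

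The direct, conditional strategy is in the spirit of Milne's reduction. By Deligne's purity (Theorem \ref{smoothweight}), the geometric Frobenius $F$ acts on $V := H^i(\bar X,\Q_\ell(n))$ with eigenvalues of absolute value $q^{(i-2n)/2}$; distinct eigenvalues give orthogonal generalized eigenspaces, so the problem reduces to showing that within each generalized $F$-eigenspace $V_\alpha \subset V$ the eigenspace coincides with the generalized eigenspace. After replacing $\F$ by a finite extension $\F'/\F$ over which $\alpha$ becomes an eigenvalue of absolute value $1$ and applying a suitable Tate twist, this becomes exactly the statement that $V_\alpha$ is 1-semi-simple in the sense of Definition \ref{1semisimple_def}, i.e.\ that $V_\alpha^{G_{\F'}} \to (V_\alpha)_{G_{\F'}}$ is an isomorphism (Lemma \ref{SSdef}). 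Assuming the Tate conjecture (Conjecture \ref{Tateconj}), the relevant $G_{\F'}$-invariants are hit by the cycle class map and hence 1-semi-simple by Lemma \ref{SScycle} combined with Lemma \ref{SStrans}. The main obstacle on this route is the passage from cycles in middle-weight, even-degree cohomology to arbitrary $(i,n)$: this requires Grothendieck's Lefschetz standard conjecture, itself wide open over $\F$. So the direct approach yields only a conditional result.

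The indirect strategy, the one adopted by the paper, is to circumvent the Lefschetz standard conjecture altogether. By Theorem \ref{thm:main:Tate-Beilinson-ss}, semi-simplicity (jointly with Tate and Beilinson for all smooth projective $\F$-varieties) is equivalent to the vanishing of the birational invariants $H^{2i}(F_0\CP^n_{\bar\F},\Q_\ell(i+1))^{G_\F}$ for all $i,n \geq 2$. The proof plan thus becomes: translate the semi-simplicity conjecture into the concrete question of whether these Galois-invariant classes in the \'etale cohomology of function fields of projective spaces vanish. The hardest part remains the vanishing itself, but the virtue of this reformulation is that it converts an abstract representation-theoretic assertion into a tangible cohomological-birational question about very explicit varieties, where concrete techniques (alterations, Gysin sequences, weight arguments as developed in Section 3) become applicable.
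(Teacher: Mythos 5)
This statement is \emph{Conjecture} \ref{SSconj-strong}, not a theorem: the paper states it without proof, and indeed no unconditional proof is known. You correctly recognize this, and your recognition that the right move is to reformulate rather than prove is aligned with what the paper actually does (Theorem \ref{thm:main:Tate-Beilinson-ss} gives an equivalence, not a proof). So there is no paper proof to compare against, and the bulk of your "proposal" is a survey of the surrounding landscape rather than a proof attempt.

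One technical point in your "direct conditional strategy" deserves a correction. You propose to reduce semi-simplicity within a generalized eigenspace $V_\alpha$ to $1$-semi-simplicity by passing to a finite extension $\F'/\F$ and applying a Tate twist so that $\alpha$ becomes the eigenvalue $1$. This does not work in general: by Deligne's purity, $\alpha$ is a $q$-Weil number of the appropriate weight, but it need not be a root of unity times an integer power of $q^{1/2}$, so no Tate twist composed with a finite base change can normalize an arbitrary Frobenius eigenvalue to $1$. The correct reduction — which the paper records in Remark \ref{rem:1-ss_implies_ss} and attributes to Milne — proceeds by passing to the product $X\times X$: if $E_\lambda\subset H^i(\bar X,\bar\Q_\ell(n))$ is a nonzero generalized eigenspace, Poincar\'e duality supplies a Frobenius eigenvector $v\in H^{2d-i}(\bar X,\bar\Q_\ell(d-n))$ with eigenvalue $\lambda^{-1}$, so $E_\lambda\otimes v\cdot\bar\Q_\ell$ sits inside the generalized $1$-eigenspace of $H^{2d}(\bar X\times\bar X,\bar\Q_\ell(d))$, and $1$-semi-simplicity for $X\times X$ forces $E_\lambda$ to be an honest eigenspace. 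You should replace your Tate-twist step with this product construction. Your remaining observations — that the Tate conjecture forces the invariants in middle even degree to be algebraic (Lemma \ref{SScycle}, Lemma \ref{SStrans}), that going beyond middle even degree requires input akin to the standard conjectures (the paper cites \cite{milne-AJM} and $\sim_{num}=\sim_{hom}$ in Remark \ref{Remark_after_conjectures}), and that the paper's contribution is the equivalence with vanishing of the birational invariant of function fields of projective spaces — are accurate.
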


The following is an important weak version of the above conjecture.

\begin{conjecture}[$1$-semi-simplicity conjecture]\label{SSconj}
The $G_\F$-action on $H^i(\bar X,\Q_\ell(n))$
 is $1$-semi-simple, i.e.\ the Frobenius action is given by matrix whose Jordan blocks at the eigenvalue $1$ are trivial. 
 Equivalently, the natural map 
$$\coh{2i}{\Xbar}{\ql(i)}^{G_\F}\rightarrow\coh{2i}{\Xbar}{\ql(i)}_{G_\F}$$
is an isomorphism.
\end{conjecture}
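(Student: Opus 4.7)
The plan is to verify the ``equivalently'' clause of Conjecture \ref{SSconj}, i.e.\ that the $G_\F$-action being $1$-semi-simple on $H^i(\bar X,\Q_\ell(n))$ for all pairs $(i,n)$ is equivalent to the natural maps $\coh{2i}{\Xbar}{\ql(i)}^{G_\F}\to\coh{2i}{\Xbar}{\ql(i)}_{G_\F}$ being isomorphisms for all $i$. The statement of the conjecture itself --- the truth of either formulation --- is of course open in general. The two inputs driving the equivalence are Definition \ref{1semisimple_def} and Deligne's theorem on weights.

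First I would invoke Definition \ref{1semisimple_def}: for any $V\in Rep(G_\F,\ql)$, triviality of the Jordan blocks of Frobenius at the eigenvalue $1$ is equivalent to bijectivity of the canonical map $V^{G_\F}\to V_{G_\F}$. Applying this with $V=\coh{i}{\Xbar}{\ql(n)}$ reformulates the $1$-semi-simplicity of $H^i(\bar X,\Q_\ell(n))$ as the bijectivity of $\coh{i}{\Xbar}{\ql(n)}^{G_\F}\to\coh{i}{\Xbar}{\ql(n)}_{G_\F}$ for each $(i,n)$.

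Next I would reduce this family of conditions to the ``middle'' subfamily $i=2n$ using weights. By Deligne's Weil conjectures, every Frobenius eigenvalue on $\coh{i}{\Xbar}{\ql(n)}$ has complex absolute value exactly $q^{(i-2n)/2}$ (with $q=|\F|$); hence $1$ can be an eigenvalue only when $i=2n$. In every other bidegree, $\id-F$ is invertible on $\coh{i}{\Xbar}{\ql(n)}$, so the exact sequence \eqref{frob_sequence} forces $\coh{i}{\Xbar}{\ql(n)}^{G_\F}=\coh{i}{\Xbar}{\ql(n)}_{G_\F}=0$, and $1$-semi-simplicity holds tautologically. Thus the condition over all $(i,n)$ collapses onto the subfamily $\coh{2i}{\Xbar}{\ql(i)}^{G_\F}\to\coh{2i}{\Xbar}{\ql(i)}_{G_\F}$, as claimed.

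There is no serious obstacle in this reduction: the whole argument is a formal consequence of Definition \ref{1semisimple_def} and Deligne's weight theorem \cite{deligne-weil}. The actual difficulty lies in proving the conjecture itself, which is known only in special cases and which the main theorems of this paper characterize in terms of the vanishing of certain birational invariants of projective space.
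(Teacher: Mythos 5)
You correctly recognize that Conjecture \ref{SSconj} is not a statement to be proven --- it is open --- and that the only verifiable content is the ``Equivalently'' clause reducing $1$-semi-simplicity of $H^i(\bar X,\Q_\ell(n))$ for all $(i,n)$ to bijectivity of $\coh{2i}{\Xbar}{\ql(i)}^{G_\F}\to\coh{2i}{\Xbar}{\ql(i)}_{G_\F}$. Your derivation is correct and matches the reasoning the paper relies on implicitly: Definition \ref{1semisimple_def} and Lemma \ref{SSdef} equate $1$-semi-simplicity of $V$ with bijectivity of $V^{G_\F}\to V_{G_\F}$, and Deligne's purity theorem for smooth projective varieties (every Frobenius eigenvalue on $H^i(\bar X,\Q_\ell(n))$ has absolute value exactly $q^{(i-2n)/2}$, slightly sharper than the one-sided bound recorded in the paper's Theorem \ref{smoothweight} but used freely elsewhere, e.g.\ in the proof of Lemma \ref{coh_injection} and in Proposition \ref{roof1}) shows that $\id-F$ is invertible on $H^i(\bar X,\Q_\ell(n))$ whenever $i\neq 2n$, so those bidegrees contribute nothing via the exact sequence \eqref{frob_sequence}. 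The paper does not spell this out, as it is a standard reduction, but your argument is exactly what justifies the phrasing of the conjecture.
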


\begin{remark} \label{rem:1-ss_implies_ss}
Clearly, Conjecture \ref{SSconj-strong} implies \ref{SSconj}.
Milne observed that in fact both conjectures are equivalent in the sense that Conjecture \ref{SSconj} for $X\times X$ implies Conjecture \ref{SSconj-strong} for $X$,  see \cite[Remark 8.6]{milne-AJM}.
We recall the argument for convenience.
Let $E_\lambda \subset H^i(\bar X,\bar \Q_\ell(n)) $ be the generalized eigenspace  for the Frobenius action with eigenvalue $\lambda\in \bar \Q_\ell$.
If $E _\lambda\neq 0$, then there is a non-zero class $v\in H^{2d-i}(\bar X,\bar \Q_\ell(d-n))$ which is a Frobenius-eigenvector for the eigenvalue $\lambda^{-1}$,  where $d=\dim X$.
But then 
$$
E _\lambda\otimes v\cdot \bar \Q_\ell \subset H^{2d}(\bar X\times \bar X,\bar \Q_\ell(d))
$$ 
lies in the generalized  eigenspace for eigenvalue $1$ 
and the $1$-semi-simplicity for $X\times X$ implies that the Frobenius action on $E_\lambda$ must be semi-simple,  hence $E_\lambda$ is in fact an eigenspace for the eigenvalue $\lambda$, as we want.
\end{remark}

We also have the following result due to Milne, see \cite[Proposition~8.2 and Remark~8.5]{milne-AJM}.

\begin{proposition}[Milne]\label{prop:Remark_after_conjectures} 
The Tate conjecture for divisors on $X$ implies the $1$-semi-simplicity conjecture for $H^2(\bar X,\mathbb Q_\ell(1))$.  
\end{proposition}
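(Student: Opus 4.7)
The plan is to translate the Tate conjecture for divisors on $X$ into the vanishing of Galois invariants on the quotient $V/I$, where $V := H^2(\bar X, \Q_\ell(1))$ and $I \subseteq V$ is the image of the cycle class map $\cl_{\bar X}^1 : \NS(\bar X)_{\Q_\ell} \hookrightarrow V$, and then to combine this with Lemma \ref{SScycle} via a diagram chase.

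First I would observe that Tate for divisors on $X$ forces the equality $V^{G_\F} = I^{G_\F}$. By Hochschild--Serre applied to $\mathbb{G}_m$ (using Hilbert 90 and $\Br(\F) = 0$) together with Lang's theorem (which yields $H^1(G_\F, \Pic^0(\bar X)) = 0$), one has $\NS(X) = \NS(\bar X)^{G_\F}$, so the image of $\cl_X^1$ sits inside $I \cap V^{G_\F} = I^{G_\F}$; Tate then gives $V^{G_\F} = I^{G_\F}$.

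Next I would show that Tate for divisors on $X$ also forces $(V/I)^{G_\F} = 0$. Taking the $\ell$-adic limit of the Kummer sequences on $X$ and $\bar X$, and invoking the Hochschild--Serre isomorphism $H^2(X,\Q_\ell(1)) \cong V^{G_\F}$ (which follows from the weight-based vanishing $H^1(\bar X,\Q_\ell(1))_{G_\F} = 0$, cf.\ Lemma \ref{coh_injection}), one obtains short exact sequences
$$0 \to \NS(X)_{\Q_\ell} \to V^{G_\F} \to V_\ell \Br(X) \to 0 \quad \text{and} \quad 0 \to \NS(\bar X)_{\Q_\ell} \to V \to V_\ell \Br(\bar X) \to 0,$$
where $V_\ell(-) := T_\ell(-) \otimes_{\Z_\ell} \Q_\ell$. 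The first identifies Tate for divisors on $X$ with the vanishing $V_\ell \Br(X) = 0$, while the second identifies $V/I$ with $V_\ell \Br(\bar X)$. Applying the Hochschild--Serre spectral sequence for $\mathbb{G}_m$ at the level of $\ell$-primary torsion, and using that $\cd_\ell(G_\F) = 1$ (which kills the higher $E_2^{p,q}$-terms on $\ell$-primary torsion) together with Lang's theorem for the $\Pic^0$ part and the finiteness of $\NS(\bar X)\{\ell\}$, one finds that the kernel of $\Br(X)\{\ell\} \to \Br(\bar X)\{\ell\}^{G_\F}$ is finite. Passing to Tate modules and tensoring with $\Q_\ell$ yields $V_\ell \Br(X) \cong V_\ell \Br(\bar X)^{G_\F} = (V/I)^{G_\F}$, hence $(V/I)^{G_\F} = 0$.

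Finally I would run the long exact Galois cohomology sequence for $0 \to I \to V \to V/I \to 0$:
$$\cdots \to V^{G_\F} \to (V/I)^{G_\F} \to I_{G_\F} \to V_{G_\F} \to (V/I)_{G_\F} \to 0.$$
From $(V/I)^{G_\F} = 0$, the map $I_{G_\F} \to V_{G_\F}$ is injective. By Lemma \ref{SScycle}, $I^{G_\F} \to I_{G_\F}$ is an isomorphism; combined with $V^{G_\F} = I^{G_\F}$ from the first step, the natural map $V^{G_\F} \to V_{G_\F}$ is injective, hence an isomorphism by the dimension equality implicit in \eqref{frob_sequence}. By Lemma \ref{SSdef}, $V$ is $1$-semi-simple. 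The hard part will be the Brauer group comparison in the second step, which requires careful handling of the $d_2$-differential in the Hochschild--Serre spectral sequence for $\mathbb{G}_m$ at the level of $\ell$-primary torsion.
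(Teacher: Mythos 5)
The paper does not reprove this proposition; it cites \cite[Proposition~8.2 and Remark~8.5]{milne-AJM}, so the question is whether your argument is sound, and whether it matches Milne's. Your Step~1 (Tate forces $V^{G_\F}=I^{G_\F}$) and Step~3 (the Galois cohomology diagram chase, using Lemma~\ref{SScycle}) are correct granting Step~2. Step~2 is where the argument breaks down.

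In Step~2 you only establish that the kernel of $\Br(X)\{\ell\}\to\Br(\bar X)\{\ell\}^{G_\F}$ is finite, which after passing to $V_\ell$ gives an \emph{injection} $V_\ell\Br(X)\hookrightarrow (V_\ell\Br(\bar X))^{G_\F}$. Finiteness of the kernel says nothing about the cokernel, so the claimed isomorphism is not proved; and the cokernel is precisely the obstruction you are trying to kill. Indeed, identifying $V_\ell\Br(X)\cong V^{G_\F}/I^{G_\F}$ and $(V_\ell\Br(\bar X))^{G_\F}\cong(V/I)^{G_\F}$, the long exact Galois cohomology sequence for $0\to I\to V\to V/I\to 0$ shows
$$
\coker\bigl(V_\ell\Br(X)\to(V_\ell\Br(\bar X))^{G_\F}\bigr)\cong\ker\bigl(I_{G_\F}\to V_{G_\F}\bigr),
$$
and, once Tate gives $V^{G_\F}=I^{G_\F}$, the injectivity of $I_{G_\F}\to V_{G_\F}$ is (via Lemma~\ref{SScycle}) exactly equivalent to injectivity of $V^{G_\F}\to V_{G_\F}$, i.e.\ to $1$-semi-simplicity of $V$. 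So the vanishing you assert in Step~2 is a restatement of the conclusion, not a reduction. No amount of careful bookkeeping of the $d_2$-differential will fix this, and note in any case that $\cd_\ell(G_\F)=1$ does \emph{not} kill the relevant $E_2^{2,1}$-term: $\Pic(\bar X)$ has the non-torsion quotient $\NS(\bar X)$, and $H^2(\hat\Z,\Z)\cong\Q/\Z\neq 0$, so $H^2(G_\F,\NS(\bar X))$ need not vanish on $\ell$-primary parts.

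What is missing is a Galois-equivariant splitting of $I\hookrightarrow V$. This is what Milne's proof supplies, via Poincar\'e duality rather than the Brauer group. The cup-product form $\langle\alpha,\beta\rangle=\deg(\alpha\cup\beta\cup L^{\dim X-2})$ on $V=H^2(\bar X,\Q_\ell(1))$ is $G_\F$-equivariant (the hyperplane class $L$ is $G_\F$-invariant), nondegenerate on $V$ by Poincar\'e duality together with Deligne's hard Lefschetz theorem, and nondegenerate when restricted to $I=\NS(\bar X)_{\Q_\ell}$ by Matsusaka's theorem (numerical equals algebraic equivalence for divisors). Hence $V=I\oplus I^{\perp}$ as $G_\F$-modules. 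Tate then gives $(I^{\perp})^{G_\F}\cong V^{G_\F}/I^{G_\F}=0$, whence $(I^{\perp})_{G_\F}=0$ by the dimension equality in \eqref{frob_sequence}; thus $V^{G_\F}\to V_{G_\F}$ is identified with $I^{G_\F}\to I_{G_\F}$, which is an isomorphism by Lemma~\ref{SScycle}. You should replace the Brauer-group analysis in Step~2 with this pairing argument.
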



%
\begin{remark}\label{Remark_after_conjectures} 
\begin{enumerate} 
    \item\label{Remark_after_conjectures_2} It follows from Poincar\'e duality that the $1$-semi-simplicity conjecture in degree $i$ and dimension $d$ is equivalent to the $1$-semi-simplicity conjecture in degree $d-i$ and dimension $d$.  
    \item  It follows from \cite[Proposition 8.4]{milne-AJM} (see also \cite[Theorem~2]{moonen}) that the Tate conjecture and the equality $\sim_{num}=\sim_{hom}$, i.e. the coincidence of the numerical and homological equivalence of algebraic cycles with rational coefficients, for all varieties over $\F$ implies the semi-simplicity conjecture over $\F$.
    \item\label{Remark_after_conjectures_5} The Beilinson conjecture holds for divisors, cf.\ \cite[Remark~6.15(a)]{jannsen}. It also holds for zero-cycles, because it can be reduced to the case of divisors using the fact that every zero-cycle is supported on some smooth ample curve $C$ and $\Pic^0(\bar C)$, the Picard group of degree $0$ divisors over $\bar \F$, is a torsion group because every element is defined over a finite field, hence contained in a finite group by Grothendieck's representability of the Picard functor.
    \end{enumerate}
\end{remark}

Conjectures \ref{Tateconj}, \ref{Beilconj} and \ref{SSconj}  are stated for cycles in every degree. To be able to talk about these conjectures only in certain degrees we introduce the following notation:
\begin{notation}\label{deftate}
Let $d$ and $i$ be non-negative integers and recall the finite field $\F$ that is fixed throughout.
\begin{itemize}
    \item $\bigtate{i}{d}{\F}\coloneqq$ The Tate conjecture (Conjecture \ref{Tateconj}) in degree $i$ for every $d$-dimensional smooth projective $\F$-variety,
    \item $\bigSS{i}{d}{\F}\coloneqq$ The $1$-semi-simplicity conjecture (Conjecture \ref{SSconj}) in degree $i$ for every $d$-dimensional smooth projective $\F$-variety,
    \item $\bigbeil{i}{d}{\F}\coloneqq$ The Beilinson conjecture (Conjecture \ref{Beilconj}) in degree $i$ for every $d$-dimensional smooth projective $\F$-variety.
\end{itemize}
\end{notation}
We finish this section with a simple lemma.

\begin{lemma}\label{Tatedown}
Let $i$ and $d$ be non-negative integers. If $\bigtate{i}{d}{\F}$ (respectively $\bigSS{i}{d}{\F}$, $\bigbeil{i}{d}{\F}$) holds, then for all non-negative integers $p$ and $q$ satisfying $0\leq q-p\leq d-i$, $\bigtate{i-p}{d-q}{\F}$ (respectively $\bigSS{i-p}{d-q}{\F}$, $\bigbeil{i-p}{d-q}{\F}$) holds as well.
\begin{proof}
Let $X$ be a $(d-1)$-dimensional smooth projective variety over $\F$. 
We have direct sum decompositions 
$$\chow{i}{X\times\mathbb{P}^1}\simeq\chow{i}{X}\oplus\chow{i-1}{X}$$
and
$$\coh{2i}{\Xbar\times\mathbb{P}^1}{\ql(i)}\simeq\coh{2i}{\Xbar}{\ql(i)}\oplus\coh{2i-2}{\Xbar}{\ql(i-1)},$$
which are compatible with the cycle class map and the action of the Galois group. From this it follows that $\bigtate{i}{d}{\F}$ implies $\bigtate{i}{d-1}{\F}$ and $\bigtate{i-1}{d-1}{\F}$ and similarly for $\bigSS{i}{d}{\F}$ and $\bigbeil{i}{d}{\F}$. Using this inductively gives the result.
\end{proof}
\end{lemma}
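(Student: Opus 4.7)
The plan is to establish a single inductive step and then iterate it. The step will say that whichever of the three conjectures (Tate, $1$-semi-simplicity, or Beilinson) we consider, its validity in codimension $i$ on $d$-dimensional smooth projective $\F$-varieties implies its validity in codimension $i$ and in codimension $i-1$ on $(d-1)$-dimensional smooth projective $\F$-varieties. Iterating this step $q-p$ times in its codimension-preserving form and $p$ times in its codimension-decreasing form will then yield the claim; the hypothesis $0\leq q-p\leq d-i$ ensures all intermediate dimensions and codimensions remain non-negative (indeed $d-q\geq i-p$ throughout).

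To carry out the inductive step, I would let $X$ be a smooth projective $\F$-variety of dimension $d-1$ and set $Y := X \times_\F \CP^1$, which is smooth projective of dimension $d$. The projective bundle formula provides canonical $G_\F$-equivariant direct sum decompositions
$$
\CH^i(Y)_{\Q_\ell} \cong \CH^i(X)_{\Q_\ell} \oplus \CH^{i-1}(X)_{\Q_\ell}
$$
and
$$
H^{2i}(\bar Y, \Q_\ell(i)) \cong H^{2i}(\bar X, \Q_\ell(i)) \oplus H^{2i-2}(\bar X, \Q_\ell(i-1)),
$$
where in each decomposition the first summand corresponds to pullback along the projection $Y \to X$ and the second to multiplication by the hyperplane class on $\CP^1$ (equivalently, push-forward from a fiber). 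Since the cycle class map \eqref{def:cl_X} is compatible with pullbacks and with products, the two decompositions match up under it.

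The remaining verification is that each of the three conjectural properties passes to direct summands of such a cycle class map. Surjectivity onto Galois invariants (Tate) and injectivity (Beilinson) pass to direct summands componentwise, because both the source and the target decomposition are compatible with the map. The $1$-semi-simplicity property passes to sub- and quotient-representations by Lemma \ref{SStrans}(1), hence in particular to direct summands. This yields the implications $\bigtate{i}{d}{\F} \Rightarrow \bigtate{i}{d-1}{\F}$ and $\bigtate{i-1}{d-1}{\F}$, and likewise for $\bigSS{i}{d}{\F}$ and $\bigbeil{i}{d}{\F}$, completing the inductive step.

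I do not anticipate a serious obstacle. The only mild subtlety is that for the $1$-semi-simplicity case one should invoke Lemma \ref{SStrans}(1) rather than try to verify stability under direct summands by hand, since it is not completely immediate a priori that this property is preserved in that way; for Tate and Beilinson the preservation is immediate from the compatibility of the projective bundle decomposition with the cycle class map.
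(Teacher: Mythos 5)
Your proposal is correct and follows essentially the same route as the paper: both form $Y = X\times\CP^1$, invoke the projective bundle decompositions of $\CH^i$ and $H^{2i}$ compatibly with the cycle class map and the Galois action, deduce the single-step implications, and iterate. Your remarks on why each property passes to direct summands (including citing Lemma~\ref{SStrans}(1) for $1$-semi-simplicity) merely spell out a point the paper leaves implicit.
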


\subsection{Generalizations to arbitary varieties} \label{subsec:Tateetc-open}

Jannsen has formulated analogues of the Hodge and Tate  conjecture for arbitrary varieties (not necessarily smooth nor proper), see \cite[Conjecture 7.2 and 7.3]{jannsen-LMN}. 
He has shown that these variants essentially reduce to the case of smooth proper varieties, see \cite[Theorems 7.9 and 7.10]{jannsen-LMN},  if one assumes the semi-simplicity conjecture. 
(Jannsen also assumes resolution of singularities, but after the work of de Jong this can be replaced by alterations \cite{deJong}.)

 Propositions \ref{SSopenvar},  \ref{prop:Tateopenvar} and   \ref{SSsingular} below show that the Tate and $1$-semi-simplicity conjectures for smooth projective varieties imply their analogues for open or singular varieties under certain conditions. They are similar to \cite[Theorem~7.10]{jannsen-LMN} and \cite[Theorem~12.7]{jannsen-LMN}. 
In all cases, Jannsen proves stronger statements under stronger assumptions, but his ideas can be adapted to prove the versions stated below. 

\begin{proposition}\label{SSopenvar} 
 If 
$\coh{2i}{\Xbar}{\ql(i)} $ is 1-semi-simple
  for every $d$-dimensional smooth projective $\F$-variety $X$, then the same is true for every $d$-dimensional smooth quasi-projective $\F$-variety.
\end{proposition}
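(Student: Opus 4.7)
The plan is two reductions. First, reduce via alterations to the case where $U$ is an open subvariety of a smooth projective $d$-dimensional $\F$-variety. Second, peel off the complement using the Gysin sequence and Deligne's weight bounds.

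First I would realize $U$ as a $G_\F$-subrepresentation of the cohomology of such an open subvariety. Given $U$ smooth quasi-projective of dimension $d$, choose a projective compactification $U \subset Y$. By de Jong's theorem on alterations, there is a proper generically finite surjective map $f: X \to Y$ with $X$ regular (hence, $\F$ being perfect, smooth) projective of dimension $d$. Setting $V := f^{-1}(U)$, the restriction $f|_V: V \to U$ is a proper generically finite surjective morphism of smooth $\F$-varieties, so Lemma \ref{bmtoet} gives $(f|_V)_\ast \circ (f|_V)^\ast = \deg(f) \cdot \id$ on $H^{2i}(\bar U, \Q_\ell(i))$. Hence the pullback
$$(f|_V)^\ast: H^{2i}(\bar U, \Q_\ell(i)) \longrightarrow H^{2i}(\bar V, \Q_\ell(i))$$
is a $G_\F$-equivariant injection, and Lemma \ref{SStrans}(1) reduces the proposition to proving $1$-semi-simplicity of $H^{2i}(\bar V, \Q_\ell(i))$.

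For the second step, I would prove the following refinement by induction on $\dim Z$: for any smooth quasi-projective $d$-dimensional $\F$-variety $W$ with $H^{2i}(\bar W, \Q_\ell(i))$ already $1$-semi-simple, and any closed subset $Z \subset W$, the cohomology $H^{2i}(\overline{W \setminus Z}, \Q_\ell(i))$ is $1$-semi-simple. This refinement applied to $W = X$ and $Z = X \setminus V$ finishes the proof. The base $Z = \emptyset$ is tautological. For the inductive step, let $c$ be the minimal codimension of an irreducible component of $Z$ and let $Z_1 \subset Z$ be the union of components of codimension $c$. The Gysin sequence \eqref{Gysin} reads
$$H^{2i}(\bar W, \Q_\ell(i)) \xrightarrow{j^\ast} H^{2i}(\overline{W \setminus Z_1}, \Q_\ell(i)) \xrightarrow{\partial} H^{2i+1-2c}_{BM}(\bar Z_1, \Q_\ell(i-c)) \xrightarrow{i_\ast} H^{2i+1}(\bar W, \Q_\ell(i)).$$
By Lemma \ref{BMweight} the boundary term has weights $\geq (2i+1-2c) - 2(i-c) = 1$, so its subrepresentation $K := \ker(i_\ast)$ satisfies $K^{G_\F} = 0 = K_{G_\F}$ and is trivially $1$-semi-simple. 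The image $Q$ of $j^\ast$, being a $G_\F$-quotient of the $1$-semi-simple $H^{2i}(\bar W, \Q_\ell(i))$, is $1$-semi-simple by Lemma \ref{SStrans}(1). Applying Lemma \ref{SStrans}(2) to the exact sequence $Q \to H^{2i}(\overline{W \setminus Z_1}, \Q_\ell(i)) \to K$ (with $K^{G_\F} = 0$) shows that $H^{2i}(\overline{W \setminus Z_1}, \Q_\ell(i))$ is $1$-semi-simple. Now $W \setminus Z_1$ is again smooth quasi-projective of dimension $d$ and the residual closed subset $Z \cap (W \setminus Z_1)$ has strictly smaller dimension than $Z$, so the inductive hypothesis completes the argument.

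The main obstacle is the weight bound: the argument depends crucially on $H^{2i+1-2c}_{BM}(\bar Z_1, \Q_\ell(i-c))$ having weights $\geq 1$, ruling out $1$ as a Frobenius eigenvalue on the boundary term. This is why the argument works specifically for the middle twist $\Q_\ell(i)$ in degree $2i$; the analogous bound would fail for, say, $\Q_\ell(i+1)$ or degree $2i-1$, breaking the induction.
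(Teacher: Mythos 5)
Your proof is correct and follows essentially the same strategy as the paper's: reduce by alterations (via $f_\ast f^\ast = \deg f\cdot\id$ and Lemma~\ref{SStrans}) to an open $V$ inside a smooth projective $X$, then use the Gysin sequence and Lemma~\ref{BMweight} to see that the boundary term has weights $\geq 1$ and hence no eigenvalue $1$, so Lemma~\ref{SStrans}(2) applies. The only difference is that you stratify the complement $Z := X\setminus V$ into equidimensional pieces and run an induction on $\dim Z$; this is unnecessary, since Lemma~\ref{BMweight} already holds for arbitrary quasi-projective schemes (its proof absorbs exactly the stratification you are redoing), and the paper applies the Gysin sequence for $Z\hookrightarrow X$ directly, with $c=\dim X-\dim Z$ and $H^{2i-2c+1}_{BM}(\bar Z,\Q_\ell(i-c))$ of weights $\geq 1$, in one step. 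Your argument is valid but contains a redundant induction.
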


\begin{proof}
Let $U$ be a smooth quasi-projective $\F$-variety of dimension $d$.
By \cite{deJong}, applied to a projective closure of $U$, we get a smooth projective variety $X$ with a dense open subset $V\subset X$ together with a generically finite proper surjective map $\tau : V\to U$. 
By \eqref{eq:f_*f^*}, we have
$$
 \tau_*\circ \tau^*=\deg\tau\cdot \identity :\coh{2i}{\overline{U}}{\ql(i)}\longrightarrow\coh{2i}{\overline{U}}{\ql(i)} .
$$
Hence,  the pullback map $\tau^\ast$ is injective and so it suffices by Lemma  \ref{SStrans} to prove that $H^{2i}(\bar V,\Q_\ell(i))$ is 1-semi-simple.
In other words, we may assume that $U=V$ admits a smooth projective closure $X$.
Let $Z=X\setminus U$.
Then the sequence
\begin{equation}\label{SSopen_seq1}
    \coh{2i}{\bar{X}}{\ql(i)}\rightarrow\coh{2i}{\bar{U}}{\ql(i)}\rightarrow\cohbm{2i-2c+1}{\bar Z}{\ql(i-c)}
\end{equation}
is exact by the exactness of (\ref{Gysin}), where we used Lemma \ref{bmtoet} to identify the Borel-Moore cohomology of $\bar {U}$ and $\bar X$ with the $\ell$-adic \'etale cohomology of $\bar{U}$ and $\bar X$, respectively. 

Since $X$ is smooth and projective,  $G_\F$ acts $1$-semi-simply on $\coh{2i}{\bar{X}}{\ql(i)}$ by assumptions. 
On the other hand, $\cohbm{2i-2c+1}{\bar{Z}}{\ql(i-c)}$ has weights $\geq 1$ (see Lemma \ref{BMweight}) and so the Frobenius action does not have $1$ as an eigenvalue, which shows that $1$-semi-simplicity holds for trivial reasons. 
We then conclude from Lemma \ref{SStrans} that $\coh{2i}{\bar{U}}{\ql(i)}$ is a $1$-semi-simple representation of $G_\F$, as we want.
\end{proof}

\begin{proposition}\label{prop:Tateopenvar} 
If  for every $d$-dimensional smooth projective  variety $X$ over the finite field $\F$, 
the cycle class map
$$cl^{i}_{X}:\chow{i}{X}_{\ql}\longrightarrow\coh{2i}{\bar X}{\ql(i)}^{G_\F}$$
is surjective and 
 $\coh{2i}{\Xbar}{\ql(i)}$ is 1-semi-simple,
then the same is true for every $d$-dimensional smooth quasi-projective $\F$-variety.
\end{proposition}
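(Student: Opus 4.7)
The $1$-semi-simplicity conclusion is exactly Proposition~\ref{SSopenvar}, so the remaining task is to show that for every smooth quasi-projective $U$ of dimension $d$ over $\F$, the cycle class map $\cl_U^i\colon \CH^i(U)_{\Q_\ell}\to H^{2i}(\bar U,\Q_\ell(i))^{G_\F}$ is surjective. The strategy closely parallels Proposition~\ref{SSopenvar}: use de Jong's alterations to reduce to the case of a dense open subset of a smooth projective variety; employ a weight argument to promote a Galois-invariant class on the open piece to the cohomology of the compactification; then use the $1$-semi-simplicity hypothesis to upgrade to a Galois-invariant lift; then invoke Tate on the compactification.

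Given $U$, fix a projective closure and apply \cite{deJong} to obtain a proper generically finite surjection $\tau\colon X\to \bar U$ from a smooth projective $\F$-variety $X$ of dimension $d$. Set $V=\tau^{-1}(U)$, a smooth open subset of $X$, and note that $\tau|_V\colon V\to U$ is proper and generically finite. For $\alpha\in H^{2i}(\bar U,\Q_\ell(i))^{G_\F}$, if $\tau^*\alpha=\cl_V(\gamma)$ for some $\gamma\in\CH^i(V)_{\Q_\ell}$, then \eqref{eq:f_*f^*} gives
$$\deg(\tau)\cdot\alpha=\tau_*\tau^*\alpha=\tau_*\cl_V(\gamma)=\cl_U(\tau_*\gamma),$$
so $\alpha\in \im(\cl_U^i)$. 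Hence it suffices to treat the case where $U=V$ is a dense open subset of a smooth projective $\F$-variety $X$ of dimension $d$, which I assume from now on.

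Let $j\colon V\hookrightarrow X$ be the inclusion and set $Z=X\setminus V$. The localization long exact sequence furnishes a $G_\F$-equivariant exact sequence
$$H^{2i}(\bar X,\Q_\ell(i))\xrightarrow{j^*}H^{2i}(\bar V,\Q_\ell(i))\longrightarrow H^{2i+1}_{\bar Z}(\bar X,\Q_\ell(i)).$$
The crucial input is the weight estimate that $H^{2i+1}_{\bar Z}(\bar X,\Q_\ell(i))$ has Frobenius weights $\geq 1$, so that its $G_\F$-invariants vanish. This is obtained by stratifying $Z$ into smooth, locally closed, equi-codimensional strata $S\subset Z$ (each of some codimension $c$ in $X$) and iterating the Gysin sequence \eqref{Gysin}: each graded piece of the resulting filtration on $H^{2i+1}_{\bar Z}(\bar X,\Q_\ell(i))$ is a subquotient of some $H^{2i+1-2c}(\bar S,\Q_\ell(i-c))$, which by Theorem~\ref{smoothweight} has weights $\geq (2i+1-2c)-2(i-c)=1$. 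Taking $G_\F$-invariants of the displayed sequence, we conclude $H^{2i}(\bar V,\Q_\ell(i))^{G_\F}\subset \im(j^*)$.

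To upgrade this to a Galois-invariant lift, invoke the $1$-semi-simplicity hypothesis on $H^{2i}(\bar X,\Q_\ell(i))$: there is a $G_\F$-equivariant decomposition $H^{2i}(\bar X,\Q_\ell(i))=H^{2i}(\bar X,\Q_\ell(i))^{G_\F}\oplus A'$, where $A'$ is the sum of generalized Frobenius eigenspaces with eigenvalue $\neq 1$. Writing any $j^*$-preimage of $\alpha$ as $\beta_0+\beta'$ with $\beta_0\in H^{2i}(\bar X,\Q_\ell(i))^{G_\F}$ and $\beta'\in A'$, the image $j^*\beta'$ lies in the $G_\F$-equivariant quotient $j^*(A')$, whose Frobenius eigenvalues are all $\neq 1$, so it has no nonzero fixed vector; since $\alpha-j^*\beta_0\in H^{2i}(\bar V,\Q_\ell(i))^{G_\F}$ equals $j^*\beta'$, we conclude $\alpha=j^*\beta_0$. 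The Tate hypothesis yields $\beta_0=\cl_X(\delta)$ for some $\delta\in\CH^i(X)_{\Q_\ell}$, and restricting cycles along $j$ gives $\alpha=\cl_V(j^*\delta)$, as desired. The principal technical subtlety is the weight bound for $H^{2i+1}_{\bar Z}(\bar X,\Q_\ell(i))$ when $Z$ is singular and not equi-codimensional; the stratification and iterated Gysin argument above handles this uniformly.
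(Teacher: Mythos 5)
Your proof is correct and follows essentially the same route as the paper's: reduce via de Jong's alterations to the case of a dense open $V\subset X$ with $X$ smooth projective, use a weight argument on the localization/Gysin term for $Z=X\setminus V$ to see that a Galois-invariant class on $\bar V$ lifts to $\bar X$, use $1$-semi-simplicity to make the lift Galois-invariant (your explicit eigenspace decomposition is precisely the content of Lemma~\ref{SSdef}), then apply the Tate hypothesis on $X$ and restrict. The only cosmetic difference is that you phrase the weight bound via cohomology with supports plus a stratification of $Z$, whereas the paper packages the same estimate in Borel--Moore cohomology through Lemma~\ref{BMweight}.
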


\begin{proof}
By Proposition \ref{SSopenvar}, we already know the $1$-semi-simplicity part of the statement. So it remains to prove the surjectivity of the cycle class map.

Let $U$ be a smooth quasi-projective variety over $\F$.
Using alterations \cite{deJong}, one reduces the problem to the case where $U$ admits a smooth projective closure $X$.
Let $Z:=X\setminus U$.
Then there is an exact sequence \eqref{SSopen_seq1}, where $\cohbm{2i-2c+1}{\bar Z}{\ql(i-c)}$ has no Galois invariants by Lemma \ref{BMweight}.
By $1$-semi-simplicity of  $\coh{2i}{\bar{X}}{\ql(i)}$, the natural map
$$
\coh{2i}{\bar{X}}{\ql(i)}^{G_\F}\longrightarrow \coh{2i}{\bar{U}}{\ql(i)}^{G_\F}
$$
is surjective by  Lemma \ref{SSdef}.
The surjectivitiy of $\cl_X^i$ then implies the surjectivity of $\cl_U^i$, as we want. 
\end{proof}

\begin{proposition}\label{SSsingular}
Let $X$ be a quasi-projective scheme over the finite field $\F$ and assume that $\bigtate{j}{d}{\F}$ and $\bigSS{j}{d}{\F}$ hold.  
Then $\cohbm{2i}{\bar X}{\ql(i)}$ is 1-semi-simple for all 
 $i$ with $\dim X-d+j\leq i\leq j$.
\end{proposition}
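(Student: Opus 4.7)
The plan is to induct on $n := \dim X$. The base case $n = 0$ is trivial, since $H^{2i}_{BM}(\bar X, \Q_\ell(i))$ vanishes unless $i = 0$, in which case it is a permutation $G_\F$-representation and hence semi-simple.

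For the inductive step, Lemma~\ref{directsum} reduces us to the case where $X$ is reduced and equi-dimensional of dimension $n$. Let $U \subset X$ be the smooth locus (dense open since $\F$ is perfect) and $Z = X\setminus U$; by stratifying $Z$ by the codimensions of its components and iterating the argument, we may assume $Z$ has pure codimension $c \geq 1$ in $X$. The non-emptiness of $[n-d+j,\, j]$ forces $n \leq d$, and Lemma~\ref{Tatedown} then supplies $\bigtate{i}{n}{\F}$ and $\bigSS{i}{n}{\F}$ for $i \in [n-d+j,\, n]$ (for $i > n$ both $V$ and $Q$ below vanish trivially). The Gysin sequence \eqref{Gysin} yields the exact segment
\[
W := H^{2(i-c)}_{BM}(\bar Z, \Q_\ell(i-c)) \xrightarrow{i_\ast} V := H^{2i}_{BM}(\bar X, \Q_\ell(i)) \xrightarrow{j^\ast} Q := H^{2i}(\bar U, \Q_\ell(i)) \xrightarrow{\partial} R,
\]
where $R := H^{2i+1-2c}_{BM}(\bar Z, \Q_\ell(i-c))$ has weights $\geq 1$ by Lemma~\ref{BMweight}, so $R^{G_\F} = 0$. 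Proposition~\ref{SSopenvar} shows $Q$ is $1$-semi-simple, and the induction hypothesis applied to $Z$ (dimension $n-c<n$; the shifted index $i-c$ meets the range condition $(n-c)-d+j \le i-c \le j$, using $i \leq j \leq j+c$) shows that $W$ is also $1$-semi-simple.

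To invoke Lemma~\ref{SStrans}(2), it remains to verify that $V^{G_\F} \to Q^{G_\F}$ is surjective. For this, pick an alteration $\tau : \tilde X \to X$ with $\tilde X$ smooth projective of dimension $n$, and set $\tilde U := \tau^{-1}(U)$. By $\bigtate{i}{n}{\F}$ for $\tilde X$ and Proposition~\ref{prop:Tateopenvar} for $\tilde U$, combined with the Chow localization sequence and the functoriality of the cycle class map, a diagram chase (in the spirit of the proof of Proposition~\ref{prop:Tateopenvar}) shows that $j^\ast : H^{2i}(\bar{\tilde X},\Q_\ell(i))^{G_\F} \to H^{2i}(\bar{\tilde U},\Q_\ell(i))^{G_\F}$ is surjective. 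The morphism $\tau|_{\tilde U}$ is proper and generically finite between smooth varieties, so Lemma~\ref{bmtoet} gives $(\tau|_{\tilde U})_\ast (\tau|_{\tilde U})^\ast = \deg(\tau)\cdot \identity$, which yields surjectivity of $(\tau|_{\tilde U})_\ast$ on $G_\F$-invariants. Combining this with the base-change identity $\tau_\ast \circ j^\ast_{\tilde X} = j^\ast_X \circ \tau_\ast$ (proper pushforward commutes with open pullback in Borel--Moore cohomology), we obtain the desired surjection $V^{G_\F} \to Q^{G_\F}$, and Lemma~\ref{SStrans}(2) concludes that $V$ is $1$-semi-simple.

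I anticipate the main technical care to go into the reduction to pure-codimension $Z$ (via a codimension stratification of the singular locus and iteration of the Gysin argument, facilitated by Lemma~\ref{directsum}) rather than into the Galois-theoretic step, which reduces cleanly to the already-established Propositions~\ref{SSopenvar} and~\ref{prop:Tateopenvar} together with the base-change and weight lemmas.
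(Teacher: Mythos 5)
Your proof is correct and follows the same skeleton as the paper's (induction on dimension, Gysin sequence relating the singular locus $Z$, $X$, and the regular locus $U$, then Lemma~\ref{SStrans}), but resolves the key step differently. The paper applies Proposition~\ref{prop:Tateopenvar} to the irreducible components of the smooth open $U$ to conclude that $Q^{G_\F}$ consists of algebraic classes, and then lifts these to $V^{G_\F}$ via the Chow localization sequence and the cycle class map into the Borel--Moore cohomology of the (possibly singular) $X$. You instead pass through an alteration $\tau\colon\tilde X\to X$, prove the restriction $H^{2i}(\bar{\tilde X},\Q_\ell(i))^{G_\F}\to H^{2i}(\bar{\tilde U},\Q_\ell(i))^{G_\F}$ is surjective on the smooth side, and descend via $(\tau|_{\tilde U})_\ast(\tau|_{\tilde U})^\ast=(\deg\tau)\cdot\identity$ and proper base change. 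This has the advantage of never invoking the cycle class map into the Borel--Moore cohomology of a singular scheme. It is also worth noting that the surjectivity of $j^\ast_{\tilde X}$ on Galois-invariants already follows from $1$-semi-simplicity of $H^{2i}(\bar{\tilde X},\Q_\ell(i))$ together with Lemma~\ref{SSdef} and the weight argument for $\bar{\tilde Z}$ (exactly the intermediate step in the proof of Proposition~\ref{prop:Tateopenvar}); so your route does not actually require the Chow localization sequence, nor the hypothesis $\bigtate{j}{d}{\F}$ at all, which would yield a mild strengthening of the proposition.

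One small gap to flag: Lemma~\ref{directsum}(1) decomposes over \emph{connected} components, and a connected reduced quasi-projective scheme need not be equi-dimensional, so the claimed reduction ``to the case where $X$ is reduced and equi-dimensional'' is not supplied by that lemma. Relatedly, de Jong's alterations are constructed for \emph{varieties}, i.e.\ integral schemes; if $X$ is not irreducible you must take a disjoint union of alterations of its components and apply the degree formula component-wise, where the degrees can differ. Both points are routine to repair (the paper handles the analogous issue by decomposing $U$ into its irreducible components $U_s$ and carrying the degree shifts $c_s$), but as written your argument tacitly assumes $X$ irreducible when choosing $\tau$.
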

\begin{proof}
We will do an induction on the dimension of $X$. Thanks to Lemma \ref{directsum}, we may assume throughout the proof that all schemes are reduced.

Let $U:=X_{reg}$ denote the regular locus of $X$ and let $Z\coloneqq X-U$ denote its complement. 
Let $c\coloneqq\dim X-\dim U$  and let $i$ be an integer satisfying the inequalities $\dim X-d+j\leq i\leq j$. 
The sequence
\begin{equation}\label{gysinSSsing}
    \cohbm{2i-2c}{\bar{Z}}{\ql(i-c)}\longrightarrow\cohbm{2i}{\bar{X}}{\ql(i)}\longrightarrow\cohbm{2i}{\bar U}{\ql(i)}
\end{equation}
is exact by the exactness of (\ref{Gysin}).
By  Lemmas \ref{directsum} and   \ref{bmtoet} we have
\begin{align}\label{eq:oplusU_s}
\cohbm{2i}{\bar U}{\ql(i)}\cong \bigoplus_s \coh{2i-2c_s}{\bar U_s}{\ql(i-c_s)}
\end{align}
where $U_s$ denote the irreducible components of $U$ and where $c_s=\dim U-\dim U_s$.

We claim that for each component $U_s$, $G_\F$ acts $1$-semi-simply on $\coh{2i-2c_s}{\bar U_s}{\ql(i-c_s)}$ and the morphism
$$
\chow{i-c_s}{U_s}_{\ql}\longrightarrow \coh{2i-2c_s}{\bar U_s}{\ql(i-c_s)}^{G_\F}
$$
is surjective. 
To see this, we apply Proposition \ref{prop:Tateopenvar} to reduce the problem to showing that $\bigtate{i-c_s}{\dim X-c_s}{\F}$ and $\bigSS{i-c_s}{\dim X-c_s}{\F}$ hold.
By Lemma \ref{Tatedown}, we reduce this further to the validity of $\bigtate{j}{d}{\F}$ and $\bigSS{j}{d}{\F}$, which hold by assumptions. 
This last step works because we can rewrite $i-c_s$ and $\dim X-c_s$ as $i-c_s=j-(j-i+c_s)$ and $\dim X-c_s=d-(d-\dim X+c_s)$, and then apply Lemma \ref{Tatedown} because $(j-i+c_s)$, $(d-\dim X+c_s)$ are non-negative and $j-i+c_s\leq d-\dim X+c_s$ by the assumption on $i$. 
In conclusion we have thus shown that the Galois module \eqref{eq:oplusU_s} is $1$-semi-simple and the subspace of $G_\F$-invariant classes is algebraic.

 Next, $\cohbm{2i-2c}{\bar {Z}}{\ql(i-c)}$ is 1-semi-simple by the induction hypothesis because  $\dim Z<\dim X$
 and
 $\dim Z-d+j=\dim X-c-d+j\leq i-c\leq j $.
 In order to prove 1-semi-simplicity for $\cohbm{2i}{\bar{X}}{\ql(i)}$ it now suffices by  Lemma \ref{SStrans} to prove that
$$
\cohbm{2i}{\bar {X}}{\ql(i)}^{G_\F}\longrightarrow\cohbm{2i}{\bar U}{\ql(i)}^{G_\F}
$$
is surjective.
This in turn follows from the fact that the target of the above map consists of algebraic classes,  as we have shown above.
This concludes the proof of the proposition.
\end{proof}

\subsection{Generalizations to higher Chow groups}

For a smooth $\F$-scheme $X$ and a ring $R$,  we denote by 
$$
H^p_M(X,R(q)):=\mathbb H^p(X_{Zar},(z^n(-,\bullet)\otimes_{\Z} R)[-2n])
$$
 the motivic cohomology of $X$ with coefficients in $R$, 
 where $ z^n(-,\bullet)$ denotes the complex of Zariski sheaves given by Bloch's cycle complex \cite{bloch-higher-chow}. 
There are natural isomorphisms
$$
H^p_M(X,\Z(q))\cong
\CH^q(X,2q-p) 
$$
to the higher Chow groups of Bloch, see \cite[p.\ 269, (iv)]{bloch-higher-chow}.
In particular,  $H^{2p}_M(X,\Z(p))\cong \CH^p(X)$ agrees with ordinary Chow groups.

There is a canonical map $H^p_M(X,\Z(q))\to H^p_L(X,\Z(q))$ to Lichtenbaum motivic cohomology, where the latter is given by the hypercohomology of the aforementioned motivic complex in the \'etale topology:
$$
H^p_L(X,R(q)):=\mathbb H^p(X_{\et},(z^n(-,\bullet)\otimes_{\Z} R)[-2n]) .
$$
There is a reduction modulo $\ell^r$-map $H^p_L(X,\Z(q))\to H^p_L(X,\Z/\ell^r(q))$ and we have a canonical isomorphism 
$$
H^p_L(X,\Z/{\ell^r}(q))\cong H^p(X,\mu_{\ell^r}^{\otimes n}),
$$
where the right hand side denotes ordinary \'etale cohomology, see \cite[Theorem 1.5]{geisser-levine}.
Taking inverse limits, we thus arrive at a natural map
$$
H^p_M(X,\Z(q))\otimes_{\Z} \Z_\ell\longrightarrow H^p(X,\Z_\ell(n)).
$$
We can compose this with the natural map $H^p(X,\Z_\ell(n))\to H^p(\bar X,\Z_\ell(n))  $ to get a higher cycle map to $H^p(\bar X,\Z_\ell(n))$, which lands  in the subspace of Galois-invariant classes.

\begin{conjecture}[Tate conjecture for higher Chow groups] \label{conj:generalized-Tate-function}
Let $X$ be a smooth quasi-projective scheme over the finite field $\F$. 
Then the natural map 
$$
H^p_M(X,\Z(q))\otimes_{\Z } \Q_\ell\longrightarrow H^p(\bar X,\Q_\ell(q))^{G_\F}
$$
is surjective.
\end{conjecture}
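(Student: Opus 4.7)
The plan is to present a \emph{conditional} proof, since the statement as written is an open conjecture that subsumes the classical Tate conjecture for every smooth projective $\F$-variety (take $p = 2q$ and $X$ itself smooth projective). What can realistically be established is surjectivity of the higher cycle map assuming the classical Tate, Beilinson, and $1$-semi-simplicity conjectures for smooth projective varieties, in direct analogy with Propositions \ref{SSopenvar} and \ref{prop:Tateopenvar}.

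First, I would reduce to the case where $X$ is a dense open subset of a smooth projective $\F$-variety $Y$ with complement $D \subset Y$ a simple normal crossings divisor whose strata are smooth projective. This is achieved by iterating de Jong alterations \cite{deJong} applied to a chosen projective closure of $X$ and to the successive boundary strata; by the degree identity of Lemma \ref{bmtoet} together with Lemma \ref{F0pullback}, pullback along such an alteration is injective on $\Q_\ell$-cohomology, so surjectivity of the higher cycle map transfers from the alteration back to $X$. Moreover, the localization sequence for Bloch's higher Chow groups fits into a commutative ladder with the Gysin sequence \eqref{Gysin} for $(Y, D)$ (tensored with $\Q_\ell$), so the task is reduced to propagating surjectivity of the higher cycle map through these long exact sequences.

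Next, I would take $G_\F$-invariants on the \'etale row. By Lemma \ref{BMweight}, the Borel--Moore cohomology of $\bar D$ and its deeper strata has weights $\geq p - 2q$; in the range $p \leq 2q$ this weight bound forces the Galois-invariants of the right-hand boundary term in \eqref{Gysin} to vanish, provided $1$-semi-simplicity is assumed on the smooth projective pieces so that one can pass $G_\F$-invariants through the Gysin sequence via Lemma \ref{SSdef}. One then extracts a surjection $H^p(\bar Y, \Q_\ell(q))^{G_\F} \twoheadrightarrow H^p(\bar X, \Q_\ell(q))^{G_\F}$; the classical Tate conjecture on $Y$ lifts the target to Chow classes on $Y$, and an induction on $\dim D$ feeds in the strata contributions via the compatibility of the motivic and \'etale localization sequences.

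The main obstacle is the range $p > 2q$, where producing enough classes in $H^p_M(X, \Z(q))$ forces one to use higher Chow groups of the boundary strata nontrivially: the boundary residue in the motivic ladder sends $\CH^q(X, 2q - p)$ to $\CH^{q-1}(D, 2q - p - 1)$, and checking compatibility with the \'etale residue requires delicate weight bookkeeping. Honestly, the conjecture in this range appears to require Beilinson-type vanishing of motivic cohomology of function fields in degrees $\neq$ weight, which is itself open, so the best one should hope to prove is precisely the conditional statement underlying Corollary \ref{cor:generalized Tate}: the classical Tate, Beilinson, and semi-simplicity conjectures for smooth projective $\F$-varieties together imply the generalized Tate conjecture in the stated range.
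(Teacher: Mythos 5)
You correctly recognize the key point: this is a \emph{conjecture}, not a theorem — the paper states it without proof, and indeed no proof could be expected since the special case $p = 2q$, $X$ smooth projective recovers the classical Tate conjecture. Your honest acknowledgment of this is the right call.

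However, the conditional statement you propose to prove instead — that the classical Tate, Beilinson, and semi-simplicity conjectures together imply the generalized Tate conjecture — is \emph{not} what Corollary \ref{cor:generalized Tate} asserts, and in fact the implication runs the other way. Corollary \ref{cor:generalized Tate} says: the generalized Tate conjecture applied to $\F(x_1,\dots,x_n)$ for all $n$ implies the classical Tate, Beilinson, and semi-simplicity conjectures. It does not say the classical conjectures imply the generalized one. Moreover, the paper is explicit that this reverse implication is itself open: in the remark following Conjecture \ref{conj:generalized-Beilinson}, the authors state that while Proposition \ref{prop:Tateopenvar} handles the passage from smooth projective to smooth quasi-projective in the case of ordinary Chow groups ($p = 2q$), ``we do not know if the generalized Tate conjecture for smooth projective varieties \ldots together with the semi-simplicity conjecture implies the generalized Tate conjecture for all smooth quasi-projective varieties.'' So the alteration-plus-localization-ladder strategy you sketch, while natural (and indeed the engine behind Propositions \ref{SSopenvar} and \ref{prop:Tateopenvar} in the $p=2q$ case), does not obviously close in the range $p < 2q$: the boundary residue lands in higher Chow groups of the strata, and there is no control over whether those are hit by the motivic side without invoking Beilinson's vanishing conjecture for function fields, which is itself open. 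Your proposal is thus honest about the obstacle, but the claimed conditional theorem is not one the paper proves, and the attribution to Corollary \ref{cor:generalized Tate} is inverted.
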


\begin{conjecture}[Beilinson conjecture for higher Chow groups]\label{conj:generalized-Beilinson}
Let $X$ be a smooth quasi-projective variety over the finite field $\F$. 
Then the natural map 
$$
H^p_M(X,\Z(q))\otimes_{\Z}  \Q_\ell\longrightarrow H^p(\bar X,\Q_\ell(q))^{G_\F}
$$
is injective.
\end{conjecture}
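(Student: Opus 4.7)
The plan would be to follow the paper's reduction strategy (from quasi-projective to projective, then to birational invariants of $F_0$-type), while acknowledging that the statement contains the classical Beilinson Conjecture \ref{Beilconj} as the special case $p=2q$ and is therefore wide open. So this is a plan for how an attack could proceed within the framework of the paper, not a complete proof.

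First, I would reduce from smooth quasi-projective $X$ to smooth projective $X$. Pick a projective compactification of $X$ and use de Jong's alterations to get a smooth projective $\tilde X$ with an open $\tilde U$ mapping generically finitely to $X$; by the argument used in the proof of Proposition \ref{prop:Tateopenvar}, pullback along this alteration is injective after tensoring with $\Q_\ell$, so it suffices to treat $\tilde U$. The localization long exact sequence in motivic cohomology is compatible, via the higher cycle class map, with the Gysin sequence \eqref{Gysin} in \'etale cohomology. Combined with the fact that by Lemma \ref{BMweight} and Theorem \ref{smoothweight} the Borel--Moore groups on strata of the boundary $\tilde X \setminus \tilde U$ have weights $\geq p-2q$, this reduces the problem by induction on the dimension of the boundary to the smooth projective case. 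For $p > 2q$ the target Galois-invariant group vanishes and injectivity is automatic, so the substance lives in the range $p \leq 2q$.

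Next, in the smooth projective case I would attempt to propagate through the coniveau filtration in the style of Bloch--Ogus. A putative kernel class $\alpha \in H^p_M(X,\Z(q))\otimes \Q_\ell$ has trivial image in every Zariski open of $\bar X$, so one hopes to trace $\alpha$ to an image in $H^p_M(F_0 \bar X, \Z(q))\otimes \Q_\ell$ (coming from cycles supported in codimension $\geq 1$) and then compare with the birational invariant $H^p(F_0 \bar X, \Q_\ell(q))^{G_\F}$ studied in the paper. Lemma \ref{F0pullback}, which gives injectivity of pullback on $F_0$-cohomology along generically finite dominant maps, would then allow one to dominate the problem on $\bar X$ by the corresponding problem on a birational model, and ultimately by the case $\bar X = \CP^n$.

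The principal obstacles are twofold. On the motivic side, the Bloch--Ogus descent through the coniveau filtration cleanly requires a Beilinson--Soul\'e vanishing statement $H^p_M(\Spec K, \Q(q)) = 0$ for $p < 0$ (and control of motivic cohomology of function fields in general), which is itself open in positive characteristic. On the \'etale side, by Theorem \ref{thm:main:Tate-Beilinson-ss} the vanishing of $H^{2i}(F_0 \CP^n_{\bar \F}, \Q_\ell(i+1))^{G_\F}$ in all degrees is equivalent to the full conjunction of the Tate, Beilinson, and semi-simplicity conjectures, so any proof along these lines would simultaneously prove all of these conjectures. I do not see a strategy that isolates Conjecture \ref{conj:generalized-Beilinson} from the classical cycle conjectures; this coupling is the fundamental difficulty.
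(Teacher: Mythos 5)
The statement you were asked to address is not a theorem of the paper but a \emph{conjecture} (Conjecture~\ref{conj:generalized-Beilinson}); the paper neither proves it nor claims to. You correctly recognized this from the outset, and your honest framing---``a plan for how an attack could proceed \dots, not a complete proof''---is exactly right. In particular your observation that the $p=2q$ case on smooth projective $X$ recovers the classical Beilinson Conjecture~\ref{Beilconj}, which is open, already shows that no self-contained proof was possible, and your appeal to Theorem~\ref{thm:main:Tate-Beilinson-ss} to illustrate how tightly the generalized conjecture is interlocked with the Tate and semi-simplicity conjectures reflects the paper's own perspective (cf.\ Corollary~\ref{cor:generalized Tate} and Remark~\ref{rem:beilinson-conjecture}).

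Two small corrections to the analysis. First, in the range $p>2q$ the map is injective not because the \emph{target} vanishes (a zero target says nothing about injectivity) but because the \emph{source} does: $H^p_M(X,\Z(q))\cong\CH^q(X,2q-p)$ and higher Chow groups vanish in negative simplicial degree, as the paper itself notes around \eqref{eq:motivic coho of F(X)-vanishing}. Second, the claim that a proof ``would simultaneously prove all of these conjectures'' overstates things slightly: injectivity alone (Conjecture~\ref{conj:generalized-Beilinson}) does not force the vanishing $H^{2i}(F_0\CP^n_{\bar\F},\Q_\ell(i+1))^{G_\F}=0$ that Theorem~\ref{thm:main:Tate-Beilinson-ss} requires; for that one also needs the surjectivity (generalized Tate, Conjecture~\ref{conj:generalized-Tate-function}) together with Beilinson's motivic vanishing $H^p_M(\Spec K,\Q(q))=0$ for $p\neq q$. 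You do gesture at both of these as obstacles, so the overall picture is accurate, but the logical dependency should be stated more carefully. With those caveats, your assessment is correct: the statement is a genuine conjecture, and your proposal appropriately identifies why it lies beyond reach by the methods of the paper.
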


The conjunction of the Tate and Beilinson conjecture for higher Chow groups says that $H^p_M(X,\Z(q))\otimes_{\Z}  \Q_\ell\cong H^p(\bar X,\Q_\ell(q))^{G_\F}$; this goes back to Friedlander and Beilinson, see \cite[Remark 8.3.4(b)]{beilinson-absolute-hodge} and  \cite[Conjecture 12.4(b)]{jannsen-LMN}.

\begin{remark}
The Tate conjecture for higher Chow groups applied to smooth projective varieties is equivalent to the ordinary Tate conjecture, because $H^p(\bar X,\Q_\ell(q))^{G_\F}=0$ for $p\neq 2q$ and smooth projective $X$ for weight reasons.
We have shown in Proposition \ref{prop:Tateopenvar} above that the Tate conjecture and the 1-semi-simplicity conjecture for smooth projective varieties implies the respective statements for all smooth quasi-projective varieties. 
In contrast, we do not know if Conjecture \ref{conj:generalized-Tate-function} for smooth projective varieties (which by the previous comment is equivalent to the ordinary Tate conjecture) together with the semi-simplicity conjecture  implies Conjecture \ref{conj:generalized-Tate-function} for all smooth quasi-projective varieties.
In fact, if one could show this, then the main result of this paper (see Theorem \ref{thm:main:Tate-Beilinson-ss}) together with Lemma \ref{lem:generalized-Tate} below would show that the Tate conjecture (Conjecture \ref{Tateconj}) and 1-semi-simplicty conjecture implies the Beilinson conjecture (Conjecture \ref{Beilconj}).
\end{remark}

For a smooth $\F$-scheme $X$, we define
$$
H^p_M(F_jX,\Z(q)):=\lim_{\substack{\longrightarrow\\F_jX\subset U\subset X}}
H^p_M(U,\Z(q)).
$$
Motivic cohomology commutes with projective systems of schemes with affine dominant transition maps,  see e.g.\ \cite[Example 11.1.25]{cisinski-deglise}. 
Hence, for any smooth quasi-projective variety $X$ over $\F$, we have
\begin{align} \label{eq:motivic coho of F(X)}
H^p_M(F_0X,\Z(q)) \cong H^p_M(\Spec \F(X),\Z(q))\cong \CH^{q}(\Spec \F(X),2q-p).
\end{align}
A class in $\CH^{q}(\Spec \F(X),2q-p)$ is represented by a codimension $q$-cycle on the algebraic $(2q-p)$-simplex over $\F(X)$.
If $2q-p<q$, or equivalently $q<p$, then no such cycle exists by dimension reasons and so
\begin{align} \label{eq:motivic coho of F(X)-vanishing}
H^p_M(F_0X,\Z(q))=0\ \ \ \text{for $q<p$}. 
\end{align}  
Conjecture \ref{conj:generalized-Tate-function} thus implies the following statement for function fields.

\begin{conjecture}[Tate conjecture for function fields] \label{conj:generalized-Tate-function field}
Let $X$ be a smooth quasi-projective variety over the finite field $\F$. 
Then the natural map 
$$
H^p_M(\Spec  \F(X),\Z(q)) \otimes_{\Z } \Q_\ell\longrightarrow H^p(F_0 \bar X,\Q_\ell(q))^{G_\F}
$$
is surjective.
In particular,
$H^p(F_0 \bar X,\Q_\ell(q))^{G_\F}=0$ for $q<p$.
\end{conjecture}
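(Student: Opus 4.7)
The plan is to derive this statement from Conjecture \ref{conj:generalized-Tate-function} (the generalized Tate conjecture for smooth quasi-projective varieties) by passing to a filtered colimit over smooth dense open subsets $U\subset X$ that are defined over $\F$. The excerpt already writes both sides of the natural map as such colimits: by \eqref{eq:motivic coho of F(X)} the source is $\colim_U H^p_M(U,\Z(q))\otimes_{\Z}\Q_\ell$, and $\ell$-adic \'etale cohomology commutes with the corresponding filtered colimit on the geometric level, so
$$
H^p(F_0\bar X,\Q_\ell(q))\cong \colim_U H^p(\bar U,\Q_\ell(q)).
$$
The subsystem of those opens defined over $\F$ is cofinal, as is explained in Section \ref{Galois_action_basics}.

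Next I would verify that $G_\F$-invariants commute with this filtered colimit, so that
$$
H^p(F_0\bar X,\Q_\ell(q))^{G_\F}\cong \colim_U H^p(\bar U,\Q_\ell(q))^{G_\F}.
$$
Since $G_\F\cong \hat\Z$ is topologically generated by the geometric Frobenius $F$, taking $G_\F$-invariants on a continuous $\Q_\ell$-representation amounts to taking $\ker(F-\identity)$. Kernels commute with filtered colimits of $\Q_\ell$-vector spaces, which are exact, so the identification above follows. Combining this with \eqref{eq:motivic coho of F(X)} and applying Conjecture \ref{conj:generalized-Tate-function} termwise to each $U$ in the colimit yields the desired surjectivity.

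The ``in particular'' vanishing assertion is then automatic from the surjectivity together with \eqref{eq:motivic coho of F(X)-vanishing}: we have $H^p_M(\Spec\F(X),\Z(q))\cong \CH^q(\Spec\F(X),2q-p)$, which vanishes as soon as $2q-p<q$, i.e.\ $q<p$, since the $(2q-p)$-simplex over $\F(X)$ is too low-dimensional to carry any codimension-$q$ cycles. The only real subtlety in this derivation is the interchange of $G_\F$-invariants with the filtered colimit; this is essentially automatic for $\hat\Z$-representations on $\Q_\ell$-vector spaces because of the exactness of filtered colimits, but it would fail in more general Galois-cohomological settings, for instance with $\Z_\ell$- or torsion coefficients, where one would additionally need to control higher Galois cohomology before passing to the limit.
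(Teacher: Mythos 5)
Your argument matches the paper's proof of Lemma~\ref{lem:generalized-Tate}: identify $H^p_M(F_0X,\Z(q))$ with the motivic cohomology of $\F(X)$ via \eqref{eq:motivic coho of F(X)}, use exactness of filtered colimits to commute $G_\F$-invariants past the direct limit and apply Conjecture~\ref{conj:generalized-Tate-function} termwise, and deduce the vanishing from \eqref{eq:motivic coho of F(X)-vanishing}. Your closing caveat about $\Z_\ell$- or torsion coefficients is off target, though: taking $G_\F$-invariants is a kernel, which commutes with filtered colimits of modules over any ring, so no higher Galois cohomology intervenes; the subtlety with integral or torsion coefficients arises for inverse limits and for the Hochschild--Serre descent, not here.
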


\begin{lemma}\label{lem:generalized-Tate}
Conjecture \ref{conj:generalized-Tate-function} implies Conjecture \ref{conj:generalized-Tate-function field}.
\end{lemma}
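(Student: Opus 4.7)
The plan is to obtain the function field statement by taking a filtered colimit of the generalized Tate conjecture applied to dense open subschemes of $X$ defined over $\F$.

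I would first apply Conjecture \ref{conj:generalized-Tate-function} to each dense open $U\subset X$ (necessarily smooth, as $X$ is smooth), obtaining a surjection
$$H^p_M(U,\Z(q))\otimes_{\Z}\Q_\ell \twoheadrightarrow H^p(\bar U,\Q_\ell(q))^{G_\F}.$$
These assemble into a directed system of surjections under pullback along open inclusions $V\hookrightarrow U$. Taking the filtered colimit over all such $U$ defined over $\F$, the left-hand side becomes $H^p_M(\Spec \F(X),\Z(q))\otimes_{\Z}\Q_\ell$ by \eqref{eq:motivic coho of F(X)}. For the right-hand side, the observation recalled in Section \ref{Galois_action_basics} (any Galois orbit of a closed subset of codimension $\geq 1$ still has codimension $\geq 1$) ensures that taking the colimit over $U/\F$ gives the same group as the colimit over all open dense $\bar U\subset \bar X$, namely $H^p(F_0\bar X,\Q_\ell(q))$.

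The key technical point is to commute $G_\F$-invariants with the filtered colimit on the right. This works because $G_\F\cong\hat{\Z}$ is topologically generated by the Frobenius $F$, so for any continuous finite-dimensional $\Q_\ell$-representation $V$ we have $V^{G_\F}=\ker(F-\identity:V\to V)$. Since kernels commute with filtered colimits of $\Q_\ell$-vector spaces,
$$\varinjlim_{U} H^p(\bar U,\Q_\ell(q))^{G_\F} \;=\; \Bigl(\varinjlim_{U} H^p(\bar U,\Q_\ell(q))\Bigr)^{G_\F}\;=\; H^p(F_0\bar X,\Q_\ell(q))^{G_\F}.$$
Concretely, if a class $x\in H^p(F_0\bar X,\Q_\ell(q))^{G_\F}$ is lifted to some $x_U\in H^p(\bar U,\Q_\ell(q))$, then $F x_U - x_U$ dies in the colimit, hence becomes zero after restriction to some smaller $V\subset U$; the restricted class $x_V$ is then $F$-invariant in $H^p(\bar V,\Q_\ell(q))$, hence $G_\F$-invariant by continuity.

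Since filtered colimits are exact, the colimit of surjections is a surjection, which yields the desired surjectivity. The ``in particular'' vanishing $H^p(F_0\bar X,\Q_\ell(q))^{G_\F}=0$ for $q<p$ then follows from the already-established vanishing \eqref{eq:motivic coho of F(X)-vanishing} of motivic cohomology of the function field in those degrees. The only nontrivial step is the commutation of $(-)^{G_\F}$ with the filtered colimit, and this reduces cleanly to the fact that $F$ topologically generates $G_\F$.
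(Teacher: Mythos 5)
Your proof is correct and takes essentially the same approach as the paper: apply Conjecture \ref{conj:generalized-Tate-function} to opens $U\subset X$ defined over $\F$, pass to the filtered colimit using \eqref{eq:motivic coho of F(X)} on the left, and use exactness of filtered colimits (so that $(-)^{G_\F}=\ker(\identity-F)$ commutes with the colimit) on the right; the ``in particular'' then follows from \eqref{eq:motivic coho of F(X)-vanishing}. The paper's proof is just terser, relying on the discussion around \eqref{HSnrexact} in Section \ref{Galois_action_basics} for the commutation of invariants with direct limits, which you spell out explicitly.
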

\begin{proof}
The surjectivity statement in Conjecture \ref{conj:generalized-Tate-function field} follows from \eqref{eq:motivic coho of F(X)} and the fact that the direct limit functor is exact.
The remaining assertion follows from \eqref{eq:motivic coho of F(X)-vanishing}.
\end{proof}

\begin{remark}\label{rem:beilinson-conjecture} 
By a conjecture of Beilinson and Parshin, 
$$
H^p_M(\Spec \bar \F(X),\Z(q)) \otimes_{\Z } \Q_\ell=0\ \ \ \text{ for all $p\neq q$},
$$
see \cite[Conjecture 8.3.3(a)]{beilinson-absolute-hodge}.
(Beilinson considers in loc.\ cit.\ a different definition for motivic cohomology, which however coincides rationally with the one used in this paper, see \cite[p.\ 269, (iv) and (vii)]{bloch-higher-chow}.
Moreover, Beilinson attributes an equivalent conjecture on the algebraic K-theory of smooth varieties over $\F$ to Parshin in \cite[Conjecture 2.4.2.3]{beilinson-2}.)
Conjecture \ref{conj:generalized-Tate-function} thus implies 
$$
H^p(F_0 \bar X,\Q_\ell(q))^{G_\F}=0\ \ \ \text{ for $p\neq q$}.
$$
For $p=q$, we have that $H^p_M(\Spec  \F(X),\Z(p))\cong K^M_p(\F(X))$ agrees with the $p$-th Milnor K-group of $ \F(X)$, see \cite[Theorem 5.1 and 19.1]{mazza-voevodsky-weibel}.
Hence, the aforementioned conjecture of Beilinson together with Conjecture \ref{conj:generalized-Tate-function} and Conjecture \ref{conj:generalized-Beilinson} for $p=q$ implies that the invariant $H^p(F_0 \bar X,\Q_\ell(q))^{G_\F}$ of the function field $\F(X)$  that we study in this paper should agree with Milnor K-theory tensored with $\Q_\ell$, as follows
$$
H^p(F_0 \bar X,\Q_\ell(q))^{G_\F}\cong \begin{cases} K^M_p(\F(X))\otimes_{\Z} \Q_\ell\ \ \ \ &\text{if $p=q$};\\
0\ \ \ \ &\text{otherwise}.
\end{cases}
$$
\end{remark}

\section{More on 1-semi-simplicity} \label{sec:more-ss}

\begin{lemma}\label{SSopen}
Let $X$ be a quasi-projective variety over the finite field $\F$ and let $i\geq 0$.
If the absolute Galois group $G_\F$ acts $1$-semi-simply on $\cohbm{2i}{\bar X}{\ql(i)}$, then the same is true for all open subsets $U\subseteq X$.
\end{lemma}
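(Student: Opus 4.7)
The plan is to use the Gysin (localization) sequence to compare $H^{2i}_{BM}(\bar X,\Q_\ell(i))$ with $H^{2i}_{BM}(\bar U,\Q_\ell(i))$ and then apply the weight bounds on Borel--Moore cohomology together with the permanence properties of $1$-semi-simplicity collected in Lemma \ref{SStrans}.

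Concretely, let $Z := X \setminus U$ with its reduced structure, and let $c := \dim X - \dim Z \geq 1$. If $Z$ fails to be equi-dimensional we split it into its irreducible (or connected) components using Lemma \ref{directsum}; the argument then runs component-wise and we may as well assume $Z$ is pure of codimension $c$. The Gysin sequence \eqref{Gysin} provides the exact sequence
\[
H^{2i}_{BM}(\bar X,\Q_\ell(i)) \xrightarrow{\, j^\ast\,} H^{2i}_{BM}(\bar U,\Q_\ell(i)) \xrightarrow{\,\partial\,} H^{2i+1-2c}_{BM}(\bar Z,\Q_\ell(i-c)),
\]
which is $G_\F$-equivariant by the discussion in Section \ref{Galois_action_basics}.

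Now I would extract two facts. First, the image $I := \image(j^\ast) \subseteq H^{2i}_{BM}(\bar U,\Q_\ell(i))$ is a quotient of $H^{2i}_{BM}(\bar X,\Q_\ell(i))$ and hence $1$-semi-simple by Lemma \ref{SStrans}(1) and the hypothesis. Second, the quotient $H^{2i}_{BM}(\bar U,\Q_\ell(i))/I$ injects into $H^{2i+1-2c}_{BM}(\bar Z,\Q_\ell(i-c))$, whose weights are at least $(2i+1-2c) - 2(i-c) = 1$ by Lemma \ref{BMweight}. In particular $1$ is not a Frobenius eigenvalue on this quotient, so it has no $G_\F$-invariants and is trivially $1$-semi-simple.

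It then remains to apply Lemma \ref{SStrans}(2) to the short exact sequence
\[
0 \longrightarrow I \longrightarrow H^{2i}_{BM}(\bar U,\Q_\ell(i)) \longrightarrow H^{2i}_{BM}(\bar U,\Q_\ell(i))/I \longrightarrow 0,
\]
using the vanishing of $G_\F$-invariants on the right term to conclude that the middle term is $1$-semi-simple. The main conceptual point is simply the weight estimate pushing the ``defect'' between $U$ and $X$ away from eigenvalue $1$; I do not anticipate any serious obstacle beyond bookkeeping when $Z$ fails to be equi-dimensional, which is handled cleanly by Lemma \ref{directsum}.
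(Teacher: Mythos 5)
Your proof is correct and follows essentially the same route as the paper: the Gysin localization sequence, the weight bound of Lemma~\ref{BMweight} on the Borel--Moore cohomology of $\bar Z$ (forcing the third term to have no Frobenius eigenvalue $1$), and Lemma~\ref{SStrans} to propagate $1$-semi-simplicity along the exact sequence. The only difference is cosmetic: you reorganize the three-term sequence into a short exact sequence with the image $I=\image(j^\ast)$ before applying Lemma~\ref{SStrans}(2), while the paper applies that lemma directly to the three-term exact sequence $W\to V\to Q$ (which it allows); and your preliminary reduction to the equi-dimensional case of $Z$ via Lemma~\ref{directsum} is unnecessary, since both the Gysin sequence as stated in~\eqref{Gysin} and the weight bound in Lemma~\ref{BMweight} already apply without any equi-dimensionality assumption on $Z$.
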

\begin{proof}
Let $Z=X\setminus U$  and let $c\coloneqq\dim X-\dim Z$. 
The sequence
$$
\cohbm{2i}{\bar X}{\ql(i)}\longrightarrow\cohbm{2i}{\bar{U}}{\ql(i)}\longrightarrow\cohbm{2i+1-2c}{\bar{Z}}{\ql(i-c)}
$$
is exact by the exactness of (\ref{Gysin}).
By Lemma \ref{BMweight},  $\cohbm{2i+1-2c}{\bar{Z}}{\ql(i-c)}$ has weights $\geq 1$, hence no Galois-invariants and so it is 1-semi-simple, see Lemma \ref{SSdef}. 
 On the other hand, $\cohbm{2i}{\bar X}{\ql(i)}$ is $1$-semi-simple by assumptions. The $1$-semi-simplicity of $\cohbm{2i}{\bar{U}}{\ql(i)}$ follows from Lemma \ref{SStrans}.
\end{proof}

\begin{lemma}\label{lem:SSopencor}
Let $X_1,...,X_n$ be quasi-projective varieties over the finite field $\F$ such that the absolute Galois group $G_\F$ acts $1$-semi-simply on $\cohbm{2i}{\bar X_m}{\ql(i)}$ for all $m=1,...,n$. 
Let $V$ be a $\Q_\ell$-vector space (possibly infinite dimensional) with a linear $G_\F$-action.
If 
$$
\phi:\bigoplus_{m=1,\dots ,n}\nrcoh{2i}{j}{\bar X_m}{\ql(i)}\longrightarrow V
$$
is a $G_\F$-equivariant $\Q_\ell$-linear map, then 
any class $v\in \im(\phi)$ that is Galois-invariant admits a Galois-invariant lift.
\end{lemma}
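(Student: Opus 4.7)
The plan is to reduce to a finite-dimensional $G_\F$-subrepresentation and then invoke Lemma \ref{SSdef}\eqref{item:SSdef:3}. Given $u \in \bigoplus_m \nrcoh{2i}{j}{\bar X_m}{\ql(i)}$ with $\phi(u) = v$, I first pick a representative $(u_m)_m$ of $u$ in the direct limit defining refined unramified cohomology, so that $u_m \in \cohbm{2i}{\bar U_m}{\ql(i)}$ for some open subset $U_m \subseteq X_m$ containing $F_jX_m$. Using the observation in Section \ref{Galois_action_basics} that the $\F$-rational opens are cofinal in this direct system, I may and will arrange that each $U_m$ is defined over $\F$, so that $\cohbm{2i}{\bar U_m}{\ql(i)}$ inherits a natural $G_\F$-action compatible with the canonical map into the direct limit.

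Next I consider the $G_\F$-equivariant composition
$$\psi \colon W := \bigoplus_m \cohbm{2i}{\bar U_m}{\ql(i)} \longrightarrow \bigoplus_m \nrcoh{2i}{j}{\bar X_m}{\ql(i)} \xrightarrow{\ \phi\ } V,$$
which satisfies $\psi((u_m)) = v$. The space $W$ is finite-dimensional over $\Q_\ell$ by Lemma \ref{negdegree}, and by Lemma \ref{SSopen} each summand $\cohbm{2i}{\bar U_m}{\ql(i)}$ is $1$-semi-simple because, by hypothesis, $\cohbm{2i}{\bar X_m}{\ql(i)}$ is. Since $G_\F$-invariants and coinvariants commute with finite direct sums, $W$ itself is $1$-semi-simple. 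Lemma \ref{SSdef}\eqref{item:SSdef:3}, applied to $\psi$ (whose image is finite-dimensional and thus an object of $Rep(G_\F,\Q_\ell)$) and to the Galois-invariant class $v \in \image(\psi)$, then supplies a lift $u' \in W^{G_\F}$ with $\psi(u') = v$. The image of $u'$ in $\bigoplus_m \nrcoh{2i}{j}{\bar X_m}{\ql(i)}$ is a Galois-invariant preimage of $v$ under $\phi$, as required.

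The only step requiring care is the reduction to a finite-dimensional $G_\F$-stable piece of the direct limit, which is why it is important to choose the $U_m$ over $\F$; once this is done, the $1$-semi-simplicity hypothesis does all the work via Lemma \ref{SSdef}\eqref{item:SSdef:3}. I do not anticipate any further obstacle.
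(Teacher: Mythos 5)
Your proof is correct and follows essentially the same route as the paper's: choose representatives in $\bigoplus_m H^{2i}_{BM}(\bar U_m,\Q_\ell(i))$ for suitable $\F$-rational opens $U_m\supseteq F_jX_m$, invoke Lemma \ref{negdegree} for finiteness, Lemma \ref{SSopen} for $1$-semi-simplicity of each summand, and Lemma \ref{SSdef}\eqref{item:SSdef:3} to produce the Galois-invariant lift. The only small inaccuracy is the parenthetical about the image of $\psi$ needing to lie in $Rep(G_\F,\Q_\ell)$: in Lemma \ref{SSdef}\eqref{item:SSdef:3} the finiteness hypothesis is on the source $V$, while the target $W$ is allowed to be an arbitrary $\Q_\ell$-representation, so that remark is unnecessary (though harmless, since the image is a quotient of the finite-dimensional $W$ in any case).
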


\begin{proof}
Replacing $V$ by $\im(\phi)$, we may assume that $\phi$ is surjective.
Let $v\in V$ be $G_\F$-invariant. 
We may find open subsets $U_m\supseteq F_{j}X_m$ such that $v$ is in the image of the composition
$$\bigoplus\cohbm{2i}{\bar U_m }{\ql(i)}\longrightarrow\bigoplus\nrcoh{2i}{j}{\bar X_m}{\ql(i)}\longrightarrow V.
$$
The left hand side is a finite-dimensional $\Q_\ell$-vector space with a 1-semi-simple Galois action, see Lemma \ref{negdegree} and \ref{SSopen}.
It follows from Lemma \ref{SSdef} that the Galois-invariant element $v\in V$ lifts to a Galois-invariant class via $\phi$, as we want.
\end{proof}

\begin{lemma}\label{lem:liftforbeil2}
Let $X$ be a smooth projective variety over the finite field $\F$. 
Let $U$ be an open subset of $X$ such that $F_{j}X\subseteq U$. 
Assume that $\bigtate{i-j-1}{\dim X-j-1}{\F}$ and $\bigSS{i-j-1}{\dim X-j-1}{\F}$ hold.
Then the absolute Galois group $G_\F$ acts $1$-semi-simply on $\coh{2i-1}{\bar{U}}{\ql(i)}$.\par
The $j=0$ case holds when $X$ is just projective and $U$ is smooth.
\end{lemma}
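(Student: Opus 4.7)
The plan is to analyze the Gysin sequence associated to the closed immersion $Z := X \setminus U \hookrightarrow X$, combining Deligne's purity for smooth projective varieties with Proposition \ref{SSsingular} applied to $Z$, and concluding via Lemma \ref{SStrans}.

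\textbf{Smooth projective case.} I will set $c := \dim X - \dim Z$; since $F_j X \subseteq U$, no codimension-$\leq j$ point of $X$ lies in $Z$, so $c \geq j+1$. Lemma \ref{bmtoet} and the Gysin sequence \eqref{Gysin} yield an exact sequence
$$H^{2i-1}(\bar X, \Q_\ell(i)) \to H^{2i-1}(\bar U, \Q_\ell(i)) \xrightarrow{\partial} H^{2(i-c)}_{BM}(\bar Z, \Q_\ell(i-c)).$$
By Deligne's purity, $H^{2i-1}(\bar X, \Q_\ell(i))$ is pure of weight $-1$, so its $G_\F$-invariants vanish; in particular it is trivially $1$-semi-simple. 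I will then apply Proposition \ref{SSsingular} to $Z$ with parameters $j' := i-j-1$ and $d := \dim X - j - 1$: the required hypotheses $\bigtate{j'}{d}{\F}$ and $\bigSS{j'}{d}{\F}$ are exactly what we assume, and a direct calculation gives $\dim Z - d + j' = (\dim X - c) - (\dim X - j - 1) + (i - j - 1) = i - c$, so that $i-c$ lies in the admissible window $[i-c,\, i-j-1]$, with the upper bound amounting to $c \geq j+1$. Hence $H^{2(i-c)}_{BM}(\bar Z, \Q_\ell(i-c))$ is $1$-semi-simple, and so is its subrepresentation $\im(\partial)$ by Lemma \ref{SStrans}(1). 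Applying Lemma \ref{SStrans}(2) to the induced exact sequence
$$H^{2i-1}(\bar X, \Q_\ell(i)) \to H^{2i-1}(\bar U, \Q_\ell(i)) \to \im(\partial) \to 0,$$
whose leftmost term has vanishing $G_\F$-invariants, yields the $1$-semi-simplicity of $H^{2i-1}(\bar U, \Q_\ell(i))$.

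\textbf{Reduction for $j=0$.} When $X$ is only projective and $U$ is a smooth dense open subset, I will choose an alteration $\tau: X' \to X$ with $X'$ smooth projective (de Jong \cite{deJong}) and set $U' := \tau^{-1}(U)$. Then $U'$ is open and smooth in $X'$, and since $\tau$ is dominant on every component, $U'$ is dense, i.e.\ $F_0 X' \subseteq U'$. The restriction $\tau|_{U'}: U' \to U$ is proper and generically finite, so by \eqref{eq:f_*f^*} the pullback $\tau^*: H^{2i-1}(\bar U, \Q_\ell(i)) \to H^{2i-1}(\bar{U'}, \Q_\ell(i))$ is $G_\F$-equivariant and injective. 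The smooth projective case applied to $(X', U')$ together with Lemma \ref{SStrans}(1) transfers the $1$-semi-simplicity to $H^{2i-1}(\bar U, \Q_\ell(i))$, as required.

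The main obstacle I expect is the numerical check that the index $i-c$ lies in the admissible window of Proposition \ref{SSsingular}; the range only just fits, since the lower bound $\dim Z - d + j' \leq i-c$ is actually an equality, forced by the specific tuning of the assumed cycle conjectures to the codimension of $Z$.
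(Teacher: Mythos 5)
Your proposal is correct and follows essentially the same argument as the paper: the same Gysin sequence, the same weight argument for $H^{2i-1}(\bar X,\Q_\ell(i))$, the same application of Proposition \ref{SSsingular} with the same numerical bookkeeping, conclusion via Lemma \ref{SStrans}, and the same alteration-and-pullback-injectivity reduction for the $j=0$ case. The only cosmetic difference is that you insert $\im(\partial)$ into the exact sequence before invoking Lemma \ref{SStrans}, which is harmless.
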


\begin{proof}
By the Gysin sequence (\ref{Gysin}), there exists an exact sequence
$$
\coh{2i-1}{\bar X}{\ql(i)}\longrightarrow\coh{2i-1}{\bar{U}}{\ql(i)}\longrightarrow\cohbm{2i-2c}{\bar{Z}}{\ql(i-c)},
$$
where $Z\coloneqq X-U$  and $c\coloneqq\dim X-\dim Z$. Notice that $c\geq j+1$ by assumptions.
By the Weil conjectures,  $\coh{2i-1}{\bar X}{\ql(i)}^{G_\F}=0$ and so $\coh{2i-1}{\bar X}{\ql(i)}$ is trivially a $1$-semi-simple $G_\F$-representation, see Lemma \ref{SSdef}. 
By Proposition \ref{SSsingular}, $\bigtate{i-j-1}{\dim X-j-1}{\F}$ and $\bigSS{i-j-1}{\dim X-j-1}{\F}$ imply that $\cohbm{2i-2c}{\bar{Z}}{\ql(i-c)}$ is $1$-semi-simple as well, where we used that
$
i-c\leq i-j-1
$ and 
$$
\dim Z-( \dim X-j-1)+i-j-1= \dim Z-\dim X+i=i-c.
$$
It follows from Lemma \ref{SStrans} that $\coh{2i-1}{\bar{U}}{\ql(i)}$ is $1$-semi-simple, as we want.

In the $j=0$ case with $X$ projective and $U$ smooth, the problem may be reduced to the case where $X$ is smooth, as follows. 
By  \cite{deJong}, there exists an alteration $\tau:X^{\prime}\rightarrow  X$ with $X^{\prime}$ smooth and projective. 
Clearly, $F_{0}X^{\prime}\subseteq\tau^{-1}(U)$ (this is the part that does not work for $j>0$). 
Moreover, since $\tau$ gives an injection 
$$
\coh{2i-1}{\bar{U}}{\ql(i)}\hookrightarrow\coh{2i-1}{\tau^{-1}(\bar{U})}{\ql(i)}
$$
because $\tau_*\tau^*=\deg\tau\cdot \identity $, see \eqref{eq:f_*f^*},
it suffices to show the $1$-semi-simplicity of $\coh{2i-1}{\tau^{-1}(\bar{U})}{\ql(i)}$ by Lemma \ref{SStrans}.
This reduces the claim to the case where $X$ is smooth projective,  proven above.
\end{proof}

\begin{lemma}\label{lem:liftforbeil2cor}
Assume that $\bigtate{i-j-1}{d-j-1}{\F}$ and $\bigSS{i-j-1}{d-j-1}{\F}$ hold for some non-negative integers $i,j,d$.
Let $X_1,...,X_n$ be smooth projective $\F$-varieties of dimension $d$ and let $V$ be a $\Q_\ell$-vector space with a linear $G_\F$-action.
Let
$$
\phi:\bigoplus_{m=1}^n \nrcoh{2i-1}{j}{\bar X_m}{\ql(i)}\longrightarrow V
$$
 be a $G_\F$-equivariant $\Q_\ell$-linear map.
Then any $G_\F$-invariant class in $\im (\phi)$ admits via $\phi$ a $G_\F$-invariant lift.

The $j=0$ case of the statement above holds without assuming smoothness of $X_1,...,X_n$.
\end{lemma}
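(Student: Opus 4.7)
The plan is to adapt the argument of Lemma \ref{lem:SSopencor} by replacing its even-degree input (Lemma \ref{SSopen}) with the odd-degree semi-simplicity provided by Lemma \ref{lem:liftforbeil2}. First I would reduce to the case $V = \im(\phi)$ so that $\phi$ becomes surjective, and fix a $G_\F$-invariant class $v \in V$. By the definition of $\nrcoh{2i-1}{j}{\bar X_m}{\ql(i)}$ as a direct limit over open subsets of $\bar X_m$ containing $F_j \bar X_m$, and using the observation in Section \ref{Galois_action_basics} that any such open can be refined to one descending to $\F$, I would choose open subvarieties $U_m \subseteq X_m$ defined over $\F$ with $F_j X_m \subseteq U_m$, such that $v$ lies in the image of the $G_\F$-equivariant composition
$$
\psi\colon S \;:=\; \bigoplus_{m=1}^n \cohbm{2i-1}{\bar U_m}{\ql(i)} \longrightarrow \bigoplus_{m=1}^n \nrcoh{2i-1}{j}{\bar X_m}{\ql(i)} \xrightarrow{\phi} V.
$$

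By Lemmas \ref{negdegree} and \ref{bmtoet}, $S$ is a finite-dimensional continuous $\Q_\ell$-representation of $G_\F$. Under the assumed validity of $\bigtate{i-j-1}{d-j-1}{\F}$ and $\bigSS{i-j-1}{d-j-1}{\F}$, Lemma \ref{lem:liftforbeil2} applied to each pair $(X_m, U_m)$ tells us that every summand $\coh{2i-1}{\bar U_m}{\ql(i)}$ is a 1-semi-simple $G_\F$-representation. Since invariants and coinvariants commute with finite direct sums, 1-semi-simplicity then passes to $S$. The lifting criterion Lemma \ref{SSdef}\eqref{item:SSdef:3} applied to $\psi\colon S \to V$ yields some $s \in S^{G_\F}$ with $\psi(s) = v$, and the image of $s$ in $\bigoplus_m \nrcoh{2i-1}{j}{\bar X_m}{\ql(i)}$ is, by $G_\F$-equivariance of the structure maps, the desired $G_\F$-invariant lift.

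For the $j = 0$ case in which the $X_m$ are only assumed projective, I would additionally shrink each $U_m$ by intersecting with the smooth locus $(X_m)_{\reg}$; this preserves the inclusion $F_0 X_m \subseteq U_m$ since the generic point of the integral variety $X_m$ lies in its smooth locus, and it ensures the hypotheses of the second assertion of Lemma \ref{lem:liftforbeil2}. That assertion then supplies the required 1-semi-simplicity of $\coh{2i-1}{\bar U_m}{\ql(i)}$, and the remainder of the argument goes through unchanged. The real content is thus entirely packaged into Lemma \ref{lem:liftforbeil2} and the lifting criterion Lemma \ref{SSdef}\eqref{item:SSdef:3}; I do not expect any further substantive obstacle beyond the approximation bookkeeping just described.
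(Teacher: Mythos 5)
Your proposal is correct and follows essentially the same route as the paper: lift $v$ into the image of a finite-dimensional piece $\bigoplus_m H^{2i-1}_{BM}(\bar U_m,\Q_\ell(i))$ coming from opens $U_m\supseteq F_jX_m$, invoke Lemma \ref{lem:liftforbeil2} (with $\dim X_m=d$, so the hypotheses $\bigtate{i-j-1}{d-j-1}{\F}$, $\bigSS{i-j-1}{d-j-1}{\F}$ match exactly) to get $1$-semi-simplicity, and finish with Lemmas \ref{negdegree} and \ref{SSdef}, shrinking the $U_m$ into the smooth locus in the $j=0$ case. The only cosmetic differences are the unnecessary (though harmless) reduction to $\phi$ surjective, since Lemma \ref{SSdef}\eqref{item:SSdef:3} already covers arbitrary equivariant $\phi$, and the explicit appeal to Lemma \ref{bmtoet} to identify Borel--Moore with ordinary cohomology on the smooth opens, which the paper leaves implicit.
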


\begin{proof}
Let $v\in \im (\phi)$ be  Galois-invariant. 
We may find open subsets $U_m\supseteq F_{j}X_m$ of $X_m$ such that $v$ is in the image of the composition
$$
\bigoplus_{m=1}^n \coh{2i-1}{\bar U_m}{\ql(i)}\longrightarrow\bigoplus_{m=1}^n \nrcoh{2i-1}{j}{\bar X_m}{\ql(i)}\longrightarrow V.
$$
By Lemma \ref{lem:liftforbeil2}, the left hand side is $1$-semi-simple and the same holds for $j=0$ without the smoothness assumption on $X_1,\dots ,X_n$, as long as we pick each $U_m\subset X_m$ small enough so that it is smooth.
It thus follows from Lemmas \ref{negdegree} and \ref{SSdef} that $v$ admits a Galois-invariant lift, as we want.
\end{proof}

\begin{cor}\label{cor:liftforbeil}
Let $X_1,...,X_n$ be smooth projective varieties of dimension $d$ over the finite field $\F$. 
 Let $V$ be a $\Q_\ell$-vector space with a linear $G_\F$-action and let 
 $$
 \phi:\bigoplus_{m=1,\dots ,n}\nrcoh{2i-1}{i-1}{\bar X_m}{\ql(i)}\longrightarrow V
 $$ 
 be a $G_\F$-equivariant morphism. 
 If  $v\in \im (\phi)$ is $G_\F$-invariant, then $v$ admits via $\phi$ a $G_\F$-invariant lift.
 
    The $i=1$ case of the statement above holds without assuming smoothness of $X_1,...,X_n$.
\begin{proof}
    This is the $j=i-1$ case of Lemma \ref{lem:liftforbeil2cor}; 
    the assumptions $\bigtate{0}{d-i}{k}$ and $\bigSS{0}{d-i}{k}$ of that lemma are trivially true.
\end{proof}
\end{cor}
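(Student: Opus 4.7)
The plan is to deduce this corollary directly from Lemma \ref{lem:liftforbeil2cor} by specializing $j = i-1$. Under that substitution, the hypotheses of the lemma become $\bigtate{0}{d-i}{\F}$ and $\bigSS{0}{d-i}{\F}$, i.e.\ the Tate and $1$-semi-simplicity conjectures in codimension zero on smooth projective varieties of dimension $d-i$. Both of these are automatic: for any smooth projective $\F$-variety $Y$, the group $\coh{0}{\bar Y}{\ql(0)} \cong \ql^{\#\pi_0(\bar Y)}$ carries a permutation Galois action through the action on connected components, and the cycle class map in codimension $0$ sends fundamental classes of irreducible components to the corresponding basis vectors, so it surjects onto the $G_\F$-invariants. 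The Galois action factors through a finite quotient and is thus semi-simple, which in particular yields $1$-semi-simplicity by Lemma \ref{SStrans}.

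For the second assertion, we want to drop the smoothness assumption on $X_1,\dots,X_n$ in the case $i=1$. This corresponds to $j = i-1 = 0$, which is precisely the case for which Lemma \ref{lem:liftforbeil2cor} explicitly allows one to forgo the smoothness hypothesis on the $X_m$. Since the Tate and $1$-semi-simplicity hypotheses in codimension $0$ on varieties of dimension $d-1$ remain trivially valid, the conclusion carries over.

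The core of the argument is therefore already packaged in Lemma \ref{lem:liftforbeil2cor}; there is no genuine obstacle here beyond checking that the codimension-$0$ Tate and $1$-semi-simplicity statements indeed hold on the nose. The only mild subtlety is that $H^0$ of a possibly disconnected variety $\bar Y$ has a nontrivial (but permutation) Galois action, and one should observe that this is already enough for both conjectures in degree $0$.
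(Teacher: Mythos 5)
Your proof is correct and follows exactly the paper's route: specialize Lemma \ref{lem:liftforbeil2cor} to $j=i-1$ and note that the resulting hypotheses $\bigtate{0}{d-i}{\F}$ and $\bigSS{0}{d-i}{\F}$ are trivial, with the smoothness-free case following from the $j=0$ clause of that lemma. Your added justification that the degree-$0$ Tate and $1$-semi-simplicity statements hold because $H^0$ carries a permutation Galois action whose invariants are spanned by fundamental classes is a correct elaboration of what the paper leaves implicit.
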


\section{Galois-invariants of the cohomology of the generic point} \label{sec:galois-invariants}

The purpose of this section is to prove the following result which relates the validity of the 1-semi-simplicity, the Beilinson, and the Tate conjecture for $X$ with the Galois-invariants of the generic point.
The latter are expected to vanish by the Tate conjecture for the function field of $X$, see Conjecture \ref{conj:generalized-Tate-function field}.

\begin{theorem}\label{thm:induct1} 
Let $X$ be a smooth projective variety of dimension $d$ over the finite field $\F$.
Then the following hold true.
\begin{enumerate}
    \item\label{induct1_1} If the Tate and 1-semi-simplicity conjecture hold in degree $i$ on $X$,  then 
    $$
    \nrcoh{2i}{0}{\bar X}{\ql(i)}^{G_\F}=0.
    $$ 
    \item\label{induct1_2} Assume that $\bigtate{i-1}{d-1}{\F}$ and $\bigSS{i-1}{d-1}{\F}$ hold.
    If
    $$
   \nrcoh{2i}{0}{\bar X}{\ql(i)}^{G_{\F }}=0,
    $$
    then the Tate and 1-semi-simplicity conjecture hold in degree $i$ on $X$. 
    \item\label{induct1_3} For all $d$ and $i\geq 2$,  assume that $\bigtate{i-1}{d-1}{\F}$ and $\bigSS{i-1}{d-1}{\F}$  hold. 
    If the Beilinson conjecture for codimension $i$-cycles on $X$ holds, then 
    $$
    \nrcoh{2i-1}{0}{\bar X}{\ql(i)}^{G_\F}=0 .
    $$ 
    \item\label{induct1_4} 
    Assume that $\bigtate{i-2}{d-2}{\F}$, $\bigSS{i-2}{d-2}{\F}$, and $\bigbeil{i-1}{d-1}{\F}$ hold.
    If
    $$
   \nrcoh{2i-1}{0}{\bar X}{\ql(i)}^{G_{\F }}=0
    $$
    Then the Beilinson conjecture holds in degree $i$ on $X$.
\end{enumerate}
\end{theorem}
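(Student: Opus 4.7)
The plan is to establish the injectivity of $\cl_X^i: \CH^i(X)_{\Q_\ell} \to H^{2i}(\bar X, \Q_\ell(i))$ via a descending devissage along the codimension filtration of $\bar X$, with the vanishing of $H^{2i-1}(F_0\bar X,\Q_\ell(i))^{G_\F}$ playing the role of the base case.

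Since $H^{2i-1}(\bar X,\Q_\ell(i))$ is pure of weight $-1$ for smooth projective $X$, it has no Galois (co)invariants; the Hochschild--Serre sequence then gives $H^{2i}(X,\Q_\ell(i)) \simeq H^{2i}(\bar X,\Q_\ell(i))^{G_\F}$, and Beilinson's conjecture in codimension $i$ on $X$ becomes the injectivity of the cycle class map $\CH^i(X)_{\Q_\ell} \to H^{2i}(X,\Q_\ell(i))$. By the Gysin exact sequence (Lemma \ref{exactnesslemma}), this amounts to showing that any $\xi \in H^{2i-1}(F_{i-1}X,\Q_\ell(i))$ with $\partial_i(\xi) = \tilde\alpha \in Z^i(X)_{\Q_\ell}$ can be modified, by an element of $H^{2i-1}(X,\Q_\ell(i))$, so as to lie in the image of $\iota_{i-1}: \bigoplus_{x \in X^{(i-1)}} H^1(x,\Q_\ell(1)) \to H^{2i-1}(F_{i-1}X,\Q_\ell(i))$, which would exhibit $\tilde\alpha$ as a rational equivalence.

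Working over $\bar X$, Lemma \ref{lem:liftforbeil2} applied with $j=i-1$ (making its Tate and $1$-semi-simplicity hypotheses trivial) combined with Lemma \ref{SSdef} allows a $G_\F$-invariant preimage $\bar\xi \in H^{2i-1}(F_{i-1}\bar X,\Q_\ell(i))^{G_\F}$ of $\bar\alpha$ to be chosen; the Hochschild--Serre sequence (\ref{HSnrexact}) then produces a compatible lift on $X$. Since $\bar\xi_0 \in H^{2i-1}(F_0\bar X,\Q_\ell(i))^{G_\F} = 0$ by hypothesis, one shows inductively, for $j = 0, 1, \ldots, i-2$, that the restriction $\bar\xi_j$ of $\bar\xi$ to $H^{2i-1}(F_j\bar X,\Q_\ell(i))$ lies in the image of $H^{2i-1}(\bar X,\Q_\ell(i)) \to H^{2i-1}(F_j\bar X,\Q_\ell(i))$; at $j = i-2$ this produces, after modifying $\bar\xi$ by the corresponding element of $\ker\partial_i$, the desired containment in $\image(\iota_{i-1})$ over $\bar X$. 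The inductive step rests on the vanishing of the Galois invariants of the residue groups $\bigoplus_{x \in \bar X^{(j)}} H^{2i-1-2j}(x,\Q_\ell(i-j))$ for $1 \le j \le i-2$: for each $x \in \bar X^{(j)}$ with closure $Y$ of dimension $d-j$ over $\F' = \F(x)$, the relevant term equals $H^{2(i-j)-1}(F_0\bar Y,\Q_\ell(i-j))^{G_{\F'}}$, which vanishes by Theorem \ref{thm:induct1}(3) applied to a smooth projective alteration of $Y$ together with Corollary \ref{cor:F0vanish}; its hypotheses $\bigtate{i-j-1}{d-j-1}{\F}$, $\bigSS{i-j-1}{d-j-1}{\F}$ and $\bigbeil{i-j}{d-j}{\F}$ all follow from our assumptions $\bigtate{i-2}{d-2}{\F}$, $\bigSS{i-2}{d-2}{\F}$ and $\bigbeil{i-1}{d-1}{\F}$ via Lemma \ref{Tatedown} with $p = q = j-1$. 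This induction is well-founded because (3) is applied only to strictly lower-dimensional varieties. Finally, descent from a rational equivalence on $\bar X$ to one on $X$ with $\Q_\ell$-coefficients uses $1$-semi-simplicity of the codimension-$(i-1)$ point residues (from Lemma \ref{lem:liftforbeil2cor} applied with $j = i-1$) together with Hochschild--Serre.

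The main obstacle will be carrying out the inductive lift precisely: at each step one must pass from the vanishing at level $j-1$ to containment in the geometric image at level $j$, which requires both the vanishing of the Galois invariants of the residue groups (supplied by Theorem \ref{thm:induct1}(3) used inductively) and compatibility of the Galois action with the Gysin long exact sequence (supplied by the $1$-semi-simplicity machinery of Section 5, in particular Lemmas \ref{lem:liftforbeil2} and \ref{lem:liftforbeil2cor}). A secondary delicate point is the final descent of the rational equivalence from $\bar X$ back to $X$, which again relies on the lifting lemmas of Section 5 applied to the residue groups at codimension $i-1$.
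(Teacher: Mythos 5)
Your proposal addresses only item~\eqref{induct1_4}; items \eqref{induct1_1}, \eqref{induct1_2}, and \eqref{induct1_3} of the theorem are left unproved, and your proof of \eqref{induct1_4} invokes item~\eqref{induct1_3} for lower-dimensional varieties. This is a genuine incompleteness: for the argument to be well-founded one must either prove item~\eqref{induct1_3} independently, or set up a simultaneous induction over all four items (the Tate direction supplies the residue vanishing needed for the Beilinson direction, and vice versa). The paper resolves this by packaging the key single-step statement into Proposition~\ref{prop:roof3}, whose four parts are interdependent in a controlled way, and then deducing all four items of Theorem~\ref{thm:induct1} at once from Corollaries~\ref{cor:Tupdown} and~\ref{cor:Bupdown}. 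Your recursive appeal to item~\eqref{induct1_3} is in spirit the same as the paper's inductive use of Proposition~\ref{prop:roof3}\eqref{item:prop:roof3-3}, but you never explain how \eqref{induct1_3} itself is established, so as written the proposal does not prove the theorem.

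Within your sketch of item~\eqref{induct1_4} the strategy is essentially the paper's: descend along $F_j\bar X$, use the vanishing of Galois invariants of the residue groups together with the $1$-semi-simplicity lifting machinery. A few points of imprecision: the inductive hypothesis is more cleanly stated as $\nrcoh{2i-1}{j}{\bar X}{\ql(i)}^{G_\F}=0$ rather than ``$\bar\xi_j$ lies in the image of $H^{2i-1}(\bar X,\ql(i))$''; the two are equivalent here only because $H^{2i-1}(\bar X,\ql(i))$ has weight $-1$, hence no Galois invariants or coinvariants, so it is trivially $1$-semi-simple and every Galois-invariant class in its image already equals zero --- a fact you should make explicit, since it is what rescues your formulation. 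Also, the closure $Y=\overline{\{x\}}$ of $x\in X^{(j)}$ is a $(d-j)$-dimensional variety over $\F$ (not over $\F(x)$), and the final descent of the rational equivalence from $\bar X$ to $X$ is handled in the paper without any extra lifting argument: Corollary~\ref{kercyc} gives directly $\ker(\cl^i_X)\simeq\urcoh{2i-1}{i-2}{\bar X}{\ql(i)}^{G_\F}$, which is a subspace of $\nrcoh{2i-1}{i-2}{\bar X}{\ql(i)}^{G_\F}$, so the vanishing over $\bar X$ immediately yields the Beilinson conjecture for $X$ with no further Hochschild--Serre argument.
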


\subsection{Refined unramified cohomology and algebraic cycles over finite fields}

Our starting point is the following result.
 
\begin{theorem}[\cite{Sch-refined}]\label{kercyc_1}
Let $X$ be a smooth quasi-projective variety over the finite field $\F$. 
Then,
$$
\ker(\cl^{i}_X:\CH^i(X)_{\Q_\ell}\to H^{2i}(\bar X,\Q_\ell(i)))\simeq\frac{\urcoh{2i-1}{i-2}{X}{\ql(i)}}{\coh{2i-1}{X}{\ql(i)}}.
$$
\end{theorem}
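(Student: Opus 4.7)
The plan is to derive this from the coniveau/Gysin exact sequences of Lemma \ref{exactnesslemma}, following the approach of \cite{Sch-refined}. The main step is to construct a natural boundary-type map
$$\psi: \urcoh{2i-1}{i-2}{X}{\ql(i)} \longrightarrow \CH^i(X)_{\ql}$$
whose image is the kernel of the cycle class map and whose kernel is the image of $\coh{2i-1}{X}{\ql(i)}$.

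For the construction, Lemma \ref{ceilinglemma} identifies $\nrcoh{2i-1}{i}{X}{\ql(i)} = \coh{2i-1}{X}{\ql(i)}$ and $\nrcoh{2i}{i}{X}{\ql(i)} = \coh{2i}{X}{\ql(i)}$. Lemma \ref{exactnesslemma} at filtration $j=i$ in degree $2i-1$ then yields the exact sequence
$$\coh{2i-1}{X}{\ql(i)} \to \nrcoh{2i-1}{i-1}{X}{\ql(i)} \xrightarrow{\partial} Z^i(X)_{\ql} \xrightarrow{i_*} \coh{2i}{X}{\ql(i)},$$
where the residue target $\bigoplus_{x \in X^{(i)}} \coh{0}{x}{\ql(0)}$ is identified with the codimension-$i$ cycle group (since those points are closed) and $i_*$ is the cycle class map at the level of $Z^i$. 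Given $\alpha \in \urcoh{2i-1}{i-2}{X}{\ql(i)}$, I would pick any lift $\tilde\alpha \in \nrcoh{2i-1}{i-1}{X}{\ql(i)}$ and set $\psi(\alpha) := [\partial\tilde\alpha] \in \CH^i(X)_{\ql}$. The relation $\cl^i_X \circ \psi = 0$ is then immediate from exactness, and a reverse chase through the same sequence shows that $\psi$ surjects onto the kernel of $\cl^i_X$ viewed as a map to $\coh{2i}{X}{\ql(i)}$.

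The target in the statement is $\coh{2i}{\bar X}{\ql(i)}$ rather than $\coh{2i}{X}{\ql(i)}$; one reconciles this by noting that the kernel of $\coh{2i}{X}{\ql(i)} \to \coh{2i}{\bar X}{\ql(i)}$ equals $\coh{2i-1}{\bar X}{\ql(i)}_{G_\F}$ via the Hochschild--Serre sequence \eqref{HSexact}, and by a weight argument (Lemma \ref{coh_injection} for smooth projective $X$, Theorem \ref{smoothweight} otherwise) this kernel has trivial intersection with the image of the cycle class map, so the two possible kernels of $\cl^i_X$ coincide.

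The main obstacle is the well-definedness of $\psi$ modulo rational equivalence, together with the identification of $\ker(\psi)$ with the image of $\coh{2i-1}{X}{\ql(i)}$. The ambiguity in the lift $\tilde\alpha$ is controlled by the Gysin sequence at filtration $j=i-1$ in degree $2i-2$: two different lifts differ by a class in $\nrcoh{2i-1}{i-1}{X}{\ql(i)}$ coming from $\bigoplus_{x \in X^{(i-1)}} \coh{1}{x}{\ql(1)}$, and one must identify the further residue of such a class at a codimension-$i$ point with the divisor of a rational function on the closure of $x$, so that the induced change in $\partial\tilde\alpha$ is exactly a rationally trivial cycle. Consequently, if $\partial\tilde\alpha$ is rationally trivial one can modify $\tilde\alpha$ without changing its image $\alpha$ so that $\partial\tilde\alpha=0$ in $Z^i(X)_\ql$, which by exactness forces $\tilde\alpha$ to lift to $\nrcoh{2i-1}{i}{X}{\ql(i)}=\coh{2i-1}{X}{\ql(i)}$. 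This residue identification (essentially Kummer theory matched with Bloch--Ogus) is the technical heart of \cite{Sch-refined}.
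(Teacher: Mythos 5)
Your proposal takes a genuinely different route from the paper: the paper simply cites \cite[Theorem 1.8(1)]{Sch-refined} for an isomorphism with target $\coh{2i}{X}{\ql(i)}$ and then passes to $\coh{2i}{\Xbar}{\ql(i)}$ via a weight argument, whereas you attempt to re-derive the cited isomorphism directly from the Gysin sequences. Your final reduction step (that the kernel does not change when the target is replaced by $\coh{2i}{\Xbar}{\ql(i)}$, via \eqref{HSexact} and Deligne's weights) is correct and matches the paper.

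However, the re-derivation has a genuine gap, located exactly where the paper relies on the citation. In the well-definedness step you write that "one must identify the further residue of such a class at a codimension-$i$ point with the divisor of a rational function on the closure of $x$." With $\Q_\ell$- (or even $\Z_\ell$-) coefficients this identification does not hold: the group $\coh{1}{x}{\ql(1)}$ is not $\kappa(x)^*\otimes_\Z\Q_\ell$ but is built from the $\ell$-adic completion $\varprojlim_r\kappa(x)^*/\ell^r$ (compare the caveat in Remark \ref{Remark_after_urcoh}(3) that the cohomology of the generic point with integral $\ell$-adic coefficients is \emph{not} the Galois cohomology of the function field). Hence a class $\gamma\in\bigoplus_{x\in X^{(i-1)}}\coh{1}{x}{\ql(1)}$ need not be represented by rational functions, and there is no a priori reason that $\partial(i_*\gamma)\in\bigoplus_{x\in X^{(i)}}\ql$ is rationally trivial, so $\psi$ is not visibly well-defined as a map into $\CH^i(X)_{\ql}$. (The converse containment, which you use to show $\ker\psi$ is the image of $\coh{2i-1}{X}{\ql(i)}$, is fine, because genuine rational functions do map into $\coh{1}{x}{\ql(1)}$ via the Kummer map.) The possible discrepancy between the two sides is precisely what the filtered piece $N^{i-1}\CH^i(X)_{\Z_\ell}$ appearing in \cite[Theorem 1.8(1)]{Sch-refined} measures; the paper's proof makes essential use of the non-formal input that $N^{i-1}\CH^i(X)_{\Z_\ell}=0$ because $\F$ is finitely generated over its prime field (\cite[Lemma 7.5(2) and Proposition 6.6]{Sch-refined}). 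This finiteness result is the missing ingredient in your sketch; without it the Gysin sequences alone only give the isomorphism modulo that filtered piece.
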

\begin{proof}
Since $X$ is smooth and irreducible, hence equi-dimensional,  Borel--Moore cohomology coincides with ordinary cohomology by Lemma \ref{bmtoet}.
Hence, by \cite[Theorem 1.8(1)]{Sch-refined}, we have
$$
\ker(\cl^{i}_X:\CH^i(X)_{\Q_\ell}\to H^{2i}(X,\Q_\ell(i)))\simeq\frac{\urcoh{2i-1}{i-2}{X}{\ql(i)}}{\coh{2i-1}{X}{\ql(i)}},
$$
where we use that  in loc.\ cit.\ we have $N^{i-1}\CH^i(X)_{\Z_\ell}=0$ because $\F$ is a finite field,  hence finitely generated over its prime field, see \cite[Lemma 7.5(2) and Proposition 6.6]{Sch-refined}.
It remains to prove that $H^{2i}( X,\Q_\ell(i)))\to H^{2i}(\bar X,\Q_\ell(i)))$ is injective, which is well-known: 
By \eqref{HSexact} the kernel of this map is given by the coinvariants of $H^{2i-1}(\bar X,\Q_\ell(i)))$, which vanish for weight reasons, see
 Theorem \ref{smoothweight}. This concludes the proof of the theorem. 
\end{proof}

\begin{prop}\label{roof1}
Let $X$ be a smooth projective variety over the finite field $\F$. 
Then,
\begin{enumerate}
    \item the morphism
    \begin{equation}\label{equation_roof1}
    \urcoh{2i-1}{i-2}{X}{\ql(i)}\rightarrow\urcoh{2i-1}{i-2}{\Xbar}{\ql(i)}^{G_\F}
    \end{equation}
    is an isomorphism.
    \item The morphism
    \begin{equation}\label{equation_roof2}
    \urcoh{2i}{i-1}{X}{\ql(i)}\rightarrow\urcoh{2i}{i-1}{\Xbar}{\ql(i)}^{G_\F}
    \end{equation}
    is injective. If $H^{2i}(\bar X,\ql(i))$ is 1-semi-simple, it is surjective as well.    
\end{enumerate}
\end{prop}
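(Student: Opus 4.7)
My plan is to reduce both source and target to the unrefined groups $\nrcoh{}{}{\cdot}{\cdot}$ and then invoke Hochschild--Serre. The Gysin exact sequence of Lemma \ref{exactnesslemma} at $(2i-1,i-1,i)$ over $X$ has cokernel embedded in $\bigoplus_{x\in X^{(i-1)}}H^0(\F(x),\Q_\ell(1))$; each summand vanishes because the constant subfield of $\F(x)$ is a finite $\F_{q'}$ on which the cyclotomic character sends Frobenius to $q'\neq 1$. Hence $\urcoh{2i-1}{i-2}{X}{\ql(i)}=\nrcoh{2i-1}{i-2}{X}{\ql(i)}$. Over $\bar X$ the analogous cokernel embeds into $\bigoplus_{x\in\bar X^{(i-1)}}\Q_\ell(1)$, which is nonzero but has zero $G_\F$-invariants: by Frobenius reciprocity each orbit contributes $\operatorname{Ind}_{G_F}^{G_\F}\Q_\ell(1)$ for some finite $F/\F$, with invariants $\Q_\ell(1)^{G_F}=0$. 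Left-exactness of $(\cdot)^{G_\F}$ then gives $\urcoh{2i-1}{i-2}{\bar X}{\ql(i)}^{G_\F}=\nrcoh{2i-1}{i-2}{\bar X}{\ql(i)}^{G_\F}$. With these two identifications the map in (1) is the Hochschild--Serre quotient \eqref{HSnrexact}, which is automatically surjective, and injectivity reduces to $\nrcoh{2i-2}{i-2}{\bar X}{\ql(i)}_{G_\F}=0$. I obtain the latter by using Lemma \ref{exactnesslemma} (negative-degree vanishing) to realize this group as a quotient of $\nrcoh{2i-2}{i-1}{\bar X}{\ql(i)}=H^{2i-2}(\bar X,\Q_\ell(i))$ (Lemma \ref{ceilinglemma}), and then applying Deligne's purity: $H^{2i-2}(\bar X,\Q_\ell(i))$ is pure of weight $-2$, so has vanishing $G_\F$-coinvariants, and right-exactness of $(\cdot)_{G_\F}$ propagates this to the quotient.

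\textbf{Plan for (2).} Lemma \ref{ceilinglemma} identifies $\nrcoh{2i}{i}{X}{\ql(i)}=H^{2i}(X,\Q_\ell(i))$, and the piece of Lemma \ref{exactnesslemma} at $(2i,i,i)$ identifies the kernel of the restriction into $\nrcoh{2i}{i-1}{X}{\ql(i)}$ as $\image(\cl_X^i)$, giving
\[
0\to\image(\cl_X^i)\to H^{2i}(X,\Q_\ell(i))\to\urcoh{2i}{i-1}{X}{\ql(i)}\to 0,
\]
and analogously over $\bar X$. The purity of $H^{2i-1}(\bar X,\Q_\ell(i))$ (weight $-1$) combined with \eqref{HSexact} yields an isomorphism $H^{2i}(X,\Q_\ell(i))\cong H^{2i}(\bar X,\Q_\ell(i))^{G_\F}$; taking $G_\F$-invariants of the $\bar X$-sequence (left-exact) and comparing via this iso identifies $\ker(\phi)\cong\image(\cl_{\bar X}^i)^{G_\F}/\image(\cl_X^i)$. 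For injectivity it then suffices to show $\image(\cl_{\bar X}^i)^{G_\F}=\image(\cl_X^i)$: given an invariant $\alpha=\cl_{\bar X}^i(\bar c)$ I first average $\bar c$ over its finite $G_\F$-orbit (finite since each cycle is defined over a finite extension of $\F$) to obtain a $G_\F$-invariant representative $\bar c'\in\chow{i}{\bar X}_{\Q_\ell}$ with $\cl_{\bar X}^i(\bar c')=\alpha$, realize $\bar c'$ as the image of some $c'\in\chow{i}{X_{\F'}}_{\Q_\ell}$, enlarge $\F'/\F$ so that $c'$ is $\Gal(\F'/\F)$-invariant on the nose, and descend by transfer: $c:=\operatorname{tr}(c')/[\F':\F]\in\chow{i}{X}_{\Q_\ell}$ satisfies $\pi^{*}c=c'$, so $\alpha=\cl_X^i(c)$ under the top H--S identification. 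For the surjectivity of $\phi$ under 1-semi-simplicity of $H^{2i}(\bar X,\Q_\ell(i))$, I apply Lemma \ref{SSdef}(3) to the Galois-equivariant surjection $H^{2i}(\bar X,\Q_\ell(i))\twoheadrightarrow\urcoh{2i}{i-1}{\bar X}{\ql(i)}$ to lift each invariant of the target to an invariant of the source, then use the top H--S isomorphism to transport it to $H^{2i}(X,\Q_\ell(i))$, whose image in $\urcoh{2i}{i-1}{X}{\ql(i)}$ is the desired preimage.

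\textbf{Main obstacle.} The hardest step is the cycle-descent identity $\image(\cl_{\bar X}^i)^{G_\F}=\image(\cl_X^i)$ underpinning injectivity in part (2). The nontrivial inclusion $\image(\cl_{\bar X}^i)^{G_\F}\subseteq\image(\cl_X^i)$ combines two ingredients: a finite-orbit averaging argument in the spirit of Lemma \ref{SScycle} (to replace a cycle representative by a $G_\F$-invariant one), and a transfer computation that requires dividing by $[\F':\F]$. It is therefore essential that we work with $\Q_\ell$-coefficients, and the same identity would fail with $\Z_\ell$-coefficients. All remaining steps are formal consequences of the Gysin long exact sequence, Deligne's weights, and the Hochschild--Serre spectral sequence.
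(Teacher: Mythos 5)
Your argument here is essentially correct and equivalent to the paper's, though packaged differently. Both identify (via Lemma~\ref{ceilinglemma}, the $j=0$ part of Lemma~\ref{exactnesslemma}, and the fact that $\iota_*$ induces the cycle class map) the short exact sequence
$$0\to\image(\cl_X^i)\to H^{2i}(X,\Q_\ell(i))\to\urcoh{2i}{i-1}{X}{\ql(i)}\to 0,$$
use the weight-$(-1)$ vanishing of $H^{2i-1}(\bar X,\Q_\ell(i))_{G_\F}$ to get $H^{2i}(X,\Q_\ell(i))\cong H^{2i}(\bar X,\Q_\ell(i))^{G_\F}$, and then reduce the injectivity of \eqref{equation_roof2} to the descent statement $\image(\cl_{\bar X}^i)^{G_\F}=\image(\cl_X^i)$. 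Your orbit-averaging/transfer argument for that descent is the cycle-theoretic incarnation of the paper's diagram chase through $\bigoplus_{x\in X^{(i)}}H^0(\bar x,0)$. The surjectivity via Lemma~\ref{SSdef}(3) is also the paper's argument in disguise. So Part~(2) is fine.

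\textbf{On Part (1) — a fatal degree error.} Your plan rests on the claimed identifications $\urcoh{2i-1}{i-2}{X}{\ql(i)}=\nrcoh{2i-1}{i-2}{X}{\ql(i)}$ and $\urcoh{2i-1}{i-2}{\bar X}{\ql(i)}^{G_\F}=\nrcoh{2i-1}{i-2}{\bar X}{\ql(i)}^{G_\F}$, which you derive from the residue term of Lemma~\ref{exactnesslemma} being $\bigoplus_{x\in X^{(i-1)}}H^0(\cdot,\Q_\ell(1))$. But plugging into Lemma~\ref{exactnesslemma} with first index $2i-1$, level $j=i-1$, twist $n=i$ gives the residue term
$$\bigoplus_{x\in X^{(i-1)}}H^{(2i-1)+1-2(i-1)}(x,\Q_\ell(i-(i-1)))=\bigoplus_{x\in X^{(i-1)}}H^{2}(x,\Q_\ell(1)),$$
i.e.\ degree $2$, not $0$. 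Your vanishing argument (cyclotomic character nontrivial on the constant subfield) computes $H^0(\kappa(x),\Q_\ell(1))$, not $H^2$, and $H^2(x,\Q_\ell(1))$ does \emph{not} vanish for codimension-$(i-1)$ points $x$ whose residue field has positive transcendence degree. In fact the identity $\urcoh{2i-1}{i-2}{\bar X}{\ql(i)}^{G_\F}=\nrcoh{2i-1}{i-2}{\bar X}{\ql(i)}^{G_\F}$ is precisely Lemma~\ref{roof2}(2) of the paper, which is proved there only under the extra hypothesis $\bigtate{1}{\dim X-i+1}{\F}$, so it cannot be unconditional as your argument would imply. The same degree issue kills the last step as well: $\nrcoh{2i-2}{i-2}{\bar X}{\ql(i)}$ is not a quotient of $\nrcoh{2i-2}{i-1}{\bar X}{\ql(i)}=H^{2i-2}(\bar X,\Q_\ell(i))$, since the cokernel of the restriction embeds into $\bigoplus_{x\in\bar X^{(i-1)}}H^{1}(\bar x,\Q_\ell(1))$ (not a negative-degree term), which is nonzero in general. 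Structurally, the trouble is that you work with a single Hochschild--Serre sequence at level $i-2$, where Lemma~\ref{ceilinglemma} no longer applies ($i-2<\lceil(2i-2)/2\rceil$), so the kernel of the Hochschild--Serre map is not a pure cohomology group and cannot be killed by weights alone. The paper instead works with the commutative diagram \eqref{Diagram_roof1} at the two levels $i-1$ and $i-2$ simultaneously, proving injectivity of the Hochschild--Serre map at level $i-1$ (where the kernel is $H^{2i-2}(\bar X,\Q_\ell(i))_{G_\F}$, which has weight $-2$), and then diagram-chasing from $\urcoh{}{}$ at level $i-2$ through the edge terms. You need this two-level structure; collapsing $\urcoh{}{}$ to $\nrcoh{}{}$ at a single level is exactly what the statement is refusing to let you do.
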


\begin{proof}
For a point $x\in X^{(j)}$ we denote by $\bar x:=\Spec \kappa x\times_{\F} \Spec \bar \F$ its base change to $\bar \F$; this is a finite union of codimension $j$ points of $\bar X$ which form a $G_\F$-orbit under the natural Galois action on $\bar X^{j}$.
Note that we have an equality of sets
$$
\{ \bar x\mid x\in X^{(j)} \}=\bar X^{(j)} .
$$
We thus get a commutative diagram
\begin{align}\label{Diagram_roof1}
\xymatrix{
 \underset{x\in X ^{(i-j)}}{\bigoplus}H^j(x,j)\ar[r]\ar[d] & 
 H^{2i-j}(F_{i-j}X,i) \ar[d]^{\phi}\ar[r]&  H^{2i-j}(F_{i-j-1}X,i)  \ar[r]\ar[d]& \underset{x\in X^{(i-j)}}{\bigoplus} H^{j+1}(x,j)\ar[d] \\
 \underset{x\in X ^{(i-j)} }{\bigoplus} H^j(\bar x,j)  \ar[r]& H^{2i-j}(F_{i-j}\bar X,i) \ar[r]& H^{2i-j}(F_{i-j-1}\bar X,i)   \ar[r] &  \underset{x\in X ^{(i-j)}}{\bigoplus}H^{j+1}(\bar{x},j) ,
}
\end{align} 
whose rows are exact by Lemma \ref{exactnesslemma}, whose vertical arrows come from  (\ref{HSnrexact}), and where all cohomology groups are computed with $\Q_\ell$-coefficients.
We will consider the above diagram for $j\in\{0, 1\}$.
Note that in the $j=0$ case, $i_*$ induces the cycle class map, see Lemma \ref{ceilinglemma} and \cite[Lemma 9.1]{Sch-refined}.

First we prove the surjectivity of \eqref{equation_roof1} and \eqref{equation_roof2}. 
Let $\alpha\in\urcoh{2i-j}{i-1-j}{\bar X}{i}^{G_\F}$, i.e. $\alpha$ is $G_\F$-invariant and lifts to a class $\gamma\in\nrcoh{2i-j}{i-j}{\bar X}{i}$. 
We may assume that $\gamma$ is $G_\F$-invariant: This follows from Corollary \ref{cor:liftforbeil} ($n=1$ case) in the $j=1$ case and from Lemma \ref{SSdef} in the $j=0$ case, where we use that $H^{2i}(\bar X,i)=H^{2i}(F_i\bar X,i)$ is 1-semi-simple in the $j=0$ case by assumption.  
Therefore, by the exactness of \eqref{HSnrexact}, $\gamma$ lies in the image of   $\phi$. 
The surjectivity of \eqref{equation_roof1} and \eqref{equation_roof2} follows now from the commutativity of \eqref{Diagram_roof1}.

\begin{claim}
The morphism $\phi$ of diagram (\ref{Diagram_roof1}) is injective. 
\end{claim}
\begin{proof}
By the exactness of \eqref{HSnrexact},
$$
\ker\phi \simeq\nrcoh{2i-j-1}{i-j}{\Xbar}{i}_{G_{\F}}\simeq\coh{2i-j-1}{\Xbar}{i}_{G_{\F}},
$$ 
where the second isomorphism follows from Lemmas \ref{bmtoet} and \ref{ceilinglemma}, because $i-j\geq \lceil \frac{2i-j-1}{2}\rceil$ for $j=0,1$.
 Since $X$ is smooth projective, $\coh{2i-j-1}{\bar X}{i}$ has weight $-j-1\neq 0$ and so the Galois invariance   of  $\coh{2i-j-1}{\bar X}{i}$ vanishes by the Weil conjectures (see Theorem \ref{weight}); the same is true for the Galois coinvariance of it by Lemma \ref{SSdef}. 
 Hence, $\phi$ is injective.
\end{proof}

We now turn to the  injectivity assertions in \eqref{equation_roof1} and   \eqref{equation_roof2}.
To this end let $\alpha\in\urcoh{2i-j}{i-j-1}{X}{i}\subset H^{2i-j}(F_{i-j-1}X,i)$ map to zero under the morphism in \eqref{equation_roof1} or \eqref{equation_roof2}. 
This means that $\alpha$ lifts to a class $\beta\in\nrcoh{2i-j}{i-j}{X}{i}$ and the image 
$$
\phi(\beta)\in\nrcoh{2i-j}{i-j}{\bar X}{i}
$$
of $\beta$ maps to zero in $\nrcoh{2i-j}{i-1-j}{\bar X}{i}$. 
By the exactness of the bottom row of diagram \eqref{Diagram_roof1},  $\phi(\beta)$ lifts to an element $\delta\in\bigoplus\coh{j}{\overline{x}}{j}$: $i_*\delta=\phi (\beta)$.

We will show that $\delta$ can be replaced by a Galois-invariant class. 
The $j=0$ case is clear, because in this case the Galois action on $\bigoplus\coh{j}{\bar{x}}{j}$ has finite orbits on each element.
The $j=1$ case  follows from Corollary \ref{cor:liftforbeil}. 

We have thus reduced the problem to the situation where  $i_*\delta=\phi (\beta)$  for some $G_\F$-invariant class $\delta$.
Since $\delta$ is $G_\F$-invariant, it lifts to $\bigoplus\coh{1}{{x}}{1}$ and so a simple diagram chase in \eqref{Diagram_roof1} shows that we can choose the class $\beta$ above in such a way that $\phi(\beta)=0$.
The above claim then implies that $\beta=0$ and so $\alpha=0$, as we want.
\end{proof}

\begin{cor}\label{kercyc}
Let $X$ be a smooth projective variety over the finite field $\F$. 
Then there are isomorphisms
$$
\ker(\cl^{i}_{X})\simeq\urcoh{2i-1}{i-2}{\bar X}{\ql(i)}^{G_\F}
\ \ \ \text{and}\ \ \ 
\ker(\cl^{i}_{\bar X})\simeq\varinjlim \urcoh{2i-1}{i-2}{\bar X}{\ql(i)}^{G_{\F^{\prime}}},
$$
where the direct limit runs through all the finite field extensions $\F^{\prime}/\F$ with $\F^{\prime}\subseteq\bar \F$.
\end{cor}
\begin{proof}
We first note that $\coh{2i-1}{X}{\ql(i)}=0$, by the Hochschild--Serre sequence \eqref{HSexact}, because $\coh{2i-1}{\Xbar}{\ql(i)}$ and $\coh{2i-2}{\Xbar}{\ql(i)}$ have no Galois invariance and hence also no coinvariance, see Theorem \ref{smoothweight} and Lemma  \ref{coh_injection}.
By Theorem \ref{kercyc_1}, we conclude that $\ker(\cl^{i}_{X})\simeq\urcoh{2i-1}{i-2}{X}{\ql(i)}$.  
The first assertion in the corollary then follows from Proposition \ref{roof1}.

It remains to prove the second assertion.
Since any element of $\chow{i}{\bar X}$ is in the image of $\chow{i}{X_{\F^{\prime}}}\rightarrow\chow{i}{\bar X}$ for some finite extension $\F^{\prime}/\F$ with $\F^{\prime}\subseteq\bar \F$, it follows that $\ker(\cl^{i}_{\bar X})=\varinjlim \ker(\cl^{i}_{X_{\F^{\prime}}})$. 
Hence, the second assertion of the corollary follows from the first after taking direct limits.
\end{proof}

\subsection{Refined unramified cohomology and cycle conjectures over finite fields}
The next theorem expresses the validity of the Tate, Beilinson, and $1$-semi-simplicity conjectures in terms of the Galois invariance of the refined unramified cohomology. See Definition \ref{unramcoh} for the definition of the refined unramified cohomology groups.

\begin{theorem}\label{nrtatebeil}
Let $X$ be a smooth projective variety over the finite field $\F$. 
Let $G_\F$ denote the absolute Galois group of $\F$. 
Then the following are true:
\begin{enumerate}
    \item\label{nrtatebeil_1} For all $i\geq 1$, $\urcoh{2i}{i-1}{\bar X}{\ql(i)}^{G_\F}=0$ if and only if the Tate and $1$-semi-simplicity conjectures in degree $i$ hold for $X$.
    \item For all $i\geq 2$, $\urcoh{2i-1}{i-2}{\bar X}{\ql(i)}^{G_\F}=0$ if and only if the Beilinson conjecture in degree $i$ holds for $X$.
\end{enumerate}
\end{theorem}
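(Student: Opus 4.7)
The plan is to derive both statements from the Gysin sequence of Lemma~\ref{exactnesslemma}, combined with the comparison results of Proposition~\ref{roof1} and the $1$-semi-simplicity of the image of the cycle class map (Lemma~\ref{SScycle}). Part~(2) is then immediate: Corollary~\ref{kercyc} identifies $\ker(\cl^i_X)$ with $\urcoh{2i-1}{i-2}{\bar X}{\ql(i)}^{G_\F}$, and by definition the Beilinson conjecture in degree $i$ for $X$ is the vanishing of this kernel. The remainder of the argument concerns Part~(1).

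The starting point for Part~(1) is the short exact sequence extracted from Lemma~\ref{exactnesslemma} with $j=i$, using Lemma~\ref{ceilinglemma} to rewrite $\nrcoh{2i}{i}{\bar X}{\ql(i)}$ as $\coh{2i}{\bar X}{\ql(i)}$ and identifying the Gysin pushforward from codimension-$i$ points with the cycle class map:
\begin{equation*}
0 \longrightarrow \image(\cl^i_{\bar X}) \longrightarrow \coh{2i}{\bar X}{\ql(i)} \longrightarrow \urcoh{2i}{i-1}{\bar X}{\ql(i)} \longrightarrow 0,
\end{equation*}
with an analogous sequence over $X$. The Hochschild--Serre sequence~\eqref{HSexact} combined with the weight bound of Theorem~\ref{smoothweight} (which forces $\coh{2i-1}{\bar X}{\ql(i)}_{G_\F}=0$) shows that the natural map $\coh{2i}{X}{\ql(i)} \to \coh{2i}{\bar X}{\ql(i)}^{G_\F}$ is an isomorphism. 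Hence the Tate conjecture in degree~$i$ for $X$ is equivalent to $\urcoh{2i}{i-1}{X}{\ql(i)}=0$.

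For the direction $(\Leftarrow)$, assuming Tate and $1$-semi-simplicity in degree~$i$, the preceding remark gives $\urcoh{2i}{i-1}{X}{\ql(i)}=0$, and the surjectivity part of Proposition~\ref{roof1}(2) --- available thanks to the assumed $1$-semi-simplicity --- transports this vanishing to $\urcoh{2i}{i-1}{\bar X}{\ql(i)}^{G_\F}=0$. For the direction $(\Rightarrow)$, assume $\urcoh{2i}{i-1}{\bar X}{\ql(i)}^{G_\F}=0$. The always-valid injectivity in Proposition~\ref{roof1}(2) yields $\urcoh{2i}{i-1}{X}{\ql(i)}=0$, whence the Tate conjecture. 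For $1$-semi-simplicity, I would apply the six-term exact sequence in $G_\F$-cohomology --- available since $G_\F\cong\hat\Z$ has cohomological dimension one, compare~\eqref{frob_sequence} --- to the displayed Gysin short exact sequence over $\bar X$. The vanishing hypothesis then yields $\image(\cl^i_{\bar X})^{G_\F} = \coh{2i}{\bar X}{\ql(i)}^{G_\F}$ together with an injection $\image(\cl^i_{\bar X})_{G_\F} \hookrightarrow \coh{2i}{\bar X}{\ql(i)}_{G_\F}$, while Lemma~\ref{SScycle} provides the isomorphism $\image(\cl^i_{\bar X})^{G_\F} \xrightarrow{\cong} \image(\cl^i_{\bar X})_{G_\F}$. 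Concatenating these three identifications exhibits the natural map $\coh{2i}{\bar X}{\ql(i)}^{G_\F} \to \coh{2i}{\bar X}{\ql(i)}_{G_\F}$ as an injection of finite-dimensional $\ql$-vector spaces of equal dimension, hence as an isomorphism; this is $1$-semi-simplicity.

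The main conceptual content is that Lemma~\ref{SScycle} packages the algebraic part $\image(\cl^i_{\bar X})$ of $\coh{2i}{\bar X}{\ql(i)}$ as an automatically $1$-semi-simple subspace, so that the refined unramified quotient $\urcoh{2i}{i-1}{\bar X}{\ql(i)}$ simultaneously detects both the failure of the cycle class map to be surjective onto the Galois-invariants and the potential non-semi-simple part of the Galois action. The main technical step is the diagram chase in the six-term Galois-cohomology sequence that cleanly separates these two failures; once the Gysin short exact sequence is in place, everything else is formal.
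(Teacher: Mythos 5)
Your proposal is correct and follows essentially the same route as the paper: Part (2) via Corollary~\ref{kercyc}, and Part (1) via the Gysin short exact sequence, the Hochschild--Serre/weight argument identifying $\coh{2i}{X}{\ql(i)}\cong\coh{2i}{\bar X}{\ql(i)}^{G_\F}$, Proposition~\ref{roof1}(2), and the $1$-semi-simplicity of $\image(\cl^i_{\bar X})$ from Lemma~\ref{SScycle}. The only cosmetic difference is that where the paper invokes Lemma~\ref{SStrans} (applied to the short exact sequence $0\to T\to\coh{2i}{\Xbar}{\ql(i)}\to\urcoh{2i}{i-1}{\Xbar}{\ql(i)}\to 0$ with $Q^{G_\F}=0$) to conclude $1$-semi-simplicity, you unwind that lemma and carry out the six-term Galois-cohomology diagram chase explicitly, which is precisely how Lemma~\ref{SStrans} is proved.
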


\begin{proof}
The second item is an immediate consequence of Corollary \ref{kercyc}.

For the first assertion, consider the diagram 
\begin{equation}\label{Diagram_4.1}
    \hspace*{-2cm} 
    \begin{tikzpicture}
    \matrix[matrix of math nodes,column sep={100pt,between origins},row
    sep={70pt,between origins},nodes={asymmetrical rectangle}] (s)
    {
        &|[name=1]| \nrcoh{2i-1}{i-1}{X}{\ql(i)} &|[name=2]| \underset{x\in X^{(i)}}{\bigoplus}\ql[x] &|[name=3]| \coh{2i}{X}{\ql(i)} &|[name=4]| \nrcoh{2i}{i-1}{X}{\ql(i)} \\
        &|[name=A]| \nrcoh{2i-1}{i-1}{\bar X}{\ql(i)} &|[name=B]| \underset{\bar{x}\in \bar X^{(i)}}{\bigoplus}\ql[\bar{x}] &|[name=C]| \coh{2i}{\Xbar}{\ql(i)} &|[name=D]| \nrcoh{2i}{i-1}{\Xbar}{\ql(i)}, \\
    };
    \draw[->]
                (1) edge (A)
                (2) edge (B)
                (3) edge (C)
                (4) edge (D)
                (1) edge (2)
                (2) edge node [above] {$\iota_\ast$} (3)
                (3) edge (4)
                (A) edge (B)
                (B) edge node [above] {$\iota_\ast$} (C)
                (C) edge (D)
    ;
    \end{tikzpicture}
\end{equation}
where the vertical arrows are those from the exact sequence (\ref{HSnrexact}). Here we used Lemma \ref{ceilinglemma} and \ref{bmtoet} to identify $\nrcoh{2i}{i}{\Xbar}{\ql(i)}$ with $\coh{2i}{\Xbar}{\ql(i)}$ and $\nrcoh{2i}{i}{X}{\ql(i)}$ with $\coh{2i}{X}{\ql(i)}$. 

By Lemma \ref{coh_injection} and by the exactness of \eqref{HSexact}, $\coh{2i}{\bar X}{\ql(i)}^{G_\F}$ is isomorphic to $\coh{2i}{X}{\ql(i)}$. It follows that the Tate conjecture  for $X$ in degree $i$ is equivalent to the surjectivity of $\iota_\ast$ in the top row of  \eqref{Diagram_4.1}, which is equivalent to the vanishing of $\urcoh{2i}{i-1}{X}{\ql(i)}$ by the exactness of that row.

Assume that the Tate and $1$-semi-simplicity conjectures in degree $i$ hold for $X$. Then, the vanishing of $\urcoh{2i}{i-1}{X}{\ql(i)}$ implies the vanishing of $\urcoh{2i}{i-1}{\Xbar}{\ql(i)}^{G_\F}$ by Proposition \ref{roof1}.

Conversely, the vanishing of $\urcoh{2i}{i-1}{\Xbar}{\ql(i)}^{G_\F}$ implies the vanishing of $\urcoh{2i}{i-1}{X}{\ql(i)}$ by Proposition \ref{roof1}, and hence the Tate conjecture in degree $i$ for $X$. For the $1$-semi-simplicity conjecture for $X$, consider the following exact sequence
$$0\rightarrow T\rightarrow\coh{2i}{\Xbar}{\ql(i)}\rightarrow\urcoh{2i}{i-1}{\Xbar}{\ql(i)}\rightarrow 0,$$
which is cut out from the bottom row of Diagram (\ref{Diagram_4.1}). Here, $T$ is the image of $\cl_{\bar X}^{i}$.
By Lemma \ref{SScycle}, $G_\F$ acts $1$-semi-simply on $T$.
Since $\urcoh{2i}{i-1}{\Xbar}{\ql(i)}^{G_\F}$ vanishes, it follows from Lemma \ref{SStrans} that $\coh{2i}{\Xbar}{\ql(i)}$ is $1$-semi-simple as well.
\end{proof}

\begin{lemma}\label{roof2}
Let $X$ be a smooth projective variety over the finite field $\F$.
Then,
\begin{enumerate}
    \item for all $i\geq 1$,
    \begin{equation}\label{equation_roof2_1}
    \urcoh{2i}{i-1}{\Xbar}{\ql(i)}^{G_\F}=\nrcoh{2i}{i-1}{\Xbar}{\ql(i)}^{G_\F}.
\end{equation}
    \item for all $i\geq 2$, $\bigtate{1}{\dim X-i+1}{\F}$ implies
    \begin{equation}\label{equation_roof2_2}
    \urcoh{2i-1}{i-2}{\Xbar}{\ql(i)}^{G_\F}=\nrcoh{2i-1}{i-2}{\Xbar}{\ql(i)}^{G_\F}.
\end{equation}    
\end{enumerate}
\end{lemma}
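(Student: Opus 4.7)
The plan is to prove both parts by reducing the inclusion on Galois invariants to the vanishing of the Gysin residue on $G_\F$-invariant classes, using Deligne's weight bounds and (for Part (2)) Tate for divisors. I work throughout with the colimit description $\nrcoh{p}{j}{\bar X}{\ql(n)}=\varinjlim_{\bar U}\cohbm{p}{\bar U}{\ql(n)}$, where $\bar U\supseteq F_j\bar X$ runs over open subsets of $\bar X$ that are defined over $\F$ (cf.\ Section \ref{Galois_action_basics}).

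For Part (1), any such $\bar U\supseteq F_{i-1}\bar X$ has complement $\bar Z:=\bar X\setminus \bar U$ of codimension $c\geq i$, and the Gysin sequence \eqref{Gysin} reads
\begin{equation*}
\coh{2i}{\bar X}{\ql(i)}\longrightarrow \coh{2i}{\bar U}{\ql(i)}\longrightarrow \cohbm{2i-2c+1}{\bar Z}{\ql(i-c)}.
\end{equation*}
The right-hand term has weights $\geq 2i-2c+1-2(i-c)=1$ by Lemma \ref{BMweight} and therefore no $G_\F$-invariants, so $\coh{2i}{\bar U}{\ql(i)}^{G_\F}$ lies in the image of $\coh{2i}{\bar X}{\ql(i)}$ in $\coh{2i}{\bar U}{\ql(i)}$. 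Taking the colimit over $\bar U$ and using $\nrcoh{2i}{i}{\bar X}{\ql(i)}\cong \coh{2i}{\bar X}{\ql(i)}$ (Lemmas \ref{ceilinglemma} and \ref{bmtoet}), I obtain $\nrcoh{2i}{i-1}{\bar X}{\ql(i)}^{G_\F}\subseteq \urcoh{2i}{i-1}{\bar X}{\ql(i)}$; the reverse inclusion is trivial.

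For Part (2) the analogous weight bound on $\cohbm{2i-2c}{\bar Z}{\ql(i-c)}$ is only $\geq 0$, so weights alone do not suffice. Instead I use the Gysin exact sequence
\begin{equation*}
\nrcoh{2i-1}{i-1}{\bar X}{\ql(i)}\longrightarrow \nrcoh{2i-1}{i-2}{\bar X}{\ql(i)}\xrightarrow{\partial}\bigoplus_{\bar x\in \bar X^{(i-1)}}\coh{2}{\bar x}{\ql(1)}
\end{equation*}
of Lemma \ref{exactnesslemma} and aim to show that the $G_\F$-invariants of the target vanish, which forces $\partial$ to kill every Galois-invariant class and yields the equality. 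Galois orbits of such $\bar x$ correspond to points $x\in X^{(i-1)}$; for each $x$ I pick a representative $\bar x_0$ with field of definition a finite extension $\F'/\F$. By orbit-stabilizer / Shapiro's lemma, the $G_\F$-invariants of the summand over $x$ are identified with $\coh{2}{\bar x_0}{\ql(1)}^{G_{\F'}}$, and by Remark \ref{Remark_after_urcoh}(3) this group is $\nrcoh{2}{0}{\bar W}{\ql(1)}^{G_{\F'}}$ for $\bar W=\overline{\{\bar x_0\}}$, a $(d-i+1)$-dimensional subvariety of $\bar X$ defined over $\F'$.

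It remains to prove $\nrcoh{2}{0}{\bar W}{\ql(1)}^{G_{\F'}}=0$ under $\bigtate{1}{d-i+1}{\F}$. Taking a projective closure and applying de Jong's alteration theorem \cite{deJong} together with Lemma \ref{F0pullback}, I reduce to the same vanishing with $\bar W$ replaced by a smooth projective $\F'$-variety $Y'$ of dimension at most $d-i+1$. The hypothesis $\bigtate{1}{d-i+1}{\F}$, applied to $Y'$ viewed as an $\F$-variety (whose base change to $\bar\F$ is a disjoint union of copies of $\bar Y'$ permuted transitively by $G_\F/G_{\F'}$), transfers via Shapiro's lemma to $\bigtate{1}{d-i+1}{\F'}$; Milne's Proposition \ref{prop:Remark_after_conjectures} then supplies the accompanying $\bigSS{1}{d-i+1}{\F'}$. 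The Part (1) weight argument, now applied in degree $2$ and twist $1$ on $\bar Y'$, shows that $\coh{2}{\bar U}{\ql(1)}^{G_{\F'}}$ equals the image of $\coh{2}{\bar Y'}{\ql(1)}^{G_{\F'}}$ for every open $\bar U\subseteq \bar Y'$ defined over $\F'$ (here $1$-semi-simplicity is used through Lemma \ref{SSdef} to lift Galois invariants across the restriction map). By Tate, $\coh{2}{\bar Y'}{\ql(1)}^{G_{\F'}}$ is spanned by cycle classes of $\F'$-divisors $D$ on $Y'$, and each such $[D]$ dies in the colimit by restricting to $\bar Y'\setminus D$. The main obstacle in Part (2) is this chain of reductions --- global $\bar X$ to a subvariety $\bar W$, then via alterations to a smooth projective model, then transferring Tate and $1$-semi-simplicity from $\F$ to $\F'$ --- each step invoking a different piece of the machinery developed earlier.
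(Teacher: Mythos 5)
Your argument is correct, and Part~(1) is essentially identical to the paper's proof: in both cases the key point is that the Gysin residue lands in a group of weights $\geq 1$ (by Lemma~\ref{BMweight}, equivalently Corollary~\ref{F0weight}), so it kills nothing Galois-invariant, and taking the colimit gives the inclusion $\nrcoh{2i}{i-1}{\Xbar}{\ql(i)}^{G_\F}\subseteq\urcoh{2i}{i-1}{\Xbar}{\ql(i)}$.

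For Part~(2) the top-level structure is again the same --- show that the target $\bigoplus_{x\in X^{(i-1)}}\coh{2}{\bar x}{\ql(1)}$ of the residue has no $G_\F$-invariants, so that any $G_\F$-invariant class in $\nrcoh{2i-1}{i-2}{\Xbar}{\ql(i)}$ lifts --- but the route to the vanishing differs. The paper keeps everything at the level of $\F$-varieties: the closure $\overline{\{x\}}\subset X$ is an $\F$-variety of dimension $d-i+1$, and the paper invokes Theorem~\ref{nrtatebeil}\eqref{nrtatebeil_1}, the already-proven Part~(1) with $i=1$, and Corollary~\ref{cor:F0vanish} to conclude $\nrcoh{2}{0}{\bar Y}{\ql(1)}^{G_\F}=0$ for quasi-projective $\F$-varieties $Y$ of that dimension. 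You instead decompose $\bar x$ into a $G_\F$-orbit, pass via Shapiro's lemma to the field of definition $\F'$ of one component, transfer $\bigtate{1}{d-i+1}{\F}$ to $\bigtate{1}{d-i+1}{\F'}$ by Weil restriction (and then $\bigSS{1}{d-i+1}{\F'}$ via Milne), and then reprove the needed $\nrcoh{2}{0}{\bar Y'}{\ql(1)}^{G_{\F'}}=0$ directly from alterations, the Part~(1) weight computation, Lemma~\ref{SSdef}, and the fact that algebraic classes die on their complements. In effect you re-derive by hand the $i=1$ case of Theorem~\ref{nrtatebeil}\eqref{nrtatebeil_1} rather than quoting it; this buys a more self-contained and elementary argument (no appeal to Proposition~\ref{roof1} or Lemma~\ref{SScycle}) at the cost of the extra Shapiro/Weil-restriction bookkeeping that the paper avoids by staying over $\F$. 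Both arguments are valid.
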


\begin{proof}
Note first that the left-hand side of both \eqref{equation_roof2_1} and \eqref{equation_roof2_2} is contained in the right-hand side by the definition of the refined unramified cohomology, see Definition \ref{unramcoh}.
For the other inclusion, let  $j\in\{0, 1\}$.
Then we have to show that any class in $\nrcoh{2i-j}{i-j-1}{\Xbar}{\ql(i)}^{G_\F}$ is unramified, i.e.\ in the image of $\nrcoh{2i-j}{i-j}{\Xbar}{\ql(i)}$.
To see this, consider  the diagram \eqref{Diagram_roof1} with exact rows above and assume that the 
  right-most vertical arrow, namely the map
\begin{align} \label{eq:most-vertical-map in diagr roof1}
 \underset{x\in X^{(i-j)}}{\bigoplus} H^{j+1}(x,j)\ \longrightarrow \underset{ x\in  X^{(i-j)}}{\bigoplus}H^{j+1}(\bar{x},j) ,
\end{align}
is the zero map, where as before $\bar x$ denotes the base change of the point $x\in X^{(i-j)}$ to $\bar \F$,  i.e.\ the Galois orbit in $\bar X^{(i-j)}$ that corresponds to $x\in X^{(i-j)}$.
By \eqref{HSnrexact}, this is equivalent to saying that the right-hand side in \eqref{eq:most-vertical-map in diagr roof1} has no Galois-invariants.
Under this assumption, 
 any $\alpha\in\nrcoh{2i-j}{i-1-j}{\Xbar}{\ql(i)}^{G_\F}$ lifts to a class in $\nrcoh{2i-j}{i-j}{\Xbar}{\ql(i)}$ by the exactness of the bottom row in  \eqref{Diagram_roof1}. 
 
To conclude the argument, it thus suffices to show that \eqref{eq:most-vertical-map in diagr roof1} is zero.
If $j=0$,  this is clear because then the weights of $\coh{1}{\overline{x}}{\ql}$ are at least $1$ by Corollary \ref{F0weight}. 
If $j=1$, we argue in the following way: By Proposition \ref{prop:Remark_after_conjectures}, the assumption $\bigtate{1}{\dim X-i+1}{\F}$ implies $\bigSS{1}{\dim X-i+1}{\F}$.
By Theorem \ref{nrtatebeil}(\textit{\ref{nrtatebeil_1}}) and the first item of Lemma \ref{roof2}, proven above, $\bigtate{1}{\dim X-i+1}{\F}$ and $\bigSS{1}{\dim X-i+1}{\F}$ imply that $\nrcoh{2}{0}{\overline{Y}}{\ql(1)}$ has no Galois invariance for any smooth projective $\F$-variety $Y$ of dimension $\dim X-i+1$. But this implies that $\nrcoh{2}{0}{\overline{Y}}{\ql(1)}$ has no Galois invariance for any projective $\F$-variety $Y$ of dimension $\dim X-i+1$ by Corollary \ref{cor:F0vanish}. 
It  thus follows from \eqref{HSnrexact} that \eqref{eq:most-vertical-map in diagr roof1} is the zero map, as we want.
\end{proof}

\subsection{Passing from $F_jX$ to $F_{j-1}X$}

The following is an important technical result that will be used in the proof of Theorem \ref{thm:induct1}.

\begin{proposition}\label{prop:roof3}
Let $X$ be a smooth projective variety of dimension $d$ over the finite field $\F$. 
Then the following holds true.
\begin{enumerate}
    \item \label{item:prop:roof3-1} If the $1$-semi-simplicity conjecture in degree $i$ holds for $X$, then for all $j$ with $1\leq j \leq i$, the morphism
    \begin{equation}\label{equation_roof3_1}
        \nrcoh{2i}{j}{\Xbar}{\ql(i)}^{G_\F}\longrightarrow\nrcoh{2i}{j-1}{\Xbar
    }{\ql(i)}^{G_\F}
    \end{equation}
    is surjective.
    \item  \label{item:prop:roof3-2}  For all $j$ with $1\leq j \leq i-1$, if $\bigtate{i-j}{d-j}{\F}$ and $\bigSS{i-j}{d-j}{\F}$ hold,  then  the morphism \eqref{equation_roof3_1} is injective.
    \item  \label{item:prop:roof3-3}  For all $j$ with $1\leq j \leq i-2$, if $\bigtate{i-j}{d-j}{\F}$ and $\bigSS{i-j}{d-j}{\F}$ hold, then the morphism
    \begin{equation}\label{equation_roof3_2}
        \nrcoh{2i-1}{j}{\bar X}{\ql(i)}^{G_\F}\longrightarrow\nrcoh{2i-1}{j-1}{\bar X}{\ql(i)}^{G_\F}
    \end{equation}
    is surjective.
    \item  \label{item:prop:roof3-4}  For all $j$ with $1\leq j \leq i-2$, if $\bigbeil{i-j}{d-j}{\F}$,  $\bigtate{i-j-1}{d-j-1}{\F}$,  and $\bigSS{i-j-1}{d-j-1}{\F}$ hold, then the morphism \eqref{equation_roof3_2} is injective.
\end{enumerate}
\end{proposition}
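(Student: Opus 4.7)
The plan is to exploit the localization exact sequence of Lemma \ref{exactnesslemma} applied to $\bar X$ with $\ql(i)$-coefficients in cohomological degrees $2i-1$ and $2i$, which relates $\nrcoh{\bullet}{j}{\bar X}{\ql(i)}$ to $\nrcoh{\bullet}{j-1}{\bar X}{\ql(i)}$ through residues at the codimension-$j$ points of $\bar X$. Since all arrows are $G_\F$-equivariant, the surjectivity assertions (items (1) and (3)) reduce to showing that the boundary $\partial$ vanishes on $G_\F$-invariants and that lifts can be chosen $G_\F$-invariantly, while the injectivity assertions (items (2) and (4)) reduce to showing that the image of the Gysin map $i_*$ from the residues carries no non-zero $G_\F$-invariants.

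For items (1) and (3) I will first verify $\partial\alpha=0$ for a $G_\F$-invariant $\alpha$ and then produce a $G_\F$-invariant lift. In item (1) the residues sit in $\bigoplus_{x}\nrcoh{2(i-j)+1}{0}{\bar Z_x}{\ql(i-j)}$, which has weights $\geq 1$ by Corollary \ref{F0weight}, so $\partial\alpha=0$; a $G_\F$-invariant lift is then produced by combining the assumed $1$-semi-simplicity of $H^{2i}(\bar X,\ql(i))$ with Lemma \ref{SSopen} (propagating $1$-semi-simplicity to every open $\bar U\subseteq\bar X$) and Lemma \ref{lem:SSopencor}. For item (3) the residues sit in $\bigoplus_{x}\nrcoh{2(i-j)}{0}{\bar Z_x}{\ql(i-j)}$, which has weights only $\geq 0$, so I kill their $G_\F$-invariants by a cycle-shrinking argument: one reduces via Lemma \ref{F0pullback}, Corollary \ref{cor:F0vanish} and an alteration to a smooth projective $\tilde Z_x$, applies Proposition \ref{prop:Tateopenvar} under the hypotheses $\bigtate{i-j}{d-j}{\F}$ and $\bigSS{i-j}{d-j}{\F}$ to realize every $G_\F$-invariant on a smooth open of $\tilde Z_x$ as a cycle class, and observes that for $i-j\geq 1$ any such class is killed after shrinking past its support in the direct limit over opens. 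The odd-degree $G_\F$-invariant lift is then supplied by Lemma \ref{lem:liftforbeil2cor}, whose hypotheses follow from ours via Lemma \ref{Tatedown}.

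For items (2) and (4), given $\alpha=i_*(\gamma)\in\nrcoh{2i-\epsilon}{j}{\bar X}{\ql(i)}^{G_\F}$ with $\epsilon\in\{0,1\}$, the plan is first to lift to a $G_\F$-invariant $\gamma'$ satisfying $i_*(\gamma')=\alpha$, and then to show $\gamma'=0$. In item (2) the lift uses an infinite direct-sum extension of Lemma \ref{lem:SSopencor} (whose required $1$-semi-simplicity of $H^{2(i-j)}_{BM}(\bar Z_x,\ql(i-j))$ is supplied by Proposition \ref{SSsingular}), and $\gamma'=0$ follows from the cycle-shrinking vanishing used in item (3). Item (4) is the most delicate: a $G_\F$-invariant lift is obtained from the filtration-level-$0$ case of Lemma \ref{lem:liftforbeil2cor} (which requires no smoothness), and $\gamma'=0$ reduces via Lemma \ref{F0pullback} and an alteration to showing that $\nrcoh{2(i-j)-1}{0}{\overline{\tilde Z_x}}{\ql(i-j)}^{G_\F}=0$ on a smooth projective $\tilde Z_x$ of dimension $d-j$. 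I prove this vanishing by downward induction on the filtration level $j'$ from $(i-j)-2$ to $0$: the base case combines $\bigbeil{i-j}{d-j}{\F}$ with Theorem \ref{nrtatebeil}(2) and Lemma \ref{roof2}(2) (whose input $\bigtate{1}{d-i+1}{\F}$ is extracted via Lemma \ref{Tatedown} and Proposition \ref{prop:Remark_after_conjectures}), while the inductive step applies Lemma \ref{exactnesslemma} together with (a) the cycle-shrinking vanishing of the residue invariants in degree $2(i-j-j'-1)$ on subvarieties of dimension $d-j-j'-1$ and (b) Lemma \ref{lem:liftforbeil2cor}, which lifts any $G_\F$-invariant in the image of the restriction map from filtration level $j'+1$ back to a $G_\F$-invariant at that level; the inductive hypothesis then forces $\nrcoh{2(i-j)-1}{j'}{\overline{\tilde Z_x}}{\ql(i-j)}^{G_\F}=0$.

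The main obstacle is item (4): whereas items (1)--(3) rest on direct weight or cycle-shrinking arguments, item (4) requires propagating the vanishing given by Beilinson at the top filtration level $(i-j)-2$ all the way down to the generic-point level $0$ through the localization sequence, independently killing a new batch of residue invariants at each descent step. Carefully tracking which Tate, Beilinson, and $1$-semi-simplicity statements in which codimension and dimension enter at each stage --- all extracted from the fixed hypotheses via Lemma \ref{Tatedown} and Proposition \ref{prop:Remark_after_conjectures} --- is the main source of technical friction.
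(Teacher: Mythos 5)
Your proposal follows the same overall strategy as the paper --- the localization sequence from Lemma \ref{exactnesslemma}, splitting into a residue-vanishing step and a $G_\F$-invariant-lifting step --- and the lifting steps (via Lemma \ref{lem:SSopencor}, Proposition \ref{SSsingular}, and Lemma \ref{lem:liftforbeil2cor}) coincide with those in the paper. Where you genuinely diverge is in how you kill the Galois invariants of the relevant residue groups $\coh{2(i-j)}{\bar x}{\ql(i-j)}$ and $\coh{2(i-j)-1}{\bar x}{\ql(i-j)}$. The paper routes this through Theorem \ref{nrtatebeil}, Lemma \ref{roof2}, and an internal downward induction that invokes items \eqref{item:prop:roof3-1} and \eqref{item:prop:roof3-3} of the proposition itself: one first kills the invariants at filtration level $i-j-1$ (resp.\ $i-j-2$) by relating refined unramified cohomology to the cycle class map, and then propagates the vanishing down to level $0$ using the already-established surjectivity statements and Corollary \ref{cor:F0vanish}. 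Your ``cycle-shrinking'' argument is more direct: after reducing by Corollary \ref{cor:F0vanish} to a smooth projective model, you represent a Galois-invariant class in $H^{2i'}(F_0 \bar Y,\Q_\ell(i'))$ by a $G_\F$-invariant class on a smooth open $U$ (1-semi-simplicity on opens, Proposition \ref{SSopenvar} plus Lemma \ref{SSdef}), realize it as a cycle class via Proposition \ref{prop:Tateopenvar}, and then shrink $U$ past the proper closed support of the cycle. This bypasses Theorem \ref{nrtatebeil} and Lemma \ref{roof2} entirely and avoids the self-referential induction; the paper's route is more economical in that it reuses the proposition's own earlier items as black boxes, whereas yours reproves the descent inline --- in particular your downward induction inside item \eqref{item:prop:roof3-4} is essentially an unwinding of item \eqref{item:prop:roof3-3} that the paper simply cites. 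One micro-simplification worth noting: since taking $G_\F$-invariants commutes with filtered colimits, a $G_\F$-invariant class in the direct limit $H^{2i'}(F_0\bar Y,\Q_\ell(i'))$ already comes from a $G_\F$-invariant class on some open defined over $\F$, so the 1-semi-simplicity lift in the cycle-shrinking step is not strictly needed.
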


\begin{proof}
For a codimension $j$ point $x\in X^{(j)}$ on $X$ we denote as before by $\bar x\subset \bar X^{(j)}$ the base change to $\bar \F$, i.e.\ the Galois orbit of $\bar \F$-points that corresponds to $x$.
For $m\in\{0, 1\}$, consider the sequence
\begin{equation}\label{sequence_roof3} 
   \underset{ x\in X^{(j)}}{\bigoplus}\coh{2i-m-2j}{\bar{x}}{i-j} \rightarrow\nrcoh{2i-m}{j}{\Xbar}{i} \rightarrow\nrcoh{2i-m}{j-1}{\Xbar}{i}\xrightarrow{\partial}\underset{ x \in X^{(j)}}{\bigoplus}\coh{2i-m-2j+1}{\bar{x}}{i-j},
\end{equation}
which is exact by Lemma \ref{exactnesslemma}. 
The $m=0$, respectively $m=1$ case, will be used for proving the first two, respectively last two, items.  

We start with the surjectivity claims; that is, with items \eqref{item:prop:roof3-1} and \eqref{item:prop:roof3-3}. 
Let 
$$
\alpha\in\nrcoh{2i-m}{j-1}{\Xbar}{\ql(i)}^{G_\F} .
$$  
It is enough to show that the residue $\partial\alpha$ of $\alpha$ vanishes. 
Indeed, if $\alpha$ has no residues, it lifts to $\nrcoh{2i-m}{j}{\Xbar}{i}$ by the exactness of \eqref{sequence_roof3}. 
It remains to show that this lift may be replaced by a Galois-invariant class. 
If $m=0$ this follows from Lemmas \ref{bmtoet} and \ref{lem:SSopencor}.
If $m=1$, it follows from Lemma \ref{lem:liftforbeil2cor}, because the assumptions $\bigtate{i-j}{d-j}{\F}$ and $\bigSS{i-j}{d-j}{\F}$ imply $\bigtate{i-j-1}{d-j-1}{\F}$ and $\bigSS{i-j-1}{d-j-1}{\F}$ by Lemma \ref{Tatedown}.
 
In order show that $\alpha$ has no residues, it suffices to show that $\coh{2i-m-2j+1}{\bar{x}}{i-j}$ has no Galois invariance for $x\in X^{(j)}$. 
If $m=0$,  $\coh{2i-m-2j+1}{\bar{x}}{i-j}$ has weights at least $1$ by Corollary \ref{F0weight}, and hence cannot have any nontrivial Galois-invariant class.
If $m=1$, we argue as follows: the assumptions $\bigtate{i-j}{d-j}{\F}$ and $\bigSS{i-j}{d-j}{\F}$ imply by Theorem \ref{nrtatebeil} and Lemma \ref{roof2}  that for any smooth projective $\F$-variety $Y$ of dimension $d-j$, the cohomology group $\nrcoh{2i-2j}{i-j-1}{\bar{Y}}{i-j}$ has trivial Galois-invariant subspace.  
Thanks to the assumption $\bigSS{i-j}{d-j}{\F}$, we may inductively use the surjectivity of  \eqref{equation_roof3_1}, proven above, to conclude that $\nrcoh{2i-2j}{0}{\overline{Y}}{i-j}$ has no nontrivial Galois-invariant classes for any smooth projective $\F$-variety $Y$ of dimension $d-j$. 
Therefore, by Corollary \ref{cor:F0vanish}, $\nrcoh{2i-2j}{0}{\overline{Y}}{i-j}^{G_\F}=0$ for any projective $k$-variety $Y$ of dimension $d-j$, as we want.

We now turn to the injectivity claims, i.e.\ to items  \eqref{item:prop:roof3-2} and \eqref{item:prop:roof3-4}. 
To this end let 
$$
\alpha\in \ker\left( \nrcoh{2i-m}{j}{\Xbar}{j}^{G_\F} \to\nrcoh{2i-m}{j-1}{\Xbar}{j} \right) .
$$
We want to show that $\alpha$ is zero.  
By exactness of  \eqref{sequence_roof3},  $\alpha$ lifts to an element 
$$
\delta\in\bigoplus_{x\in X^{j}}\coh{2i-m-2j}{\bar{x}}{i-j},\ \ \ \iota_\ast \delta=\alpha .
$$ 
We will show that $\alpha$ is zero, by proving that the lift $\delta$ may be chosen to be Galois-invariant and the Galois invariance of $\bigoplus\coh{2i-m-2j}{\bar{x}}{\ql(i-j)}$ vanishes.

Let $x_1,...,x_n\in X^{(j)}$ be the codimension-$j$ points of $X$, such that $\delta$ is supported on the base change of these points to $\bar \F$.
Since each point $x_l$ is defined over $\F$,  $G_\F$ acts summand-wise on   $\bigoplus_{l}\coh{2i-2j-m}{\overline{x}_l}{\ql(i-j)}$.

If $m=0$, the assumptions $\bigtate{i-j}{d-j}{\F}$ and $\bigSS{i-j}{d-j}{\F}$ imply, by Proposition \ref{SSsingular}, that $\cohbm{2i-2j}{\bar{X}_l}{i-j}$ is a $1$-semi-simple $G_\F$-representation for all $l$, where $\bar{X}_l=X_l\times\bar \F$ is the base change of the closure $ X_l\subset X$  of the codimension-$j$ point $x_l$. 
Hence, $\delta$ may be chosen to be Galois-invariant by Lemma \ref{lem:SSopencor}, because $\iota_\ast \delta=\alpha$ is Galois-invariant by assumptions. 
If $m=1$, it follows from Lemma \ref{lem:liftforbeil2cor} ($j=0$ case) that we may assume that $\delta$ is Galois-invariant because $\bigtate{i-j-1}{d-j-1}{\F}$ and $\bigSS{i-j-1}{d-j-1}{\F}$ hold by assumptions.

It remains to prove that the Galois invariance of 
$$
\bigoplus_{x\in X^{(j)}}\coh{2i-m-2j}{\overline{x}}{i-j}
$$ vanishes. 
We have already shown this  when $m=0$. 
The argument for $m=1$ is similar: Since $\bigtate{i-j-1}{d-j-1}{\F}$ holds and $j\leq i-2$ by assumption, $\bigtate{1}{d-i+1}{\F}$ holds by Lemma \ref{Tatedown} applied to $p=q=i+2-j$.  
By Theorem \ref{nrtatebeil} and Lemma \ref{roof2}, 
$\bigtate{1}{d-i+1}{\F}$  and $\bigbeil{i-j}{d-j}{\F}$ imply  the vanishing of the Galois invariance of $\nrcoh{2i-2j-1}{i-j-2}{\bar{Y}}{i}$ for any smooth projective $\F$-variety $Y$ of dimension $d-j$. 
Note that,  by Lemma \ref{Tatedown}, $\bigtate{i-j-1}{d-j-1}{\F}$ and 
$\bigSS{i-j-1}{d-j-1}{\F}$ 
imply $\bigtate{i-j-l}{d-j-l}{\F}$ and $\bigSS{i-j-l}{d-j-l}{\F}$ for $1\leq l\leq i-j$.
Hence,  inductively using the surjectivity of \eqref{equation_roof3_2},  
we conclude the vanishing of the Galois invariance of $\nrcoh{2i-2j-1}{0}{\bar{Y}}{i-j}$ for any smooth projective $\F$-variety $Y$ of dimension $d-j$, which in turn implies, by Corollary \ref{cor:F0vanish}, the vanishing of the Galois invariance of $\nrcoh{2i-2j-1}{0}{\bar{Y}}{i-j}$ for any quasi-projective $\F$-variety $Y$ of dimension $d-j$. 
This shows that $\bigoplus\coh{2i-2j-1}{\bar{x}}{i-j}$ has no Galois invariance, as we want.
This concludes the proof of the proposition.
\end{proof}

\begin{cor}\label{cor:Tupdown}
Let $X$ be a smooth projective variety of dimension $d$ over the finite field $\F$.
Then the following are true:
\begin{enumerate}
    \item The Tate and $1$-semi-simplicity conjectures in degree $i$ for $X$ imply the vanishing $$\nrcoh{2i}{j}{\Xbar}{\ql(i)}^{G_\F}=0$$
    for all $j$ with $0\leq j\leq i-1$. \label{item:cor:Tupdown-1}
    \item\label{Tupdown_1} If $\bigtate{i-1}{d-1}{\F }$ and $\bigSS{i-1}{d-1}{\F }$ hold, then the vanishing 
    $$
 \nrcoh{2i}{0}{\bar X}{\ql(i)}^{G_{\F }}=0
    $$
    implies the Tate and $1$-semi-simplicity conjectures in degree $i$ for $X$.\label{item:cor:Tupdown-2}
\end{enumerate}
\end{cor}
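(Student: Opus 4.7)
The strategy is to reduce both parts to the reformulation of the Tate and $1$-semi-simplicity conjectures via refined unramified cohomology. By combining Theorem \ref{nrtatebeil}(\ref{nrtatebeil_1}) with Lemma \ref{roof2}(1), the Tate and $1$-semi-simplicity conjectures in degree $i$ for $X$ are jointly equivalent to the single vanishing
$$
\nrcoh{2i}{i-1}{\bar X}{\ql(i)}^{G_\F}=0,
$$
so both directions of the corollary reduce to relating this invariant with $\nrcoh{2i}{0}{\bar X}{\ql(i)}^{G_\F}$ through the tower of restriction maps studied in Proposition \ref{prop:roof3}.

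For item (\ref{item:cor:Tupdown-1}), the plan is to apply Proposition \ref{prop:roof3}(\ref{item:prop:roof3-1}) iteratively in $j$, starting at $j=i-1$ and descending to $j=1$. The assumed $1$-semi-simplicity in degree $i$ for $X$ is exactly the hypothesis needed to make each restriction map
$$
\nrcoh{2i}{j}{\bar X}{\ql(i)}^{G_\F}\longrightarrow\nrcoh{2i}{j-1}{\bar X}{\ql(i)}^{G_\F}
$$
surjective, so the vanishing at $j=i-1$ propagates downward and yields the claim for every $0\leq j\leq i-1$.

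For item (\ref{item:cor:Tupdown-2}), the direction is reversed and injectivity of these restriction maps is required instead. The first step is to invoke Lemma \ref{Tatedown} with $p=q=j-1$, which from the hypotheses $\bigtate{i-1}{d-1}{\F}$ and $\bigSS{i-1}{d-1}{\F}$ produces $\bigtate{i-j}{d-j}{\F}$ and $\bigSS{i-j}{d-j}{\F}$ for every $1\leq j\leq i-1$ (the case $d<i$ being vacuous). With these in hand, Proposition \ref{prop:roof3}(\ref{item:prop:roof3-2}) supplies the desired injections, and an upward induction starting from the assumed $\nrcoh{2i}{0}{\bar X}{\ql(i)}^{G_\F}=0$ forces $\nrcoh{2i}{i-1}{\bar X}{\ql(i)}^{G_\F}=0$, which by the first paragraph is equivalent to the two conjectures in degree $i$ for $X$.

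I do not anticipate a serious obstacle once the refined unramified cohomology reformulation and Proposition \ref{prop:roof3} are at our disposal; the corollary is a formal iteration of surjections (part (\ref{item:cor:Tupdown-1})) or injections (part (\ref{item:cor:Tupdown-2})). The only point requiring attention is verifying that the hypotheses of Proposition \ref{prop:roof3} are met at every rung of the induction, and in part (\ref{item:cor:Tupdown-2}) this is handled once and for all by the single application of Lemma \ref{Tatedown} described above.
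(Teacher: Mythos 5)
Your proposal is correct and follows the same route as the paper: reduce via Theorem \ref{nrtatebeil}\eqref{nrtatebeil_1} and Lemma \ref{roof2} to the vanishing at level $j=i-1$, then iterate the surjections from Proposition \ref{prop:roof3}\eqref{item:prop:roof3-1} downward (for item \eqref{item:cor:Tupdown-1}) or the injections from Proposition \ref{prop:roof3}\eqref{item:prop:roof3-2} upward (for item \eqref{item:cor:Tupdown-2}), the latter after feeding the hypotheses through Lemma \ref{Tatedown}. The checks of the hypotheses at each rung are the same as in the paper.
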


\begin{proof}
By Theorem \ref{nrtatebeil} and Lemma \ref{roof2}, the Tate and $1$-semi-simplicity conjectures in degree $i$ for $X$ implies the vanishing of the $G_\F$-invariance of $\nrcoh{2i}{i-1}{\bar X}{\ql(i)}$. 
Thanks to the $1$-semi-simplicity in degree $i$, we can inductively apply Proposition \ref{prop:roof3}\eqref{item:prop:roof3-1} to get the vanishing of the Galois invariance of $\nrcoh{2i}{j}{\bar X}{\ql(i)}$ for all $j$ with $0\leq j\leq i-1$.
This proves item \eqref{item:cor:Tupdown-1}.

By Lemma \ref{Tatedown}, $\bigtate{i-1}{d-1}{\F}$ and $\bigSS{i-1}{d-1}{\F}$ imply $\bigtate{i-j}{d-j}{\F}$ and $\bigSS{i-j}{d-j}{\F}$ for all $1\leq j\leq i$. 
This allows us to inductively apply Proposition \ref{prop:roof3}\eqref{item:prop:roof3-2} to conclude the vanishing of the Galois invariance of $\nrcoh{2i}{i-1}{\bar X}{\ql(i)}$ from the vanishing of the $G_\F$-invariance of $\nrcoh{2i}{0}{\bar X}{\ql(i)}$. 
But by Theorem \ref{nrtatebeil}, the vanishing of the Galois invariance of $\nrcoh{2i}{i-1}{\bar X}{\ql(i)}$ implies the Tate and $1$-semi-simplicity conjectures in degree $i$ for $X$. 
This proves item \eqref{item:cor:Tupdown-2}, as we want.
\end{proof}

\begin{example}
Item \eqref{item:cor:Tupdown-1} in Corollary \ref{cor:Tupdown} shows for instance that 
$$\nrcoh{2i}{0}{\CP^n_{\bar \F}}{\ql(i)}^{G_\F}=0$$
for all $i\geq 1$ and all $n$.
\end{example}
 
\begin{cor}\label{cor:Bupdown}
Let $X$ be a smooth projective variety of dimension $d$ over the finite field $\F$.  
Then the following are true:
\begin{enumerate}
    \item If $\bigtate{i-m}{d-m}{\F}$ and $\bigSS{i-m}{d-m}{\F}$ hold, then the Beilinson conjecture in degree $i$ on $X$ implies the vanishing
    $$
    \nrcoh{2i-1}{j}{\bar X}{\ql(i)}^{G_\F}=0\ \ \ \text{for all $j$ with $m-1\leq j\leq i-2$.}
    $$ 
    \item If $\bigtate{i-2}{d-2}{\F }$,  $\bigSS{i-2}{d-2}{\F }$, and $\bigbeil{i-1}{d-1}{\F }$ hold, then the vanishing
    $$
    \nrcoh{2i-1}{0}{\bar X}{\ql(i)}^{G_{\F}}=0
    $$
    implies the Beilinson conjecture in degree $i$ for $X$.
\end{enumerate}
\end{cor}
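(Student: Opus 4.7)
The plan mirrors the proof of Corollary \ref{cor:Tupdown}: we translate between the Beilinson conjecture in degree $i$ on $X$ and the vanishing of refined unramified cohomology of $\bar X$ at various filtration levels using Theorem \ref{nrtatebeil}(2) together with Lemma \ref{roof2}(2), and then propagate the vanishing along the filtration levels with the help of Proposition \ref{prop:roof3}(3) and (4).

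For item (1), I would start with Beilinson in degree $i$ on $X$, which by Theorem \ref{nrtatebeil}(2) is equivalent to the vanishing of $\urcoh{2i-1}{i-2}{\bar X}{\ql(i)}^{G_\F}$. The hypothesis $\bigtate{i-m}{d-m}{\F}$ implies $\bigtate{1}{d-i+1}{\F}$ via Lemma \ref{Tatedown} (take $p=q=i-m-1$), so Lemma \ref{roof2}(2) identifies this $G_\F$-invariant subspace with $\nrcoh{2i-1}{i-2}{\bar X}{\ql(i)}^{G_\F}$, which then vanishes. Next, apply Proposition \ref{prop:roof3}\eqref{item:prop:roof3-3} inductively in descending order for $j=i-2,i-3,\dots,m$, which yields surjective restriction maps
\[
\nrcoh{2i-1}{j}{\bar X}{\ql(i)}^{G_\F}\twoheadrightarrow \nrcoh{2i-1}{j-1}{\bar X}{\ql(i)}^{G_\F}.
\]
The hypotheses $\bigtate{i-j}{d-j}{\F}$, $\bigSS{i-j}{d-j}{\F}$ at each step follow from the assumed $\bigtate{i-m}{d-m}{\F}$, $\bigSS{i-m}{d-m}{\F}$ by Lemma \ref{Tatedown} since $j\geq m$. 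The resulting chain of surjections propagates the vanishing down to $j=m-1$, proving item (1).

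For item (2), I would go in the opposite direction using the injectivity half of Proposition \ref{prop:roof3}. Starting from $\nrcoh{2i-1}{0}{\bar X}{\ql(i)}^{G_\F}=0$, apply Proposition \ref{prop:roof3}\eqref{item:prop:roof3-4} inductively in ascending order for $j=1,2,\dots,i-2$. At the $j$-th step the injectivity of
\[
\nrcoh{2i-1}{j}{\bar X}{\ql(i)}^{G_\F}\hookrightarrow \nrcoh{2i-1}{j-1}{\bar X}{\ql(i)}^{G_\F}
\]
requires $\bigbeil{i-j}{d-j}{\F}$, $\bigtate{i-j-1}{d-j-1}{\F}$, and $\bigSS{i-j-1}{d-j-1}{\F}$; each of these follows from $\bigbeil{i-1}{d-1}{\F}$, $\bigtate{i-2}{d-2}{\F}$, $\bigSS{i-2}{d-2}{\F}$ by Lemma \ref{Tatedown}. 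Iteration gives $\nrcoh{2i-1}{i-2}{\bar X}{\ql(i)}^{G_\F}=0$. Since the assumption $\bigtate{i-2}{d-2}{\F}$ also implies $\bigtate{1}{d-i+1}{\F}$ by Lemma \ref{Tatedown}, Lemma \ref{roof2}(2) identifies this with $\urcoh{2i-1}{i-2}{\bar X}{\ql(i)}^{G_\F}=0$. Theorem \ref{nrtatebeil}(2) then produces the Beilinson conjecture in degree $i$ on $X$.

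The only real work is the bookkeeping: carefully verifying at each step that the running hypotheses for Proposition \ref{prop:roof3}(3) and (4) are available in the required range of degrees and dimensions by repeated application of Lemma \ref{Tatedown}; no new geometric input is needed beyond what was already assembled for Corollary \ref{cor:Tupdown}.
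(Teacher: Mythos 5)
Your proposal is correct and follows essentially the same argument as the paper: translate Beilinson to the vanishing of $\urcoh{2i-1}{i-2}{\bar X}{\ql(i)}^{G_\F}$ via Theorem \ref{nrtatebeil}(2), identify this with $\nrcoh{2i-1}{i-2}{\bar X}{\ql(i)}^{G_\F}$ via Lemma \ref{roof2}(2), and propagate along the filtration with Proposition \ref{prop:roof3}(3),(4) using Lemma \ref{Tatedown} for the bookkeeping. The only (harmless) extra in your item (2) is the invocation of Lemma \ref{roof2}: since $\urcoh{2i-1}{i-2}{\bar X}{\ql(i)}$ is a subgroup of $\nrcoh{2i-1}{i-2}{\bar X}{\ql(i)}$, the vanishing of the Galois invariants of the latter immediately gives that of the former, so no hypothesis is needed for that step.
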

\begin{proof}
Assume that $\bigtate{i-m}{d-m}{\F}$ and $\bigSS{i-m}{d-m}{\F}$ hold. 
Then, it follows from Lemma \ref{Tatedown} that $\bigtate{i-j}{d-j}{\F}$ and $\bigSS{i-j}{d-j}{\F}$ hold for $m\leq j\leq i$ as well, in particular $\bigtate{1}{d-i+1}{\F}$ is true. 
By Lemma \ref{roof2} and Theorem \ref{nrtatebeil}, $\bigtate{1}{d-i+1}{\F}$ together with the Beilinson conjecture in degree $i$ for $X$ implies the vanishing of the $G_\F$-invariance of $\nrcoh{2i-1}{i-2}{\bar X}{\ql(j)}$. 
Now, inductively using Proposition \ref{prop:roof3}\eqref{item:prop:roof3-3}, we conclude the vanishing of the Galois invariance of $\nrcoh{2i-1}{j}{\bar X}{\ql(i)}$ for $m-1\leq j\leq i-2$. 
This proves the first item.

For the second item, by Theorem \ref{nrtatebeil}, it is sufficient to show that $\nrcoh{2i-1}{i-2}{\bar X}{\ql(i)}$ has no $G_\F$-invariance. 
By Lemma \ref{Tatedown}, $\bigbeil{i-1}{d-1}{\F}$, $\bigtate{i-2}{d-2}{\F}$,  and $\bigSS{i-2}{d-2}{\F}$ imply $\bigbeil{i-j}{d-j}{\F}$, $\bigtate{i-j-1}{d-j-1}{\F}$ and $\bigSS{i-j-1}{d-j-1}{\F}$ for all $1\leq j \leq i-1$. 
Therefore, by Proposition \ref{prop:roof3}\eqref{item:prop:roof3-4}, the morphism
$$
\nrcoh{2i-1}{j}{\bar X}{\ql(i)}^{G_\F}\rightarrow\nrcoh{2i-1}{j-1}{\bar X}{\ql(i)}^{G_\F}
$$
is injective for all $1\leq j \leq i-2$. 
Consequently, the vanishing of the Galois invariance of $\nrcoh{2i-1}{i-2}{\bar X}{\ql(i)}$ follows from that of $\nrcoh{2i-1}{0}{\bar X}{\ql(i)}$.
\end{proof}

\subsection{Proof of Theorem \ref{thm:induct1}}

\begin{proof}[Proof of Theorem \ref{thm:induct1}]
The first two items follow immediately from Corollary \ref{cor:Tupdown}. The last two items follow immediately from Corollary \ref{cor:Bupdown}.
\end{proof}

\section{Reduction to the case of the function field of $\CP^n$} \label{sec:reduction-to-Pn}

The main result of this section is the following theorem.

\begin{theorem} \label{thm:cohcrit1}
The following holds for all $d,i\geq 1$:
\begin{enumerate} 
    \item\label{cohcrit1_3}  $\bigtate{j}{d}{\F}$ and $\bigSS{j}{d}{\F}$ hold for all $j$ with $1\leq j\leq i$ if and only if
    $$
     \nrcoh{2j+1}{0}{\mathbb{P}^{d+1}_{\bar \F}}{\ql(j+1)}^{G_{\F }}=0\ \ \ \ \text{ for all $j$ with $1\leq j\leq i$.}
    $$
    \item\label{cohcrit1_4} $\bigtate{j-1}{d-1}{\F }$, $\bigSS{j-1}{d-1}{\F }$, and $\bigbeil{j}{d}{\F }$ hold   for all $j$ with $2\leq j\leq i$ if and only if
    $$
    \nrcoh{2j}{0}{\mathbb{P}^{d+1}_{\bar \F}}{\ql(j+1)}^{G_{\F }}=0 \ \ \ \ \text{ for all $j$ with $2\leq j\leq i$.}
    $$ 
\end{enumerate}
\end{theorem}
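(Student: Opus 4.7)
The plan is to deduce Theorem \ref{thm:cohcrit1} from Theorem \ref{thm:induct1} via two complementary reductions. The forward direction of each item follows by applying Theorem~\ref{thm:induct1}\eqref{induct1_3} to a suitable variety, and the reverse direction uses the Galois-equivariant residue (Faddeev) decomposition
$$H^{i}(F_0\bar{\CP}^{d+1},\ql(n)) \cong H^{i}(F_0\bar{\CP}^{d},\ql(n)) \oplus \bigoplus_{\mathfrak m\in (\mathbb A^1_L)^{(1)}} H^{i-1}(\kappa(\mathfrak m),\ql(n-1))$$
associated to the purely transcendental extension $\bar\F(\CP^{d+1})=L(t)$ with $L=\bar\F(\CP^{d})$. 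This decomposition follows from the Gysin sequence of Lemma~\ref{exactnesslemma} applied to $\mathbb A^1_L$ and is split $G_\F$-equivariantly via specialization at an $\F$-rational point. The key geometric input is that for every smooth projective geometrically integral variety $X/\F$ of dimension $d$, there is a $G_\F$-invariant closed point $\mathfrak m_X$ with $\kappa(\mathfrak m_X)\cong \bar\F(X)$: by Noether normalization and the primitive element theorem, $\F(X)\cong\F(\CP^{d})[t]/(q(t))$, and geometric integrality of $X$ ensures that $\F(X)$ and $L$ are linearly disjoint over $\F(\CP^d)$, so $q$ remains irreducible over $L$. Consequently $H^{i-1}(F_0\bar X,\ql(n-1))^{G_\F}$ is a direct summand of $H^{i}(F_0\bar{\CP}^{d+1},\ql(n))^{G_\F}$.

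For the forward direction of~(1), apply Theorem~\ref{thm:induct1}\eqref{induct1_3} to $X=\CP^{d+1}$ with the theorem's index equal to $j+1$: the required hypotheses $\bigtate{j}{d}{\F}$ and $\bigSS{j}{d}{\F}$ are exactly our assumptions, while Beilinson in codimension $j+1$ on $\CP^{d+1}$ holds trivially since $\CH^*(\CP^{d+1})=\Z$. For the forward direction of~(2), apply Theorem~\ref{thm:induct1}\eqref{induct1_3} instead to an arbitrary smooth projective $X$ of dimension $d$ with index~$j$ to obtain $H^{2j-1}(F_0\bar X,\ql(j))^{G_\F}=0$. By Corollary~\ref{cor:F0vanish} this extends to all quasi-projective varieties of dimension~$d$, so the residue summands in the Faddeev decomposition for $H^{2j}(F_0\bar{\CP}^{d+1},\ql(j+1))$ have vanishing Galois invariants. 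The remaining ``constant'' summand $H^{2j}(F_0\bar{\CP}^{d},\ql(j+1))^{G_\F}$ vanishes by downward induction on $d$, using Lemma~\ref{Tatedown} to propagate the hypotheses, with trivial base case once $j$ exceeds the dimension.

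For the reverse directions we argue by simultaneous induction on $i$ across both items. The vanishing hypothesis on $\bar{\CP}^{d+1}$ transports through the Faddeev decomposition: the residue summands force the vanishing of $H^{2j}(F_0\bar X,\ql(j))^{G_\F}$ in~(1) (respectively of $H^{2j-1}(F_0\bar X,\ql(j))^{G_\F}$ in~(2)) for every geometrically integral smooth projective $X$ of dimension~$d$, and the same vanishing extends to all smooth projective $X$ of that dimension via Corollary~\ref{cor:F0vanish}. The ``constant'' summand yields the analogous vanishings on $\CP^{d}$, which by the inductive hypothesis supplies the required Tate and $1$-semi-simplicity conjectures in lower dimension. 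Feeding these vanishings into Theorem~\ref{thm:induct1}\eqref{induct1_2} (for~(1)) or Theorem~\ref{thm:induct1}\eqref{induct1_4} (for~(2)) closes the inductive step.

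The main obstacle I expect is the careful bookkeeping of the simultaneous induction in the two parameters $(i,d)$, together with verifying the $G_\F$-equivariance of the Faddeev decomposition at the invariant closed points $\mathfrak m_X$ (where geometric integrality of $X$ is essential, so that the stabilizer of $\mathfrak m_X$ is the full absolute Galois group); matching the auxiliary hypotheses of Theorem~\ref{thm:induct1} with the inductive assumptions through Lemma~\ref{Tatedown} also requires some care.
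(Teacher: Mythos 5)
Your proposal takes a genuinely different route from the paper.  The paper's argument (via Lemmas \ref{PState_preparation}, \ref{PSbeil}, \ref{PState} and Corollaries \ref{induct2}, \ref{induct3}) applies the long exact sequence of Lemma \ref{exactnesslemma} to $\CP^{d+1}_{\bar\F}$ directly, with the residue sum indexed by codimension-one points of $\CP^{d+1}_\F$, so that the Galois action is automatically compatible with the decomposition; and the ``constant'' term is the unramified cohomology of $\CP^{d+1}$, which vanishes for degree $\geq 2$ by stable birational invariance and $\cd(\F)=1$ (see \eqref{eq:vanishing-H_nr-Pn}).  You instead use a Faddeev-type splitting of $\bar\F(\CP^{d+1})=L(t)$ with $L=\bar\F(\CP^d)$.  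Your constant term is $H^\ast(F_0\bar\CP^d,\ast)$ (same degree and twist), which is not a small group and must be handled by a downward induction on $d$; this is more bookkeeping but it does close.  Your observation that the forward direction of item~\eqref{cohcrit1_3} follows from a single application of Theorem~\ref{thm:induct1}\eqref{induct1_3} to $X=\CP^{d+1}$ (rather than to an arbitrary $X$ of dimension $d$, as the paper does via item~\eqref{induct1_1} plus Lemma \ref{PState}) is a genuine simplification.

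There is, however, a concrete gap in your treatment of the residue terms.  You first prove the vanishing for geometrically integral smooth projective $X/\F$ (via the $G_\F$-invariant closed point $\mathfrak m_X$), and then assert that ``the same vanishing extends to all smooth projective $X$ of that dimension via Corollary~\ref{cor:F0vanish}.''  That corollary does \emph{not} say this: it passes from the class of \emph{all} smooth projective $\F$-varieties of dimension $d$ to all quasi-projective ones, so its hypothesis is precisely what you are trying to prove, not the restriction to geometrically integral ones.  Non-geometrically-integral $X$ do occur (e.g.\ a smooth projective $\F'$-variety regarded as an $\F$-variety, where $\F'/\F$ is a proper finite extension), and they are \emph{not} captured by $G_\F$-invariant closed points of $\mathbb A^1_L$.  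They are, however, captured by $G_\F$-orbits: a closed point $\mathfrak m_0$ of $\mathbb A^1_{\F(\CP^d)}$ with residue field $\F(X)$ base-changes to an orbit $\{\mathfrak m_1,\dots,\mathfrak m_r\}$ in $\mathbb A^1_L$, and one checks via Shapiro's lemma that the $G_\F$-invariants of $\bigoplus_l H^\ast(\kappa(\mathfrak m_l))$ agree with $H^\ast(F_0\bar X,\ast)^{G_\F}$ for the (possibly non-geometrically-integral) $\F$-model $X$.  In other words, you should organise the Faddeev residue sum by $G_\F$-orbits, or equivalently by closed points of $\mathbb A^1_{\F(\CP^d)}$, rather than restricting to invariant points and appealing to Corollary~\ref{cor:F0vanish} — this is essentially what the paper achieves automatically by indexing its residue term by $(\CP^{d+1}_\F)^{(1)}$.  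With that correction, and a careful write-up of the $G_\F$-equivariance of the Faddeev splitting (which you flag but do not verify), your strategy should succeed.
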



\begin{remark} \label{rem:effective-version}
Theorem \ref{thm:cohcrit1} gives effective versions of the statements in Theorems \ref{thm:main:Tate-Beilinson-ss} and \ref{thm:main:Tate-ss} in any given dimension.
For instance, the validity of the Tate and 1-semi-simplicity conjecture for surfaces over $\F$ is equivalent to $H^3(F_0\CP^3_{\bar \F},\Q_\ell(2))^{G_\F}=0$.
(Here one could drop the 1-semi-simplicity conjecture, as it is in the case of surfaces a consequence of the Tate conjecture, see Proposition \ref{prop:Remark_after_conjectures}).
Similarly, the 1-semi-simplicity and Tate conjecture for surfaces together with the Beilinson conjecture for threefolds over $\F$ is equivalent to  $H^4(F_0\CP^4_{\bar \F},\Q_\ell(3))^{G_\F}=0$.
\end{remark}

\subsection{Relation to hypersurfaces in projective space}

\begin{lemma}\label{PState_preparation}
Let $i$ and $j$ be non-negative integers.
Then the following are equivalent:
\begin{itemize}
\item $ 
\nrcoh{i}{0}{\bar X}{\ql(j)}^{G_\F}=0
$ 
for every $d$-dimensional smooth projective variety $X$ over $\F$;
\item we have $\coh{i}{\bar{x}}{\ql(j)}^{G_\F}=0$ for every $  x\in(\mathbb{P}_{\F}^{d+1})^{(1)}$.
\end{itemize}  
\end{lemma}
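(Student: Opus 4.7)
The plan is to identify both vanishing statements with the vanishing of Galois invariants of the cohomology of the generic point of a $d$-dimensional $\F$-variety, and then to move between smooth projective $d$-dimensional $\F$-varieties and hypersurfaces in $\mathbb{P}^{d+1}_\F$ using de Jong alterations and the primitive element theorem.

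As a preliminary step I would record that for any $x \in (\mathbb{P}^{d+1}_\F)^{(1)}$, the closure $Y := \overline{\{x\}} \subset \mathbb{P}^{d+1}_\F$ is an irreducible hypersurface of dimension $d$ with function field $\kappa(x)$, and that there is a $G_\F$-equivariant identification
$$\coh{i}{\bar x}{\ql(j)} \cong \nrcoh{i}{0}{\bar Y}{\ql(j)}.$$
Since $\F$ is perfect, the algebraic closure $\F' \subset \kappa(x)$ of $\F$ in $\kappa(x)$ is a finite separable extension and $\kappa(x)/\F'$ is regular, so $\kappa(x) \otimes_\F \bar\F$ decomposes as a finite product of fields indexed by the embeddings of $\F'$ into $\bar\F$, corresponding to the generic points of the (finitely many) geometric components of $\bar Y$. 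Through the identification of $\nrcoh{i}{0}{\bar Y}{\ql(j)}$ with the cohomology of the generic points of the components of $\bar Y$ (cf.\ Remark \ref{Remark_after_urcoh}, combined with Lemmas \ref{bmtoet} and \ref{directsum}), the above isomorphism follows, and it is manifestly $G_\F$-equivariant because the Galois action permutes the components.

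For the forward direction, given $x \in (\mathbb{P}^{d+1}_\F)^{(1)}$ with closure $Y$, de Jong's theorem \cite{deJong} produces an alteration $\pi\colon \tilde Y \to Y$ with $\tilde Y$ a smooth projective $d$-dimensional $\F$-variety. Lemma \ref{F0pullback} then yields a $G_\F$-equivariant injection
$$\coh{i}{\bar x}{\ql(j)} \cong \nrcoh{i}{0}{\bar Y}{\ql(j)} \hookrightarrow \nrcoh{i}{0}{\bar{\tilde Y}}{\ql(j)}.$$
Applying the left-exact functor of $G_\F$-invariants and using the hypothesis yields $\coh{i}{\bar x}{\ql(j)}^{G_\F} = 0$.

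For the reverse direction, let $X$ be a smooth projective $d$-dimensional $\F$-variety. Since $\F$ is perfect, $\F(X)/\F$ admits a separating transcendence basis $t_1, \dots, t_d$, and the primitive element theorem gives $\F(X) = \F(t_1, \dots, t_d)(\alpha)$ for some $\alpha$. The rational map $X \dashrightarrow \mathbb{P}^{d+1}_\F$ defined by $(1\colon t_1\colon\dots\colon t_d\colon \alpha)$ is birational onto its image $Y$, which is an irreducible hypersurface in $\mathbb{P}^{d+1}_\F$ corresponding to a point $x \in (\mathbb{P}^{d+1}_\F)^{(1)}$ with $\kappa(x) \cong \F(X)$. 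Birationality of $X$ and $Y$ gives open $\F$-isomorphic subsets $U \subset X$ and $V \subset Y$, and since $F_0\bar X = F_0\bar U = F_0\bar V = F_0\bar Y$ as pro-schemes with $G_\F$-action, one gets a $G_\F$-equivariant isomorphism $\nrcoh{i}{0}{\bar X}{\ql(j)} \cong \nrcoh{i}{0}{\bar Y}{\ql(j)} \cong \coh{i}{\bar x}{\ql(j)}$, and the desired vanishing follows. The main technical point is the identification in the preliminary step when $Y$ is not geometrically irreducible; everything else is essentially an invocation of \cite{deJong}, the primitive element theorem, and Lemma \ref{F0pullback}.
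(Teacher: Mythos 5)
Your proof is correct and follows essentially the same route as the paper's: the paper invokes Corollary \ref{cor:F0vanish} (which packages the alteration argument via Lemma \ref{F0pullback}) for one direction, and the Hartshorne/primitive-element fact that every $d$-dimensional $\F$-variety is birational to a hypersurface in $\CP^{d+1}_\F$ for the other. You simply spell out the identification $\coh{i}{\bar x}{\ql(j)}\cong \nrcoh{i}{0}{\bar Y}{\ql(j)}$ more explicitly; otherwise the two proofs coincide.
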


\begin{proof} 
By Corollary \ref{cor:F0vanish},
$  
\nrcoh{i}{0}{\bar X}{\ql(j)}^{G_\F}=0
$   for any smooth projective $\F$-variety $X$ if and only if the same vanishing holds for any quasi-projective $\F$-variety.
The lemma thus follows from the elementary fact that, since $\F$ is a perfect field,  any $d$-dimensional  $\F$-variety is birational to a hypersurface in $\CP^{d+1}_\F $,  see \cite[Proposition~I.4.9]{hartshorne}. 
\end{proof}

\begin{lemma}\label{PSbeil}
Let $i\geq 1$. If 
\begin{equation}\label{PSbeil_morphism}
\underset{ x \in (\mathbb{P}^{d+1}_{  \F})^{(1)}}{\bigoplus} \coh{2i-1}{\bar{x}}{\ql(i)} ^{G_\F}=0 ,
\end{equation} 
 then $\nrcoh{2i}{0}{\mathbb{P}^{d+1}_{\bar \F}}{\ql(i+1)}^{G_\F}=0$. 
Conversely,  \eqref{PSbeil_morphism}  holds if $\nrcoh{2i}{0}{\mathbb{P}^{d+1}_{\bar \F}}{\ql(i+1)}^{G_\F}=0$ and $\bigtate{i-1}{d-1}{\F}$ as well as $\bigSS{i-1}{d-1}{\F}$ hold.
\end{lemma}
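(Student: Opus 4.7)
The proof should proceed by combining the Gysin (localization) exact sequence of Lemma~\ref{exactnesslemma} at codimension one for $X=\CP^{d+1}_{\bar \F}$ in degree $2i$ with coefficients $\ql(i+1)$,
\[
\nrcoh{2i}{1}{X}{\ql(i+1)} \to \nrcoh{2i}{0}{X}{\ql(i+1)} \xrightarrow{\partial} \bigoplus_{x\in (\CP^{d+1}_\F)^{(1)}} H^{2i-1}(\bar x,\ql(i)) \xrightarrow{\iota_\ast} \nrcoh{2i+1}{1}{X}{\ql(i+1)},
\]
which is $G_\F$-equivariant, so left-exactness of $G_\F$-invariants yields exactness at the middle term. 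This is the pivot for both implications.

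\textbf{Forward direction.} I would induct on $d\geq 0$. The base case $d=0$ is immediate: $\bar\F(t)=\bar\F(\CP^1_{\bar \F})$ has cohomological dimension one, so $H^{2i}(\bar\F(t),\ql(i+1))=0$ for $i\geq 1$. For the inductive step, use the Faddeev--Milnor residue sequence for the purely transcendental extension $\bar\F(\CP^d)\subset \bar\F(\CP^d)(t)=\bar\F(\CP^{d+1})$:
\[
0\to H^{2i}(\bar\F(\CP^d),\ql(i+1))\to H^{2i}(\bar\F(\CP^{d+1}),\ql(i+1))\to \bigoplus_{v} H^{2i-1}(\kappa(v),\ql(i))\to 0,
\]
which is $G_\F$-equivariant. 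Taking Galois invariants, the inductive hypothesis kills the leftmost term, and each $\kappa(v)$ is (up to finite extension) the function field of a $d$-dimensional $\F$-variety; hence Lemma~\ref{PState_preparation} applied to hypothesis~\eqref{PSbeil_morphism} kills the rightmost invariants, forcing the middle term to vanish.

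\textbf{Backward direction.} The hypothesis $\nrcoh{2i}{0}{\CP^{d+1}_{\bar \F}}{\ql(i+1)}^{G_\F}=0$ combined with left-exactness in the Gysin sequence yields an injection
\[
\bigl(\bigoplus_x H^{2i-1}(\bar x,\ql(i))\bigr)^{G_\F}\ \hookrightarrow\ \nrcoh{2i+1}{1}{\CP^{d+1}_{\bar \F}}{\ql(i+1)}^{G_\F}.
\]
By Lemma~\ref{PState_preparation}, the desired vanishing is equivalent to $\nrcoh{2i-1}{0}{\bar Y}{\ql(i)}^{G_\F}=0$ for every $d$-dimensional smooth projective $\F$-variety $Y$. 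I would establish this by applying Gysin on $Y$ in degree $2i-1$ with coefficients $\ql(i)$ at successive codimensions: the codimension-one residue target $\bigoplus_{y\in Y^{(1)}}H^{2i-2}(\bar y,\ql(i-1))$ has zero Galois invariants by Theorem~\ref{thm:induct1}(\ref{induct1_1}) applied to $(d-1)$-dimensional subvarieties, using the hypotheses $\bigtate{i-1}{d-1}{\F}$ and $\bigSS{i-1}{d-1}{\F}$ together with Lemma~\ref{PState_preparation}. Propagating these hypotheses via Lemma~\ref{Tatedown} and iterating Proposition~\ref{prop:roof3}(\ref{item:prop:roof3-3}) gives surjectivity of the transition maps $\nrcoh{2i-1}{j}^{G_\F}\to \nrcoh{2i-1}{j-1}^{G_\F}$ in the relevant range, while Deligne's weight bound (Theorem~\ref{smoothweight}) forces $\nrcoh{2i-1}{i}{\bar Y}{\ql(i)}^{G_\F}=H^{2i-1}(\bar Y,\ql(i))^{G_\F}=0$ at the top of the filtration; chaining these gives $\nrcoh{2i-1}{0}{\bar Y}{\ql(i)}^{G_\F}=0$.

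\textbf{Main obstacle.} The hardest part is the backward direction, specifically the interplay of two different ambient varieties: the hypothesis lives on $\CP^{d+1}$ while the conclusion must be translated (via Lemma~\ref{PState_preparation}) to arbitrary $d$-dimensional $Y$, and one must then propagate vanishing all the way down the niveau filtration on $Y$. The boundary cases of Proposition~\ref{prop:roof3}'s range near codimensions $j=i-1,i$ require particular care and are handled by weight arguments rather than by iterated Gysin alone.
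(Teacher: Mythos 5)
Your backward direction has a genuine gap. After noting the injection
\[
\bigl(\bigoplus_x H^{2i-1}(\bar x,\ql(i))\bigr)^{G_\F}\hookrightarrow \nrcoh{2i+1}{1}{\CP^{d+1}_{\bar\F}}{\ql(i+1)}^{G_\F},
\]
you abandon it and try to prove $\nrcoh{2i-1}{0}{\bar Y}{\ql(i)}^{G_\F}=0$ directly on an arbitrary $d$-dimensional $Y$, using only $\bigtate{i-1}{d-1}{\F}$ and $\bigSS{i-1}{d-1}{\F}$. But your chain of surjectivities from Proposition~\ref{prop:roof3}\eqref{item:prop:roof3-3} only covers $1\le j\le i-2$, and the remaining step is not a weight argument: under $\bigtate{1}{d-i+1}{\F}$, Lemma~\ref{roof2} identifies $\nrcoh{2i-1}{i-2}{\bar Y}{\ql(i)}^{G_\F}$ with $\urcoh{2i-1}{i-2}{\bar Y}{\ql(i)}^{G_\F}$, and by Theorem~\ref{nrtatebeil}(2) the latter vanishes if and only if the Beilinson conjecture in degree $i$ holds for $Y$. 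That is precisely the kind of statement the whole paper is trying to characterize; you cannot extract it from Tate and $1$-semi-simplicity alone, and indeed your argument never uses the hypothesis $\nrcoh{2i}{0}{\CP^{d+1}_{\bar\F}}{\ql(i+1)}^{G_\F}=0$. The paper avoids this trap by working entirely on $\CP^{d+1}$, where Beilinson holds unconditionally: Corollary~\ref{cor:Bupdown}(1) (with $m=2$, applied in degree $i+1$ and dimension $d+1$) uses the Tate/1-SS hypotheses together with the trivial Beilinson conjecture for projective space to conclude $\nrcoh{2i+1}{1}{\CP^{d+1}_{\bar\F}}{\ql(i+1)}^{G_\F}=0$; then in the short exact sequence $0\to L\to M\to R$, vanishing of both $L$ and $R$ forces $M=0$, which is \eqref{PSbeil_morphism}.

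Your forward direction is correct in spirit but takes a much longer road than necessary, and has an unfilled gap. The paper observes that $\urcoh{2i}{0}{\CP^{d+1}_{\bar\F}}{\ql(i+1)}=0$ for $i\ge 1$ (stable birational invariance of unramified cohomology plus $\mathrm{cd}(\F)=1$), so the Gysin sequence immediately reduces to
\[
0\to \nrcoh{2i}{0}{\CP^{d+1}_{\bar\F}}{\ql(i+1)}^{G_\F}\to\bigl(\bigoplus_x H^{2i-1}(\bar x,\ql(i))\bigr)^{G_\F},
\]
and the hypothesis kills the right-hand side, giving the result in one step with no induction. Your Faddeev induction on $d$, besides being longer, needs a bridge you do not supply: to apply the inductive hypothesis you must show $H^{2i}(\bar\F(\CP^d),\ql(i+1))^{G_\F}=0$, which via Lemma~\ref{PState_preparation} requires the vanishing of $H^{2i-1}(F_0\bar W,\ql(i))^{G_\F}$ for $(d-1)$-dimensional $W$, while \eqref{PSbeil_morphism} only gives this for $d$-dimensional residue fields. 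One would need an auxiliary injectivity lemma for passage to purely transcendental extensions (and attention to the Galois action permuting places), neither of which appears in your write-up.
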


\begin{proof} 
Let $k=\F$ or $k=\bar \F$.
Then we have
\begin{align}\label{eq:vanishing-H_nr-Pn}
H^{i}_{0,nr}(\CP^n_k,j)\cong H^{i}_{0,nr}(\Spec k,j)=0\ \ \ \text{for all $i\geq 2$},
\end{align}
where the first isomorphism follows from the stable birational invariance of unramified cohomology,  see \cite{CTO} or \cite[Corollary~1.8]{Sch-moving}, and the vanishing in question follows from the fact that finite fields have cohomological dimension $1$.
Since taking $G_\F$-invariants is a left exact functor, we thus get the following exact sequence from Lemma \ref{exactnesslemma}:
$$
0\longrightarrow H^{2i}(F_0\CP^{d+1}_{\bar \F}, i+1)^{G_\F}\longrightarrow \bigoplus_{x\in (\CP_{\F}^{d+1})^{(1)}}H^{2i-1}(\bar x,i)^{G_\F}\longrightarrow 
 H^{2i+1}(F_1\CP^{d+1}_{\bar \F}, i+1)^{G_\F} .
$$
Hence,  $\nrcoh{2i}{0}{\mathbb{P}^{d+1}_{\bar \F}}{i+1}^{G_\F}=0$ if \eqref{PSbeil_morphism} holds true.

Conversely, assume that $\bigtate{i-1}{d-1}{\F}$ as well as $\bigSS{i-1}{d-1}{\F}$ hold.
By Corollary \ref{cor:Bupdown} ($m=2$ case), $\bigtate{i-1}{d-1}{\F}$ and $\bigSS{i-1}{d-1}{\F}$ imply   $\nrcoh{2i+1}{1}{\mathbb{P}^{d+1}_{\bar \F}}{i+1}^{G_\F}=0$,  because the Beilinson conjecture holds for $\mathbb{P}^{d+1}_{\F}$. 
Hence,  $\nrcoh{2i}{0}{\mathbb{P}^{d+1}_{\bar \F}}{i+1}^{G_\F}=0$ follows from \eqref{PSbeil_morphism}. 
This concludes the proof of the lemma. 
\end{proof}

\begin{lemma}\label{PState} 
If
\begin{equation}\label{PState_morphism}
  \underset{ x \in (\mathbb{P}^{d+1}_{  \F})^{(1)}}{\bigoplus} \coh{2i}{\bar{x}}{\ql(i)}  ^{G_\F}=0 ,
\end{equation}
then $\nrcoh{2i+1}{0}{\mathbb{P}^{d+1}_{\bar \F}}{\ql(i+1)}^{G_\F}=0$.
\end{lemma}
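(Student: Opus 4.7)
The plan is to apply the Gysin exact sequence (Lemma \ref{exactnesslemma}) to $\bar X = \CP^{d+1}_{\bar \F}$ at codimension $j = 1$, degree $2i+1$, twist $i+1$, yielding
\[
\nrcoh{2i+1}{1}{\bar X}{\ql(i+1)} \to \nrcoh{2i+1}{0}{\bar X}{\ql(i+1)} \xrightarrow{\partial} \bigoplus_{\bar x \in \bar X^{(1)}} H^{2i}(\bar x,\ql(i)) \xrightarrow{\iota_\ast} \nrcoh{2i+2}{1}{\bar X}{\ql(i+1)}.
\]
The image of the leftmost map is $\urcoh{2i+1}{0}{\bar X}{\ql(i+1)}$, which vanishes by the same stable birational invariance of classical unramified cohomology used in the proof of Lemma \ref{PSbeil}, together with $H^{2i+1}(\Spec \bar \F, \ql(i+1)) = 0$. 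Hence $\partial$ is injective. Taking $G_\F$-invariants (a left-exact functor) and identifying $\bigl(\bigoplus_{\bar x \in \bar X^{(1)}} H^{2i}(\bar x, \ql(i))\bigr)^{G_\F} = \bigoplus_{x \in (\CP^{d+1}_\F)^{(1)}} H^{2i}(\bar x, \ql(i))^{G_\F}$, we obtain the left-exact sequence
\[
0 \to \nrcoh{2i+1}{0}{\bar X}{\ql(i+1)}^{G_\F} \xrightarrow{\partial} \bigoplus_{x \in (\CP^{d+1}_\F)^{(1)}} H^{2i}(\bar x, \ql(i))^{G_\F} \xrightarrow{\iota_\ast} \nrcoh{2i+2}{1}{\bar X}{\ql(i+1)}^{G_\F}.
\]

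The lemma follows once we prove the key vanishing $\nrcoh{2i+2}{1}{\bar X}{\ql(i+1)}^{G_\F} = 0$, which forces $\partial$ to be an isomorphism between the two groups appearing in the statement. To prove this vanishing, observe that by the Hochschild--Serre sequence \eqref{HSnrexact} any class in $\nrcoh{2i+2}{1}{\bar X}{\ql(i+1)}^{G_\F}$ is represented by a Galois-invariant class $\alpha_{\bar U_0} \in H^{2i+2}(\bar U_0, \ql(i+1))^{G_\F}$ for some open $\bar U_0 \supset F_1 \bar X$ defined over $\F$. Being Galois-invariant, $\alpha_{\bar U_0}$ is pure of weight zero. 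The Gysin sequence for $\bar X \supset \bar X \setminus \bar U_0$, whose residue target has weight $\geq 1$ by Lemma \ref{BMweight} (combined with $H^{2i+3}(\bar X, \ql(i+1)) = 0$ since $\bar X$ has only even cohomology), implies that the weight-zero part of $H^{2i+2}(\bar U_0, \ql(i+1))$ equals the image of the restriction from $H^{2i+2}(\bar X, \ql(i+1))$. If $i \geq d+1$ this image vanishes and we are done; otherwise $H^{2i+2}(\bar X, \ql(i+1)) = \Q_\ell$ is generated by the class of any $(i+1)$-codimensional linear subspace, and $\alpha_{\bar U_0}$ is the restriction of some $c \in \Q_\ell$.

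To kill $c$, choose a codimension-$2$ linear subspace $\bar Z_0 \subset \bar X$ defined over $\F$ (e.g.\ a coordinate subspace). Since $\bar Z_0$ has codimension two, the open set $\bar U := \bar U_0 \cap (\bar X \setminus \bar Z_0)$ is defined over $\F$ and still contains $F_1 \bar X$. For $\bar Z_0 \cong \CP^{d-1}_{\bar \F}$ the Gysin pushforward $H^{2i-2}(\bar Z_0, \ql(i-1)) = \Q_\ell \to H^{2i+2}(\bar X, \ql(i+1)) = \Q_\ell$ sends the generator (class of an $(i-1)$-codimensional linear subspace of $\bar Z_0$) to the class of an $(i+1)$-codimensional linear subspace of $\bar X$, which generates the target; hence this pushforward is an isomorphism. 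Therefore $c$ restricts to zero in $H^{2i+2}(\bar X \setminus \bar Z_0, \ql(i+1))$, so $\alpha_{\bar U_0}$ restricts to zero in $H^{2i+2}(\bar U, \ql(i+1))$, proving $\alpha = 0$ in the direct limit. The main obstacle is precisely this weight-and-restriction step, which crucially uses the special geometry of $\CP^{d+1}$: the existence of $\F$-rational codimension-$2$ linear subspaces whose Gysin pushforward realizes the generator of $H^{2i+2}(\bar X, \ql(i+1))$.
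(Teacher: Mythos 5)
Your proof is correct. It derives the same key exact sequence as the paper's proof, namely (using Lemma \ref{exactnesslemma}, the vanishing \eqref{eq:vanishing-H_nr-Pn}, and left-exactness of $G_\F$-invariants)
\[
0\longrightarrow H^{2i+1}(F_0\CP^{d+1}_{\bar \F}, i+1)^{G_\F}\longrightarrow \bigoplus_{x\in (\CP_{\F}^{d+1})^{(1)}}H^{2i}(\bar x,i)^{G_\F}\longrightarrow
 H^{2i+2}(F_1\CP^{d+1}_{\bar \F}, i+1)^{G_\F},
\]
and correctly reduces the lemma to the vanishing $H^{2i+2}(F_1\CP^{d+1}_{\bar \F}, \Q_\ell(i+1))^{G_\F}=0$.

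Where you diverge from the paper is in the proof of that vanishing. The paper simply cites item \eqref{induct1_1} of Theorem \ref{thm:induct1} (really Corollary \ref{cor:Tupdown}\eqref{item:cor:Tupdown-1}, applied with $i$ replaced by $i+1$ and $j=1$, using that Tate and 1-semi-simplicity hold trivially for projective space). This is a one-line invocation, but it hides the whole machinery of Proposition \ref{prop:roof3}, Theorem \ref{nrtatebeil}, and Corollary \ref{cor:Tupdown}. You instead give a direct geometric argument: lift via \eqref{HSnrexact} to a Galois-invariant class on an $\F$-rational open $\bar U_0\supset F_1\bar X$; use the weight bound of Lemma \ref{BMweight} on the Borel--Moore cohomology of the codimension $\geq 2$ complement to extend the class to $H^{2i+2}(\CP^{d+1}_{\bar\F},\Q_\ell(i+1))$; and then kill the extension by removing an $\F$-rational codimension-$2$ linear subspace whose Gysin pushforward onto $H^{2i+2}(\CP^{d+1}_{\bar\F},\Q_\ell(i+1))$ is surjective. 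The exchange is a reasonable one: the paper's citation is uniform (it would apply verbatim to any smooth projective $X$ for which Tate and 1-semi-simplicity in degree $i+1$ are known), while your route is elementary and self-contained but exploits the special geometry of $\CP^{d+1}_{\F}$, specifically the existence of the rational codimension-$2$ linear subspace. Since the lemma is precisely about $\CP^{d+1}$, this is not a restriction, and your argument is a nice concrete illustration of what Corollary \ref{cor:Tupdown} amounts to in this case.
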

\begin{proof}
This follows by a slight  modification of the proof of Lemma \ref{PSbeil}.
By \eqref{eq:vanishing-H_nr-Pn} and the left exactness of taking $G_\F$-invariants,  Lemma \ref{exactnesslemma} yields an exact sequence
$$
0\longrightarrow H^{2i+1}(F_0\CP^{d+1}_{\bar \F}, i+1)^{G_\F}\longrightarrow \bigoplus_{x\in (\CP_{\F}^{d+1})^{(1)}}H^{2i}(\bar x,i)^{G_\F}\longrightarrow 
 H^{2i+2}(F_1\CP^{d+1}_{\bar \F}, i+1)^{G_\F} .
$$
Since the Tate and 1-semi-simplicity conjecture hold on projective space,  
 item \eqref{induct1_1} in Theorem \ref{thm:induct1} shows that $ H^{2i+2}(F_1\CP^{d+1}_{\bar \F}, i+1)^{G_\F}=0$ vanishes, and so the result follows. 
\end{proof}

\subsection{Applications}
In this section we use the relation to hypersurfaces in projective space, established in the previous section,  to deduce various consequences from Theorem \ref{thm:induct1}.

\begin{cor}\label{induct2}
Let $d$ and $i\geq 1$ be positive integer.
Then the following are true:
\begin{enumerate}
    \item\label{induct2_1} If $\bigtate{i}{d}{\F}$ and $\bigSS{i}{d}{\F}$ hold, then
    $ 
    \nrcoh{2i+1}{0}{\mathbb{P}^{d+1}_{\bar \F}}{\ql(i+1)}^{G_\F}=0.
    $     
    \item\label{induct2_2} If $\bigtate{i-1}{d-1}{\F}$ and $\bigSS{i-1}{d-1}{\F}$ hold and
    $ 
     \nrcoh{2i+1}{0}{\mathbb{P}^{d+1}_{\bar \F}}{\ql(i+1)}^{G_{\F }}=0,
    $ 
    then $\bigtate{i}{d}{\F}$ and $\bigSS{i}{d}{\F}$ hold.
    \item\label{induct2_3} If $\bigtate{i-1}{d-1}{\F}$, $\bigSS{i-1}{d-1}{\F}$,  and $\bigbeil{i}{d}{\F}$ hold, then
    $ 
    \nrcoh{2i}{0}{\mathbb{P}^{d+1}_{\bar \F}}{\ql(i+1)}^{G_\F}=0.
    $ 
    \item\label{induct2_4} If $\bigtate{i-1}{d-1}{\F }$, $\bigSS{i-1}{d-1}{\F }$,  and $\bigbeil{i-1}{d-1}{\F }$ hold and
    $ 
    \nrcoh{2i}{0}{\mathbb{P}^{d+1}_{\bar \F}}{\ql(i+1)}^{G_{\F}}=0,
    $ 
    then $\bigbeil{i}{d}{\F}$ holds.
\end{enumerate}
\end{cor}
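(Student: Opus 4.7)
The plan is a uniform ``sandwich'' argument that chains three earlier results: Theorem \ref{thm:induct1}, which relates the Tate, Beilinson, and $1$-semi-simplicity conjectures on a smooth projective variety $X$ to the vanishing of Galois invariants of the cohomology of the generic point of $\bar X$; Lemma \ref{PState_preparation}, which translates the vanishing of $\nrcoh{i}{0}{\bar X}{\ql(j)}^{G_\F}$ for \emph{every} $d$-dimensional smooth projective $\F$-variety $X$ into the vanishing of $\coh{i}{\bar x}{\ql(j)}^{G_\F}$ at every codimension-$1$ point $x$ of $\CP^{d+1}_\F$ (the key ingredient being the classical fact that every $d$-dimensional $\F$-variety is birational to a hypersurface in $\CP^{d+1}_\F$); and Lemma \ref{PState} (for the Tate and $1$-semi-simplicity items) or Lemma \ref{PSbeil} (for the Beilinson items), which identifies the sum of these local invariants with the single global invariant $\nrcoh{\cdot}{0}{\CP^{d+1}_{\bar \F}}{\ql(\cdot)}^{G_\F}$.

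For item \eqref{induct2_1}, I would apply Theorem \ref{thm:induct1}\eqref{induct1_1} to every smooth projective $d$-dimensional $X$: the hypotheses $\bigtate{i}{d}{\F}$ and $\bigSS{i}{d}{\F}$ give $\nrcoh{2i}{0}{\bar X}{\ql(i)}^{G_\F}=0$ for all such $X$. Lemma \ref{PState_preparation} then yields $\coh{2i}{\bar x}{\ql(i)}^{G_\F}=0$ at every $x\in(\CP^{d+1}_\F)^{(1)}$, and Lemma \ref{PState} converts this into the desired vanishing of $\nrcoh{2i+1}{0}{\CP^{d+1}_{\bar \F}}{\ql(i+1)}^{G_\F}$. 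Item \eqref{induct2_2} is the same chain read backwards: Lemma \ref{PState} turns the assumed vanishing into $\coh{2i}{\bar x}{\ql(i)}^{G_\F}=0$, Lemma \ref{PState_preparation} transfers this to $\nrcoh{2i}{0}{\bar X}{\ql(i)}^{G_\F}=0$ for every smooth projective $d$-dimensional $X$, and Theorem \ref{thm:induct1}\eqref{induct1_2}---whose side hypotheses $\bigtate{i-1}{d-1}{\F}$ and $\bigSS{i-1}{d-1}{\F}$ are precisely those assumed---concludes $\bigtate{i}{d}{\F}$ and $\bigSS{i}{d}{\F}$.

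Items \eqref{induct2_3} and \eqref{induct2_4} follow the same pattern with Beilinson in place of Tate and $1$-semi-simplicity, with the degree shifted from $2i$ to $2i-1$, and with Lemma \ref{PSbeil} replacing Lemma \ref{PState}. For item \eqref{induct2_3} one applies Theorem \ref{thm:induct1}\eqref{induct1_3} (whose side hypotheses are assumed) followed by Lemma \ref{PState_preparation} and the easy direction of Lemma \ref{PSbeil}. For item \eqref{induct2_4} one runs the chain in reverse: the converse direction of Lemma \ref{PSbeil} consumes the assumed $\bigtate{i-1}{d-1}{\F}$ and $\bigSS{i-1}{d-1}{\F}$, while Theorem \ref{thm:induct1}\eqref{induct1_4} is fed the hypotheses $\bigtate{i-2}{d-2}{\F}$ and $\bigSS{i-2}{d-2}{\F}$ (deduced from $\bigtate{i-1}{d-1}{\F}$ and $\bigSS{i-1}{d-1}{\F}$ via Lemma \ref{Tatedown}) together with the assumed $\bigbeil{i-1}{d-1}{\F}$.

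There is no genuine obstacle here, since all the substantive analytic work has been carried out in Section 6 and in the preceding lemmas of the present section. The only nontrivial point to keep straight is the bookkeeping of side hypotheses along each chain: one must verify in each of the four items that the auxiliary assumptions carried by Theorem \ref{thm:induct1} and Lemma \ref{PSbeil} either match the assumed hypotheses of the item exactly or are implied by them through Lemma \ref{Tatedown}.
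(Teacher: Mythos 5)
Your proposal is correct and follows essentially the same route as the paper's proof: each item is proved by chaining Theorem~\ref{thm:induct1}, Lemma~\ref{PState_preparation}, and Lemma~\ref{PState} (for the Tate/semi-simplicity items) or Lemma~\ref{PSbeil} (for the Beilinson items), with Lemma~\ref{Tatedown} supplying the shifted side hypotheses needed by Theorem~\ref{thm:induct1}\eqref{induct1_4} in item~\eqref{induct2_4}. The bookkeeping of auxiliary hypotheses you describe matches the paper's argument.
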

\begin{proof} 
We start with the first item. 
By item \eqref{induct1_1} in Theorem \ref{thm:induct1},  $\bigtate{i}{d}{\F}$ and $\bigSS{i}{d}{\F}$ imply that $\nrcoh{2i}{0}{\bar \F}{\ql(i)}^{G_\F}=0$ for every smooth projective $\F$-variety $X$. 
Therefore, 
$$
\nrcoh{2i+1}{0}{\mathbb{P}^{d+1}_{\bar \F}}{\ql(i+1)}^{G_\F}=0,
$$
by Lemmas \ref{PState_preparation} and \ref{PState}, 
as we want.

For the second item, notice that, by Lemma \ref{PState}, our assumptions imply that
$$
\underset{ x \in (\mathbb{P}^{d+1}_{  \F})^{(1)}}{\bigoplus}\coh{2i}{\bar{x}}{\ql(i)}^{G_\F}=0 .
$$ 
Hence, by Lemma \ref{PState_preparation}, $
\nrcoh{2i}{0}{\bar X}{\ql(i)}^{G_{\F }}=0
$ 
for any smooth projective $\F$-variety $X$. 
By item \eqref{induct1_2} of Theorem \ref{thm:induct1}, this implies that $\bigtate{i}{d}{\F}$ and $\bigSS{i}{d}{\F}$ hold,  as we want.

For the third item, observe that $\bigtate{i-1}{d-1}{\F}$, $\bigSS{i-1}{d-1}{\F}$, and $\bigbeil{i}{d}{\F}$ imply the vanishing
$$
\nrcoh{2i-1}{0}{\bar X}{\ql(i)}^{G_\F}=0
$$
for any smooth projective $\F$-variety $X$ by item \eqref{induct1_3} of Theorem \ref{thm:induct1}.
Therefore, by Lemmas \ref{PState_preparation} and \ref{PSbeil},
$ 
\nrcoh{2i}{0}{\mathbb{P}^{d+1}_{\bar \F}}{\ql(i+1)}^{G_\F}=0,
$ 
as we want.

For the fourth and last item, notice first that, by Lemmas  \ref{PState_preparation} and  \ref{PSbeil},  our assumptions imply that
$$
\nrcoh{2i-1}{0}{\bar X}{\ql(i)}^{G_{\F}}=0
$$
for any smooth projective $\F$-variety $X$. 
Since $\bigtate{i-1}{d-1}{\F}$ and $\bigSS{i-1}{d-1}{\F}$ imply $\bigtate{i-2}{d-2}{\F}$ and $\bigSS{i-2}{d-2}{\F}$ by Lemma \ref{Tatedown}, we conclude from item \eqref{induct1_4} in Theorem \ref{thm:induct1} that $\bigbeil{i}{d}{\F}$ holds.
\end{proof}

The following lemma will allow us to rewrite Corollary \ref{induct2} in a form that is suitable for inductive arguments.

\begin{lemma}\label{injectcup}
For all $i$,$n$, and $d$, there exists a $G_\F$-equivariant injection
$$
\nrcoh{i}{0}{\mathbb{P}^{d}_{\bar \F}}{\ql(n)}\hookrightarrow\nrcoh{i+1}{0}{\mathbb{P}^{d+1}_{\bar \F}}{\ql(n+1)}.
$$
\end{lemma}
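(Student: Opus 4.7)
The plan is to construct the injection $\varphi$ as a right-inverse of a residue map, with the right-inverse being cup product with the Kummer class of a uniformizer. This is the refined unramified cohomology analogue of the classical splitting of Galois cohomology for a purely transcendental extension of transcendence degree one.

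First, I would embed $\CP^d_\F\hookrightarrow\CP^{d+1}_\F$ as a hyperplane $H$ defined over $\F$, say $H=\{x_{d+1}=0\}$. The Gysin sequence (Lemma \ref{exactnesslemma}) applied to the codimension-one point of $\CP^{d+1}$ corresponding to the generic point of $H$, followed by the direct limit over opens containing that point, yields a $G_\F$-equivariant residue map
$$
\partial_H\colon\nrcoh{i+1}{0}{\CP^{d+1}_{\bar\F}}{\ql(n+1)}\longrightarrow \nrcoh{i}{0}{H_{\bar\F}}{\ql(n)}=\nrcoh{i}{0}{\CP^d_{\bar\F}}{\ql(n)}.
$$
The claim to prove is that $\partial_H$ admits a $G_\F$-equivariant section $\varphi$.

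To build the section I would work in the affine chart $\A^{d+1}_\F\cong\A^d_\F\times\A^1_\F\subset\CP^{d+1}_\F$, with $t$ the coordinate on the $\A^1$-factor, so that $H\cap\A^{d+1}_\F=\{t=0\}$. For each open $W\subseteq\A^d_{\bar\F}$ defined over $\F$ (such $W$ are cofinal among opens of $\CP^d_{\bar\F}$ because $\CP^d$ and $\A^d$ share the same function field), the product $W\times\mathbb{G}_{m,\bar\F}$ is an open subscheme of $\CP^{d+1}_{\bar\F}$. The Gysin sequence for the closed embedding $W\times\{0\}\hookrightarrow W\times\A^1$ with complement $W\times\mathbb{G}_m$, combined with $\A^1$-homotopy invariance and the triviality of $\OO(W\times\{0\})$ on $W\times\A^1$, yields a short exact sequence
$$
0\to \coh{i+1}{W}{\ql(n+1)}\to \coh{i+1}{W\times\mathbb{G}_m}{\ql(n+1)}\xrightarrow{\partial_W}\coh{i}{W}{\ql(n)}\to 0 .
$$
The $G_\F$-equivariant map $s_W(\alpha)\coloneqq p^*\alpha\cup[t]$, with $p\colon W\times\mathbb{G}_m\to W$ the projection and $[t]\in \coh{1}{\mathbb{G}_{m,\bar\F}}{\ql(1)}$ the Kummer class of $t$, is a section of $\partial_W$ by the Leibniz-type formula for residues (since $t$ has valuation one along $W\times\{0\}$). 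Assembling the $s_W$ in the direct limit over $W$ and using that each $W\times\mathbb{G}_m$ is in particular an open of $\CP^{d+1}_{\bar\F}$ produces the desired $\varphi$, and functoriality of the Gysin sequence then forces $\partial_H\circ\varphi=\id$, which proves injectivity.

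The most delicate step will be verifying the identity $\partial_W\circ s_W=\id$ and its compatibility with the global residue $\partial_H$ at the level of $\ql$-coefficients, rather than only with $\Z/\ell^r$-coefficients where the identity is classical and well-documented. My plan would be to first work with $\Z/\ell^r$-coefficients (where the Kummer class of $t$, the cup product on \'etale cohomology, and the Leibniz formula are standard), verify the required identities at each finite level, and then pass to $\Z_\ell$- and $\ql$-coefficients using finiteness of the intermediate groups (Lemma \ref{negdegree}) combined with Mittag--Leffler. The Galois-equivariance of $\varphi$ is automatic since $H$, $t$, the projection $p$, and the cup product are all defined over $\F$.
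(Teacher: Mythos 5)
Your argument is correct and follows essentially the same route as the paper's own proof: both construct the map as cup product with the Kummer class of the extra affine coordinate $t$ (after pulling back along the projection), and establish injectivity via the residue map along the divisor $\{t=0\}$ using the compatibility $\partial_t(\pi^*\alpha\cup[t])=\alpha$. The paper phrases the splitting directly on $\Z_\ell$-level refined unramified cohomology using \cite[(P6) in Definition 4.4 and Proposition 6.6]{Sch-refined} and \cite[Lemma 2.2]{Sch-survey}, which obviates your extra $\Z/\ell^r$-to-$\ql$ Mittag--Leffler step, but the underlying idea is identical.
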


\begin{proof}
Since $F_0X$ depends only on the generic point of $X$, we can replace projective space by affine space in the statement of the lemma.
Let $t_1,\dots ,t_{d+1}$ be the standard coordinates on $\mathbb{A}^{d+1}$. The projection to the first $d$ coordinates, $\mathbb{A}^{d+1}\xrightarrow{\pi}\mathbb{A}^{d},$
induces a pull-back map
$$
\pi^*:\nrcoh{i}{0}{\mathbb{A}^{n}}{\zl(j)}\rightarrow\nrcoh{i}{0}{\mathbb{A}^{n+1}}{\zl(j)}.
$$
Let $(t_{d+1})\in\nrcoh{1}{0}{\mathbb{A}^{d+1}}{\zl(1)}$ be the class of $t_{d+1}\in\kbar(t_1,...,t_{d+1})$, obtained via the Kummer sequence, cf.\ \cite[(P6) in Definition~4.4 and Proposition~6.6]{Sch-refined}. 
We define the following map
\begin{align*}
\nrcoh{i}{0}{\mathbb{P}^{n}}{\zl(j)}&\longrightarrow\nrcoh{i+1}{0}{\mathbb{P}^{n+1}}{\zl(j+1)}
\\
\:\alpha&\mapsto (t_{d+1})\cup \pi^*(\alpha).
\end{align*}
We claim that this map is injective.
To see this, consider the residue map 
$$
\partial_{t_{d+1}}: \nrcoh{i+1}{0}{\mathbb{P}^{n+1}}{\zl(j+1)}\longrightarrow \nrcoh{i}{0}{\mathbb{P}^{n}}{\zl(j)}
$$ 
associated to the prime divisor $\{t_{d+1}=0\}\subset \A^{d+1}$.
This map is $G_\F$-equivariant.
By the compatibility of cup-product and the residue map (see e.g.\  \cite[Lemma~2.2]{Sch-survey}), we get
$$
\partial_{t_{d+1}}((t_{d+1})\cup \pi^*(\alpha))=\partial_{t_{d+1}}(t_{d+1})\cup\alpha=\alpha.
$$ 
Altogether we have seen that $\partial_{t_{d+1}}$ yields a $G_\F$-equivariant splitting of the above cup product map.
This proves the lemma.
\end{proof}

In conjunction with the above lemma, Corollary \ref{induct2} yields the following.

\begin{cor}\label{induct3}
Thw following holds for the finite field $\F$:
    \begin{enumerate}
        \item\label{induct3_1} 
        For all $d,i\geq 1$, $\bigtate{i}{d}{\F }$ and $\bigSS{i}{d}{\F }$ hold  if and only if $\bigtate{i-1}{d-1}{\F}$ and $\bigSS{i-1}{d-1}{\F}$ hold  and
        $$
        \nrcoh{2i+1}{0}{\mathbb{P}^{d+1}_{\bar \F}}{\ql(i+1)}^{G_{\F }}=0.
        $$
        \item\label{induct3_2} 
        For all $d$ and $i\geq 2$, $\bigtate{i-1}{d-1}{\F }$, $\bigSS{i-1}{d-1}{\F }$, and $\bigbeil{i}{d}{\F }$ hold if and only if $\bigtate{i-2}{d-2}{\F }$, $\bigSS{i-2}{d-2}{\F }$, and $\bigbeil{i-1}{d-1}{\F }$ hold  and
       $$
       \nrcoh{2i}{0}{\mathbb{P}^{d+1}_{\bar \F}}{\ql(i+1)}^{G_{\F}}=0.
       $$
    \end{enumerate}
\end{cor}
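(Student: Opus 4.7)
My plan is to deduce both equivalences from Corollary \ref{induct2}, using Lemma \ref{Tatedown} to obtain the lower-dimensional conjectures appearing in the forward directions, and reserving Lemma \ref{injectcup} for a short bootstrap argument needed in the backward direction of item \eqref{induct3_2}.

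The forward direction of item \eqref{induct3_1} combines Lemma \ref{Tatedown} (applied with $p=q=1$) to drop $\bigtate{i}{d}{\F}$ and $\bigSS{i}{d}{\F}$ down to the $(i-1,d-1)$ level, with Corollary \ref{induct2}(\ref{induct2_1}) for the vanishing. Its converse is an immediate application of Corollary \ref{induct2}(\ref{induct2_2}). The forward direction of item \eqref{induct3_2} is analogous: Lemma \ref{Tatedown} supplies the $(i-2,d-2)$ versions of the Tate and $1$-semi-simplicity conjectures as well as the $(i-1,d-1)$ version of the Beilinson conjecture, while Corollary \ref{induct2}(\ref{induct2_3}) gives the vanishing.

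The only step requiring actual work is the backward direction of item \eqref{induct3_2}: given $\bigtate{i-2}{d-2}{\F}$, $\bigSS{i-2}{d-2}{\F}$, $\bigbeil{i-1}{d-1}{\F}$ together with $\nrcoh{2i}{0}{\CP^{d+1}_{\bar \F}}{\ql(i+1)}^{G_{\F}}=0$, I must produce all three of $\bigtate{i-1}{d-1}{\F}$, $\bigSS{i-1}{d-1}{\F}$, and $\bigbeil{i}{d}{\F}$. The natural tool for the Beilinson conclusion is Corollary \ref{induct2}(\ref{induct2_4}), but that result requires $\bigtate{i-1}{d-1}{\F}$ and $\bigSS{i-1}{d-1}{\F}$ as hypotheses -- exactly what is still to be shown. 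I would resolve this via Lemma \ref{injectcup}, whose instance at $(i,n,d)=(2i-1,i,d)$ gives a $G_\F$-equivariant injection
$$
\nrcoh{2i-1}{0}{\CP^d_{\bar \F}}{\ql(i)} \hookrightarrow \nrcoh{2i}{0}{\CP^{d+1}_{\bar \F}}{\ql(i+1)}.
$$
Passing to $G_\F$-invariants, the assumed vanishing on the right forces $\nrcoh{2i-1}{0}{\CP^d_{\bar \F}}{\ql(i)}^{G_{\F}}=0$. Feeding this together with $\bigtate{i-2}{d-2}{\F}$ and $\bigSS{i-2}{d-2}{\F}$ into Corollary \ref{induct2}(\ref{induct2_2}) with $(i,d)$ replaced by $(i-1,d-1)$ yields $\bigtate{i-1}{d-1}{\F}$ and $\bigSS{i-1}{d-1}{\F}$. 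With these in hand, Corollary \ref{induct2}(\ref{induct2_4}) now applies and produces $\bigbeil{i}{d}{\F}$, finishing the argument. I do not expect a genuine obstacle here; the only non-cosmetic move is noticing that Lemma \ref{injectcup} shifts degree, weight, and dimension in a way precisely compatible with the indices appearing in Corollary \ref{induct2}(\ref{induct2_2}).
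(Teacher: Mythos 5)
Your proposal is correct and follows essentially the same route as the paper: Lemma \ref{Tatedown} plus Corollary \ref{induct2}(\ref{induct2_1})/(\ref{induct2_3}) for the forward directions, Corollary \ref{induct2}(\ref{induct2_2}) for the converse of item~(\ref{induct3_1}), and for the converse of item~(\ref{induct3_2}) the same Lemma \ref{injectcup} bootstrap to first obtain $\bigtate{i-1}{d-1}{\F}$, $\bigSS{i-1}{d-1}{\F}$ before invoking Corollary \ref{induct2}(\ref{induct2_4}). The only cosmetic difference is that the paper cites the already-proven item~(\ref{induct3_1}) for this intermediate step, while you unwind it directly to Corollary \ref{induct2}(\ref{induct2_2}); the index shift under Lemma \ref{injectcup} is handled correctly in both.
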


\begin{proof}
    By Lemma \ref{Tatedown}, $\bigtate{i}{d}{\F}$ and $\bigSS{i}{d}{\F}$ imply $\bigtate{i-1}{d-1}{\F}$ and $\bigSS{i-1}{d-1}{\F}$. 
    Hence, the first item follows from items \eqref{induct2_1} and  \eqref{induct2_2} of Corollary \ref{induct2}.
    
   By  Lemma \ref{Tatedown} and item \eqref{induct2_3} of Corollary \ref{induct2}, 
     $\bigtate{i-1}{d-1}{\F}$, $\bigSS{i-1}{d-1}{\F}$, and $\bigbeil{i}{d}{\F}$  imply $\bigtate{i-2}{d-2}{\F}$, $\bigSS{i-2}{d-2}{\F}$, and $\bigbeil{i-1}{d-1}{\F}$ as well as the vanishing
     $$
     \nrcoh{2i}{0}{\mathbb{P}^{d+1}_{\bar \F}}{\ql(i+1)}^{G_{\F}}=0.
     $$
     For the converse, first notice that, by Lemma \ref{injectcup}, the vanishing
     \begin{equation}\label{induct3_equation1}
         \nrcoh{2i}{0}{\mathbb{P}^{d+1}_{\bar \F}}{\ql(i+1)}^{G_{\F }}=0
     \end{equation}
     implies the vanishing
     \begin{equation}\label{induct3_equation2}
         \nrcoh{2i-1}{0}{\mathbb{P}^{d}_{\bar \F}}{\ql(i)}^{G_{\F }}=0 .
     \end{equation} 
     The vanishing \eqref{induct3_equation2}, together with the assumptions $\bigtate{i-2}{d-2}{\F }$ and $\bigSS{i-2}{d-2}{\F }$ imply $\bigtate{i-1}{d-1}{\F }$ and $\bigSS{i-1}{d-1}{\F }$   by item \eqref{induct3_1} in Corollary \ref{induct3}, proven above. 
     Moreover, $\bigtate{i-1}{d-1}{\F }$ and $\bigSS{i-1}{d-1}{\F }$ together with $\bigbeil{i-1}{d-1}{\F }$ and the vanishing (\ref{induct3_equation1}) imply $\bigbeil{i}{d}{\F }$  by item \eqref{induct2_4} in Corollary \ref{induct2}.
\end{proof}

\subsection{Proof of Theorem \ref{thm:cohcrit1}}

\begin{proof}[Proof of Theorem \ref{thm:cohcrit1}]
We note that for non-negative integers $d$ and any finite field $\F$,  $\bigtate{0}{d}{\F }$, $\bigSS{0}{d}{\F }$, and $\bigbeil{1}{d}{\F}$ hold unconditionally, see Remark \ref{Remark_after_conjectures}.
\end{proof}

\subsection{Proof of Theorems  \ref{thm:main:Tate-Beilinson-ss} and  \ref{thm:main:Tate-ss}, and Corollary \ref{cor:Lefschetz}} \label{subsec:proof of thm 1 and  2}

\begin{proof}[Proof of Theorems \ref{thm:main:Tate-Beilinson-ss} and \ref{thm:main:Tate-ss}]
Recall that the 1-semi-simplicity conjecture for all smooth projective varieties over $\F$ implies the semi-simplicity conjecture for all smooth projective varieties over $\F$,  see Remark \ref{rem:1-ss_implies_ss}.
Items \eqref{cohcrit1_3} and \eqref{cohcrit1_4} of
Theorem \ref{thm:cohcrit1} therefore imply Theorems \ref{thm:main:Tate-Beilinson-ss} and \ref{thm:main:Tate-ss}, respectively. 
\end{proof}

\begin{proof}[Proof of Corollary \ref{cor:Lefschetz}]
Fix an integer $n$.
Assume that for all $d\leq n$, the Tate, Beilinson, and 1-semi-simplicity conjectures hold for cycles of codimension $i\leq \lceil d/2\rceil$ on all smooth projective varieties of dimension $d$, defined over some finite field $\F$.
This implies by Theorem \ref{thm:cohcrit1} that
$$
H^{2j}(F_0\CP^{d+1}_{\bar F},\Q_\ell(j+1))^{G_{\F}}=0
$$
for all $2\leq j\leq  \lceil d/2\rceil$.
Since the Tate and semi-simplicity conjectures hold for projective space, the above vanishing also holds true for $j=1$, see item \eqref{induct1_1} in Theorem \ref{thm:induct1}.
The statement is trivial for $j<1$ and so we find that the above vanishing holds for all $j\leq  \lceil d/2\rceil$.
Since affine varieties over algebraically closed fields have no cohomology in degrees larger than their dimension, the above vanishing is also automatic for $j> \lceil d/2\rceil$.
It then follows from Theorem \ref{thm:cohcrit1} that the Tate, Beilinson, and 1-semi-simplicity conjectures hold for cycles of arbitrary codimension on smooth projective varieties of dimension at most $n$ over $\F$. 
This proves Corollary \ref{cor:Lefschetz}.
\end{proof}

\begin{remark}
A similar argument as in Corollary \ref{cor:Lefschetz} shows that the Tate and 1-semi-simplicity conjectures in half of the degrees imply the conjectures in all degrees.
That result also follows from Deligne's hard Lefschetz theorem, but we note that our proof did not make use of this (nontrivial) result.
Moreover, a similar approach does not work to prove the assertion on the Beilinson conjecture in Corollary \ref{cor:Lefschetz}, because a hard Lefschetz theorem for Chow groups is not known in general.
\end{remark}

\section*{Acknowledgements} 
This paper contains results of the first named author's PhD thesis \cite{balkan} written under the supervision of the second named author.
We thank the referees for their helpful comments.
This project has received funding from the European Research Council (ERC) under the European Union's Horizon 2020 research and innovation programme under grant agreement No 948066 (ERC-StG RationAlgic).



\begin{thebibliography}{HKLR} 

  
%
\bibitem[SGA4$\frac{1}{2}$]{SGA4.5}
P.\ Deligne, {\em Cohomologie \'etale}, Lecture Notes in Mathematics \textbf{569} Springer-Verlag, Berlin, 1977, S\'eminaire de G\'eom\'etrie Alg\'ebrique du Bois-Marie SGA 4 1/2, avec la collaboration de J.\ F.\ Boutot, A.\ Grothendieck, L.\ Illusie et J.-L.\ Verdier.
%
%
%
%
%


\bibitem[ASD73]{artin-swinnerton-dyer}
M.\ Artin and H.P.F. \ Swinnerton-Dyer, {\em The Shafarevich-Tate conjecture for pencils of elliptic curves on K3 surfaces}, Invent.\ Math.\ \textbf{20} (1973),  249--266.


%
%
%
%
%


\bibitem[Bal24]{balkan}
S.\ Balkan, {\em Cohomological criteria for the Tate and Beilinson conjectures}, PhD Thesis, Leibniz University Hannover, 2024.

\bibitem[Bei85]{beilinson-2}
A.\ Beilinson, {\em Higher regulators and values of L-functions},  J.\ Math.\ Sci.\ \textbf{30} (1985), 2036--2070.


\bibitem[Bei86]{beilinson-absolute-hodge}
A.\ Beilinson, {\em Notes on absolute Hodge cohomology},  Contemporary Mathematics AMS \textbf{55}, Part I, 1986,  35--68.

%
%
%
%
%
%
\bibitem[BS15]{BS}
B.\ Bhatt and P.\ Scholze, {\em The pro-\'etale topology of schemes},  Ast\'erisque \textbf{369} (2015),  99--201.
%
%
%
%
%
%

\bibitem[Blo86]{bloch-higher-chow}
S.\ Bloch, {\em Algebraic cycles and higher K-theory}, Adv.\ in Math.\ \textbf{61} (1986), 267--304.
%
%
\bibitem[BO74]{BO}
S.\ Bloch and A.\ Ogus, {\em Gersten's conjecture and the homology of schemes}, Ann.\ Sci.\ \'Ec.\ Norm.\ Sup\'er., \textbf{7} (1974), 181--201.
%
%
%
%


\bibitem[Ch13]{charles}
F.\ Charles,  {\em The Tate conjecture for K3 surfaces over finite fields,} Invent.\ Math.\ \textbf{194}
(2013),  119--145.

%

\bibitem[CD19]{cisinski-deglise}
D.-C.\ Cisinski and F.\ D\'eglise, {\em Triangulated Categories of Mixed Motives}, Springer, Monographs in Mathematics, 2019.


\bibitem[CT95]{CT-survey}
J.-L.\ Colliot-Th\'el\`ene, {\em Birational invariants, purity and the Gersten conjecture}, K-theory and algebraic geometry: connections with quadratic forms and division algebras (Santa Barbara, CA, 1992), 1--64, Proc.\ Sympos.\ Pure Math.\, \textbf{58}, AMA, Providence,RI, 1995.
%
%
%
%
%
%
%
\bibitem[CTO89]{CTO}
J.-L.\ Colliot-Th\'el\`ene and M.\ Ojanguren, {\em Vari\'et\'es unirationnelles non rationnelles : au-del\`a de l'exemple d'Artin et Mumford}, Invent.\ Math.\ \textbf{97} (1989), 141--158.

\bibitem[CTSS83]{CTSS}
J.-L.\ Colliot-Thélène, J.-J.\ Sansuc, and C.\ Soulé, {\em Torsion dans le groupe de Chow de codimension deux}, Duke Math.\ J.\ \textbf{50} (1983),  763--801.

%
%
%
%



%
%
%
%
%
\bibitem[deJ96]{deJong}
A.J.\  de  Jong, {\em Smoothness,  semi-stability  and  alterations},  Publ.\  Math.\  de  l’IH\'ES \textbf{83} (1996), 51--93.
\bibitem[Del74]{deligne-weil}
P.\ Deligne, {\em La conjecture de Weil, I}, Publ.\ Math.\ I.H.\'E.S., \textbf{43} (1974) 273--307.

\bibitem[Del80]{deligne-weil2}
P.\ Deligne, {\em La conjecture de Weil, II}, Publ.\ Math.\ I.H.\'E.S., \textbf{52} (1980) 137--252. 

%
%
%







%
%
%
%
%
%
%
%
%
%
%
%
%
%
%

\bibitem[Gei98]{geisser98}
T.\ Geisser, {\em Tate’s Conjecture, Algebraic Cycles and Rational
K-Theory in Characteristic p}, K-Theory \textbf{13} 1998, 109--122. 


\bibitem[GL01]{geisser-levine}
T.\ Geisser and M.\  Levine,  {\em The Bloch--Kato conjecture and a theorem of Suslin-Voevodsky},  J.\ Reine Angew.\ Math.\ \textbf{530} (2001), 55--103.

%
%
%
%
%
%
%
%

\bibitem[Har77]{hartshorne}
R.\ Hartshorne, {\em Algebraic Geometry}, Springer, 1977.


%
\bibitem[HPT18]{HPT}
B.\ Hassett, A.\ Pirutka and Yu.\ Tschinkel, {\em Stable rationality of quadric surface bundles over surfaces},  Acta Math.\ \textbf{220} (2018), 341--365.
%
%
%
%
%
%
%

\bibitem[Jan88]{jannsen}
U.\ Jannsen, {\em Continuous \'Etale cohomology}, Math.\ Ann.\ \textbf{280} (1988), 207--245.

\bibitem[Jan90]{jannsen-LMN}
U.\ Jannsen, {\em Mixed Motives and Algebraic K-Theory}, LMN 1400, Springer, Berlin, 1990.

%
%





 
%

\bibitem[Ka98]{kahn98}
B.\ Kahn,  {\em A sheaf-theoretic reformulation of the Tate conjecture}, Preprint 1998, 
arXiv:9801017. 

\bibitem[Ka03]{kahn}
B.\ Kahn,  {\em \'Equivalences rationnelle et num\'erique sur certaines
              vari\'et\'es de type ab\'elien sur un corps fini},  Ann.\  Sci.\  \'Ecole Norm.\ Sup.\  \textbf{36} (2003), 977--1002. 

\bibitem[KMP16]{kim-madapusipera}
W.\ Kim and K.\ Madapusi Pera, {\em  2-adic integral canonical models},  Forum of Mathematics, Sigma. 2016;4:e28.

%
%
%
%
%



%
%
%
%
%

\bibitem[MP15]{madapusipera}
K.\ Madapusi Pera,  {\em The Tate conjecture for K3 surfaces in odd characteristic},  Invent.\
Math.\ \textbf{201} (2015),  625--668.

\bibitem[Mau14]{maulik}
D.\ Maulik, {\em  Supersingular K3 surfaces for large primes}, Duke Math.\  J.\ \textbf{163} (2014),
 2357--2425.



%


\bibitem[MVW06]{mazza-voevodsky-weibel}
C.\ Mazza, V.\ Voevodsky, and C.\ Weibel, {\em Lecture Notes on motivic cohomology}. American Mathematical Society, Clay Mathematics Monographs, Volume 2, Cambridge, MA, 2006.

%


\bibitem[Mil80]{milne}
J.S.\ Milne, {\em \'Etale cohomology}, Princeton University Press, Princeton, NJ, 1980.
%
%

\bibitem[Mil86]{milne-AJM}
J.\ Milne,  {\em Values of zeta functions of varieties over finite fields}. Amer.\ J.\  Math.\ \textbf{108} (1986),  297--360.


\bibitem[Moo19]{moonen}
B.\ Moonen, A remark on the Tate conjecture, Journal of Algebraic Geometry \textbf{28} (2019), 599--603. 

\bibitem[Mor19]{morrow}
M.\ Morrow, {\em A Variational Tate Conjecture in crystalline cohomology},
J.\  Eur.\  Math.\  Soc.\ \textbf{21} (2019), 3467--3511.

%
%
%
%


%
%
%

\bibitem[NO85]{nygaard-ogus}
N.\ Nygaard and A.\ Ogus,  {\em Tate’s conjecture for K3 surfaces of finite height}, Ann.\ of
Math.\  \textbf{122} (1985),  461--507.


%
%
%
%


%
%
%
%



%
%
%

%

\bibitem[RSZ83]{rudakov-shafarevich-zink}
A.N.\ Rudakov, I.R.\ Shafarevich, and T.\ Zink,  {\em The influence of height on degenerations of
algebraic surfaces of type K3}. Math.\ USSR Izv.\  \textbf{20} (1983), 119--135.

%

%
%
%
\bibitem[Sch19]{Sch-JAMS}
S.\ Schreieder, {\em Stably irrational hypersurfaces of small slopes},
J.\ Amer.\ Math.\ Soc.\ \textbf{32} (2019), 1171--1199.
%
\bibitem[Sch21]{Sch-survey}
S.\ Schreieder, {\em Unramified cohomology, algebraic cycles and rationality}, in: G. Farkas et al. (eds), Rationality of Varieties,
Progress in Mathematics 342, Birkh\"auser (2021), 1--44.


%

\bibitem[Sch23]{Sch-refined}
S.\ Schreieder, {\em Refined unramified cohomology of schemes}, 
Compositio Mathematica 159 (2023), 1466-1530. 


\bibitem[Sch23]{Sch-moving}
S.\ Schreieder, {\em A moving lemma for cohomology with support}, 
arXiv:2207.08297,  Preprint 2023.




%
%
%
%
%
%
%
%
%

%
%
%
\bibitem[Sou84]{soule}
C.\ Soul\'e, {\em Groupes de Chow et K-th\'eorie de vari\'et\'es sur un corps fini}, Math.\ Ann.\ \textbf{268} (1984), 317--345.

%
%
%

\bibitem[Ta66]{tate}
J.\ Tate, {\em  Endomorphisms of abelian varieties over finite fields}, Invent.\ Math.\ \textbf{2} (1966),
134--144.

\bibitem[Ta94]{tate2}
J.\ Tate, {\em Conjectures on algebraic cycles in l-adic cohomology}, Motives, Proc.\
Sympos.\ Pure Math.\ \textbf{55}, Part 1, 71-–83. Amer.\ Math.\ Soc.\ (1994).

%
%
%
%

\bibitem[Tot17]{totaro-tate}
B.\ Totaro, {\em Recent progress on the Tate conjecture}, Bulletin of the AMS \textbf{54} (2017), 575--590.
%
%
%
%
%
%
%
%
%
%
%
%
%
%
%
%
%
%
%

%
%
%
%
%

\end{thebibliography}
\end{document}